\newenvironment{claim}{{\sc Claim:}}
\newtheorem{theorem}{\sc Theorem}
\newtheorem{corollary}[theorem]{\sc Corollary}
\newtheorem{definition}[theorem]{\sc Definition}
\newtheorem{lemma}[theorem]{\sc Lemma}
\newtheorem{proposition}[theorem]{\sc Proposition}
\theoremstyle{plain}
\newtheorem{prop}[theorem]{\sc Proposition}
\theoremstyle{remark}
\newtheorem{remark}[theorem]{\sc Remark}
\newcommand{\A}{\mathcal A}
\newcommand{\K}{\mathcal K}
\newcommand{\M}{\mathcal M}
\newcommand{\Mtilde}{\widetilde{\mathcal M}}
\newcommand{\Mis}{\widetilde{\mathfrak{M}}}
\newcommand{\R}{\mathbb{R}}
\newcommand{\N}{\mathbb{N}}
\newcommand{\PP}{\mathscr{P}}
\newcommand{\parts}{\mathscr{P}}
\newcommand{\T}{\mathbb{T}}
\newcommand{\comp}{\mbox{\scriptsize  $\circ$}}
\newcommand{\D}[1]{\mbox{\rm #1}}
\newcommand{\eps}{\varepsilon}
\renewcommand{\epsilon}{\varepsilon}
\renewcommand{\emptyset}{\varnothing}
\newcommand{\weakst}{\stackrel{\ast}{\rightharpoonup}}
\begin{document}
\baselineskip=15pt

\title[Convergence/divergence of discounted solutions]{Convergence/divergence phenomena in the vanishing discount limit of Hamilton-Jacobi equations
}\author[A.\ Davini, P.\ Ni, J.\ Yan\and M.\ Zavidovique]{Andrea Davini, Panrui Ni, Jun Yan\and Maxime Zavidovique}

\address{Andrea Davini\\ Dipartimento di Matematica\\ {Sapienza} Universit\`a di
  Roma\\ P.le Aldo Moro 2, 00185 Roma\\ Italy}
\email{davini@mat.uniroma1.it}

\address{Panrui Ni\\ Sorbonne Universit\'e, Universit\'e de Paris Cit\'e, CNRS, Institut de Math\'ematiques de Jussieu-Paris Rive Gauche \\Paris 75005\\ France }
\email{panruini@imj-prg.fr}

\address{Jun Yan\\ School of Mathematical Sciences\\ Fudan University\\ Shanghai 200433\\ China}
\email{yanjun@fudan.edu.cn}

\address{Maxime Zavidovique\\ Sorbonne Universit\'e, Universit\'e de Paris Cit\'e, CNRS, Institut de Math\'ematiques de Jussieu-Paris Rive Gauche \\Paris 75005\\ France}
\email{mzavidovi@imj-prg.fr}

\subjclass{ }

\vspace{-6ex}
\begin{abstract}
We study the asymptotic behavior of solutions of an equation of the form
\begin{equation}\label{abs}\tag{*}
G\big(x, D_x u,\lambda u(x)\big) = c_0\qquad\hbox{in $M$}
\end{equation}
on a closed Riemannian manifold $M$, where $G\in C(T^*M\times\R)$ is convex and superlinear in the gradient variable, is globally Lipschitz but not monotone in the last argument, and $c_0$ is the critical constant associated with the Hamiltonian $H:=G(\cdot,\cdot,0)$.
By assuming that $\partial_u G(\cdot,\cdot,0)$ satisfies a positivity condition of integral type on the Mather set of $H$, we prove that any equi-bounded family of solutions of \eqref{abs} uniformly converges to a distinguished critical solution $u_0$ as $\lambda \to 0^+$.
We furthermore show that any other possible family of solutions uniformly diverges to $+\infty$ or $-\infty$. We then look into {the linear} case $G(x,p,u):=a(x)u + H(x,p)$ and prove that the family $(u_\lambda)_{\lambda \in (0,\lambda_0)}$ of maximal solutions to \eqref{abs} is well defined and equi-bounded for $\lambda_0>0$ small enough. When $a$ changes sign and enjoys a stronger localized positivity assumption, we show that equation \eqref{abs}
does admit other solutions too, and that they all  uniformly diverge to $-\infty$ as $\lambda \to 0^+$.
This is the first time that converging and diverging families of solutions are shown to coexist in such a generality.
\end{abstract}

\keywords{weak KAM Theory, vanishing discount problems, Mather measures, viscosity solution theory}
\maketitle

\section*{Introduction}
In this paper we are concerned with the asymptotic behavior of solutions of an equation of the form
\begin{equation}\label{intro E}\tag{E$_\lambda$}
G\big(x, D_x u,\lambda u(x)\big) = c_0\qquad\hbox{in $M$}
\end{equation}
{posed} on a closed Riemannian manifold $M$, where $G\in C(T^*M\times\R)$ is convex and superlinear in the gradient variable, is globally Lipschitz in the last argument, and $c_0$ is the critical constant associated with the Hamiltonian $H:=G(\cdot,\cdot,0)$. We refer the reader to Section \ref{sec weak kam} for the definition of $c_0$ and of the other related objects coming from weak KAM Theory that will be mentioned in this introduction.
The monotonicity condition on $G$ in the last argument, that is standard for these kind of equations, is dropped here in favor of the following much weaker integral condition
\begin{itemize}
\item[\textbf{(L5)}] \qquad $\displaystyle \int_{TM} \dfrac{\partial L_G}{\partial u}(x,v,0)\,d\tilde{\mu}(x,v)<0
\qquad
\hbox{for all $\tilde\mu\in\Mis$,}$
\end{itemize}
where $L_G$ is the convex conjugate function of $G$, $\Mis$ denotes the set of Mather measures for $L_G(\cdot,\cdot,0)$, and $G$ (and hence $L_G$) satisfies a $C^1$-type regularity condition near $u=0$, see conditions (G4) and (L4) in Section \ref{sec general}.
Under these assumptions, we prove that  any equi-bounded family of solutions of \eqref{intro E} uniformly converges to a distinguished critical solution $u_0$ as $\lambda \to 0^+$. We furthermore show that any family consisting of other possible solutions uniformly diverges to $+\infty$ or $-\infty$.

We underline that the conditions presented above on $G$ are not sufficient to guarantee the existence and uniqueness of viscosity solutions to \eqref{E}.
This is due to the fact that, without global strict monotonicity of $G$ with respect to $u$, there is no comparison principle. This also makes unenforceable Perron's method, which is the technique customarily employed to prove existence of solutions.

The general issue of existence and uniqueness of such solutions is subsequently addressed in the paper in the linear case $G(x,p,u):=a(x)u + H(x,p)$ under the minimal hypotheses on $a\in C(M)$ and $H\in C(T^*M)$
that guarantee that the conditions on $G$ and $L_G$ mentioned above are in force. We prove that the family $(u_\lambda)_{\lambda \in (0,\lambda_0)}$ of maximal solutions to \eqref{intro E} is well defined and equi-bounded for $\lambda_0>0$ small enough. When $a$ changes sign and $a\geqslant 0$ in a neighborhood of the projected Aubry set $\A$, we show that equation \eqref{intro E} does admit other solutions too  that uniformly diverge to $-\infty$ as $\lambda \to 0^+$. If we additionally assume $a>0$ on $\A$, we furthermore show that any family made up of solutions to \eqref{intro E} that differ from the maximal ones uniformly diverge to $-\infty$. Incidentally, this completely solves the vanishing discount problem for this model case under the sole assumption that $a>0$ on $\A$  in view of the results established in \cite{Z},  where $a$ was additionally assumed nonnegative on $M$.

Condition (L5) was introduced in \cite{CFZZ} and therein employed to solve the vanishing discount problem for an equation of the form
\eqref{intro E} under the same set of assumptions considered herein, plus the additional requirement that $G$ is globally non-decreasing in $u$.
Condition (L5) can be read as a strict monotonicity condition on $G$ with respect to $u$, and this is transparent in the linear case
$G(x,p,u):=a(x)u+H(x,p)$. What we find striking about the output of our study is the fact that (L5) is a very weak requirement: it implies that $a$ has to be strictly positive only on some portions of the projected Mather set $\M$, where the latter is the minimal closed set that contains the projection of the supports of all Mather measures.
This set $\M$ can be very small, such as a finite set of points, see Remark \ref{oss small A}. Furthermore, it has been conjectured  by Ma\~ n\' e \cite{Mane} that for generic Hamiltonians $H$
both $\mathcal A$ and $\mathcal M$ coincide with the support of a closed curve.  Many results have been obtained in this direction,  see for example \cite{CFR} and the references therein, showing that condition (L5) generically leaves a lot of space for $a$ to take negative values.

The main results proved in this paper keep holding when the superlinearity condition on $G$ is relaxed in favor of a simple coercivity. We have decided not to pursue this generalization here since that would add additional technicalities with the drawback of hiding the ideas at the base of our work, see Remark \ref{oss coercive} for further details.

\subsection*{History of the problem}
The so-called {\em ergodic approximation} is a technique introduced in \cite{hom} to study the existence of solutions of the Hamilton-Jacobi equation\footnote{All solutions in the paper are meant in the viscosity sense. The definition will be provided later.}
\begin{equation}\label{hjc}
H(x,D_x u)=c\qquad \hbox{in $M$}
\end{equation}
on the flat $d$-dimensional torus $M:=\T^d\simeq\mathbb R^d/\mathbb Z^d$, where the Hamiltonian $H$ is a continuous function on $T^*M$, coercive in the gradient variable, uniformly with respect to $x\in M$,  and $c$ is a real number. Let $\lambda>0$ and $u_\lambda$ be the unique solution of
\[\lambda u(x)+H(x,D_x u)=0 \qquad\hbox{in $M$}.
\]
According to \cite{hom}, {the functions $-\lambda u_{\lambda}$ uniformly converge on $M$,  as $\lambda\to 0^+$, to a constant $c_0$. Furthermore, the solutions $(u_\lambda)_{\lambda>0}$ are equi-Lipschitz, yielding, by the Arzel\'a-Ascoli Theorem, that the functions
$u_{\lambda}-\min_{x\in M}u_{\lambda}$ uniformly converge, along subsequences as $\lambda\to 0^+$, to a
solution of (\ref{hjc}) with $c$ equaling $c_0$.}  The constant $c_0$ is called {\em critical value} of $H$ and is characterized by the property of being the unique constant $c\in\mathbb R$ such that (\ref{hjc}) admits solutions.
%
At that time, it was not clear if different converging sequences yield the same limit. Some constraints on the possible limit solutions were subsequently found in \cite{G,IM}, but the breakthrough came with the work
\cite{Da4}, where the authors proved that the unique solution $u_\lambda$ of
\begin{equation}\label{lde}\tag{${\textrm{HJ}}_\lambda$}
\lambda u(x)+H(x,D_x u)=c_0\qquad\hbox{in $M$}
\end{equation}
converges to a distinguished solution of
\begin{equation}\label{e0}\tag{HJ$_0$}
  H(x,D_x u)=c_0\qquad\hbox{in $M$},
\end{equation}
as $\lambda\rightarrow 0^+$ under the sole additional assumption that $H$ is convex in the gradient variable. The proof relies on techniques and tools issued from weak KAM Theory, in particular on the concept of Mather measure, and it works whenever $M$ is a closed Riemannian manifold. This kind of problem is also known as the {\em vanishing discount problem}. When the convexity condition on $H$ is dropped,  the functions $u_\lambda$ may not converge, as it was pointed out in \cite{Z2} through a counterexample posed on the 1-dimensional torus.

As a nonlinear generalization  (see \cite{V1,V3,GMT} and \cite{V2}), one can study the uniform convergence of the unique solution of
\[H_\lambda\big(x,D_x u,u(x)\big)=c_0\qquad\hbox{in $M$},\]
as $\lambda\rightarrow 0^+$, where $H_\lambda(x,p,u)$ is strictly increasing in $u$, and uniformly converges to $H(x,p)$ on compact sets as $\lambda\rightarrow 0^+$. This kind of problem is called the {\em vanishing contact structure problem}. The vanishing discount problem falls in this framework as a particular case by choosing $H_\lambda (x,p,u):= \lambda u+H(x,p)$.

The asymptotic convergence result {has been subsequently established in} many different situations. For the second order case, one can refer to \cite{IMT,IMT2,MT,Zh}. For the discrete case, one can refer to \cite{Da1,n1,Zbook} and also \cite{AZa} in the context of twist maps. For the similar problem in the mean field game theory, one can refer to \cite{CP}. For the weakly coupled Hamilton-Jacobi systems, one can refer to \cite{Da5,Da6,Ish4,Ish5}. For the non-compact setting, one can refer to \cite{Da7,Ish6}.

A natural and challenging question is to weaken the hypothesis on the monotonicity of the Hamiltonian. A first degenerate case was studied in \cite{Z}, where the author considered the convergence of the solution of
\[\lambda a(x)u(x)+H(x,D_x u)=c_0\qquad\hbox{in $M$}\]
as $\lambda\rightarrow 0^+$, where $a(x)\geqslant  0$ on $M$, and $a(x)>0$ on the projected Aubry set of $H$. Inspired by the works \cite{V3, Z}, the authors studied  in \cite{CFZZ}  the vanishing discount problem for contact Hamilton-Jacobi equations of the form \eqref{intro E}, where the positivity hypothesis on $a$ assumed in \cite{Z} is weakened and generalized by introducing the non-degeneracy integral condition (L5).
This work highlights once more that the concept of Mather measure plays a central role in the convergence result.

It is worth pointing out that all works mentioned above required a global non-decreasing hypothesis of the Hamiltonians in $u$. If the discounted equation is not increasing in the unknown function $u$, solutions may even not exist, and, if they exist, they may be not unique. In \cite{Da3,V4}, the authors discussed the uniform convergence of the minimal solution of (\ref{lde}) as $\lambda\rightarrow 0^-$. For the non-monotone vanishing discount problem, the second author provided the first example in \cite{n3} of nonconvergence. In this example, there exist a convergent family of solutions and a divergent family of solutions at the same time. This phenomenon is new comparing with all the previous works in this direction. In the present paper, we show that the example in \cite{n3} is in fact a very general phenomenon when the Hamiltonian is continuous, convex and superlinear in the fibres.

Let us conclude by mentioning that the type of problems we study are also closely linked to optimization problems in economics. The discount factor then models the effect of time through interest rates or inflation. Negative interest rates or deflation have been studied by economists (see for instance \cite{Kocher}). Our results then give possible asymptotics in the presence of coexisting inflation and deflation.

\subsection*{Presentation of our results}
We present here our main results. Section \ref{sec general} contains our analysis on the asymptotic behavior of possible solutions of
a general contact Hamilton-Jacobi equation of the form \eqref{intro E} when the discount factor $\lambda$ goes to $0$. The Hamiltonian $G(x,p,u)$ is assumed convex and superlinear in $p$, and globally Lipschitz in $u$, see conditions (G1-3) in Section \ref{sec general}. It  is also assumed to satisfy a $C^1$-type regularity condition in $u$ near $u=0$, see condition (G4) in Section \ref{sec general}.
The latter is for instance satisfied when the map $u\mapsto G(x,p,u)$ is $C^1$ in a neighborhood of $u=0$ in the following sense:
\begin{itemize}
\item[\textbf{(G4$'$)}]  there exists $\varepsilon>0$ such that $\frac{\partial G}{\partial u}(x,p,u)$ exists for all $(x,p,u)\in T^*M\times (-\eps,\eps)$
and is continuous in $T^*M\times (-\eps,\eps)$.
\end{itemize}
Let us consider
\begin{equation}\label{E}\tag{E$_\lambda$}
  G\big(x,D_xu,\lambda u(x)\big)=c_0 \qquad\hbox{in $M$},
\end{equation}
and the limit equation
\begin{equation}\label{E0}\tag{${\textrm{E}}_0$}
  G(x,D_xu,0)=c_0 \qquad\hbox{in $M$},
\end{equation}
where $c_0$ is the critical value associated with $H:=G(\cdot,\cdot,0)$. We will furthermore assume the non-degeneracy integral condition (L5), where
$\Mis$ denotes the set of Mather measures for $L:=L_G(\cdot,\cdot,0)$.

The main results contained in Section \ref{sec general} can be summarized as follows, see Theorems \ref{thm1} and \ref{thm Maxime bis}.

\begin{theorem}\label{intro thm1}
Under the previous assumptions, there exist a viscosity solution $u_0$ of \eqref{E0} and functions $\varphi:(0,1)\to\mathbb (-\infty,+\infty]$,  $\psi:(0,1)\to\mathbb [-\infty,+\infty)$ and $\theta:(0,1)\to\mathbb R$ with
\[\lim_{\lambda\to 0}\psi(\lambda)=-\infty, \quad\quad \lim_{\lambda\to 0}\varphi(\lambda)=+\infty,\quad \quad \lim_{\lambda\to 0}\theta(\lambda)=0\]
such that, if $v_\lambda$ is a solution of (\ref{E}) for some $\lambda>0$, then either one of the following alternatives occurs:
\begin{itemize}
\item[\em (i)] $v_\lambda\leqslant  \psi(\lambda)$;\smallskip
\item[\em (ii)] $v_\lambda\geqslant  \varphi(\lambda)$;\smallskip
\item[\em (iii)] $\|v_\lambda-u_0\|_\infty\leqslant  \theta(\lambda)$.
\end{itemize}
\end{theorem}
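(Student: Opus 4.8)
The plan is to isolate the \emph{soft} part of the statement, which depends only on the coercivity and superlinearity of $G$, from the \emph{hard} part, the identification of the limit of equi-bounded families of solutions, which is where weak KAM theory and conditions (L4)--(L5) intervene.

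\emph{Step 1 (coercivity dichotomy).} First we record the a priori Lipschitz bound: since $H:=G(\cdot,\cdot,0)$ is superlinear and $G$ is globally Lipschitz in $u$, every viscosity solution $v_\lambda$ of \eqref{E} satisfies $\|D_xv_\lambda\|_\infty\le\kappa\big(\|\lambda v_\lambda\|_\infty\big)$, where $\kappa(R):=\sup\{|p|:x\in M,\ |u|\le R,\ G(x,p,u)\le c_0\}<+\infty$ and, by superlinearity of $H$, $\kappa$ is sublinear, $\kappa(R)/R\to0$ as $R\to+\infty$. As $\lambda<1$ this gives $\mathrm{osc}_M\,v_\lambda\le\mathrm{diam}(M)\,\kappa(\|v_\lambda\|_\infty)=o(\|v_\lambda\|_\infty)$, so there is a universal $N_1>0$ with $\|v_\lambda\|_\infty\ge N_1\Rightarrow\mathrm{osc}_M\,v_\lambda\le\tfrac14\|v_\lambda\|_\infty$; the latter forces $v_\lambda$ to keep a constant sign on $M$ and to satisfy either $v_\lambda\ge\tfrac34\|v_\lambda\|_\infty$ or $v_\lambda\le-\tfrac34\|v_\lambda\|_\infty$ throughout $M$. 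Thus every solution is either \emph{small}, $\|v_\lambda\|_\infty<N_1$, or \emph{uniformly large} of a definite sign; in particular equi-bounded families of solutions are equi-Lipschitz. (No use of (L5) here.)

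\emph{Step 2 (limit of bounded solutions).} Let $u_0$ be the distinguished critical solution singled out by the constraint
\[
\int_{TM}u_0(x)\,\partial_u L_G(x,\xi,0)\,d\tilde\mu\ge0\qquad\text{for all }\tilde\mu\in\Mis,
\]
namely the largest viscosity subsolution of \eqref{E0} satisfying it; condition (L5) is exactly what makes this prescription nonempty, bounded, and well posed (the constraint set is stable under adding large negative constants and bounds its members from above on $\M$), and the extremal subsolution turns out to solve \eqref{E0}. We then prove the core estimate: for all $\delta,K>0$ there is $\lambda_{\delta,K}>0$ such that any solution $v_\lambda$ of \eqref{E} with $0<\lambda<\lambda_{\delta,K}$ and $\|v_\lambda\|_\infty\le K$ obeys $\|v_\lambda-u_0\|_\infty\le\delta$. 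For the upper bound, $v_\lambda$ is a Lipschitz subsolution, hence $D_xv_\lambda\cdot\xi\le L_G(x,\xi,\lambda v_\lambda)+c_0$ a.e.\ for every $\xi$; integrating against a Mather measure $\tilde\mu$ of $L:=L_G(\cdot,\cdot,0)$ and using closedness ($\int D_xv_\lambda\cdot\xi\,d\tilde\mu=0$) together with $\int L\,d\tilde\mu=-c_0$ yields $\int\big[L_G(x,\xi,\lambda v_\lambda)-L_G(x,\xi,0)\big]d\tilde\mu\ge0$, and the $C^1$-expansion near $u=0$ from (L4), uniform over the compact Mather set, turns this into $\int v_\lambda(x)\,\partial_u L_G(x,\xi,0)\,d\tilde\mu\ge-K\,\omega(\lambda K)$ with $\omega$ a modulus; passing to a uniformly convergent subsequence $v_{\lambda_n}\to u_0'$ (Arzel\'a--Ascoli, by Step 1) gives $\int u_0'\,\partial_u L_G(x,\xi,0)\,d\tilde\mu\ge0$ for all $\tilde\mu$, whence $u_0'\le u_0$. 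The lower bound $u_0'\ge u_0$ is obtained as in \cite{Da4,CFZZ}: using the (contact) Lax--Oleinik representation of $v_\lambda$ and the fact that near $u=0$ the nonlinearity $G(x,p,\lambda u)$ behaves like the linearised discount $H(x,p)+\lambda u\,\partial_uG(x,p,0)$, one transports the constraint defining $u_0$ along optimal curves, whose normalised occupation measures accumulate on Mather measures; this is where (L5) is genuinely used, in place of the missing comparison principle. Since every subsequential limit equals $u_0$, the uniform estimate follows by a routine compactness argument. This is the substance of Theorem~\ref{thm1}.

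\emph{Step 3 (assembly).} From Step 2, a diagonal choice produces $K_\lambda\nearrow+\infty$ and $\theta(\lambda)\searrow0$ as $\lambda\to0^+$, with $K_\lambda\ge N_1$ and $\lambda<\lambda_{\theta(\lambda),K_\lambda}$ for $\lambda$ small; set $\varphi(\lambda):=\tfrac34K_\lambda$, $\psi(\lambda):=-\tfrac34K_\lambda$, and extend $\varphi,\psi,\theta$ in any admissible way (finite constants) on a fixed compact subinterval of $(0,1)$. Given a solution $v_\lambda$ of \eqref{E}: if $\|v_\lambda\|_\infty\le K_\lambda$, Step 2 gives (iii); if $\|v_\lambda\|_\infty>K_\lambda\ (\ge N_1)$, Step 1 gives $v_\lambda\ge\tfrac34\|v_\lambda\|_\infty>\varphi(\lambda)$, i.e.\ (ii), or $v_\lambda\le-\tfrac34\|v_\lambda\|_\infty<\psi(\lambda)$, i.e.\ (i); for $\lambda$ in the fixed compact subinterval one argues the same way directly from Step 1. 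This packaging, together with the companion divergence statement, is Theorem~\ref{thm Maxime bis}.

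The entire weight of the argument sits in Step 2 --- the well-posedness of the constrained maximum $u_0$ and, above all, the lower bound $u_0'\ge u_0$, where the non-monotonicity of $G$ in $u$ must be overcome and (L5) is indispensable. Steps 1 and 3 are soft, using only coercivity, superlinearity, and elementary bookkeeping.
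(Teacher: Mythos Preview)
Your proposal is correct and follows the paper's strategy: Step~2 is exactly Theorem~\ref{thm1} (with the same upper/lower bound arguments via Mather measures and exponentially weighted occupation measures along calibrated curves, the paper's Definition~\ref{mulam} and Lemmas~\ref{<u0}--\ref{>u0}), while Steps~1 and~3 correspond to the paper's Harnack-type lemma \eqref{Harnack}, Proposition~\ref{uto-inf}, and the proof of Theorem~\ref{thm Maxime bis}. The only cosmetic differences are that the paper obtains the oscillation bound by a Gronwall argument along geodesics from the domination $w_\lambda\prec L_\lambda+c_0$ rather than via your direct gradient bound $\kappa(\lambda\|v_\lambda\|_\infty)$, and constructs $\varphi,\psi,\theta$ as extrema over solution classes (e.g.\ $\psi(\lambda)=\sup\{v_\lambda(x):v_\lambda<u_0-1\}$, with the limit $\psi(\lambda)\to-\infty$ proved by contradiction using Step~2 on an extracted bounded subsequence) rather than by your diagonal assembly.
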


We stress that all these three behaviors can happen at the same time for a properly chosen Hamiltonian $G$. Indeed, consider $G(x,p,u) := \sin( u) + \|p\|_x^2$. It is easlily seen that $G$ verifies all of the above conditions. The limit Hamiltonian is $H(x,p):=G(x,p,0)
= \|p\|_x^2$, its critical constant is $c_0 =0$ and the constant functions are the only solutions to $H(x,D_xu) = 0$ in $M$. Moreover, for all $\lambda>0$, the three constant functions $u_\lambda = 0$, and $u_\lambda ^\pm = \pm \frac{\pi}{\lambda}$ are solutions to the discounted equation.\smallskip

Theorem \ref{intro thm1} shows in great generality that the only possible asymptotic behavior of families of solutions $(v_\lambda)_{\lambda \in (0,\lambda_0)}$ is, up to subsequences, either to uniformly diverge to $\pm\infty$, or to uniformly converge to a specific solution $u_0$ of \eqref{E0}. We also provide two characterizations of $u_0$, see Theorems \ref{thm1} and \ref{thm2}.\smallskip

The general problem of existence  and uniqueness of such solutions is addressed in Section \ref{sec model case}. Here we consider the linear case $G(x,p,u) := a(x)u+H(x,p)$ under the minimal hypotheses on $a\in C(M)$ and $H\in C(T^*M)$
that guarantee that the conditions on $G$ and $L_G$ presented above are in force.
The discounted equation is then
 \begin{equation}\label{VH}\tag{${\textrm{HJ}}_\lambda$}
  \lambda a(x)u(x)+H(x,D_x u)=c_0\qquad\hbox{in $M$},
\end{equation}
with limit equation
\begin{equation}\label{e0}\tag{HJ$_0$}
  H(x,D_x u)=c_0 \qquad\hbox{in $M$}.
\end{equation}

We prove the following existence and convergence result:

\begin{theorem}
Under the previous hypotheses, there is $\lambda_0>0$ such that, for all $\lambda\in (0,\lambda_0)$, the
equation \eqref{VH} admits a maximal viscosity solution $u_\lambda\in C(M)$. Moreover, the family $(u_\lambda)_{\lambda \in (0,\lambda_0)}$ is equi-bounded, hence it uniformly converges to $u_0$ as $\lambda \to 0^+$.
 \end{theorem}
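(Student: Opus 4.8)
The plan is to establish, in order: (1) an a~priori upper bound on every subsolution of \eqref{VH}, uniform in $\lambda$, which is where condition \textbf{(L5)} enters; (2) existence of the maximal solution $u_\lambda$ for $\lambda$ small, by a Perron-type argument resting on~(1); (3) equi-boundedness of $(u_\lambda)$, after which Theorem~\ref{intro thm1} yields the uniform convergence to $u_0$. Throughout write $L:=L_G(\cdot,\cdot,0)$; since $\partial_uL_G(x,v,0)=-a(x)$ in the linear case, \textbf{(L5)} reads $\int_{TM}a\,d\tilde\mu>0$ for every $\tilde\mu\in\Mis$.

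\emph{Step 1.} Let $w$ be a subsolution of \eqref{VH} with $\lambda\in(0,1)$; assume $\max_Mw\ge 0$, the bound below being trivial otherwise. Integrating the Fenchel inequality $H(x,D_xw)\ge D_xw\cdot v-L(x,v)$ against a Mather measure $\tilde\mu\in\Mis$, and using the closedness of $\tilde\mu$ (so that $\int_{TM}D_xw\cdot v\,d\tilde\mu=0$, justified by mollifying $w$ as in \cite{Da4}) together with $\int_{TM}L\,d\tilde\mu=-c_0$, the subsolution inequality $\lambda a(x)w+H(x,D_xw)\le c_0$ integrates to $\int_{TM}a\,w\,d\tilde\mu\le 0$. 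Writing $w=\min_Mw+r$ with $0\le r\le\operatorname{osc}(w)$ and using $\int_{TM}a\,d\tilde\mu>0$ gives $\min_Mw\le\|a\|_\infty\operatorname{osc}(w)/\!\int_{TM}a\,d\tilde\mu$, hence $\|w\|_\infty\le C_3\operatorname{osc}(w)$ with $C_3:=1+\|a\|_\infty/\!\int_{TM}a\,d\tilde\mu$. Separately, the subsolution inequality and the superlinearity of $H$ bound $\operatorname{osc}(w)$ by $\Theta(\lambda\|w\|_\infty)$, where $\Theta(r):=\operatorname{diam}(M)\cdot\sup\{|p|:H(x,p)\le c_0+\|a\|_\infty r\text{ for some }x\}$ satisfies $\Theta(r)/r\to 0$ as $r\to\infty$. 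Combining and using $\lambda<1$, $\operatorname{osc}(w)\le\Theta(C_3\operatorname{osc}(w))$, which forces $\operatorname{osc}(w)\le R$ and hence $\|w\|_\infty\le C_3R=:C_0$ for a constant $C_0$ depending only on $H$ and $a$. Thus every subsolution of \eqref{VH} satisfies $\max_Mw\le C_0$, uniformly in $\lambda\in(0,1)$, and all subsolutions with $\max\ge 0$ are equi-Lipschitz. Note that without \textbf{(L5)} one would have $C_3=+\infty$ and this step would collapse.

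\emph{Step 2.} Suppose \eqref{VH} admits at least one bounded subsolution for $\lambda$ small (discussed below), and set $u_\lambda:=\sup\{w:w\text{ subsolution of }\eqref{VH}\}$. By Step~1 this is $\le C_0$, and it dominates the fixed bounded subsolution; its upper semicontinuous envelope is therefore a bounded subsolution, hence Lipschitz. The usual Perron ``bump-up'' now applies: at a point where $u_\lambda$ fails the supersolution test strictly, adding a small constant to a test function touching from below yields a classical subsolution of \eqref{VH} nearby---this step uses only that $G$ is \emph{continuous} in $u$, the monotonicity of $G$ in $u$ being needed for comparison and uniqueness, not here---and gluing it in would produce a subsolution exceeding $u_\lambda$ somewhere, contradicting its definition. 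Hence $u_\lambda$ solves \eqref{VH}; since every solution is a subsolution, $u_\lambda$ is the maximal solution. The crux is the bounded subsolution: when $a$ changes sign, neither constants nor constant shifts of a critical subsolution of \eqref{e0} are subsolutions of \eqref{VH}, the obstruction concentrating near $\{a=0\}$. I would obtain one by perturbing a strict critical subsolution $\bar u$ of \eqref{e0}, exploiting that for $\lambda$ small $\lambda a(x)\bar u(x)$ is small relative to the strict-subsolution slack of $\bar u$ off $\A$, while on $\{a=0\}$ that term is absent. Making this work uniformly---and so that the resulting subsolution is also bounded \emph{below} independently of $\lambda$---is, I expect, the main technical obstacle of the proof.

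\emph{Step 3.} The bound $u_\lambda\le C_0$ is Step~1. For the lower bound, $u_\lambda$, being the supremum of subsolutions, dominates the subsolution built in Step~2, which can be taken bounded below independently of $\lambda$; hence $(u_\lambda)_{\lambda\in(0,\lambda_0)}$ is equi-bounded for $\lambda_0>0$ small. Consequently, in the trichotomy of Theorem~\ref{intro thm1}, alternatives (i) and (ii) are excluded for $\lambda$ small (since $\psi(\lambda)\to-\infty$ and $\varphi(\lambda)\to+\infty$), so alternative (iii) holds: $\|u_\lambda-u_0\|_\infty\le\theta(\lambda)\to 0$, which is the assertion.
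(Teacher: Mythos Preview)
Your Step~1 is correct and rather elegant: integrating the subsolution inequality against a Mather measure to get $\int a\,w\,d\tilde\mu\le 0$, and then coupling this with the superlinearity-driven oscillation bound, does yield a uniform upper bound $\max_M w\le C_0$ on all subsolutions of \eqref{VH}. This is cleaner than the paper's route to the same conclusion, which goes through a bootstrap on the value function (first $V_\lambda\le C/\lambda$, then a key compactness lemma on the weights $\alpha(\lambda,\gamma,t)$, then $V_\lambda\le C$).

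The genuine gap is Step~2, and you have identified it yourself. Under the sole hypothesis \textbf{(L5)} (i.e.\ $\int a\,d\tilde\mu>0$ for every Mather measure), the coefficient $a$ may well be negative on parts of the Aubry set $\A$---certainly on $\A\setminus\M$, and possibly at points of $\M$ too. Your proposed perturbation of a strict critical subsolution $\bar u$ then breaks down: on $\A$ there is no slack in $H(x,D_x\bar u)\le c_0$, so one needs $a(x)\bar u(x)\le 0$ pointwise on $\A$, which forces $\bar u$ to have a prescribed sign pattern on $\A$ that no single critical subsolution need realise. So the existence of a subsolution of \eqref{VH} bounded below uniformly in $\lambda$ is not established, and without it neither the Perron argument nor the lower bound in Step~3 goes through.

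The paper sidesteps this entirely by constructing the maximal solution directly as the value function
\[
V_\lambda(x)=\inf_{\gamma(0)=x}\limsup_{t\to+\infty}\int_{-t}^0 e^{-\lambda\int_s^0 a(\gamma)\,d\tau}\bigl(L(\gamma,\dot\gamma)+c_0\bigr)\,ds,
\]
proving it satisfies a Dynamic Programming Principle (hence is a subsolution), and showing it is equi-bounded via a compactness argument: if the discounted time-weights $\lambda\alpha(\lambda,\gamma,t)$ were unbounded, the associated occupation measures would converge to a Mather measure violating \textbf{(L5)}. Maximality then gives the supersolution property by the bump construction you describe. Thus the paper never needs an explicit a~priori subsolution; the value function \emph{is} the subsolution, and its finiteness and equi-boundedness are the heart of the proof. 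Your Step~1 could in principle replace part of the paper's upper-bound analysis, but it does not supply what is missing for existence.
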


The previous theorem excludes the possibility of families of solutions that uniformly diverge to $+\infty$, but leaves open the possibility of  families uniformly diverging to $-\infty$. By strengthening the non-degeneracy integral condition (L5) with a pointwise positivity condition on $a$ on the projected Aubry set $\A$, we are able to improve the statement of Theorem \ref{intro thm1} as follows.

\begin{theorem}
Let us additionally assume that $a\geqslant 0$ in a neighborhood of $\mathcal A$ and that there exist $x_0\in M$ such that $a(x_0)<0$. Then there exists a family of solutions $(v_\lambda)_{\lambda\in (0,\hat \lambda)}$ to (\ref{VH}) for some $\hat\lambda\in (0,1)$ uniformly diverging to $-\infty$ as $\lambda \to 0^+$.
\end{theorem}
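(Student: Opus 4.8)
The plan is to construct the diverging family by a sub/supersolution (Perron) argument, exploiting the fact that when $a\geqslant 0$ near $\A$ and $a(x_0)<0$ somewhere, we can ``push down'' a solution in the region where $a<0$ without violating anything near the Aubry set. The starting point is a solution $w$ of the limit equation \eqref{e0}; since $H$ is critical, such $w$ exists and is globally Lipschitz. For a large constant $k>0$, consider the ansatz of looking for solutions of \eqref{VH} of the form $v_\lambda = w - \frac{k}{\lambda} + (\text{bounded correction})$, or more robustly, of finding a supersolution that is roughly $w$ near $\A$ and very negative where $a$ is negative. The heuristic: plugging $v$ with $v\approx -K$ for large $K$ into $\lambda a(x) v + H(x,D_xv) = c_0$ gives, in the region $\{a<0\}$, a term $\lambda a(x)(-K) = \lambda |a(x)| K >0$, which \emph{helps} make $v$ a subsolution there once $K \sim 1/\lambda$; and where $a \geqslant 0$ we keep $v$ close to a genuine critical solution so that $\lambda a(x) v$ is a small perturbation.

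The key steps, in order, would be: (1) Fix a Lipschitz critical subsolution $w$ of \eqref{e0} and, using a localization/penalization around $\A$ (here is where $a\geqslant 0$ on a neighborhood $U$ of $\A$ is used), build for each small $\lambda$ an explicit \emph{subsolution} $\underline v_\lambda$ of \eqref{VH} that coincides with $w - M(\lambda)$ on $U$ for a suitable slowly growing $M(\lambda)\to+\infty$ with $\lambda M(\lambda)\to 0$, and that dips down to a value of order $-c/\lambda$ (for a fixed small $c>0$) on a neighborhood of $x_0$, the dip being possible precisely because $a(x_0)<0$ makes the zeroth-order term have the favorable sign there. (2) Build a \emph{supersolution} $\overline v_\lambda$ of \eqref{VH} that lies above $\underline v_\lambda$ but is still forced to be $\leqslant -c'/\lambda$ somewhere — e.g. by taking $\overline v_\lambda = \min\{w + C, \text{(a barrier that is very negative near } x_0)\}$, using again the sign of $a(x_0)$. (3) Apply Perron's method between $\underline v_\lambda$ and $\overline v_\lambda$ to produce a solution $v_\lambda$ of \eqref{VH} with $\underline v_\lambda \leqslant v_\lambda \leqslant \overline v_\lambda$. (4) Observe that by construction $v_\lambda(x_0) \leqslant -c'/\lambda \to -\infty$; then invoke Theorem \ref{intro thm1}: since $v_\lambda$ is a family of solutions, along any subsequence it either converges uniformly to $u_0$, diverges uniformly to $+\infty$, or diverges uniformly to $-\infty$. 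The value at $x_0$ rules out the first two, so $v_\lambda$ diverges uniformly to $-\infty$; finally extract $\hat\lambda$ so that the construction is valid on all of $(0,\hat\lambda)$.

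The main obstacle I anticipate is Step (1)–(2): arranging the sub- and supersolutions so that they are \emph{ordered} ($\underline v_\lambda \leqslant \overline v_\lambda$) while the subsolution still genuinely solves the inequality $\lambda a(x)\underline v_\lambda + H(x, D_x\underline v_\lambda) \leqslant c_0$ across the interface between the region $\{a\geqslant 0\}$ (where $\underline v_\lambda$ is close to a critical subsolution) and the region near $x_0$ (where it plunges). The gradient of the ``plunging'' part must stay controlled so that $H(x,D_x\underline v_\lambda)$ does not blow up — this forces $M(\lambda)$ and the depth of the dip to grow slowly (logarithmically, or like $\lambda^{-\beta}$ for small $\beta$, rather than like $\lambda^{-1}$), which is delicate: we need the dip depth to beat the convergence threshold $\theta(\lambda)$ from Theorem \ref{intro thm1} while keeping slopes bounded. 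The trick is that once $v_\lambda(x_0)$ is merely \emph{not} within $\theta(\lambda)$ of $u_0$ and \emph{not} above $\varphi(\lambda)$, Theorem \ref{intro thm1}(i) forces $v_\lambda \leqslant \psi(\lambda)$ everywhere, so in fact a comparatively modest dip suffices — we only need $v_\lambda(x_0)$ to fall outside the narrow window $(-\theta(\lambda) + \inf u_0, \ \varphi(\lambda))$, not to be as negative as $-c/\lambda$. That reduction is what makes the gradient bookkeeping manageable, and it is the point I would check most carefully. A secondary subtlety is justifying Perron's method in the absence of a comparison principle: one must verify that the standard Perron construction (sup of subsolutions below $\overline v_\lambda$) still yields a viscosity solution — this is fine because Perron only uses stability and the bump construction, not comparison, so the sup is automatically a subsolution and is a supersolution by the usual argument; uniqueness is not claimed and not needed.
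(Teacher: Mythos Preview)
Your Perron-between-barriers strategy is genuinely different from the paper's route, and as written it has a real gap. The paper avoids supersolutions entirely: it takes a critical subsolution $\varphi$ that is $C^\infty$ and strict on $M\setminus\mathcal A$ (Theorem \ref{thm outA}), sets $\varphi_\lambda:=\varphi-1/\sqrt\lambda$, and checks in two lines that $\varphi_\lambda$ is a subsolution of \eqref{VH} for small $\lambda$ --- on $U$ because $a\geqslant 0$ and $\varphi_\lambda<0$, off $U$ because strictness of $\varphi$ absorbs the error $\lambda\|a\|_\infty\|\varphi\|_\infty+\|a\|_\infty\sqrt\lambda$. It then feeds $\varphi_\lambda$ into the implicit Lax--Oleinik machinery (Lemmas \ref{T-T+} and \ref{u-v-}): $v_\lambda^+:=\lim_tT^{\lambda,+}_t\varphi_\lambda\leqslant\varphi_\lambda$ (so $v_\lambda^+\to-\infty$ uniformly), and $v_\lambda:=\lim_tT^{\lambda,-}_tv_\lambda^+$ is a genuine solution that touches $v_\lambda^+$ at some point $x_\lambda$. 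Hence $v_\lambda(x_\lambda)\to-\infty$, and the Harnack-type estimate (Proposition \ref{uto-inf}) upgrades this to uniform divergence. No supersolution, no barrier, no interface bookkeeping.

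Your Step (2) is where the proposal breaks. The function $w+C$ with $w$ a critical solution is \emph{not} a supersolution of \eqref{VH}: one gets $\lambda a(x)(w(x)+C)+H(x,D_xw)=\lambda a(x)(w(x)+C)+c_0$, and since $a$ changes sign there is no choice of $C$ making the first term nonnegative on all of $M$. Without a global supersolution, the $\min$ with a local barrier near $x_0$ is not a global supersolution, and the Perron supremum has no upper cap forcing the output to be negative at $x_0$ --- you would simply recover the bounded maximal solution $u_\lambda$. (A repair is possible: use $u_\lambda$ itself as the global supersolution and take the $\min$ with a paraboloidal barrier on a ball where $a<0$; large gradients \emph{help} the supersolution inequality. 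But this is not what you sketched.) Your Step (1) is also over-engineered: a ``dip'' in the \emph{subsolution} would create large gradients, and large $H(x,D_x\underline v_\lambda)$ is fatal for the \emph{sub}solution inequality --- the paper's uniform shift $\varphi-1/\sqrt\lambda$ already does the job with zero extra gradient. Note finally that condition $a(x_0)<0$ enters the paper's proof only through the hypotheses under which the semigroup results (Lemmas \ref{T-T+}, \ref{u-v-}) are stated, not through any local barrier.
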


 When the previous hypothesis is reinforced, we obtain a stronger conclusion:

 \begin{theorem}
Let us additionally assume that $a>0$ on $\A$.  Then any family  $(v_\lambda)_{\lambda\in (0,\lambda')}$ of solutions to \eqref{VH} satisfying $v_\lambda \neq u_\lambda$ for all $\lambda\in (0,\lambda')$, with $\lambda'\in (0,1)$, uniformly diverges to $-\infty$.
 \end{theorem}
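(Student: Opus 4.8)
The plan is to rule out, for $\lambda$ small, every asymptotic behavior of $(v_\lambda)$ other than uniform divergence to $-\infty$, combining the trichotomy (Theorems~\ref{thm1} and \ref{thm Maxime bis}) with the existence, maximality and equi-boundedness of the solutions $(u_\lambda)$ provided by the previous theorem. Since $u_\lambda$ is the maximal solution of \eqref{VH} and $\|u_\lambda\|_\infty\le C$ for all small $\lambda$, every solution $v_\lambda$ of \eqref{VH} satisfies $\sup_M v_\lambda\le C$; as $\varphi(\lambda)\to+\infty$, alternative~(ii) is impossible once $\varphi(\lambda)>C$. Thus, for $\lambda$ small, each $v_\lambda$ satisfies either (i) $v_\lambda\le\psi(\lambda)$ or (iii) $\|v_\lambda-u_0\|_\infty\le\theta(\lambda)$. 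Since $\psi(\lambda)\to-\infty$, it suffices to establish the rigidity statement: for $\lambda$ small, every solution $v_\lambda$ of \eqref{VH} with $\|v_\lambda-u_0\|_\infty\le\theta(\lambda)$ equals $u_\lambda$. Indeed, together with the hypothesis $v_\lambda\neq u_\lambda$ this rules out~(iii), leaving $\sup_M v_\lambda\le\psi(\lambda)\to-\infty$.

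To prove the rigidity statement, set $h_\lambda:=u_\lambda-v_\lambda$, which is $\ge 0$ by maximality of $u_\lambda$, and suppose $m_\lambda:=\max_M h_\lambda>0$, attained at some $x_\lambda\in M$. Let $\gamma\colon(-\infty,0]\to M$ be a backward calibrated curve for $v_\lambda$ with $\gamma(0)=x_\lambda$, so that, with $L:=L_G(\cdot,\cdot,0)$,
\[
v_\lambda(x_\lambda)=e^{-\lambda\int_s^0 a(\gamma)}\,v_\lambda(\gamma(s))+\int_s^0 e^{-\lambda\int_r^0 a(\gamma)}\bigl(L(\gamma,\dot\gamma)+c_0\bigr)\,dr\qquad(s<0).
\]
Since $u_\lambda$ is a subsolution of \eqref{VH}, the same relation holds along $\gamma$ with $u_\lambda$ in place of $v_\lambda$ and the equality weakened to $\le$; subtracting gives $h_\lambda(x_\lambda)\le e^{-\lambda\int_s^0 a(\gamma)}h_\lambda(\gamma(s))$ for all $s<0$. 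As $0\le h_\lambda(\gamma(s))\le m_\lambda=h_\lambda(x_\lambda)$ and $m_\lambda>0$, this forces $e^{-\lambda\int_s^0 a(\gamma)}\ge 1$, i.e.
\[
\int_s^0 a(\gamma(r))\,dr\le 0\qquad\text{for every }s<0 .
\]

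The remaining step, which I expect to be the main obstacle, is to contradict this inequality. The key point is that, for $\lambda$ small, a backward calibrated curve $\gamma$ of a bounded solution of \eqref{VH} necessarily has its $\alpha$-limit set contained in the projected Aubry set $\A$: since $v_\lambda$ is uniformly close to the solution $u_0$ of \eqref{e0} and the discount weights $e^{-\lambda\int_r^0 a(\gamma)}$ distort the weighted action only by a controlled amount, $\gamma$ behaves asymptotically like a curve calibrated for $u_0$, hence semi-static, hence with $\alpha$-limit in $\A$; rendering this rigorous requires the quantitative weak KAM estimates of Section~\ref{sec general} (a priori Lipschitz bounds, strict critical subsolutions, and the characterization of $u_0$ in Theorem~\ref{thm2}), much as in \cite{Da4,Z,CFZZ}. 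Granting it, the hypothesis $a>0$ on the compact set $\A$ provides $\delta>0$ and a neighborhood $U$ of $\A$ with $a\ge\delta$ on $U$; since $M$ is compact, $\mathrm{dist}(\gamma(r),\A)\to 0$ as $r\to-\infty$, so $\gamma(r)\in U$ for $r\le -T$ and hence $\int_s^0 a(\gamma)\ge\delta(|s|-T)-\|a\|_\infty\,T\to+\infty$ as $s\to-\infty$, contradicting the previous display. Therefore $m_\lambda=0$, i.e.\ $v_\lambda=u_\lambda$, which completes the proof.
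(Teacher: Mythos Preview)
Your approach is genuinely different from the paper's. The paper does not pass through the trichotomy at all: it builds, from a critical subsolution $\varphi$ that is $C^\infty$ and strict outside $\A$, the family $\varphi_\lambda:=\varphi-1/\sqrt{\lambda}$, checks that the hypothesis $a>0$ on $\A$ makes $\varphi_\lambda$ a \emph{strict} subsolution of \eqref{VH} for small $\lambda$, and then invokes the Lax--Oleinik semigroup (Lemma~\ref{u-v-}) to show that no solution $v_\lambda\neq u_\lambda$ can satisfy $\varphi_\lambda\le v_\lambda\le u_\lambda$; hence every $v_\lambda\neq u_\lambda$ dips below $\varphi_\lambda$ at some point, and Proposition~\ref{uto-inf} forces uniform divergence to $-\infty$. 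Your route via the maximum principle along a calibrated curve of $v_\lambda$ is conceptually clean and, as you will see below, potentially stronger.

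There is, however, a genuine gap at your Step~7. You need to contradict $\int_s^0 a(\gamma)\le 0$ for all $s<0$, and you reach for the claim that the $\alpha$-limit set of $\gamma$ lies in $\A$. You admit this is not proved, and the heuristic you give (``$\gamma$ behaves asymptotically like a curve calibrated for $u_0$'') is neither standard nor obviously true for calibrated curves of the $\lambda$-discounted equation when $a$ changes sign. The paper contains no such statement.

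The good news is that the gap closes with a tool already in the paper that you overlooked: Lemma~\ref{e1e2}. Since your $v_\lambda$ satisfies alternative (iii), the family $(v_\lambda)$ is equi-bounded, so Lemma~\ref{gameq} gives equi-Lipschitz calibrated curves, and the proof of Lemma~\ref{e1e2} applies verbatim to these curves. In the linear case $\partial_u L_G=-a$, so Lemma~\ref{e1e2} yields $\bar\lambda,T_0>0$ such that, for $\lambda<\bar\lambda$ and $|s|\ge T_0$,
\[
\frac{1}{|s|}\int_s^0 a\big(\gamma(r)\big)\,dr>\epsilon_1>0,
\]
which immediately contradicts $\int_s^0 a(\gamma)\le 0$. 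Note that this fix uses only the integral condition~\eqref{V>0} and never the pointwise hypothesis $a>0$ on $\A$; so your argument, once repaired, actually proves a statement stronger than the one in the paper, while the paper's semigroup proof genuinely needs strict positivity on $\A$ to make $\varphi_\lambda$ a strict subsolution.
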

 Namely, in this last case, $(u_\lambda)_\lambda$ is the only converging family of solutions.\medskip

Let us conclude this presentation by stressing that, combining this analysis with the results of \cite{Z}, we fully understand the asymptotic behavior of solutions to \eqref{VH} when  $a>0$ on $\A$. The output is the following:
\begin{itemize}
\item[(a)] if $a\geqslant  0$ on the whole $M$, (this situation was considered in \cite{Z}) then, for all $\lambda>0$, there is a unique solution $u_\lambda$ to \eqref{VH}, and the family $(u_\lambda)_\lambda$ converges as $\lambda \to 0^+$.\smallskip
\item[(b)]  if there exist a point $x_0\in M$ such that $a(x_0)<0$, (this situation is discussed in the present paper), then we can find a $\lambda_0>0$ small enough such that equation \eqref{VH} admits at least two solutions for every $\lambda\in (0,\lambda_0)$. The family $(u_\lambda)_{\lambda\in (0,\lambda_0)}$ of maximal solutions uniformly converges to $u_0$ as $\lambda \to 0^+$. Any family of other solutions $(v_\lambda)_\lambda$ uniformly diverges to $-\infty$ as $\lambda \to 0^+$.
\end{itemize}

\numberwithin{theorem}{section}
\numberwithin{equation}{section}

\section{Preliminaries}\label{Sec2}

\subsection{Notation}
\begin{itemize}
\item Throughout this paper, we assume that $M$ is a closed, connected and smooth Riemannian manifold.
\item We fix $g$ an auxiliary Riemannian metric on $M$.  Let $d(x,y)$ be the distance between $x$ and $y$ in $M$ induced by $g$. By compactness of $M$ our results are independent on the choice of $g$.
\item Let diam$(M)$ be the diameter of $M$.
\item We denote by $TM$ and $T^*M$ the tangent and cotangent bundle over $M$ respectively. We denote by $(x,p)$ and $(x,v)$ points of $T^*M$ and $TM$ respectively.
\item Let $\pi:TM,T^*M\to M$ denote both canonical projections, the context will make it clear which one is considered.
\item We denote by $\|\cdot\|_x$ the norm on both $T_xM$ and $T_x^*M$ induced by $g$.
\item If $N$ is a smooth manifold, we will denote by $C(N)$ the Polish space of continuous functions from $N$ to $\R$ endowed with the metric
of local uniform convergence on $N$. We will denote by  $C_c(N)$ \big(resp. $C^1(N)$\big) the set of compactly supported (resp., $C^1$) functions from $N$ to $\R$.
\item We denote by $\parts (TM)$ the space of Borel probability measures on $TM$ endowed with the weak-$*$ topology coming from the dual $\big(C_c(TM)\big)'$.
\item We denote $C_\ell(TM)$ the set of continuous functions $g: TM\to \R$ with at most linear growth meaning that
\[
\sup\limits_{(x,v)\in TM}\frac{|g(x,v)|}{1+\|v\|_x}<+\infty.
\]
This last quantity defines a norm $\|g\|_\ell$ on the vector space $C_\ell(TM)$.
\item $\N$ denotes the set of positive integers.
\end{itemize}

\subsection{Viscosity solutions.}
We start by recalling the notion of viscosity solution.

\begin{definition}\rm
Let $G : T^*M\times \R \to \R$ be a continuous function, $c\in \R$, and consider the equation
\begin{equation}\label{eq def viscosity}
G\big(x,D_x u , u(x) \big) =c  \qquad\hbox{in $M$}.
\end{equation}
\begin{itemize}
\item[(a)] We say that $u\in C(M)$ is a {\em viscosity subsolution} of \eqref{eq def viscosity}, denoted by
\begin{equation*}
G\big(x,D_x u , u(x) \big) \leqslant   c  \qquad\hbox{in $M$},
\end{equation*}
if, for all $\varphi \in C^1(M)$ and $x_0\in M$ such that $u-\varphi$ has a local maximum at $x_0$, we have
$G\big(x_0,D_{x_0} \varphi  , u(x_0)\big ) \leqslant   c$. Such a function $\varphi$ is termed {\em supertangent to $u$ at $x_0$.}\smallskip
\item[(b)] We say that $u\in C(M)$ is a {\em viscosity supersolution} of \eqref{eq def viscosity}, denoted by
\begin{equation*}
G\big(x,D_x u , u(x) \big) \geqslant   c \qquad\hbox{in $M$},
\end{equation*}
if, for all $\varphi \in C^1(M)$ and $x_0\in M$ such that $u-\varphi$ has a local minimum at $x_0$, then
$G\big(x_0,D_{x_0} \varphi  , u(x_0) \big) \geqslant   c$.  Such a function $\varphi$ is termed {\em subtangent to $u$ at $x_0$.}\smallskip
\item[(c)] We say that $u\in C(M)$ is a {\em viscosity solution} of \eqref{eq def viscosity} if it is both a viscosity sub and supersolution.
\end{itemize}

\end{definition}

In this paper, solutions,  subsolutions, supersolutions will be always meant in the viscosity sense and implicitly assumed continuous. We recall that, if $u$ is $C^1$ on an open set $U$, then it is a viscosity solution (resp. subsolution, supersolution) in $U$ if and only if it is a pointwise solution (resp. subsolution, supersolution) in $U$.\smallskip

The following stability result is well known, see for instance \cite{barles}.

\begin{proposition}\label{prop stability}
Let $(G_n)_n$ and $(u_n)_n$ be two sequences of functions in $C(T^*M\times\R)$ and $C(M)$, respectively, such that $u_n$ is a
subsolution (resp. supersolution, solution) of \eqref{eq def viscosity} with $G:=G_n$, for each $n\in\N$. If $u_n\to u$ in $C(M)$  and $G_n\to G$ in $C(T^*M\times\R)$ as $n\to +\infty$, then $u$ is a subsolution (resp. supersolution, solution) of \eqref{eq def viscosity}.
\end{proposition}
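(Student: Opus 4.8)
The plan is to prove the subsolution statement; the supersolution case follows by the mirror argument (replace local maxima and supertangents by local minima and subtangents, and reverse the inequalities), and the solution case is then immediate. So assume each $u_n$ is a subsolution of \eqref{eq def viscosity} with $G:=G_n$, that $u_n\to u$ in $C(M)$ and $G_n\to G$ in $C(T^*M\times\R)$, and let $\varphi\in C^1(M)$ be supertangent to $u$ at some $x_0\in M$, i.e.\ $u-\varphi$ has a local maximum at $x_0$; we must show $G\big(x_0,D_{x_0}\varphi,u(x_0)\big)\leq c$. First I would reduce to the case of a strict maximum: fix $r>0$ smaller than the injectivity radius at $x_0$ and small enough that $u-\varphi$ attains its maximum over $\overline{B}(x_0,r)$ at $x_0$, and for $\delta>0$ set $\varphi_\delta(x):=\varphi(x)+\delta\, d(x,x_0)^2$. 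Then $\varphi_\delta$ is $C^1$ on $B(x_0,r)$, $D_{x_0}\varphi_\delta=D_{x_0}\varphi$ (the differential of $d(\cdot,x_0)^2$ vanishes at $x_0$), and $x_0$ is the \emph{unique} maximizer of $u-\varphi_\delta$ on $\overline{B}(x_0,r)$; so it suffices to prove $G\big(x_0,D_{x_0}\varphi_\delta,u(x_0)\big)\leq c$.

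Next I would localize the maxima of the approximants. Let $m:=(u-\varphi_\delta)(x_0)=\max_{\overline{B}(x_0,r)}(u-\varphi_\delta)$ and, by strictness and compactness, pick $\eta>0$ with $u-\varphi_\delta\leq m-\eta$ on $\partial B(x_0,r)$. For $n$ large enough that $\|u_n-u\|_{C(M)}<\eta/3$, the function $u_n-\varphi_\delta$ is $\leq m-2\eta/3$ on $\partial B(x_0,r)$ but $\geq m-\eta/3$ at $x_0$, so it attains its maximum over $\overline{B}(x_0,r)$ at an interior point $x_n$. Hence $\varphi_\delta$ is supertangent to $u_n$ at $x_n$, and since $u_n$ is a subsolution of \eqref{eq def viscosity} with $G:=G_n$,
\[
G_n\big(x_n, D_{x_n}\varphi_\delta, u_n(x_n)\big)\leq c .
\]
Moreover $x_n\to x_0$: any limit point $\bar x\in\overline{B}(x_0,r)$ of $(x_n)_n$ satisfies $(u-\varphi_\delta)(\bar x)=\lim_n (u_n-\varphi_\delta)(x_n)\geq\lim_n (u_n-\varphi_\delta)(x_0)=m$, which forces $\bar x=x_0$ by uniqueness of the maximizer.

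Finally I would pass to the limit. Since $\varphi_\delta$ is $C^1$ near $x_0$, we have $(x_n,D_{x_n}\varphi_\delta)\to(x_0,D_{x_0}\varphi_\delta)$ in $T^*M$, while $u_n(x_n)\to u(x_0)$ by uniform convergence and continuity of $u$; hence $z_n:=\big(x_n,D_{x_n}\varphi_\delta,u_n(x_n)\big)\to z:=\big(x_0,D_{x_0}\varphi_\delta,u(x_0)\big)$ in $T^*M\times\R$. The set $K:=\{z_n:n\in\N\}\cup\{z\}$ is compact, so $\|G_n-G\|_{C(K)}\to 0$, and therefore
\[
|G_n(z_n)-G(z)|\leq \|G_n-G\|_{C(K)}+|G(z_n)-G(z)|\to 0
\]
by continuity of $G$. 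Passing to the limit in the displayed subsolution inequality gives $G\big(x_0,D_{x_0}\varphi_\delta,u(x_0)\big)=G\big(x_0,D_{x_0}\varphi,u(x_0)\big)\leq c$, which is the claim.

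As for the main obstacle: there is none of real substance, this being the classical stability theorem for viscosity solutions; the two points that genuinely demand care are the perturbation turning the local maximum into a strict one while preserving the differential at $x_0$ (which on a manifold requires staying within the injectivity radius), and the correct use of the fact that the topology of $C(T^*M\times\R)$ is that of \emph{local} uniform convergence — handled above by checking that the test points $z_n$ together with their limit lie in a single compact set.
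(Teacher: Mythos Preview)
The paper does not give its own proof of this proposition: it is stated as a well-known stability result and referred to \cite{barles}. Your argument is correct and is precisely the classical proof one finds in the standard references --- perturb the test function to make the local maximum strict, localize the maxima of the approximating sequence, and pass to the limit using local uniform convergence of $G_n$ on the compact set traced by the test points.
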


\subsection{Weak KAM solutions and Aubry-Mather theory.}\label{sec weak kam}

We assume $H:T^*M\to \mathbb R$ is a continuous Hamiltonian satisfying
\begin{itemize}
\item [(H1)] (Convexity) $H(x,p)$ is convex in $p$ for all $x\in M$.\smallskip
\item [(H2)] (Superlinearity) $\lim\limits_{\|p\|_x\to+\infty}H(x,p)/\|p\|_x=+\infty$.
\end{itemize}

Let $L:TM\to\mathbb R$ be the convex conjugate function of $H$, i.e.,
\[L(x,v):=\sup_{p\in T^*_xM}\big(p( v)-H(x,p)\big),\quad (x,v)\in TM.
\]
It is well known that the Lagrangian $L$ is a continuous function on $TM$ and it is convex and superlinear in $v$.
The Fenchel inequality is a direct consequence of this definition:
\begin{equation}\label{fenchel}
L(x,v)+H(x,p)\geqslant p(v), \quad \hbox{for all $(x,v,p)\in M\times T_x M \times T^*_x M$.}
\end{equation}

Moreover, it can be proven that $H$ is itself the convex conjugate of $L$, i.e.,
\begin{equation}\label{convconj}
H(x,p) = \sup_{v\in T_x M} \big(p( v)-L(x,v)\big)
\qquad\hbox{for all $(x,p)\in T^*M$}.
\end{equation}

Let $c_0\in \R$ denote the critical constant defined as follows:
\begin{equation}\label{critical}
c_0=\min\{c\in\mathbb R:\ H(x,D_x u)=c\ \  \hbox{in $M$}\ \ \ \textrm{admits\ subsolutions}\}.
\end{equation}
We present here some facts that we will need about the {\em critical equation}, i.e.,
\begin{equation}\label{e0}\tag{HJ$_0$}
  H(x,D_x u)=c_0 \qquad\hbox{in $M$}.
\end{equation}

Solutions, subsolutions and supersolutions of \eqref{e0} will be termed {\em critical} in the sequel.

Due to the convex character of $H$, the following holds, see for instance \cite{barles, FathiSurvey}.

\begin{prop}\label{prop when G convex}
Let $u\in C(M)$. The following properties hold:
\begin{itemize}
\item[\em (i)] if $u$ is the pointwise supremum (respectively, infimum) of a family of subsolutions (resp., supersolutions) to \eqref{e0}, then $u$ is a subsolution (resp., supersolution) of \eqref{e0};\smallskip
\item[\em (ii)] if $u$ is the pointwise infimum of a family of equi-Lipschitz subsolutions to \eqref{e0}, then $u$ is a Lipschitz subsolution  of \eqref{e0};\smallskip
\item[\em (iii)] if $u$ is a convex combination of a family of equi-Lipschitz subsolutions to \eqref{e0}, then $u$ is a Lipschitz subsolution of \eqref{e0}.
\end{itemize}
\end{prop}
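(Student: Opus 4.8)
The plan is to reduce items (ii) and (iii) to a single statement that encodes the convexity of $H$, and to treat item (i) directly by stability.

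\textbf{Step 1 (a.e.\ characterization).} I would first prove: a Lipschitz function $w\in C(M)$ is a viscosity subsolution of \eqref{e0} if and only if $H(x,D_xw)\le c_0$ for Lebesgue-a.e.\ $x\in M$. The ``only if'' direction is classical and uses no convexity (at a differentiability point of $w$ one produces a $C^1$ supertangent with the same differential, see e.g.\ \cite{barles}; and $w$ is a.e.\ differentiable by Rademacher's theorem). The ``if'' direction is the only place where convexity of $p\mapsto H(x,p)$ is needed: working in a coordinate chart, I would mollify $w$ to $w_\varepsilon:=w\ast\rho_\varepsilon$ and, using Jensen's inequality for $p\mapsto H(x,p)$ together with the a.e.\ bound $H(x,D_xw)\le c_0$ and the uniform continuity of $H$ on compact sets, get $H(x,D_xw_\varepsilon)\le c_0+\omega(\varepsilon)$ pointwise with $\omega(\varepsilon)\to 0^+$. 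Since $w_\varepsilon$ is smooth it is a classical, hence viscosity, subsolution of the perturbed equation, and the usual localization argument (maximizing $w_\varepsilon-\varphi$ near a strict maximum of $w-\varphi$, then letting $\varepsilon\to 0^+$) shows $w$ is a viscosity subsolution of \eqref{e0}.

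\textbf{Step 2 (items (iii) and (ii)).} Granting Step 1, item (iii) follows at once: a finite convex combination $u=\sum_i\lambda_iu_i$ of equi-Lipschitz subsolutions is Lipschitz, and for a.e.\ $x$ one has $D_xu=\sum_i\lambda_iD_xu_i$ together with $H(x,D_xu_i)\le c_0$ for all $i$, so $H(x,D_xu)\le\sum_i\lambda_iH(x,D_xu_i)\le c_0$ by convexity; then invoke Step 1 (an integral convex combination is treated identically via Jensen). For item (ii), $u=\inf_\alpha u_\alpha$ is Lipschitz with the common Lipschitz constant. I would first pass to a countable subfamily: the functions $u_\alpha-u\ge 0$ are equi-Lipschitz with pointwise infimum $0$, so picking, for each point $x_k$ of a countable dense subset of $M$ and each $m\in\N$, an index with $u_\alpha(x_k)<u(x_k)+1/m$, one obtains a countable family $\{u_{\alpha_n}\}$ whose infimum is still $u$ (it is $\ge u$, Lipschitz, and equal to $u$ on a dense set). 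Then $v_N:=\min(u_{\alpha_1},\dots,u_{\alpha_N})$ decreases to $u$, hence converges uniformly by Dini's theorem; each $v_N$ is a viscosity subsolution because for a.e.\ $x$ --- where all $u_{\alpha_i}$, $i\le N$, and $v_N$ are differentiable and satisfy $H(x,D_xu_{\alpha_i})\le c_0$ --- one has $v_N(x)=u_{\alpha_j}(x)$ for some $j$ and $u_{\alpha_j}-v_N\ge 0$ vanishes at $x$, so $D_xv_N=D_xu_{\alpha_j}$ and $H(x,D_xv_N)\le c_0$; Step 1 applies. Finally Proposition~\ref{prop stability} gives that the uniform limit $u$ is a viscosity subsolution.

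\textbf{Step 3 (item (i)).} This part does not use convexity. If $u=\sup_\alpha u_\alpha\in C(M)$ and $\varphi\in C^1(M)$ is supertangent to $u$ at $x_0$, I replace $\varphi$ by $\varphi+d(\cdot,x_0)^2$, which is $C^1$ near $x_0$ with unchanged differential at $x_0$, so that $u-\varphi$ has a strict maximum at $x_0$ on a small closed ball $\overline B$ with $x_0$ interior. Choosing $\alpha_n$ with $u_{\alpha_n}(x_0)\to u(x_0)$ and a maximum point $x_n$ of $u_{\alpha_n}-\varphi$ on $\overline B$, the bound $u_{\alpha_n}\le u$ forces $x_n\to x_0$ and $x_n$ interior for $n$ large; then $H(x_n,D_{x_n}\varphi)\le c_0$, and letting $n\to\infty$ (continuity of $H$ and of $x\mapsto D_x\varphi$) yields $H(x_0,D_{x_0}\varphi)\le c_0$. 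The infimum-of-supersolutions case is the mirror argument with subtangents and the perturbation $\varphi-d(\cdot,x_0)^2$. The main obstacle in the whole proof is the ``if'' direction of Step 1 --- upgrading an a.e.\ subsolution to a genuine viscosity subsolution --- which is precisely the ingredient that breaks down without convexity of $H$ and which underlies items (ii) and (iii).
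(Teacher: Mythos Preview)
The paper does not prove this proposition: it is stated as background and attributed to standard references (\cite{barles, FathiSurvey}), with only the remark that items (ii) and (iii) require convexity while item (i) does not. Your proof is correct and is precisely the kind of argument one finds in those references; in particular, your Step~1 is essentially Proposition~\ref{prop equivalence}, which the paper also states (right after Proposition~\ref{prop when G convex}) and likewise cites without proof. The reduction of (ii) to a countable family via density and equi-Lipschitzness, followed by Dini plus stability, and the direct perturbation argument for (i), are standard and sound.
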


More precisely, items (ii) and (iii) above require the convexity of $H$ in the momentum, while item (i) is a general fact.

Since we are assuming $H$ to be superlinear (hence coercive, which is enough), we also have the following characterization of critical subsolutions, see for instance \cite{barles,FathiSurvey}.

\begin{prop}\label{prop equivalence}
The following are equivalent facts:
\begin{itemize}
\item[\em (i)] $v$ is a viscosity subsolution of \eqref{e0};\smallskip
\item[\em (ii)] $v$ is Lipschitz continuous and an almost everywhere subsolution of \eqref{e0}, i.e.,
\[
H(x,D_x v)\leqslant c_0\qquad\hbox{for a.e. $x\in M$.}
\]
\end{itemize}
Moreover, the set of viscosity subsolutions of  \eqref{e0} is equi-Lipschitz,  with
$\kappa_{c_0}:=\sup\{ \| p\|_x\,:\,H(x,p)\leq c_0\}$ as a common Lipschitz constant.
\end{prop}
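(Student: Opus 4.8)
\textbf{Proof plan for Proposition \ref{prop equivalence}.}

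The plan is to prove the two implications of the equivalence separately and then deduce the uniform equi-Lipschitz bound, using throughout only the coercivity of $H$ (superlinearity is more than enough) together with basic viscosity solution theory.

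First I would prove that (i) $\Rightarrow$ (ii). Suppose $v$ is a viscosity subsolution of \eqref{e0}. The key observation is that coercivity of $H$ gives an a priori Lipschitz bound on subsolutions: since $H$ is superlinear, the sublevel set $\{(x,p)\in T^*M : H(x,p)\le c_0\}$ is bounded, so there is a finite constant $\kappa_{c_0}:=\sup\{\|p\|_x : H(x,p)\le c_0\}$. I would then argue that a viscosity subsolution $v$ is automatically $\kappa_{c_0}$-Lipschitz: for any $C^1$ function $\varphi$ supertangent to $v$ at a point $x_0$, the subsolution inequality forces $H(x_0,D_{x_0}\varphi)\le c_0$, hence $\|D_{x_0}\varphi\|_{x_0}\le\kappa_{c_0}$; the standard fact that the local slope of a continuous function is controlled by the gradients of its supertangents (equivalently, that a viscosity subsolution of $F(x,D_xu)\le 0$ with $F$ coercive is Lipschitz with the constant dictated by the sublevel sets of $F$) then yields that $v$ is $\kappa_{c_0}$-Lipschitz. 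Being Lipschitz, $v$ is differentiable a.e.\ by Rademacher's theorem, and at any point $x$ of differentiability one can take $\varphi$ a suitable smooth function with $D_x\varphi = D_x v$ that is supertangent to $v$ at $x$ (a standard mollification/second-order perturbation argument), so the viscosity inequality gives $H(x,D_xv)\le c_0$ a.e. This proves (ii) and simultaneously establishes the equi-Lipschitz claim with common constant $\kappa_{c_0}$.

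Next I would prove (ii) $\Rightarrow$ (i). Let $v$ be Lipschitz with $H(x,D_xv)\le c_0$ for a.e.\ $x$. Fix $\varphi\in C^1(M)$ supertangent to $v$ at $x_0$; I must show $H(x_0,D_{x_0}\varphi)\le c_0$. The natural route is convexity: let $L$ be the Fenchel conjugate of $H$. From the a.e.\ inequality and the Fenchel inequality \eqref{fenchel}, one gets $D_xv(w)\le L(x,w)+c_0$ for a.e.\ $x$ and all $w\in T_xM$; integrating this along short geodesics (or along curves in a chart) and using absolute continuity of $v$ gives the dynamical/variational inequality $v(y)-v(x)\le \int_0^t L(\gamma(s),\dot\gamma(s))\,ds + c_0 t$ for all Lipschitz curves $\gamma$ from $x$ to $y$ of length $t$. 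This domination property is equivalent to $v$ being a viscosity subsolution of \eqref{e0}: if it failed at $x_0$ with supertangent $\varphi$, i.e.\ $H(x_0,D_{x_0}\varphi)>c_0$, then by \eqref{convconj} there is $w\in T_{x_0}M$ with $D_{x_0}\varphi(w)-L(x_0,w)>c_0$, and following the integral curve of a vector field extending $w$ for a short time would contradict the domination inequality applied with $v\le \varphi + (v-\varphi)(x_0)$ near $x_0$. (Alternatively, and perhaps more cleanly, one can cite the convexity-based equivalence directly from \cite{barles,FathiSurvey}, but the argument above is the mechanism.) The main obstacle is the careful handling of the localization on a manifold — passing between viscosity test functions, a.e.\ gradients, and the curve-integral formulation requires either working in charts or invoking the geodesic flow, and one must be attentive that the Lipschitz constant and the relevant estimates are uniform. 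Once both implications are in hand, the final sentence of the proposition is immediate from the bound derived in the first step.
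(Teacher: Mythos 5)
The paper offers no proof of this proposition --- it is quoted as standard from \cite{barles,FathiSurvey} --- and your outline is precisely the classical argument from those references: coercivity bounds the gradients of supertangents, hence the Lipschitz constant of every subsolution by $\kappa_{c_0}$; Rademacher plus the fact that $D_xv$ is a supergradient at points of differentiability gives the a.e.\ inequality; and convexity upgrades the a.e.\ inequality to the domination property $v(\gamma(b))-v(\gamma(a))\leqslant\int_a^b\big(L(\gamma,\dot\gamma)+c_0\big)\,ds$, which is equivalent to the viscosity subsolution property. So the route is the right one. Two steps, however, need repair.

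First, the final step of (ii) $\Rightarrow$ (i) does not close as written. If $\varphi$ is supertangent at $x_0$ and you follow a curve \emph{forward} from $x_0$, supertangency gives $v(\gamma(t))-v(x_0)\leqslant\varphi(\gamma(t))-\varphi(x_0)$, which points in the \emph{same} direction as the domination inequality, so no contradiction arises. You must use a curve \emph{arriving} at $x_0$ with $\dot\gamma(0)=w$: supertangency then yields $\varphi(x_0)-\varphi(\gamma(-t))\leqslant v(x_0)-v(\gamma(-t))\leqslant\int_{-t}^0\big(L(\gamma,\dot\gamma)+c_0\big)\,ds$, and dividing by $t$ and letting $t\to0^+$ gives $D_{x_0}\varphi(w)\leqslant L(x_0,w)+c_0$ for every $w$, hence $H(x_0,D_{x_0}\varphi)\leqslant c_0$ by \eqref{convconj}; no contradiction argument is needed. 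Second, ``integrate the a.e.\ inequality along a curve'' hides the genuine difficulty: a single curve may spend positive time in the null set where $D_xv$ fails to exist, so the chain rule is not automatic. The standard fixes are a Fubini argument over a tube of nearby geodesics, or --- cleaner and already in the paper's toolkit --- mollification: convexity of $H$ in $p$ (Jensen's inequality applied in charts, cf.\ Theorem \ref{approx}) produces smooth $v_\eps$ with $H(x,D_xv_\eps)\leqslant c_0+\eps$ everywhere, for which the domination is an honest computation, and one passes to the limit as $\eps\to0^+$. With these two repairs your plan is complete.
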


For every $t>0$, we define the minimal action function $h_t:M\times M\to\mathbb R$ as
\[h_t(x,y)=\inf_{\gamma}\int_{-t}^0\big[L\big(\gamma(s),\dot{\gamma}(s)\big)+c_0\big]ds,\]
where $\gamma:[-t,0]\to M$ is taken among all absolutely continuous curves\footnote{In the paper, even if not explicitly stated, all curves considered are at least absolutely continuous.}   satisfying $\gamma(-t)=x$ and $\gamma(0)=y$.
The {\em Peierls barrier} is the function $h:M\times M\to\R$
defined by
\begin{equation}\label{def h}
h(x,y):=\liminf_{t\to +\infty} h_t(x,y).
\end{equation}
It satisfies the following properties, see for instance \cite{DZ10}:

\begin{prop}\label{prop h}\
\begin{itemize}
\item[\em (i)] The Peierls barrier
 $h$ is finite valued and Lipschitz
continuous.\smallskip
\item[\em (ii)]
If $v$ is a critical subsolution, then
$$
\qquad v(x)-v(y)\leqslant h(y,x),
\quad
v(x)-v(y)\leqslant h_t(y,x)
\qquad
\text{ for every $x,y\in M$ and $t>0$}.
$$
\item[\em (iii)] For every fixed $y\in
    M$, the function $h(y,\cdot)$ is a critical solution.\smallskip
\item[\em (iv)]For every fixed $y\in
    M$, the function  $-h(\cdot,y)$ is a critical subsolution.
\end{itemize}
\end{prop}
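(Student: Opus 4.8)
The plan is to derive all four items from three ingredients: the Fenchel inequality \eqref{fenchel}; the sub-additivity $h_{t+s}(x,y)\leqslant h_t(x,z)+h_s(z,y)$, obtained by concatenating near-optimal curves at an intermediate point $z$; and the classical construction of calibrated curves, which carries the only genuine weak KAM content. I would begin with (ii). A critical subsolution $v$ is Lipschitz and satisfies $H(x,D_xv)\leqslant c_0$ for a.e.\ $x$ by Proposition \ref{prop equivalence}; hence along any curve $\gamma:[-t,0]\to M$ the Fenchel inequality yields $\frac{d}{ds}v(\gamma(s))=D_{\gamma(s)}v(\dot\gamma(s))\leqslant L(\gamma(s),\dot\gamma(s))+c_0$ for a.e.\ $s$. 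Integrating over $[-t,0]$ with $\gamma(-t)=y$ and $\gamma(0)=x$, then passing to the infimum over $\gamma$ and to the $\liminf$ as $t\to+\infty$, gives $v(x)-v(y)\leqslant h_t(y,x)$ for every $t>0$ and $v(x)-v(y)\leqslant h(y,x)$.

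For (i): since \eqref{critical} is attained there is a critical subsolution $v_0$, and the bound just proved gives $h_t(y,x)\geqslant v_0(x)-v_0(y)\geqslant -2\|v_0\|_\infty$, so $h>-\infty$. For the upper bound I would pick a point $x_*$ with $\liminf_t h_t(x_*,x_*)=0$ (such a point exists: it lies in the projected Aubry set, which is classically nonempty) and, using sub-additivity, append to a unit-time curve from $x$ to $x_*$ a near-calibrated loop at $x_*$ of arbitrarily large length together with a unit-time curve from $x_*$ to $y$; this gives $\liminf_t h_t(x,y)<+\infty$, so $h$ is finite valued. The Lipschitz regularity reduces to the standard fact that $\{h_t:\ t\geqslant 1\}$ is equi-Lipschitz on $M\times M$ with a constant depending only on $L$ — obtained by perturbing a near-optimal curve on a well-chosen unit-time subinterval so as to displace an endpoint, the change of action being $O\big(d(\cdot,\cdot)\big)$ uniformly in $t\geqslant 1$ by superlinearity of $L$ — after which $h=\liminf_t h_t$ inherits the same constant.

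Next, the subsolution halves of (iii) and (iv). Fixing $y$ and letting $t\to+\infty$ in $h_{t+s}(y,x)\leqslant h_t(y,z)+h_s(z,x)$ and in $h_{t+s}(z,y)\leqslant h_s(z,x)+h_t(x,y)$ yields $h(y,x)\leqslant h(y,z)+h_s(z,x)$ and $h(z,y)\leqslant h_s(z,x)+h(x,y)$ for all $z,x\in M$ and $s>0$. Hence both $w:=h(y,\cdot)$ and $w:=-h(\cdot,y)$ are Lipschitz by (i) and \emph{dominated}, meaning $w(x)-w(z)\leqslant h_s(z,x)$. At a point $x$ where $w$ is differentiable and for any $v\in T_xM$, applying this inequality to a short curve with $\gamma(0)=x$, $\dot\gamma(0)=v$ and dividing by $s\to 0^+$ gives $D_xw(v)\leqslant L(x,v)+c_0$; maximizing over $v$ and using \eqref{convconj} gives $H(x,D_xw)\leqslant c_0$ for a.e.\ $x$, so $w$ is a critical subsolution by Proposition \ref{prop equivalence}. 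This settles (iv) and half of (iii).

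It remains to prove the supersolution inequality in (iii), which I regard as the only delicate point and where I would concentrate the effort. Fix $x\in M$, choose $t_n\to+\infty$ with $h_{t_n}(y,x)\to h(y,x)$ and near-minimizers $\gamma_n:[-t_n,0]\to M$ of $h_{t_n}(y,x)$; by the a priori compactness of near-minimizers (again superlinearity of $L$), after a diagonal extraction $\gamma_n\to\gamma$ locally uniformly for a curve $\gamma:(-\infty,0]\to M$ with $\gamma(0)=x$. Splitting the action of $\gamma_n$ at time $-t$, using lower semicontinuity of the action and the equi-Lipschitz bound on the maps $h_{t_n-t}(y,\cdot)$ from (i) — the crucial step being $\liminf_n h_{t_n-t}\big(y,\gamma_n(-t)\big)\geqslant h\big(y,\gamma(-t)\big)$ — one obtains $h(y,x)\geqslant h\big(y,\gamma(-t)\big)+\int_{-t}^0\big[L(\gamma,\dot\gamma)+c_0\big]\,ds$, while the domination inequality $h(y,x)\leqslant h\big(y,\gamma(-t)\big)+\int_{-t}^0\big[L(\gamma,\dot\gamma)+c_0\big]\,ds$ gives the reverse; hence $\gamma$ is calibrated for $w=h(y,\cdot)$. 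Testing any $C^1$ subtangent $\varphi$ to $w$ at $x$ along $\gamma$ on an interval of vanishing length then produces $H(x,D_x\varphi)\geqslant D_x\varphi(\dot\gamma(0))-L(x,\dot\gamma(0))\geqslant c_0$, so $w$ is a critical supersolution and, combined with the previous step, a critical solution. Everything above is classical; complete arguments can be found in \cite{DZ10,FathiSurvey,barles}.
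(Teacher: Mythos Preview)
The paper does not supply its own proof of this proposition; it simply states the result and refers the reader to \cite{DZ10}. Your sketch is a correct outline of the classical weak KAM argument and is essentially what one finds in the cited references, so there is nothing to compare.

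Two minor technical points worth tightening if you ever write this out in full. First, in (ii) you write $\frac{d}{ds}v(\gamma(s))=D_{\gamma(s)}v(\dot\gamma(s))$ for a.e.\ $s$; for a merely Lipschitz $v$ this chain rule along an absolutely continuous curve is not automatic, and the clean fix---also the one used elsewhere in the paper---is to approximate $v$ by smooth almost-subsolutions via Theorem \ref{approx} and pass to the limit. Second, in the supersolution step of (iii) you invoke $\dot\gamma(0)$, which need not exist; the standard cure is to integrate the subtangent inequality over $[-t,0]$, use Fenchel pointwise to get $\frac{1}{t}\int_{-t}^0 H\big(\gamma(s),D_{\gamma(s)}\varphi\big)\,ds\geqslant c_0$, and let $t\to 0^+$ using only continuity of $H$, $D\varphi$ and of $\gamma$. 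Both issues are routine and are handled in \cite{DZ10,FathiSurvey}, which you already cite.
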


The projected {\em Aubry set} $\A$ is the closed set defined by
\[
 \A:=\{y\in M\,:\,h(y,y)=0\,\}.
\]
The following holds, see \cite{FathiSurvey, FS}:

\begin{theorem}\label{thm outA}
There exists a critical subsolution $v$ which is both strict and of class {\rm C}$^\infty$ in $M\setminus\A$, meaning that
\[
 H(x,D_x  v)<c_0\quad\text{ for every $x\in M\setminus\A$.}
\]
In particular, the projected Aubry set $\A$ is nonempty.
\end{theorem}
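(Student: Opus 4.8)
The plan is to build the desired object in two stages: first produce a Lipschitz critical subsolution that is strict at every point of $M\setminus\A$, and then regularize it there by a variable-scale mollification; the nonemptiness of $\A$ will then fall out for free. The core ingredient of the first stage is a \emph{local} statement: for every $x_0\in M\setminus\A$ there exist a critical subsolution $w_{x_0}$, an open neighbourhood $U_{x_0}\ni x_0$ and a constant $\delta_{x_0}>0$ such that $H(x,D_xw_{x_0})\leqslant c_0-\delta_{x_0}$ for a.e.\ $x\in U_{x_0}$. This is exactly where the hypothesis $x_0\notin\A$, i.e.\ $h(x_0,x_0)>0$, enters, and I expect it to be the main obstacle of the whole proof; it is the heart of the Fathi--Siconolfi construction, so I would invoke \cite{FS, FathiSurvey}. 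Morally one combines the equi-Lipschitz bound for critical subsolutions (Proposition \ref{prop equivalence}), the properties of the Peierls barrier (Proposition \ref{prop h}), and the strict positivity $h(x_0,x_0)>0$, to show that the closed set of points near which a suitably chosen critical subsolution has no slack is precisely $\A$; away from $\A$ there is therefore room.

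Granting the local lemma, the gluing is where the tools recalled above are used directly. Since $M$ is second countable, the open set $M\setminus\A$ is Lindel\"of, hence covered by countably many such neighbourhoods $U_n:=U_{x_n}$, with critical subsolutions $w_n$ and margins $\delta_n>0$; fix also one critical subsolution $w_0$, which exists because $c_0$ is the critical value, see \eqref{critical}. Normalising the $w_n$ ($n\geqslant 0$) to vanish at a fixed base point, they become uniformly bounded, being $\kappa_{c_0}$-Lipschitz on the compact $M$, so the series
\[
v:=\tfrac12\,w_0+\sum_{n\geqslant 1}2^{-n-1}w_n
\]
converges uniformly to a $\kappa_{c_0}$-Lipschitz function. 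It is a critical subsolution: each truncation of the series, with the leftover weight placed on $w_0$, is a finite convex combination of equi-Lipschitz critical subsolutions, hence a critical subsolution by Proposition \ref{prop when G convex}(iii), and these truncations converge uniformly to $v$, so Proposition \ref{prop stability} applies. By dominated convergence applied to difference quotients, $D_xv=\tfrac12 D_xw_0+\sum_k 2^{-k-1}D_xw_k$ at a.e.\ $x$, whence, by convexity of $H$ in $p$,
\[
H(x,D_xv)\ \leqslant\ \tfrac12 H(x,D_xw_0)+\sum_k 2^{-k-1}H(x,D_xw_k)\ \leqslant\ c_0-2^{-n-1}\delta_n\qquad\text{for a.e. }x\in U_n .
\]
Thus on the open set $U_n$ the function $v$ is a Lipschitz a.e.-subsolution of $H(x,D_xv)\leqslant c_0-2^{-n-1}\delta_n$, hence a viscosity subsolution of this strict inequality there, the equivalence in Proposition \ref{prop equivalence} being local in nature. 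As the $U_n$ cover $M\setminus\A$, the function $v$ is a critical subsolution that is strict at every point of $M\setminus\A$.

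It remains to smooth $v$ on $M\setminus\A$ and to conclude. One mollifies $v$ at a scale $\rho(x)$, where $\rho$ is a positive smooth function on $M\setminus\A$ tending to $0$ as $d(x,\A)\to 0$: the mollified function $v_\rho$ is smooth, its gradient at $x$ is an average of gradients $D_yv$ with $\|D_yv\|_y\leqslant\kappa_{c_0}$, so by Jensen's inequality together with the uniform continuity of $H$ on $\{\|p\|_x\leqslant\kappa_{c_0}\}$ one gets $H(x,D_xv_\rho)<c_0$ provided $\rho(x)$ is small relative to the local strictness margin of $v$ at $x$; choosing $\rho$ to decay fast enough near $\A$, $v_\rho$ is of class $C^\infty$ and strict on $M\setminus\A$, extends continuously by $v$ across $\A$, and remains a critical subsolution on $M$. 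Making the scale degenerate correctly near $\A$ so as not to spoil the subsolution property there is the delicate point, and is done as in \cite{FS}; renaming $v_\rho$ as $v$ gives the first assertion. Finally, if $\A$ were empty, the compactness of $M$ would, by the second paragraph, furnish $\delta>0$ with $H(x,D_xv)\leqslant c_0-\delta$ a.e.\ on $M$, i.e.\ a subsolution of $H(x,D_xu)=c_0-\delta$, contradicting the minimality of $c_0$ in \eqref{critical}. Hence $\A\neq\emptyset$.
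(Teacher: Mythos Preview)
The paper does not prove this theorem; it simply cites \cite{FathiSurvey, FS} for the existence of the strict smooth subsolution and observes, exactly as you do, that the nonemptiness of $\A$ then follows from the minimality of $c_0$ in \eqref{critical}. Your proposal is a faithful outline of the Fathi--Siconolfi construction behind those references: the local strictness lemma for points outside $\A$, the convex-combination gluing (which you justify cleanly via Propositions \ref{prop when G convex}(iii), \ref{prop equivalence} and \ref{prop stability}), and the variable-scale mollification on $M\setminus\A$.

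Two small remarks. First, the original \cite{FS} paper yields a $C^1$ strict subsolution; the upgrade to $C^\infty$ stated here is obtained by a further approximation (as in \cite{FathiSurvey} or Bernard's work), so if you want to match the $C^\infty$ claim precisely you should invoke that second reference as well. Second, in your smoothing step you say the regularized function ``extends continuously by $v$ across $\A$, and remains a critical subsolution on $M$''; this is correct, but it is exactly where the construction requires care (the Lipschitz constant is preserved, and the a.e.\ subsolution inequality on $M$ is recovered because the mollification does not increase $H(x,D_x\cdot)$ beyond $c_0$ when the scale is small enough relative to the modulus of continuity of $H$ on the compact $\{\|p\|_x\le\kappa_{c_0}\}$). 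You rightly flag this as the delicate point and defer it to \cite{FS}, which is precisely what the paper does.
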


The last assertion directly follows from the definition of $c_0$, see \eqref{critical}.

\begin{prop}\label{prop AC}
If $u$ and $v$ are respectively a sub and supersolution such that $u\leqslant  v$ on $\mathcal A$, then $u\leqslant  v$ on the whole of $M$. In particular, $\mathcal A$ is a uniqueness set for (\ref{e0}), meaning that if two solutions coincide on $\mathcal A$, then they are equal.
\end{prop}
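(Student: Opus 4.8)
The plan is to first note that the ``uniqueness set'' assertion is an immediate consequence of the main inequality: if $w_1,w_2$ are solutions of \eqref{e0} with $w_1=w_2$ on $\A$, applying the comparison to the pairs $(w_1,w_2)$ and $(w_2,w_1)$ gives $w_1\le w_2$ and $w_2\le w_1$ on $M$, hence $w_1=w_2$. Thus the whole statement reduces to proving that a subsolution $u$ and a supersolution $v$ of \eqref{e0} with $u\le v$ on $\A$ satisfy $u\le v$ on all of $M$. The idea is to perturb $u$ into a subsolution that is \emph{strict off} $\A$: for such a function the maximum of its difference with $v$ is forced to sit on $\A$, where the sign is already under control.

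Concretely, let $w$ be the critical subsolution provided by Theorem \ref{thm outA}, which is $C^\infty$ and strict on $M\setminus\A$. By Proposition \ref{prop equivalence} both $u$ and $w$ are $\kappa_{c_0}$-Lipschitz, so for every $\eps\in(0,1)$ the convex combination $u_\eps:=(1-\eps)u+\eps w$ is again a Lipschitz subsolution of \eqref{e0} by Proposition \ref{prop when G convex}(iii); moreover, using the a.e.\ characterization of Proposition \ref{prop equivalence} and convexity of $H$ in the momentum, $H(x,D_xu_\eps)\le(1-\eps)c_0+\eps H(x,D_xw)<c_0$ for a.e.\ $x\in M\setminus\A$, with a quantitative Fenchel defect $\eps\,\eta(x)$, $\eta(x):=c_0-H(x,D_xw)>0$. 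In particular $u_\eps$ admits no curve calibrated for it that leaves $\A$.

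The core step is to show that every maximum point of $u_\eps-v$ over $M$ belongs to $\A$. Here I would invoke the weak KAM characterizations: on the one hand $u_\eps\prec L+c_0$, i.e.\ $u_\eps(\sigma(b))-u_\eps(\sigma(a))\le\int_a^b\big[L(\sigma,\dot\sigma)+c_0\big]\,ds$ for every curve $\sigma$; on the other hand, $v$ being a supersolution, $v(x)\ge\inf_{y\in M}\big(v(y)+h_t(y,x)\big)$ for all $x\in M$ and $t>0$, the infimum being attained (a standard fact, see e.g.\ \cite{FathiSurvey}). Pick a maximum point $x_0$ of $u_\eps-v$, take $t=1$, and choose a curve $\gamma:[-1,0]\to M$ with $\gamma(0)=x_0$, $\gamma(-1)=y$ and $v(x_0)\ge v(y)+\int_{-1}^0\big[L(\gamma,\dot\gamma)+c_0\big]\,ds$. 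Chaining this with $u_\eps(x_0)-u_\eps(y)\le\int_{-1}^0\big[L(\gamma,\dot\gamma)+c_0\big]\,ds$ and the maximality inequality $(u_\eps-v)(y)\le(u_\eps-v)(x_0)$ forces all three inequalities to be equalities; in particular $u_\eps(x_0)-u_\eps(y)=\int_{-1}^0\big[L(\gamma,\dot\gamma)+c_0\big]\,ds$, so $\gamma$ is calibrated for $u_\eps$. Were $x_0\notin\A$, then $\gamma$ would stay in a compact subset of $M\setminus\A$ on a small interval $[-\delta,0]$, and the positive Fenchel defect there would contradict the calibration identity; hence $x_0\in\A$.

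To conclude, on $\A$ we have $u_\eps-v=(1-\eps)(u-v)+\eps(w-v)\le\eps(w-v)\le\eps\|w-v\|_\infty$, so the previous step yields $\max_M(u_\eps-v)\le\eps\|w-v\|_\infty$; letting $\eps\to0^+$ and using $\|u_\eps-u\|_\infty=\eps\|w-u\|_\infty\to0$ gives $\max_M(u-v)\le0$, i.e.\ $u\le v$ on $M$. The main obstacle I anticipate is not the structure of the argument but the careful handling of the supporting facts: the characterization of supersolutions by the Lax--Oleinik inequality with attained infimum, and the quantitative incompatibility between strictness of $u_\eps$ off $\A$ and the existence of a calibrated curve through a point of $M\setminus\A$; both are classical but require assembling the equi-Lipschitz bound of Proposition \ref{prop equivalence}, the $C^1$ regularity of $w$ off $\A$, and the Fenchel inequality \eqref{fenchel}.
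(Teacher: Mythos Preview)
The paper does not actually prove Proposition~\ref{prop AC}: it is recorded in the preliminary Section~\ref{sec weak kam} as a known fact from weak KAM theory, with no argument and no explicit reference. So there is no paper proof to compare against; your argument stands on its own, and it is correct.

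The route you take---perturbing $u$ via the convex combination $u_\eps=(1-\eps)u+\eps w$ with the strict subsolution $w$ of Theorem~\ref{thm outA}, then forcing any maximizer of $u_\eps-v$ onto $\A$---is one of the two standard proofs. The two facts you flag as ``obstacles'' are indeed the substantive inputs: (a) that a supersolution $v$ satisfies $T^-_tv\le v$ with the infimum attained, which follows from the comparison principle for the evolution equation $\partial_t w+H(x,D_xw)=c_0$ and Tonelli's theorem; and (b) that strictness of $u_\eps$ on a compact $K\subset M\setminus\A$ upgrades to the domination $u_\eps\prec L+c_0-\eps\delta_K$ along curves in $K$, which rules out calibration through any point of $M\setminus\A$. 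Both are classical and your sketch of them is accurate.

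The other standard proof, which you may find in the references the paper cites for its preliminaries, bypasses the maximizer argument and uses Proposition~\ref{prop h} directly: one sets $\tilde v(x):=\min_{y\in\A}\big(v(y)+h(y,x)\big)$, checks that $\tilde v$ is a critical solution (Propositions~\ref{prop h}(iii) and~\ref{prop when G convex}), that $u\le\tilde v$ via Proposition~\ref{prop h}(ii) and the hypothesis $u\le v$ on $\A$, and that $\tilde v\le v$ from the same supersolution inequality $T^-_tv\le v$. Your approach is slightly more hands-on but has the merit of not needing the Peierls barrier at all.
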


We will say that a Borel probability measure $\tilde{\mu}$ on $TM$ is {\em closed} if it satisfies the following conditions:
\begin{itemize}
\item [(a)] $\displaystyle \int_{TM}\|v\|_x\,d\tilde{\mu}(x,v)<+\infty$;\smallskip
\item [(b)] for all function $f\in C^1(M)$, we have $\displaystyle\int_{TM}D_xf(v)\,d\tilde{\mu}(x,v)=0$.\smallskip
\end{itemize}
We will denote by $\mathscr P_0$ the set of such measures.

We will furthermore denote by  $\PP_\ell$ the family of probability measures $\tilde\mu$ that satisfy condition (a) above.
The inclusions $\PP_0\subset \PP_\ell \subset \big(C_\ell(TM)\big)'$ hold. We will endow $ \PP_\ell $ with the weak-$*$ topology coming from the dual $\big(C_\ell(TM)\big)'$. We refer the reader to \cite{ItuCon} for more details on these families of measures.

\begin{theorem}\label{Mather}
The following holds
\[
\min_{\tilde{\mu}\in \PP_0}\int_{TM}L(x,v) \, d\tilde{\mu}=-c_0.
\]
\end{theorem}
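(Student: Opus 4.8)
The plan is to prove the two inequalities separately, the substantive half being the attainment of the minimum, which I would obtain by producing an explicit minimizing sequence out of almost closed action-minimizing loops.

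For the lower bound $\int_{TM}L\,d\tilde\mu\ge -c_0$ for every $\tilde\mu\in\PP_0$: fix $f\in C^1(M)$; the Fenchel inequality \eqref{fenchel} gives $L(x,v)\ge D_xf(v)-H(x,D_xf)$ on $TM$, and integrating against $\tilde\mu$ while using property (b) of closed measures annihilates the linear term, leaving $\int_{TM} L\,d\tilde\mu\ge -\sup_{x\in M}H(x,D_xf)$. It then suffices to check that $\inf_{f\in C^1(M)}\sup_{x\in M}H(x,D_xf)=c_0$. The inequality ``$\ge$'' holds because any such $f$ is a classical, hence viscosity, subsolution of $H(x,D_xu)=\sup_xH(x,D_xf)$, so that constant is $\ge c_0$ by \eqref{critical}. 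For ``$\le$'' I would mollify a Lipschitz critical subsolution $v$ — which exists by the very definition of $c_0$, with $\|D_xv\|_x\le\kappa_{c_0}$ a.e.\ by Proposition \ref{prop equivalence} — in local charts, and use Jensen's inequality together with the uniform continuity of $H$ on $\{\|p\|_x\le\kappa_{c_0}\}$ to see that the mollifications $v_\delta\in C^1(M)$ satisfy $H(x,D_xv_\delta)\le c_0+\omega(\delta)$ with $\omega(\delta)\to0^+$.

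For the reverse inequality and the attainment, I would invoke Theorem \ref{thm outA} to pick $y\in\A$, so that $h(y,y)=0$, and then \eqref{def h} to select times $t_n\to+\infty$ and loops $\gamma_n\colon[-t_n,0]\to M$ with $\gamma_n(-t_n)=\gamma_n(0)=y$ and $\int_{-t_n}^0[L(\gamma_n,\dot\gamma_n)+c_0]\,ds\le h_{t_n}(y,y)+1/n=:\eps_n\to0$. Setting $\int\phi\,d\tilde\mu_n:=t_n^{-1}\int_{-t_n}^0\phi(\gamma_n(s),\dot\gamma_n(s))\,ds$, these are Borel probability measures with: (i) $\int L\,d\tilde\mu_n\le -c_0+\eps_n$; (ii) $\int D_xf(v)\,d\tilde\mu_n=t_n^{-1}\big(f(\gamma_n(0))-f(\gamma_n(-t_n))\big)=0$ for every $f\in C^1(M)$; and (iii), by the (uniform, $M$ being compact) superlinearity of $L$, a uniform bound $\int\|v\|_x\,d\tilde\mu_n\le C$, so $\tilde\mu_n\in\PP_\ell$, together with the tightness of $(\tilde\mu_n)_n$ on $TM$. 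I would then extract a weak-$*$ limit $\tilde\mu_n\weakst\tilde\mu$ in $\PP_\ell\subset\big(C_\ell(TM)\big)'$: by tightness $\tilde\mu$ is a probability measure, and since $(x,v)\mapsto D_xf(v)$ lies in $C_\ell(TM)$, property (ii) passes to the limit, so $\tilde\mu\in\PP_0$. Finally, writing $L$ as the increasing pointwise supremum of the functions $L_N(x,v):=\max_{1\le j\le N}\big(p_j(v)-H(x,p_j)\big)$ over a countable dense family of momenta $(p_j)$ (with $p_1=0$, so each $L_N\in C_\ell(TM)$ and is bounded below, and $\sup_NL_N=L$ by the definition of $L$), monotone convergence and weak-$*$ convergence give
\[
\int_{TM}L\,d\tilde\mu=\lim_N\lim_n\int_{TM}L_N\,d\tilde\mu_n\le\liminf_n\int_{TM}L\,d\tilde\mu_n\le -c_0,
\]
and comparison with the lower bound applied to $\tilde\mu$ forces $\int_{TM}L\,d\tilde\mu=-c_0$; hence the minimum is attained and equals $-c_0$.

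The main obstacle, and the only point that requires real care, is the last step: the lower semicontinuity of $\tilde\mu\mapsto\int_{TM}L\,d\tilde\mu$ with respect to the weak-$*$ topology of $\big(C_\ell(TM)\big)'$ — which I handle through the representation $L=\sup_NL_N$ with $L_N\in C_\ell(TM)$ — coupled with the tightness drawn from the uniform superlinear bound, so that no mass escapes to infinity along the fibres and the limit stays a probability measure. The mollification in the lower bound is standard but must be carried out in charts with the usual attention to the manifold structure; everything else is bookkeeping.
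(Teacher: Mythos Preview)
The paper does not actually prove Theorem \ref{Mather}; it is stated without proof in the preliminaries (Section \ref{sec weak kam}) as a known fact from Aubry--Mather theory, and is used as background. So there is no ``paper's own proof'' to compare against.

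Your argument is essentially the standard one and is correct, with two small points worth tightening. First, in the representation $L=\sup_N L_N$ with $L_N(x,v)=\max_{j\le N}\big(p_j(v)-H(x,p_j)\big)$, the objects $p_j$ must be continuous (or smooth) $1$-forms on $M$, not bare covectors, so that each $L_N$ is globally defined on $TM$; with a countable family of smooth $1$-forms whose pointwise values are dense in every fibre $T^*_xM$ (and including the zero form to guarantee a uniform lower bound), your monotone-convergence/lower-semicontinuity step goes through as written. Second, the extraction of a limit in $\PP_\ell$ together with lower semicontinuity of $\tilde\mu\mapsto\int L\,d\tilde\mu$ is precisely the content of the paper's Lemma \ref{technical}, so you may invoke it directly and avoid redoing the tightness bookkeeping. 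Likewise, the mollification in your lower-bound half is exactly Theorem \ref{approx} applied to $G(x,p,u)=H(x,p)$, so that step can be replaced by a citation.
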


Measures realizing the above minimum are called {\em Mather measures} for $L$. We denote by $\widetilde{\mathfrak M}$ the set of all Mather measures. This set is compact. The {\em Mather set} and the {\em projected Mather set} are defined as follows:
\[
\widetilde{\mathcal M}:=\overline{\bigcup_{\tilde{\mu}\in\widetilde{\mathfrak M}}\textrm{supp}(\tilde{\mu})},
\qquad
\M:=\pi\big(\Mtilde\big).
\]
These sets are also compact, see \cite{FathiSurvey} for a proof in the regular case.\footnote{For the present nonregular case, a proof of this can be found in Appendix A in the ArXiv version of \cite{Da4}.} Furthermore, the following holds, see \cite[Proposition 3.13]{Z}\label{AinM} for a proof in the nonregular case.

\begin{theorem}\label{thm M in A}
The following inclusion holds: $\mathcal M\subseteq \mathcal A$.
\end{theorem}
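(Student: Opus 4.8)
\textbf{Proof plan for Theorem \ref{thm M in A} ($\mathcal M\subseteq\mathcal A$).}

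The plan is to show that the projection of the support of every Mather measure is contained in the Aubry set $\A=\{y\in M:h(y,y)=0\}$; since $\M$ is the closure of the union of such projections and $\A$ is closed, this suffices. So fix a Mather measure $\tilde\mu\in\widetilde{\mathfrak M}$ and a point $x_0=\pi(x_0,v_0)$ with $(x_0,v_0)\in\supp(\tilde\mu)$; I want to prove $h(x_0,x_0)=0$. First I would record the two inequalities that pin down $h(x_0,x_0)$. On one side, $h(x_0,x_0)\ge 0$: if $v$ is any critical subsolution, Proposition \ref{prop h}(ii) gives $v(x_0)-v(x_0)\le h(x_0,x_0)$, i.e. $0\le h(x_0,x_0)$ (alternatively, $h_t(x_0,x_0)\ge v(x_0)-v(x_0)=0$ for all $t$, so the $\liminf$ is $\ge0$). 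The real content is the reverse inequality $h(x_0,x_0)\le 0$, which is where the Mather-measure hypothesis enters.

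For the upper bound I would use the closedness of $\tilde\mu$ together with the fact that $\tilde\mu$ realizes the minimum in Theorem \ref{Mather}, so $\int_{TM}(L(x,v)+c_0)\,d\tilde\mu=0$. The standard route is: first show that a Mather measure is supported on (a subset of) the Euler--Lagrange flow, or at least is \emph{flow-invariant}, so that one can push the ``average zero action'' statement down to individual orbits. Concretely, using the graph property of Mather measures one writes $\tilde\mu$ as the image of a measure on $M$ under a Borel section $x\mapsto(x,V(x))$, and invariance of $\tilde\mu$ under the Euler--Lagrange flow $\phi_t$ lets one consider, for $\tilde\mu$-a.e. $(x_0,v_0)$, the orbit $s\mapsto\gamma(s):=\pi\phi_s(x_0,v_0)$. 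By Birkhoff's ergodic theorem applied to the (integrable, by condition (a)) function $L+c_0$ and the flow $\phi_t$, the time averages $\frac1t\int_{-t}^0(L(\gamma,\dot\gamma)+c_0)\,ds$ converge, for a.e. initial condition, to the conditional expectation of $L+c_0$, whose $\tilde\mu$-integral is $0$; since $L+c_0\ge$ (a subsolution's total derivative) has $\tilde\mu$-integral $0$, one deduces that this conditional expectation is $0$ a.e., hence $\liminf_{t\to\infty}\int_{-t}^0(L(\gamma,\dot\gamma)+c_0)\,ds\le 0$ along such orbits, possibly after also using the calibration/closedness to control the boundary terms $\gamma(-t)\to x_0$ via Lipschitz continuity of $h$ and compactness of $M$. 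Since $h_t(x_0,x_0)\le\int_{-t}^0(L(\gamma,\dot\gamma)+c_0)\,ds+o(1)$ once one corrects the endpoints to both equal $x_0$ (using that $L+c_0\ge 0$ along any subsolution-calibrated reasoning is not needed; one just uses the a.e. continuity and a small time modification), taking $\liminf$ gives $h(x_0,x_0)\le 0$. Combining, $h(x_0,x_0)=0$, i.e. $x_0\in\A$. Because $\A$ is closed this extends from $\supp(\tilde\mu)$ to its closure, and then to $\M=\pi(\widetilde{\mathcal M})$, noting $\pi$ is continuous and $\A$ closed.

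The main obstacle I expect is the passage from the ``integrated'' optimality of the Mather measure to an ``orbitwise'' statement, i.e. justifying the use of flow-invariance and an ergodic-theorem argument in the present \emph{non-regular} setting where $L$ is merely continuous, convex and superlinear, so the Euler--Lagrange flow need not be well defined. The clean way around this is to avoid the flow entirely: use only that $\tilde\mu\in\mathscr P_0$ is closed with $\int(L+c_0)\,d\tilde\mu=0$, approximate the (a priori only $\liminf$-defined) action by finite-time actions along curves that nearly realize $h_t$, and exploit Proposition \ref{prop h} together with the Lipschitz bound on $h$ and on critical subsolutions (Proposition \ref{prop equivalence}) to close the loop; this is exactly the kind of argument carried out in the references cited in the excerpt (\cite[Proposition 3.13]{Z}, and the ArXiv appendix of \cite{Da4}), and I would follow that line. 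A secondary, purely bookkeeping, point is the reduction $\M\subseteq\A$ from $\supp(\tilde\mu)\subseteq\pi^{-1}(\A)$: this is immediate once one knows $\A$ is closed and $\pi$ continuous, so no difficulty there.
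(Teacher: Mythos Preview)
The paper does not give its own proof of this theorem: it is stated in the preliminaries and the reader is referred to \cite[Proposition 3.13]{Z} for a proof valid in the present nonregular setting (continuous, convex, superlinear $H$). So there is no ``paper's proof'' to compare against; the paper simply imports the result.

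Your plan correctly isolates the two halves $h(x_0,x_0)\ge 0$ (trivial from Proposition~\ref{prop h}(ii)) and $h(x_0,x_0)\le 0$ (the substantive part), and you are right that the latter is where the Mather-measure hypothesis must be used. You are also right to flag the main difficulty: your first route via flow-invariance and Birkhoff's theorem does not go through as written, because with $L$ merely continuous there is in general no Euler--Lagrange flow, no graph theorem in the classical sense, and hence no dynamical system on which to run an ergodic argument. Your second paragraph acknowledges this and proposes to ``avoid the flow entirely'' using only closedness and $\int(L+c_0)\,d\tilde\mu=0$, but the sketch there is too vague to constitute a proof: the passage from an integrated identity over $TM$ to the existence of long curves with small action returning near $x_0$ is exactly the nontrivial step, and ``approximate by curves that nearly realize $h_t$'' begs the question. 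In the nonregular setting one typically proceeds by representing closed measures as limits of measures supported on closed curves (or by a Young-measure/holonomic approximation), and then extracting from the minimizing property a sequence of near-closed curves through $x_0$ with vanishing normalized action; this is the content of the cited reference. Since both you and the paper ultimately defer to \cite{Z} (and the appendix of \cite{Da4}) for this argument, your proposal is consistent with the paper's treatment, but as a self-contained proof it has a genuine gap at precisely the point you identify.
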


\begin{remark}\label{oss small A}
In the example of a mechanical Hamiltonian, i.e., $H(x,p)=\|p\|_x^2/2+V(x)$, it is well known that $c_0=\max_M V$,
$\A=\{y\in M\,:\,V(y)=\max_M V \}$ and the Mather measures are convex combinations of delta Diracs concentrated at points
$(y,0)$ with $y\in\A$, so that $\M$ is also equal to $\{y\in M\,:\,V(y)=\max_M V \}$.
\end{remark}

We conclude this paragraph by a technical lemma that will be of crucial use (see \cite[Theorem 2-4.1.3.]{ItuCon}).
\begin{lemma}\label{technical}
Let
$a\in \R$. The set $\{\tilde \mu \in \PP_\ell \, : \, \ \int_{TM} L(x,v)d\tilde\mu(x,v)\leqslant   a\}$ is compact in $\PP_\ell$.
\end{lemma}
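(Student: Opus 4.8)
The plan is to exploit the superlinearity of $L$, which by the Fenchel duality \eqref{convconj} translates into the finiteness of $H$. Fix $a\in\R$ and set $\K_a:=\{\tilde\mu\in\PP_\ell\,:\,\int_{TM}L\,d\tilde\mu\leqslant a\}$. First I would show that $\K_a$ is bounded in the norm $\|\cdot\|_\ell$-dual sense, i.e. that the masses $\int_{TM}(1+\|v\|_x)\,d\tilde\mu$ are uniformly bounded over $\tilde\mu\in\K_a$. For this, note that superlinearity of $L$ gives, for every $R>0$, a constant $C_R\geqslant 0$ with $L(x,v)\geqslant R\|v\|_x-C_R$ for all $(x,v)\in TM$ (by compactness of $M$, the superlinearity is uniform in $x$; equivalently, apply \eqref{fenchel} with $p$ of norm $R$ and take the sup, using that $H$ is bounded on $\{\|p\|_x\leqslant R\}$). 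Integrating against $\tilde\mu\in\K_a$ and using that $\tilde\mu$ is a probability measure yields $R\int_{TM}\|v\|_x\,d\tilde\mu\leqslant a+C_R$, hence $\int_{TM}\|v\|_x\,d\tilde\mu\leqslant (a+C_R)/R$; in particular the first moments, and therefore the $(C_\ell(TM))'$-norms $\int(1+\|v\|_x)\,d\tilde\mu$, are uniformly bounded. Thus $\K_a$ is contained in a norm-bounded subset of $(C_\ell(TM))'$.

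Next I would establish tightness, so as to rule out escape of mass to infinity along the fibres under weak-$*$ convergence. Given $\eps>0$, choose $R$ above so large that $(a+C_R)/R<\eps$; then by the Markov/Chebyshev inequality every $\tilde\mu\in\K_a$ satisfies $\tilde\mu\big(\{(x,v):\|v\|_x>1\}\big)\leqslant\int\|v\|_x\,d\tilde\mu<\eps$ — more usefully, for any $\rho>0$, $\tilde\mu\big(\{\|v\|_x>\rho\}\big)\leqslant (a+C_R)/(R\rho)$, which can be made arbitrarily small uniformly in $\tilde\mu$ by picking $R$ and then $\rho$ appropriately. Since $M$ is compact, the set $\{(x,v)\in TM:\|v\|_x\leqslant\rho\}$ is compact, so this is precisely tightness of the family $\K_a$ in $\PP(TM)$. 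By Prokhorov's theorem $\K_a$ is relatively compact for the weak-$*$ topology of $(C_c(TM))'$; combined with the uniform bound on first moments from the previous paragraph, a standard argument (e.g. the one in \cite{ItuCon}) upgrades this to relative compactness in $\PP_\ell$ with its $(C_\ell(TM))'$-weak-$*$ topology: any weak-$*$ limit point is again a probability measure with finite first moment, and testing against functions in $C_\ell(TM)$ (approximated by $C_c$ functions, controlling the tail with the uniform moment bound) shows the convergence holds in the finer topology.

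It then remains to check that $\K_a$ is closed in $\PP_\ell$. Let $\tilde\mu_n\in\K_a$ with $\tilde\mu_n\rightharpoonup^*\tilde\mu$ in $\PP_\ell$. Since $L\in C_\ell(TM)$ is not available in general (only $L$ has at most the growth coming from its definition, but we do know $L$ is bounded below), I would instead argue by a truncation: for $k\in\N$ put $L_k:=\min(L,k)$, which is continuous, bounded, hence $L_k\in C_\ell(TM)$, so $\int L_k\,d\tilde\mu=\lim_n\int L_k\,d\tilde\mu_n\leqslant\lim_n\int L\,d\tilde\mu_n\leqslant a$; letting $k\to+\infty$ and applying the monotone convergence theorem (valid since $L$ is bounded below, say $L\geqslant -\|L\|_{L^\infty(\{\|v\|_x\leqslant 1\})}- \text{const}$, and $L_k\uparrow L$) gives $\int L\,d\tilde\mu\leqslant a$, so $\tilde\mu\in\K_a$. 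Hence $\K_a$ is closed and relatively compact, therefore compact. The main obstacle is the bookkeeping in the second paragraph: making precise that weak-$*$ relative compactness in $(C_c)'$ together with uniform first-moment bounds yields relative compactness in the strictly finer topology of $\PP_\ell$, and that no mass is lost in the limit (so limits are genuine probability measures). This is exactly the content invoked from \cite[Theorem 2-4.1.3]{ItuCon}, and I would cite it rather than reprove it, focusing the written proof on the superlinearity estimate and the truncation argument for closedness.
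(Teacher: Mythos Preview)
The paper does not actually prove this lemma: it simply cites \cite[Theorem 2-4.1.3.]{ItuCon} and remarks that the proof there, though stated for a subclass of measures, applies verbatim. Your proof plan is correct and follows the standard route one would expect in that reference --- uniform first-moment bounds from superlinearity, tightness via Markov/Chebyshev, Prokhorov for relative compactness, upgrading to the $\PP_\ell$-topology using the moment bounds, and closedness via truncation $L_k=\min(L,k)$ and monotone convergence. There is nothing to compare here beyond noting that you have supplied the argument the paper chose to outsource; the only minor wrinkle is that your tightness estimate is slightly over-engineered (once you have $\int\|v\|_x\,d\tilde\mu\leqslant(a+C_1)$ from $R=1$, Markov's inequality with a single large $\rho$ already gives uniform tightness, no need to vary $R$), but this is cosmetic.
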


Note that, as $L$ is bounded below, the quantity $\int_{TM} L(x,v)d\tilde\mu(x,v)\in \R\cup\{+\infty\}$ is well defined for any Borel probability measure. We also remark that in \cite[Theorem 2-4.1.3.]{ItuCon} the result is proved for a particular subclass of measures, however the proof makes no use of this fact and proves the above result.

\subsection{Hamiltonians depending on the unknown function}\label{sec contact H}
We recall here known results that can be found in \cite{n2,NWY} and the references therein.
In this section, we consider a continuous Hamiltonian $G:T^*M\times\mathbb R\to\mathbb R$ which satisfies the following conditions
\begin{itemize}
\item [\textbf{(G1)}] (Lipschitz in $u$) $u\mapsto G(x,p,u)$ is $K$-Lipschitz continuous for some $K>0$, uniformly in $(x,p)\in T^*M$;\smallskip
\item [\textbf{(G2)}] (Convexity in $p$) $p\mapsto G(x,p,u)$ is convex for each $(x,u)\in M\times\mathbb R$;\smallskip
\item [\textbf{(G3)}] (Superlinearity in $p$) $p\mapsto G(x,p,u)$ is superlinear for each $(x,u)\in M\times\mathbb R$.\smallskip
\end{itemize}

\begin{remark}\label{oss coercive}
The results of this paper keep holding even when the superlinearity condition (G3) is weakened in favor of a simple coercivity. For instance,
Theorems \ref{thm1} and \ref{thm2} can be easily generalized to this setting. Indeed, since we are dealing there with a family of equi-bounded, and hence equi-Lipschitz, solutions, see Lemma \ref{s3}, we could employ the usual trick of modifying $G$ outside a compact subset of $T^*M\times\R$ to make it superlinear. This cannot be done in other parts of the paper since we are dealing with families of solutions that are neither equi-bounded nor equi-Lipschitz in general. And even when they are, as in Section \ref{sec model converging}, this needs to be proved.
In fact, this is the core of the analysis performed in Section \ref{sec model converging}, which takes advantage of the fact that the Lagrangian associated with the Hamiltonian via the Fenchel duality is finite-valued. This is no longer true in the purely coercive case, even though the difficulties arising could be handled by showing that all the minimizing curves that come into play in our analysis are indeed supported on the set where the Lagrangian is finite. Yet, we believe that treating this more general case would bring additional technicalities that would have the effect of hiding the ideas at the base of this work. We prefer to leave the coercive case to a possible future work.
\end{remark}

Let $L_G:TM\times \R\to\mathbb R$ be the convex conjugate function of $G$, i.e.,
\[L_G(x,v,u):=\sup_{p\in T^*_xM}\big(p( v)-G(x,p,u)\big).\]
Then it can be proven that $L_G$ verifies similar properties (see \cite[Lemma 4.1]{V3} with easy adaptations):
\begin{itemize}
\item [\textbf{(L1)}]  $u\mapsto L_G(x,v,u)$ is $K$-Lipschitz continuous uniformly in $(x,v)\in TM$;\smallskip
\item [\textbf{(L2)}]  $v\mapsto L_G(x,v,u)$ is convex for each $(x,u)\in M\times\mathbb R$;\smallskip
\item [\textbf{(L3)}]  $v\mapsto L_G(x,v,u)$ is superlinear for each $(x,u)\in M\times\mathbb R$;\smallskip
\end{itemize}

\begin{definition}\rm
Let $G:T^*M\times\mathbb R\to\mathbb R$ be a Hamiltonian satisfying (G1-3) and let $L_G:TM\times\mathbb R\to\mathbb R$ be the associated Lagrangian.  Let $c\in\mathbb R$. A function $u\in C(M)$ satisfying the following two  properties is called a {\em backward} (resp. {\em forward}) {\em weak KAM solution} of
\begin{equation}\label{HJE}
  G\big(x,D_x u,u(x)\big)=c  \quad \hbox{in $M$}.
\end{equation}
\begin{itemize}
\item [(1)] For each absolutely continuous curve $\gamma:[t',t]\rightarrow M$, we have
\begin{equation*}
  u\big(\gamma(t)\big)-u\big(\gamma(t')\big)\leqslant  \int_{t'}^{t}\Big[L_G\Big(\gamma(s),\dot \gamma(s),u\big(\gamma(s)\big)\Big)+c\Big]ds.
\end{equation*}
The above condition reads as {\em $u$ is dominated by $L_G+c$} and will be denoted by $u\prec L_G+c$.\smallskip

\item [(2)] For each $x\in M$, there exists an absolutely continuous curve $\gamma_-:(-\infty,0]\rightarrow M$ (resp. $\gamma_+:[0,+\infty)\to M$) with $\gamma_-(0)=x$ (resp. $\gamma_+(0)=x$) such that
\begin{align*}
  &u(x)-u\big(\gamma_-(t)\big)=\int_t^0\Big[L_G\Big(\gamma_-(s),\dot \gamma_-(s),u\big(\gamma_-(s)\big)\Big)+c\Big]ds,\quad \forall t<0
  \\ &\textrm{\Big(resp.}\ u(\gamma_+(t))-u(x)=\int_0^t\Big[L_G\Big(\gamma_+(s),\dot \gamma_+(s),u\big(\gamma_+(s)\big)\Big)+c\Big]ds,\quad \forall t>0\textrm{\Big).}
\end{align*}
The curves satisfying the above equality are called {\em $(u,L_G,c)$-calibrated curves.}
\end{itemize}
\end{definition}

By \cite[Appendix D]{NWY} and \cite[Appendix A]{n2} we have
\begin{lemma}\
\begin{itemize}
\item[\em (i)]
If $u\in C(M)$ is a backward weak KAM solution of (\ref{HJE}), it is a viscosity solution of (\ref{HJE}).\smallskip
\item[\em (ii)]  The function $w\in C(M)$ is a viscosity subsolution of (\ref{HJE}) if and only if  $w\prec L_G+c$. The latter is also equivalent to $w$ being Lipschitz continuous and verifying $G\big(x,D_x w, w(x)\big)\leqslant   c$ for almost every $x\in M$.
\end{itemize}
\end{lemma}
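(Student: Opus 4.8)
The plan is to reduce both assertions to three elementary facts: the Fenchel inequality $p(v)\le L_G(x,v,u)+G(x,p,u)$, immediate from the definition of $L_G$; the dual representation $G(x,p,u)=\sup_{v\in T_xM}\big(p(v)-L_G(x,v,u)\big)$, valid because for each fixed $u$ the map $p\mapsto G(x,p,u)$ is convex, superlinear and continuous by (G2)--(G3); and the chain rule $(f\circ\gamma)'(s)=D_{\gamma(s)}f(\dot\gamma(s))$ for a.e.\ $s$, which holds whenever $\gamma$ is absolutely continuous and $f$ is either $C^1$ or Lipschitz. No weak KAM machinery beyond the definitions recalled above is needed.

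\emph{Proof of (ii).} I would first argue that a viscosity subsolution $w$ of \eqref{HJE} is Lipschitz. Since $M$ is compact, $R:=\|w\|_\infty<\infty$, and by the superlinearity (G3) the set $\{p:G(x,p,u)\le c\}$ is contained in a ball of radius $\kappa=\kappa(R,c)$ for all $(x,u)\in M\times[-R,R]$; hence every $C^1$ supertangent $\varphi$ to $w$ at a point $x_0$ has $\|D_{x_0}\varphi\|_{x_0}\le\kappa$, and a continuous function all of whose supertangent gradients are bounded by $\kappa$ is $\kappa$-Lipschitz (a standard localization-in-charts argument, see \cite{barles}). By Rademacher's theorem $w$ is then differentiable a.e., and at each such point $D_xw$ is a supertangent gradient, so $G(x,D_xw,w(x))\le c$ a.e. Conversely, if $w$ is Lipschitz and $G(x,D_xw,w(x))\le c$ a.e., and $\varphi\in C^1$ is supertangent to $w$ at $x_0$, then $D_{x_0}\varphi$ lies in the superdifferential, hence in Clarke's generalized gradient $\partial_Cw(x_0)=\overline{\mathrm{co}}\{\lim_n D_{x_n}w:\ x_n\to x_0,\ w\text{ diff.\ at }x_n\}$; since $\{p:G(x_0,p,w(x_0))\le c\}$ is closed and convex by (G2) and contains every such limit by continuity of $G$, it contains $D_{x_0}\varphi$, i.e.\ $w$ is a viscosity subsolution (this is precisely where (G2) enters, exactly as in Proposition \ref{prop equivalence}). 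For the equivalence with domination: if $w$ is Lipschitz and an a.e.\ subsolution, then for absolutely continuous $\gamma:[t',t]\to M$ the chain rule and the Fenchel inequality with $p=D_{\gamma(s)}w$ give
\[
w(\gamma(t))-w(\gamma(t'))=\int_{t'}^t D_{\gamma(s)}w(\dot\gamma(s))\,ds\le\int_{t'}^t\Big(L_G\big(\gamma(s),\dot\gamma(s),w(\gamma(s))\big)+c\Big)\,ds,
\]
so $w\prec L_G+c$. Conversely, applying the domination inequality along a unit-speed geodesic joining two nearby points $x,y$ yields $|w(x)-w(y)|\le\big(c+\sup\{L_G(x',v,u'):x'\in M,\ \|v\|_{x'}=1,\ |u'|\le R\}\big)_+\,d(x,y)$, the supremum being finite because $L_G$ is finite-valued and continuous; hence $w$ is Lipschitz, and at a point $x_0$ of differentiability, applying domination on $[0,\varepsilon]$ to a $C^1$ curve with $\dot\gamma(0)=v$, dividing by $\varepsilon$ and letting $\varepsilon\to0^+$ gives $D_{x_0}w(v)\le L_G(x_0,v,w(x_0))+c$ for every $v$, so $G(x_0,D_{x_0}w,w(x_0))\le c$ by the dual representation.

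\emph{Proof of (i).} A backward weak KAM solution $u$ satisfies $u\prec L_G+c$ by definition, hence is a viscosity subsolution by (ii); it remains to prove it is a supersolution. Let $\varphi\in C^1$ be subtangent to $u$ at $x_0$ and let $\gamma=\gamma_-:(-\infty,0]\to M$ be a $(u,L_G,c)$-calibrated curve with $\gamma(0)=x_0$. For $h>0$ small, the local-minimum property of $u-\varphi$ at $x_0$ gives $u(x_0)-u(\gamma(-h))\le\varphi(x_0)-\varphi(\gamma(-h))=\int_{-h}^0 D_{\gamma(s)}\varphi(\dot\gamma(s))\,ds$, while calibration gives $u(x_0)-u(\gamma(-h))=\int_{-h}^0\big(L_G(\gamma(s),\dot\gamma(s),u(\gamma(s)))+c\big)\,ds$. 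Subtracting and applying the Fenchel inequality with $p=D_{\gamma(s)}\varphi$ pointwise,
\[
ch\le\int_{-h}^0\Big(D_{\gamma(s)}\varphi(\dot\gamma(s))-L_G(\gamma(s),\dot\gamma(s),u(\gamma(s)))\Big)\,ds\le\int_{-h}^0 G\big(\gamma(s),D_{\gamma(s)}\varphi,u(\gamma(s))\big)\,ds.
\]
The integrand $s\mapsto G(\gamma(s),D_{\gamma(s)}\varphi,u(\gamma(s)))$ is continuous, since it involves only $\gamma$, $D\varphi\circ\gamma$ and $u\circ\gamma$ composed with the continuous $G$; dividing by $h$ and letting $h\to0^+$ then yields $G(x_0,D_{x_0}\varphi,u(x_0))\ge c$, as wanted. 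The point of routing the estimate through $G$ rather than through $\dot\gamma(0)$ is that the velocity $\dot\gamma$ need never be controlled.

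\emph{Main obstacle.} The computational core is routine once the duality is in place; the only genuinely delicate ingredients are the two automatic-regularity facts — a continuous function with uniformly bounded supertangent gradients is Lipschitz, and $w\circ\gamma$ is absolutely continuous with a.e.\ derivative $D_{\gamma(s)}w(\dot\gamma(s))$ when $w$ is Lipschitz and $\gamma$ absolutely continuous — which are classical and can be imported from \cite{barles,FathiSurvey}, together with the finiteness of the relevant suprema of $L_G$, which rests on $L_G$ being finite-valued and continuous, itself a consequence of the superlinearity (G3). This last point is exactly where the purely coercive setting alluded to in Remark \ref{oss coercive} would demand additional care.
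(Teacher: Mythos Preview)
Your proof is correct and complete. The paper itself does not give a proof of this lemma: it simply cites \cite[Appendix D]{NWY} and \cite[Appendix A]{n2} and moves on. What you have written is precisely the standard argument one finds in those references (and in \cite{FathiSurvey} for the $u$-independent case), namely reducing (ii) to the Fenchel duality between $G$ and $L_G$ plus the convexity-based equivalence of viscosity and a.e.\ subsolutions, and handling the supersolution half of (i) by comparing the calibration identity to the subtangent along the calibrated curve and passing to the limit through the continuous integrand $s\mapsto G(\gamma(s),D_{\gamma(s)}\varphi,u(\gamma(s)))$. Your remark that routing the estimate through $G$ avoids any control of $\dot\gamma(0)$ is exactly the point of that argument. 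The two technicalities you flag at the end---bounded supertangents imply Lipschitz, and the chain rule for $w\circ\gamma$ with $w$ Lipschitz and $\gamma$ absolutely continuous---are indeed the only places where one must appeal to outside facts, and both are standard; an alternative for the second, in the spirit of the paper's later arguments, is to replace the direct chain rule by the smooth approximation of Theorem~\ref{approx} and pass to the limit, which sidesteps the a.e.\ differentiability issue entirely.
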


%

We then give an approximation result in this setting. It is an easy consequence of \cite[Theorem 8.5]{FM}.

\begin{theorem}\label{approx}
Assume $G : T^*M\times \R \to \R$ is a continuous Hamiltonian verifying (G2). Let $w : M\to \R$ be a Lipschitz function verifying $G\big(x,D_x w, w(x)\big)\leqslant   c$ for almost every $x\in M$.
Then, for every $\varepsilon>0$, there is a $w_\eps\in C^\infty(M)$ such that
$\|w-w_\varepsilon\|_\infty <\varepsilon$ and
\[
G\big(x,D_x w_\varepsilon, {w}(x)\big)\leqslant   c+\varepsilon,
\quad
G\big(x,D_x w_\varepsilon, w_\varepsilon(x)\big)\leqslant   c+\varepsilon
\qquad\hbox{for all $x\in M$}.
\]
\end{theorem}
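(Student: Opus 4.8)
The plan is to eliminate the dependence of $G$ on the unknown by \emph{freezing} it along $w$, and then to invoke the Fathi--Maderna smoothing theorem for the resulting $u$-independent Hamiltonian. Concretely, I would set $\hat G(x,p):=G\big(x,p,w(x)\big)$. Since $w\in C(M)$ and $G\in C(T^*M\times\R)$, the function $\hat G$ is continuous on $T^*M$, and by (G2) it is convex in $p$; moreover the hypothesis on $w$ says exactly that $w$ is a Lipschitz a.e.\ (equivalently, viscosity) subsolution of $\hat G(x,D_xu)=c$. Applying \cite[Theorem 8.5]{FM} to $\hat G$ and $w$ then provides, for every $\delta>0$, a function $w_\delta\in C^\infty(M)$ with $\|w-w_\delta\|_\infty<\delta$ and $\hat G(x,D_xw_\delta)\le c+\delta$ for every $x\in M$; that is,
\[
G\big(x,D_xw_\delta,w(x)\big)\le c+\delta\qquad\text{for all }x\in M,
\]
which already yields the first of the two claimed inequalities as soon as $\delta\le\varepsilon$.

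It remains to replace $w(x)$ by $w_\delta(x)$ in the last slot at the cost of a small error. Since $\|w-w_\delta\|_\infty<\delta$, this amounts to controlling the modulus of continuity of $u\mapsto G(x,p,u)$ at the points $(x,p)=\big(x,D_xw_\delta(x)\big)$. Here I would use that $w$ is $\kappa$-Lipschitz for some $\kappa>0$, so that its (generalized) derivative is bounded; the smoothing construction in \cite{FM} produces $w_\delta$ whose derivative is, up to an error vanishing with the mesh of the construction, a convex average of nearby derivatives of $w$ (this is also where convexity of $\hat G$ enters, via Jensen), so one may in addition require $\|D_xw_\delta\|_x\le\kappa+1$ for all $x$. (Under assumption (G1) this extra care is unnecessary: the $K$-Lipschitz dependence in $u$ gives directly $G(x,D_xw_\delta,w_\delta(x))\le G(x,D_xw_\delta,w(x))+K\delta$; and under (G3) the Hamiltonian $\hat G$ is coercive, so the same derivative bound follows from $\hat G(x,D_xw_\delta)\le c+1$.) Consequently $(x,D_xw_\delta(x))$ ranges in the fixed compact set $\mathcal K:=\{(x,p)\in T^*M:\|p\|_x\le\kappa+1\}$, and $w(x),w_\delta(x)$ range in the compact interval $J:=[\min_M w-1,\,\max_M w+1]$. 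By uniform continuity of $G$ on $\mathcal K\times J$ there is a modulus $\omega$, with $\omega(0^+)=0$, such that
\[
G\big(x,D_xw_\delta,w_\delta(x)\big)\le G\big(x,D_xw_\delta,w(x)\big)+\omega(\delta)\le c+\delta+\omega(\delta)\qquad\text{for all }x\in M.
\]

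To conclude I would choose $\delta\in(0,1)$ small enough that $\delta<\varepsilon$ and $\delta+\omega(\delta)\le\varepsilon$, and set $w_\varepsilon:=w_\delta$. Then $w_\varepsilon\in C^\infty(M)$, $\|w-w_\varepsilon\|_\infty<\varepsilon$, and both $G(x,D_xw_\varepsilon,w(x))\le c+\varepsilon$ and $G(x,D_xw_\varepsilon,w_\varepsilon(x))\le c+\varepsilon$ hold for every $x\in M$, as required. The only genuinely delicate point is the derivative bound on $w_\delta$ needed to make the passage from $w$ to $w_\delta$ in the last argument uniform in $x$: this is automatic from (G1) (or from coercivity in $p$), and otherwise must be read off from the internal structure of the Fathi--Maderna construction rather than from the bare statement of \cite[Theorem 8.5]{FM}. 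Everything else is a routine application of freezing plus uniform continuity on compact sets.
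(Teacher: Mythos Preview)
Your approach is correct and is precisely the intended one: the paper does not give a proof of this theorem at all, merely stating that ``it is an easy consequence of \cite[Theorem 8.5]{FM}.'' Your freezing of the $u$-variable via $\hat G(x,p):=G(x,p,w(x))$ followed by an application of Fathi--Maderna is exactly the natural elaboration of that remark, and your discussion of the passage from $w(x)$ to $w_\varepsilon(x)$ in the last slot---including the observation that (G1) or coercivity makes the derivative bound automatic, while under bare (G2) one must appeal to the internals of the FM construction---is in fact more careful than what the paper provides.
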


Let $a\in C(M)$. Assume there are two points $x_1$ and $x_2$ such that $a(x_1)>0$ and $a(x_2)<0$. Let $c\in\mathbb R$ and
consider the equation
\begin{equation}\label{VH0}
  a(x)u(x)+H(x,D_x u)=c  \qquad\hbox{in $M$}.
\end{equation}
This is a particular case of the previous setting for $G(x,p,u) = a(x)u+H(x,p)$. In this case, the associated Lagrangian is $L_G(x,v,u) = L(x,v) -a(x)u$.
The following implicit Lax-Oleinik semigroup $(T^-_t)_{t\geqslant   0}$ is a well defined semigroup of operators $T^-_t:C(M)\to C(M)$ that verify, for all $\varphi \in C(M)$,
\begin{eqnarray}\label{T-}
  T^-_t\varphi(x)=\inf_{\gamma(t)=x} \left\{\varphi\big(\gamma(0)\big)+\int_0^t\bigg[L\big(\gamma(\tau),\dot{\gamma}(\tau)\big)-a\big(\gamma(\tau)\big)T^-_\tau\varphi\big(\gamma(\tau)\big)+c\bigg]{d}\tau\right\},
\end{eqnarray}
where the infimum is taken among absolutely continuous curves $\gamma:[0,t]\rightarrow M$ with $\gamma(t)=x$. A similar property defines and characterizes the forward semigroup $(T^+_t)_{t>0}$, where, for all $\varphi \in C(M)$,
\begin{eqnarray}\label{T+}
  T^+_t\varphi(x)=\sup_{\gamma(0)=x} \left\{\varphi\big(\gamma(t)\big)-\int_0^t\bigg[L\big(\gamma(\tau),\dot{\gamma}(\tau)\big)-a\big(\gamma(\tau)\big)T^+_{t-\tau}\varphi\big(\gamma(\tau)\big)+c\bigg]{d}\tau\right\}.
\end{eqnarray}
\begin{lemma}\label{T-T+}\cite[Appendix A]{n2}
If $u_0$ is a subsolution of (\ref{VH0}), then
\[T^+_tu_0\leqslant  u_0\leqslant  T^-_tu_0.\]
If $u_0$ is a strict subsolution\footnote{Meaning that $c$ can be replaced with $c-\varepsilon$ for some $\varepsilon>0$.} of (\ref{VH0}), then
\[T^+_tu_0<u_0<T^-_tu_0.\]
\end{lemma}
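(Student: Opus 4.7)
My approach is to translate each inequality into a comparison assertion between a stationary sub/super-solution and the evolutive solution produced by the semigroup $T^\pm_t$. The key first step is the PDE interpretation of $T^-_t$: the map $(t,x)\mapsto T^-_tu_0(x)$ is the unique continuous viscosity solution of the Cauchy problem
\[
\partial_tw+H(x,D_xw)+a(x)w=c\qquad\hbox{in $(0,+\infty)\times M$,}\qquad w(0,\cdot)=u_0,
\]
which is a standard consequence of the variational formula \eqref{T-} via dynamic programming. Since $u_0$ is a stationary viscosity subsolution of $a(x)u+H(x,D_xu)=c$, and the time derivative vanishes on time-independent functions, the map $(t,x)\mapsto u_0(x)$ is trivially a viscosity subsolution of the above Cauchy problem with matching initial datum. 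Applying the comparison principle for the evolutive Hamilton-Jacobi equation then yields $u_0\leqslant T^-_tu_0$ for all $t\geqslant 0$.

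The comparison step is the principal technical difficulty, because the zeroth-order coefficient $a(x)$ is globally Lipschitz but not monotone in the unknown, so the classical comparison theorem does not apply verbatim. The standard remedy is the exponential change of unknown $\tilde w(t,x):=e^{-\lambda t}w(t,x)$ with $\lambda>\|a\|_\infty$: the transformed equation has zeroth-order coefficient $\lambda+a(x)>0$, strictly increasing in the unknown $\tilde w$, so the Crandall-Lions comparison principle applies, and the inequality transfers back to $w$.

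The bound $T^+_tu_0\leqslant u_0$ is obtained by an analogous argument: the map $(t,x)\mapsto T^+_tu_0(x)$ is characterized as the unique viscosity solution of the companion evolutive problem $\partial_tw-H(x,D_xw)-a(x)w=-c$ with the same initial datum $u_0$, and the stationary subsolution property of $u_0$ means that, read on this reversed PDE, $u_0$ is now a supersolution with matching initial datum, whence comparison gives the desired bound.

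To upgrade to the strict-subsolution version of the statement, assume $a(x)u_0+H(x,D_xu_0)\leqslant c-2\eta$ in the viscosity sense for some $\eta>0$. For $\eps\in(0,\eta)$ and $T>0$ chosen small in terms of $\|a\|_\infty$, the perturbation $U_\eps(t,x):=u_0(x)+\eps t$ is still a viscosity subsolution of the first Cauchy problem on $[0,T]\times M$; comparison then gives $u_0(x)+\eps t\leqslant T^-_tu_0(x)$ on that set, yielding the strict inequality on $(0,T]\times M$. The already established non-strict inequality, combined with the semigroup identity $T^-_{s+t}=T^-_s\circ T^-_t$, propagates strictness to all $t>0$, and the case of $T^+$ is entirely parallel. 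The main obstacle throughout remains the sign-changing, non-monotone zeroth-order term, handled once and for all by the exponential substitution described above.
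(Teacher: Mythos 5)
The paper itself does not provide a proof of this lemma; it cites \cite[Appendix A]{n2}, so your proposal can only be compared against the expected route, not a written-out argument in this document.

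Your PDE route (interpret $T^\pm_t u_0$ as the unique viscosity solutions of the corresponding Cauchy problems, then compare against the stationary profile $u_0$) is a legitimate strategy and reaches the right conclusion, but two points deserve more care.

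\emph{(1) Passing from subsolution to supersolution is not a formal sign flip.} You assert that the stationary subsolution property of $u_0$ makes it a supersolution of $\partial_t w - H(x,D_xw) - a(x)w = -c$. The supersolution test for this equation at a time--independent function requires $H(x_0,D_x\psi) + a(x_0)u_0(x_0)\leqslant c$ at a local \emph{minimum} of $u_0-\psi$, whereas the viscosity subsolution test for $u_0$ gives exactly this inequality at local \emph{maxima}. The gap is closed by convexity and coercivity of $H$: a Lipschitz subsolution satisfies $H(x,D_xu_0)\leqslant c - a u_0$ a.e., and by convexity this propagates to the entire Clarke generalized gradient, hence to every viscosity subgradient. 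This is precisely where Proposition \ref{prop equivalence} (a.e. subsolution $\Leftrightarrow$ viscosity subsolution for convex coercive $H$) is being used, and it should be stated explicitly; for a nonconvex $H$ your argument would break here.

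\emph{(2) The comparison step cannot be outsourced to vanilla Crandall--Lions.} After the exponential change $\tilde w = e^{-\lambda t}w$, the transformed Hamiltonian is $\tilde H(t,x,p,u)=(\lambda + a(x))u + e^{-\lambda t}H(x,e^{\lambda t}p)$: monotone in $u$ as you note, but still only continuous in $x$, so the doubling-of-variables argument requires the usual modulus $|H(x,p)-H(y,p)|\leqslant\omega_R(d(x,y))$ for $\|p\|\leqslant R$ (available on a compact manifold by uniform continuity on compacta, but only once one has an a priori gradient bound, which in turn comes from coercivity). None of this is fatal, but comparison for the implicit Cauchy problem is a genuine technical result of \cite{n2}/\cite{NWY}, not a free black box — which is consistent with the lemma being a citation rather than a claim proved from scratch here.

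The strictness propagation, which I initially suspected, is in fact fine: once $T^-_\tau u_0>u_0$ on $(0,T]$ and $T^-_s u_0\geqslant u_0$ for all $s\geqslant 0$, write $T^-_{T+s}u_0 = T^-_T(T^-_s u_0)\geqslant T^-_T u_0 > u_0$ using monotonicity of the semigroup, and iterate. Note that even this uses monotonicity of $T^-_T$ on $C(M)$, which for sign-changing $a$ is again a result of \cite{n2} rather than an obvious consequence of the implicit formula \eqref{T-}. Alternatively, one can avoid the semigroup identity altogether by choosing $\eps=\eps(T')\leqslant 2\eta/(1+T'\|a\|_\infty)$ for each fixed $T'>0$, which gives the strict inequality on $(0,T']$ directly, for any $T'$.

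In summary: the argument is correct, but leans heavily on the well-posedness theory for the implicit Lax--Oleinik semigroup that the paper is citing anyway; the convexity ingredient behind the supersolution step should be made explicit.
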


\begin{lemma}\label{u-v-}\cite[Proposition 2.9, Proposition 3.5, Lemma 5.4]{n2}
If $u_0$ is a subsolution of (\ref{VH0}), then the limit
\[u_-:=\lim_{t\to+\infty}T^-_tu_0\quad (resp.\ v_+:=\lim_{t\to+\infty}T^+_tu_0)\]
exists, and is a viscosity solution (resp. forward weak KAM solution) of (\ref{VH0}). In addition,
\[v_-:=\lim_{t\to+\infty}T^-_tv_+\]
is also a viscosity solution of (\ref{VH0}), and there is a point $x_0\in M$ such that $v_-(x_0)=v_+(x_0)$.

If $u_0$ is a strict subsolution of (\ref{VH0}), then $u_-$ is the maximal solution of (\ref{VH0}), and $v_-$ is the minimal solution of (\ref{VH0}).
\end{lemma}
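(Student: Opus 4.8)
The plan is to run the standard weak KAM scheme built on the implicit Lax--Oleinik semigroups, using only the properties recorded above: the variational formulas \eqref{T-}--\eqref{T+}, the order-preservation $\varphi\le\psi\Rightarrow T^\pm_t\varphi\le T^\pm_t\psi$, the semigroup law $T^\pm_{t+s}=T^\pm_s\circ T^\pm_t$, the continuity of each $T^\pm_s$ on $\big(C(M),\|\cdot\|_\infty\big)$, and the fact that a fixed point of $(T^-_t)_{t\ge0}$ (resp.\ of $(T^+_t)_{t\ge0}$) is a backward (resp.\ forward) weak KAM solution of \eqref{VH0}, hence in particular a viscosity subsolution, and, in the backward case, a genuine viscosity solution. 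First I would observe that $t\mapsto T^-_tu_0$ is non-decreasing: by Lemma \ref{T-T+} one has $u_0\le T^-_su_0$, whence, applying the order-preserving operator $T^-_t$ and the semigroup law, $T^-_tu_0\le T^-_t(T^-_su_0)=T^-_{t+s}u_0$; dually $t\mapsto T^+_tu_0$ is non-increasing, and $T^+_tu_0\le u_0\le T^-_tu_0$ for all $t$.

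Next I would prove that the families $\{T^-_tu_0\}_{t\ge0}$ and $\{T^+_tu_0\}_{t\ge0}$ are equi-bounded and equi-Lipschitz. The bounds $T^-_tu_0\ge u_0\ge\min_M u_0$ and $T^+_tu_0\le u_0\le\max_M u_0$ come for free from Lemma \ref{T-T+}. For the opposite bounds I would sandwich $u_0$ between a subsolution and a supersolution of \eqref{VH0} and invoke the supersolution counterpart of Lemma \ref{T-T+}, namely $T^-_tw\le w$ when $w$ is a supersolution, so that $T^-_tu_0\le T^-_tw\le w$ by order-preservation, and symmetrically from below for $T^+_t$. Equi-Lipschitz continuity then follows from the superlinearity of $L$: in \eqref{T-} the infimum is unchanged if one restricts to curves of controlled length, which bounds the Lipschitz seminorm of $T^-_tu_0$ in terms of $\|a\|_\infty$, of the uniform sup-bound just obtained, and of $L$. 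Monotonicity in $t$ together with equi-boundedness then yields, by Dini's theorem (or Arzel\`a--Ascoli), the uniform convergences $T^-_tu_0\to u_-$ and $T^+_tu_0\to v_+$ as $t\to+\infty$. Passing to the limit in $T^-_s(T^-_tu_0)=T^-_{t+s}u_0$ and using the continuity of $T^-_s$ gives $T^-_su_-=u_-$ for every $s>0$, so $u_-$ is a backward weak KAM solution, hence a viscosity solution, of \eqref{VH0}; in the same way $v_+$ is a fixed point of $(T^+_t)_{t\ge0}$, i.e.\ a forward weak KAM solution.

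Since a forward weak KAM solution satisfies $v_+\prec L_G+c$, it is in particular a subsolution of \eqref{VH0}, so the part of the proof just completed applies verbatim with $u_0$ replaced by $v_+$ and produces the viscosity solution $v_-:=\lim_{t\to+\infty}T^-_tv_+$, which moreover satisfies $v_-\ge v_+$ (again by Lemma \ref{T-T+}). To exhibit a point where $v_-=v_+$, I would take, for a suitable $x\in M$, a $(v_+,L_G,c)$-calibrated curve $\gamma_+:[0,+\infty)\to M$ issued from $x$, use the arcs $\gamma_+|_{[0,t]}$ as competitors in \eqref{T-} for $T^-_tv_+\big(\gamma_+(t)\big)$, and compare the resulting estimate with the calibration identity for $v_+$; letting $t\to+\infty$ along a sequence with $\gamma_+(t_n)\to x_0$, and controlling the remaining error term through a limiting occupation/closed measure supported on the $\omega$-limit of $\gamma_+$, one gets $v_-(x_0)\le v_+(x_0)$, which combined with $v_-\ge v_+$ forces $v_-(x_0)=v_+(x_0)$.

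Finally, assume $u_0$ is a strict subsolution, so that all inequalities in Lemma \ref{T-T+} are strict. For the maximality of $u_-$, given any solution $w$ of \eqref{VH0}, the function $\max(w,u_0)$ is a subsolution lying above $u_0$, hence $T^-_t\max(w,u_0)\ge T^-_tu_0$ and, in the limit, $\lim_tT^-_t\max(w,u_0)\ge u_-$; on the other hand the strict inequality for $u_0$ is precisely what is needed to show that there is at most one solution lying above $u_0$, which forces $\lim_tT^-_t\max(w,u_0)=u_-$ and therefore $w\le\max(w,u_0)\le u_-$. The minimality of $v_-$ is obtained by the dual argument, with $(T^+_t)_{t\ge0}$ and $\min(\cdot,\cdot)$ in place of $(T^-_t)_{t\ge0}$ and $\max(\cdot,\cdot)$. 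I expect the equi-boundedness step to be the true obstacle: when $a$ changes sign, inserting a constant competitor curve in \eqref{T-} only gives a bound growing like $e^{\|a\|_\infty t}$ at points where $a<0$, so a genuine global supersolution of \eqref{VH0} above $u_0$ is really needed (and is available, for the values of $c$ at which this lemma gets applied, under the standing hypotheses); once equi-boundedness is secured, the convergence and the identification of the limits are routine, while the location of the touching point $x_0$ and the strict-subsolution case are the places where the calibration structure of \cite{n2} must be handled with some care.
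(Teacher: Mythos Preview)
The paper does not prove this lemma: it is quoted verbatim from the external reference \cite{n2} (Propositions~2.9, 3.5 and Lemma~5.4), with no argument supplied. There is therefore no in-paper proof to compare your attempt against.

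As for your sketch itself, the global architecture (monotonicity of $t\mapsto T^\pm_t u_0$ from Lemma~\ref{T-T+} plus the semigroup law, equi-bounds, Dini/Arzel\`a--Ascoli, identification of the limits as fixed points) is the standard weak KAM scheme and is sound. However, three steps remain genuinely incomplete in the sign-changing regime. First, the equi-boundedness: you acknowledge that a supersolution above $u_0$ is ``really needed'', but the lemma is stated for \eqref{VH0} at an arbitrary level $c$ with $a$ changing sign, and no such supersolution is part of the hypotheses; this is exactly the kind of bound the paper works hard to obtain in Section~\ref{sec model converging} for the specific case $c=c_0$ and small $\lambda$. Second, your maximality argument is circular: to conclude $\lim_t T^-_t\max(w,u_0)=u_-$ you appeal to ``at most one solution lying above $u_0$'', but you have not shown that an arbitrary solution $w$ lies above $u_0$ (there is no comparison principle here), nor that $\max(w,u_0)\le u_-$; either of these would already imply $w\le u_-$. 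Third, the touching-point computation you outline yields
\[
T^-_tv_+\big(\gamma_+(t)\big)-v_+\big(\gamma_+(t)\big)\le\int_0^t a\big(\gamma_+(\tau)\big)\Big(v_+\big(\gamma_+(\tau)\big)-T^-_\tau v_+\big(\gamma_+(\tau)\big)\Big)d\tau,
\]
and since $T^-_\tau v_+\ge v_+$ the integrand has the sign of $-a$, which does not give the desired inequality when $a$ changes sign. All three points are handled in \cite{n2} with arguments specific to the implicit Lax--Oleinik machinery; you should consult that reference directly.
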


\section{ General convergence/divergence results}\label{sec general}

In this section, we consider a continuous Hamiltonian $G:T^*M\times\mathbb R\to\mathbb R$ which satisfies the following conditions:
\begin{itemize}
\item [\textbf{(G1)}] (Lipschitz in $u$) $u\mapsto G(x,p,u)$ is $K$-Lipschitz continuous uniformly in $(x,p)\in T^*M$ for some $K>0$;\smallskip
\item [\textbf{(G2)}] (Convexity in $p$) $v\mapsto G(x,p,u)$ is convex for each $(x,u)\in M\times\mathbb R$;\smallskip
\item [\textbf{(G3)}] (Superlinearity in $p$) $p\mapsto G(x,p,u)$ is superlinear for each $(x,u)\in M\times\mathbb R$;\smallskip
\item [\textbf{(G4)}] (Modulus continuity near $u=0$) The partial derivative $\frac{\partial G}{\partial u}(x,p,0)$ exists. For every compact subset $S\subset TM$, we can find a modulus of continuity\footnote{A modulus of continuity is a nondecreasing function $\eta : (0,+\infty) \to (0,+\infty)$ such that $\eta(s) \to 0$ as $s\to 0$.  } $\eta_S$ such that
    \[\bigg|G(x,p,u)-G(x,p,0)-\frac{\partial G}{\partial u}(x,p,0)u\bigg|\leqslant  |u|\eta_S(|u|),\quad \forall (x,p)\in S.
    \]
\end{itemize}
The dependence of $G$ in the  $u$-variable is nonlinear, in general, as in \cite{CFZZ}, but we do not make any global monotonicity assumption.
As established in  \cite[Lemma 4.1]{V3} the associated Lagrangian function $L_G:TM\times \R\to\mathbb R$ defined by
\[L_G(x,v,u):=\sup_{p\in T^*_xM}\big(p( v)-G(x,p,u)\big),\]
has similar properties:

\begin{itemize}
\item [\textbf{(L1)}] (Lipschitz in $u$) $u\mapsto L_G(x,v,u)$ is $K$-Lipschitz continuous uniformly in $(x,v)\in TM$;\smallskip
\item [\textbf{(L2)}] (Convexity in $v$) $v\mapsto L_G(x,v,u)$ is convex for each $(x,u)\in M\times\mathbb R$;\smallskip
\item [\textbf{(L3)}] (Superlinearity in $v$) $v\mapsto L_G(x,v,u)$ is superlinear for each $(x,u)\in M\times\mathbb R$;\smallskip
\item [\textbf{(L4)}] (Modulus continuity near $u=0$) The partial derivative $\frac{\partial L_G}{\partial u}(x,v,0)$ exists. For every compact subset $S\subset TM$, we can find a modulus of continuity $\eta_S$ such that
    \[\bigg|L_G(x,v,u)-L_G(x,v,0)-\frac{\partial L_G}{\partial u}(x,v,0)u\bigg|\leqslant  |u|\eta_S(|u|),\quad \forall (x,v)\in S.
    \]
    \end{itemize}
    The last condition is easier to state with the Lagrangian function as it involves Mather measures defined on $TM$:
    \begin{itemize}
\item [\textbf{(L5)}] (Non-degeneracy condition) For all Mather measures $\tilde \mu$ of $(x,v) \mapsto L_G(x,v,0)$, we have
\[
\int_{TM} \frac{\partial L_G}{\partial u}(x,v,0)\,d\tilde{\mu}<0.
\]
\end{itemize}
\begin{remark}\rm
(1) It is classical in convex analysis that for all $(x,v)\in TM$ there is $(x,p)\in T^*M$ verifying $L_G(x,v,0)+G(x,p,0) =p(v)$.
It is proved  in \cite[Lemma 4.1]{V3}  that if $(x,p)\in T^*M$ and $(x,v)\in TM$ verify the previous formula  then $\frac{\partial L_G}{\partial u} (x,v,0) = -\frac{\partial G}{\partial u} (x,p,0)$.
Therefore, an equivalent formulation of (L5) is
\begin{itemize}
\item [\textbf{(G5)}] (Non-degeneracy condition) For all Mather measures $\tilde \mu$ of $(x,v) \mapsto L_G(x,v,0)$,
\[\int_{TM} \frac{\partial G}{\partial u}(x,p_{(x,v)},0)\,d\tilde{\mu}>0.
\]
where $p_{(x,v)}$ is chosen so to satisfy $L_G(x,v,0)+G(x,p_{(x,v)},0) =p_{(x,v)}(v)$ .
\end{itemize}
\indent (2) A result of Ma\~ n\' e (\cite{Mane}) asserts that a generic Hamiltonian $H$ has a unique Mather measure.\footnote{It has been conjectured  by Ma\~ n\' e that for generic Hamiltonians $H$ this unique Mather measure is supported on a closed curve.} Hence our condition of integral type is quite loose for such a generic $H$. We also refer the reader to Remark \ref{oss small A} for an explicit example.
 \end{remark}

Consider
\begin{equation}\label{E}\tag{E$_\lambda$}
  G\big(x,D_xu,\lambda u(x)\big)=c_0 \qquad\hbox{in $M$},
\end{equation}
and the limit equation
\begin{equation}\label{E0}\tag{${\textrm{E}}_0$}
  G(x,D_xu,0)=c_0 \qquad\hbox{in $M$}.
\end{equation}
Here we denote by $c_0$ the critical value of $H(x,p):=G(x,p,0)$. We denote by $\widetilde{\mathfrak M}$ (resp. $\widetilde{\mathcal M}$) the set of all Mather measures (resp. the Mather set) corresponding to $H$. In the sequel, we will always assume that $\lambda$ belongs to the interval $(0,1)$.

The main theorems of this section are the following.
\begin{theorem}\label{thm1}
Let conditions (L1--5) be in force. Let us assume that there exist an equi-bounded family $(u_\lambda)_{\lambda\in (0,\lambda_0)}$  of solutions to  (\ref{E}), for some $\lambda_0 \in (0,1)$.  Then the functions  $u_\lambda$ uniformly converge in $M$, as $\lambda\to 0^+$, to a solution $u_0$ of the critical equation \eqref{E0}. Moreover, $u_0$ is the largest subsolution $w$ of (\ref{E0}) satisfying
\begin{equation}\label{subcon}\tag{S}
  \int_{TM}w(x)\frac{\partial L_G}{\partial u}(x,v,0)\,d\tilde{\mu}(x,v)\geqslant  0,\quad \forall \tilde{\mu}\in\widetilde{\mathfrak M}.
\end{equation}
\end{theorem}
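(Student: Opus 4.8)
The plan is to establish the theorem in three stages: first an a priori equi-Lipschitz estimate and the extraction of limit points, then the identification of any limit as a subsolution of \eqref{E0} satisfying \eqref{subcon}, and finally the comparison argument showing that the largest such subsolution is itself a solution and is the unique limit point (whence full convergence).

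\emph{Step 1: Equi-Lipschitz bound and compactness.} Since the family $(u_\lambda)_{\lambda\in(0,\lambda_0)}$ is equi-bounded, say $\|u_\lambda\|_\infty\le C$, the rescaled functions $\lambda u_\lambda$ take values in $[-\lambda_0 C,\lambda_0 C]\subset(-\varepsilon,\varepsilon)$ for $\lambda_0$ small. Each $u_\lambda$ solves $G(x,D_xu_\lambda,\lambda u_\lambda(x))=c_0$, and by (G1) this means $H(x,D_xu_\lambda)=G(x,D_xu_\lambda,0)\le c_0+K\lambda C$ a.e. (using Proposition~\ref{prop equivalence}-type characterization and that $u_\lambda$ is Lipschitz by coercivity). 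By superlinearity (G3), sublevel sets $\{H\le c_0+K\lambda_0 C\}$ are bounded uniformly, so the $u_\lambda$ are equi-Lipschitz with a constant independent of $\lambda$. By Arzel\`a--Ascoli, every sequence $\lambda_n\to 0^+$ has a subsequence along which $u_{\lambda_n}\to w$ uniformly for some Lipschitz $w$; since $\lambda_n u_{\lambda_n}\to 0$ uniformly and $G$ is continuous, Proposition~\ref{prop stability} gives that $w$ is a solution of \eqref{E0}.

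\emph{Step 2: Any limit point satisfies \eqref{subcon}.} Fix $\tilde\mu\in\widetilde{\mathfrak M}$ and let $\gamma$ realize the Mather measure structure; the standard tool is that, since $u_\lambda$ is a subsolution of $G(x,D_xu_\lambda,\lambda u_\lambda)=c_0$, one has $u_\lambda\prec L_G(\cdot,\cdot,\lambda u_\lambda(\cdot))+c_0$, hence along any curve $\gamma:[0,T]\to M$,
\[
\frac{u_\lambda(\gamma(T))-u_\lambda(\gamma(0))}{T}\le \frac1T\int_0^T\Big[L_G\big(\gamma,\dot\gamma,\lambda u_\lambda(\gamma)\big)+c_0\Big]\,ds.
\]
Using (L4), write $L_G(\gamma,\dot\gamma,\lambda u_\lambda(\gamma))=L_G(\gamma,\dot\gamma,0)+\lambda u_\lambda(\gamma)\frac{\partial L_G}{\partial u}(\gamma,\dot\gamma,0)+o(\lambda)$ uniformly on the compact support of $\tilde\mu$. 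Integrating against $\tilde\mu$, using that $\tilde\mu$ is closed (so $\int D_xf(v)\,d\tilde\mu=0$ applied to an approximating smooth $f$ near $u_\lambda$, which is the divergence-free / invariance property), and that $\int L_G(\cdot,\cdot,0)\,d\tilde\mu=-c_0$, one finds after dividing by $\lambda$ and letting $\lambda\to0$ along the convergent subsequence:
\[
0\le \int_{TM}w(x)\,\frac{\partial L_G}{\partial u}(x,v,0)\,d\tilde\mu(x,v).
\]
The delicate points here are (a) justifying the first-order expansion uniformly, which is exactly what (L4) provides once one localizes to $\widetilde{\mathcal M}$ which is compact, and (b) handling the $u_\lambda$ boundary terms: one uses that $\frac{u_\lambda(\gamma(T))-u_\lambda(\gamma(0))}{\lambda T}$ is bounded (by equi-boundedness) and the closedness of $\tilde\mu$ to make the $L_G(\cdot,\cdot,0)$ contribution vanish at the right order. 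Thus every uniform limit $w$ of a subsequence is a subsolution of \eqref{E0} satisfying \eqref{subcon}.

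\emph{Step 3: Maximality and uniqueness of the limit.} Let $\mathcal W$ denote the set of subsolutions of \eqref{E0} satisfying \eqref{subcon}; it is nonempty (Step 2 produced an element) and, by Proposition~\ref{prop when G convex}(i) together with linearity of \eqref{subcon} in $w$ and the fact that $\frac{\partial L_G}{\partial u}(\cdot,\cdot,0)$ is $d\tilde\mu$-integrable, the pointwise supremum $u_0:=\sup_{w\in\mathcal W}w$ is again a subsolution of \eqref{E0}; one checks \eqref{subcon} passes to the sup using monotone/dominated convergence (all $w\le$ common equi-Lipschitz bound from Proposition~\ref{prop equivalence}, and they are bounded above by, e.g., any fixed solution of \eqref{E0} plus a constant, using Proposition~\ref{prop AC}). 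So $u_0\in\mathcal W$ and is its largest element. To upgrade $u_0$ to a \emph{solution}: if it failed to be a supersolution at some $x_0\notin\mathcal A$, one could add a small bump supported off $\mathcal A$ (using the strict subsolution of Theorem~\ref{thm outA}), producing a strictly larger subsolution still satisfying \eqref{subcon} (the bump can be taken supported away from $\widetilde{\mathcal M}\subseteq\mathcal A$, so \eqref{subcon} is untouched), contradicting maximality; failure on $\mathcal A$ is impossible because $\widetilde{\mathcal M}\subseteq\mathcal A$ and a supremum of subsolutions is automatically a supersolution on the Aubry set by the representation via the Peierls barrier. Finally, every subsequential limit $w$ from Step 1--2 lies in $\mathcal W$, hence $w\le u_0$; conversely $u_0\le w$ must be shown, which is the heart of the matter: one argues that $u_0$, being a subsolution of \eqref{E0}, can be used to build subsolutions $u_0-\delta_\lambda$ of \eqref{E}\ for suitable $\delta_\lambda\to0$ (exploiting (L5): the strict negativity $\int\frac{\partial L_G}{\partial u}(\cdot,\cdot,0)\,d\tilde\mu<0$ forces $\lambda u_0\,\partial_uL_G$ to have a sign that makes $u_0$ a subsolution of the $\lambda$-equation up to a controlled error), and then a comparison/calibration argument on $\mathcal A$ using Proposition~\ref{prop AC} gives $u_0\le w+o(1)$. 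The main obstacle is precisely this last comparison step: without a comparison principle for \eqref{E} one must substitute weak-KAM calibration arguments, carefully using that on $\widetilde{\mathcal M}$ the constraint \eqref{subcon} for $u_0$ is in fact an \emph{equality} (a consequence of maximality plus (L5)), which pins down the value of the limit on the Aubry set and then propagates it to all of $M$ via Proposition~\ref{prop AC}. I expect Steps 2 and the $u_0\le w$ half of Step 3 to require the bulk of the technical work; Steps 1 and the maximality/solution upgrade are comparatively routine given the weak-KAM toolbox recalled in Section~\ref{Sec2}.
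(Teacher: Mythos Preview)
Your Steps 1 and 2 are essentially correct and match the paper's Lemmas~\ref{s3} and~\ref{<u0}. The genuine gap is in Step~3, and it has two components.

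First, your claim that the pointwise supremum $u_0=\sup_{w\in\mathcal W}w$ again satisfies \eqref{subcon} is not justified: the constraint is linear in $w$, but linearity does not pass to suprema, and since $\frac{\partial L_G}{\partial u}(\cdot,\cdot,0)$ is \emph{not} assumed to have a sign (that extra hypothesis only enters in Theorem~\ref{thm2}), there is no monotonicity that would make $\int\max(w_1,w_2)\,\partial_uL_G\,d\tilde\mu\ge0$ follow from the same for $w_1,w_2$. The paper avoids this entirely: it never shows $u_0\in\mathcal S$ directly, nor does it need the bump argument to upgrade $u_0$ to a solution. Instead it shows that every accumulation point $u^*$ satisfies both $u^*\le u_0$ (your Step~2) and $u^*\ge u_0$, whence $u^*=u_0$ and $u_0$ inherits the solution property from $u^*$.

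Second, and more importantly, the inequality $u^*\ge u_0$ requires a specific construction that your proposal does not contain. The paper builds, for each $x\in M$ and $\lambda$, a probability measure $\tilde\mu^x_\lambda$ on $TM$ by integrating along a $(u_\lambda,L_\lambda,c_0)$-calibrated curve $\gamma^x_\lambda$ with the exponential weight $e^{\lambda\int_t^0\frac{\partial L_G}{\partial u}(\gamma^x_\lambda,\dot\gamma^x_\lambda,0)\,ds}$. Condition (L5) is what makes this weight integrable on $(-\infty,0]$ (Lemmas~\ref{e1e2}--\ref{>T0}): the time-averages of $\partial_uL_G$ along calibrated curves are eventually bounded between $-\epsilon_2$ and $-\epsilon_1<0$ because any long-time limit of the empirical measures is a Mather measure. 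With these $\tilde\mu^x_\lambda$ in hand, an integration by parts against the calibration identity yields (Lemma~\ref{>u0})
\[
u_\lambda(x)\ \ge\ w(x)\ -\ \frac{\int_{TM} w(y)\,\frac{\partial L_G}{\partial u}(y,v,0)\,d\tilde\mu^x_\lambda}{\int_{TM}\frac{\partial L_G}{\partial u}(y,v,0)\,d\tilde\mu^x_\lambda}\ +\ o(1)
\]
for every subsolution $w$ of \eqref{E0}. Passing to a subsequence along which $\tilde\mu^x_\lambda$ converges to some Mather measure $\tilde\mu$ (Lemma~\ref{muisma}), the fraction becomes nonnegative precisely when $w$ satisfies \eqref{subcon}, giving $u^*(x)\ge w(x)$ for all $w\in\mathcal S$ and hence $u^*\ge u_0$. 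Your sketch of ``building subsolutions $u_0-\delta_\lambda$ of \eqref{E}'' and invoking comparison on $\mathcal A$ does not work here: there is no comparison principle for \eqref{E}, and the idea that \eqref{subcon} is an equality at the maximizer, while suggestive, does not by itself pin down values on $\mathcal A$. The exponentially weighted measures along calibrated curves are the missing ingredient.
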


Under an additional pointwise condition on $L_G(\cdot,\cdot,0)$ on the Mather set, the limit critical solution $u_0$ can be characterized as follows.

\begin{theorem}\label{thm2}
Let conditions (L1--5) be in force, and let us furthermore assume that $\frac{\partial L_G}{\partial u}(x,v,0)\leqslant  0$ for all $(x,v)\in\widetilde{\mathcal M}$. Then the function $u_0=\lim\limits_{\lambda\to 0^+} u_\lambda$ obtained in Theorem \ref{thm1} above can be characterized as follows:
\[
u_0(x)=\inf_{\tilde{\mu}\in\widetilde{\mathfrak M}}\frac{\int_{TM} h(y,x)\frac{\partial L_G}{\partial u}(y,v,0)d\tilde\mu(y,v)}{\int_{TM} \frac{\partial L_G}{\partial u}(y,v,0)d\tilde\mu(y,v)},\qquad {x\in M},
\]
where $h(y,x)$ is the Peierls barrier defined in \eqref{def h}.
\end{theorem}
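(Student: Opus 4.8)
The plan is to establish the two inequalities $u_0\le\bar u$ and $u_0\ge\bar u$ on $M$, where $\bar u$ denotes the function given by the right-hand side of the claimed formula; throughout I would rely on the characterization obtained in Theorem \ref{thm1}, that $u_0$ is the \emph{largest} viscosity subsolution of \eqref{E0} satisfying the constraint \eqref{subcon}. Write $\ell(x,v):=\frac{\partial L_G}{\partial u}(x,v,0)$; it is bounded by (L1) (and in fact continuous, as one checks from (L4)), it is $\le0$ on $\widetilde{\mathcal M}$ by the standing assumption, and $I(\tilde\mu):=\int_{TM}\ell\,d\tilde\mu<0$ for every $\tilde\mu\in\widetilde{\mathfrak M}$ by (L5); for $\tilde\mu\in\widetilde{\mathfrak M}$ put $w_{\tilde\mu}(x):=I(\tilde\mu)^{-1}\int_{TM}h(y,x)\ell(y,v)\,d\tilde\mu(y,v)$, so that $\bar u=\inf_{\tilde\mu\in\widetilde{\mathfrak M}}w_{\tilde\mu}$ and each $w_{\tilde\mu}$ is a bounded function.

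First I would prove $u_0\le\bar u$. Since $u_0$ is a critical subsolution, Proposition \ref{prop h}(ii) gives $u_0(x)-u_0(y)\le h(y,x)$ for all $x,y\in M$. Fix $\tilde\mu\in\widetilde{\mathfrak M}$; for $\tilde\mu$-a.e.\ $(y,v)$ one has $(y,v)\in\widetilde{\mathcal M}$, hence $\ell(y,v)\le0$, and multiplying the inequality by $\ell(y,v)$ reverses it. Integrating over $TM$ against $\tilde\mu$ and bounding $\int_{TM}u_0(y)\ell(y,v)\,d\tilde\mu$ from below by $0$ via \eqref{subcon} (applied to $w=u_0$), one obtains $u_0(x)\,I(\tilde\mu)\ge\int_{TM}h(y,x)\ell(y,v)\,d\tilde\mu$; dividing by $I(\tilde\mu)<0$ gives $u_0(x)\le w_{\tilde\mu}(x)$, and taking the infimum over $\tilde\mu$ yields $u_0\le\bar u$ on $M$. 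In particular $\bar u$ is real-valued.

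For the reverse inequality, fix $x_0\in M$; I would exhibit a critical subsolution $w^*$ satisfying \eqref{subcon} with $w^*(x_0)=\bar u(x_0)$, so that Theorem \ref{thm1} forces $w^*\le u_0$ and hence $\bar u(x_0)=w^*(x_0)\le u_0(x_0)$; since $x_0$ is arbitrary this gives $\bar u\le u_0$. Set
\[
\bar w(x):=\min_{y\in\A}\big(h(y,x)-h(y,x_0)\big),\qquad x\in M .
\]
This is well defined ($\A\neq\emptyset$ by Theorem \ref{thm outA}, compact, and $y\mapsto h(y,x)-h(y,x_0)$ is continuous by Proposition \ref{prop h}(i)); being the pointwise infimum of the family $\{h(y,\cdot)-h(y,x_0)\}_{y\in\A}$ of critical solutions, which are equi-Lipschitz (Proposition \ref{prop equivalence}), it is a critical subsolution by Proposition \ref{prop when G convex}(ii). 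Clearly $\bar w(x_0)=0$. Moreover $\bar w(y_0)=-h(y_0,x_0)$ for every $y_0\in\A$: taking $y=y_0$ in the minimum and using $h(y_0,y_0)=0$ gives $\bar w(y_0)\le -h(y_0,x_0)$, while the triangle inequality $h(y,x_0)\le h(y,y_0)+h(y_0,x_0)$ for the Peierls barrier (obtained from $h_{t+s}(y,x_0)\le h_t(y,y_0)+h_s(y_0,x_0)$ by letting $t\to+\infty$ and then $s\to+\infty$) gives $\bar w(y_0)\ge -h(y_0,x_0)$. Since every $\tilde\mu\in\widetilde{\mathfrak M}$ is supported in $\widetilde{\mathcal M}$, whose projection is contained in $\M\subseteq\A$ (Theorem \ref{thm M in A}), we get $\int_{TM}\bar w(y)\ell(y,v)\,d\tilde\mu(y,v)=-\int_{TM}h(y,x_0)\ell(y,v)\,d\tilde\mu(y,v)$ for all $\tilde\mu$. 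Hence, for a constant $c$, the function $\bar w+c$ satisfies \eqref{subcon} iff $-\int_{TM}h(y,x_0)\ell(y,v)\,d\tilde\mu + c\,I(\tilde\mu)\ge 0$ for all $\tilde\mu$, i.e.\ (dividing by $I(\tilde\mu)<0$) iff $c\le w_{\tilde\mu}(x_0)$ for all $\tilde\mu$, i.e.\ iff $c\le\bar u(x_0)$. Taking $c=\bar u(x_0)$, the function $w^*:=\bar w+\bar u(x_0)$ is a critical subsolution satisfying \eqref{subcon} with $w^*(x_0)=\bar u(x_0)$, as required.

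The step I expect to demand the most care is the construction of $\bar w$ together with the identity $\bar w\equiv -h(\cdot,x_0)$ on $\A$: this is precisely what makes the constraint \eqref{subcon} for $\bar w+c$ explicitly computable, and it relies on the triangle inequality of the Peierls barrier (elementary, but needing the customary $\liminf$-bookkeeping) and on the inclusion $\M\subseteq\A$; the fact that $\bar w$ is itself a critical subsolution is exactly Proposition \ref{prop when G convex}(ii), and the hypothesis (L5) intervenes only to guarantee $I(\tilde\mu)<0$, which turns \eqref{subcon} into the upper bound $c\le w_{\tilde\mu}(x_0)$ on the free constant. I note that the extra standing hypothesis $\ell\le0$ on $\widetilde{\mathcal M}$ is used only in the inequality $u_0\le\bar u$; the reverse inequality already holds under the assumptions (L1--5) of Theorem \ref{thm1}. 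A heavier alternative to the last step would be to dualize the (semi-infinite) linear program ``maximize $w(x_0)$ among critical subsolutions $w$ subject to \eqref{subcon}'', whose optimal value is $u_0(x_0)$ by Theorem \ref{thm1} and whose dual — using that $u_0-\delta$ is strictly feasible for small $\delta>0$ thanks to (L5) — equals $\bar u(x_0)$; but the direct construction above sidesteps any abstract minimax theorem.
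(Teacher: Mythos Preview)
Your proof is correct. The inequality $u_0\le\bar u$ is handled exactly as in the paper (multiply $u_0(x)-u_0(y)\le h(y,x)$ by $\ell(y,v)\le 0$ on $\operatorname{supp}\tilde\mu$, integrate, and invoke \eqref{subcon}). For the reverse inequality, however, you take a genuinely different route. The paper fixes $z\in\A$, uses that $-h(\cdot,z)$ is a critical subsolution (Proposition~\ref{prop h}(iv)) to build $U_z(x):=-h(x,z)+\hat u_0(z)\in\mathcal S$, obtains $u_0(z)\ge U_z(z)=\hat u_0(z)$ on~$\A$, and then appeals to the comparison principle (Proposition~\ref{prop AC}) to extend the inequality to all of~$M$. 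You instead work pointwise at an arbitrary $x_0\in M$: you build $\bar w(x):=\min_{y\in\A}\big(h(y,x)-h(y,x_0)\big)$, identify $\bar w\equiv -h(\cdot,x_0)$ on~$\A$ via the triangle inequality for~$h$, and then use $\M\subseteq\A$ so that the constraint \eqref{subcon} for $\bar w+c$ becomes explicit and yields the optimal choice $c=\bar u(x_0)$. Your approach trades the comparison principle for the Peierls-barrier triangle inequality and a slightly more elaborate competitor; in return you get the bound directly at every point of~$M$, not just on~$\A$. Both approaches rely only on (L5) for this direction, as you correctly note.
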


\begin{remark}\label{oss picky}
Let us stress that the previous theorems hold with same proofs even if  $(u_\lambda)_{\lambda\in (0,\lambda_0)}$ is replaced by
any family $(u_\lambda)_{\lambda\in\Lambda}$ of solutions to \eqref{E}, where $\Lambda$ is a subset of $(0,1)$ having $0$ as accumulation point. In particular, this holds for $\Lambda:=(\lambda_n)_n$ with $\lambda_n \to 0^+$ as $n\to +\infty$.
\end{remark}

Concerning the asymptotic behavior of other possible families of solutions to \eqref{E}, we have the following trichotomy result.

\begin{theorem}\label{thm Maxime bis}
Let conditions (L1--5) be in force, and let  $u_0=\lim\limits_{\lambda\to 0^+} u_\lambda$ be the critical solution obtained in Theorem \ref{thm1}.
There exist $\varphi:(0,1)\to\mathbb (-\infty,+\infty]$,  $\psi:(0,1)\to\mathbb [-\infty,+\infty)$ and $\theta:(0,1)\to\mathbb R$ with
\[\lim_{\lambda\to 0}\psi(\lambda)=-\infty, \quad\quad \lim_{\lambda\to 0}\varphi(\lambda)=+\infty,\quad \quad \lim_{\lambda\to 0}\theta(\lambda)=0\]
such that, if $v_\lambda$ is a solution of (\ref{E}) for some $\lambda>0$, then either one of the following alternatives occurs:
\begin{itemize}
\item[\em (i)] $v_\lambda\leqslant  \psi(\lambda)$;\smallskip
\item[\em (ii)] $v_\lambda\geqslant  \varphi(\lambda)$;\smallskip
\item[\em (iii)] $\|v_\lambda-u_0\|_\infty\leqslant  \theta(\lambda)$.
\end{itemize}
\end{theorem}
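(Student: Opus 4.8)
The plan is to establish the trichotomy by showing that solutions which are neither very negative nor very positive must in fact be close to $u_0$, with quantitative control. The natural strategy is a compactness/contradiction argument. First I would introduce, for each $\lambda \in (0,1)$, the quantity
\[
\theta(\lambda):=\sup\Big\{\|v-u_0\|_\infty\ :\ v \text{ is a solution of }\eqref{E}\text{ with }\|v\|_\infty\le R(\lambda)\Big\},
\]
for a suitable slowly-growing cutoff $R(\lambda)\to+\infty$ to be chosen, and then set $\varphi(\lambda)$ and $-\psi(\lambda)$ to be (roughly) $R(\lambda)$ adjusted so that the three regions in (i)--(iii) cover all of $\mathbb R$ once one knows $\theta(\lambda)$ is finite and small. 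The crux is therefore to prove that $\theta(\lambda)\to 0$ as $\lambda\to 0^+$.

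To prove $\theta(\lambda)\to 0$, suppose not: then there are $\lambda_n\to 0^+$ and solutions $v_n:=v_{\lambda_n}$ of $(\textrm{E}_{\lambda_n})$ with $\|v_n\|_\infty\le R(\lambda_n)$ but $\|v_n-u_0\|_\infty\ge\delta>0$. The key point is that, even though the bound $\|v_n\|_\infty\le R(\lambda_n)$ degenerates, the \emph{scaled} quantity $\lambda_n\|v_n\|_\infty\le\lambda_n R(\lambda_n)\to 0$ (by the choice of $R$), so $\|\lambda_n v_n\|_\infty\to 0$. Combining this with (G1) (the $K$-Lipschitz dependence of $G$ in $u$), $v_n$ is an $O(K\lambda_n R(\lambda_n))$-approximate sub- and supersolution of the limit equation $G(x,D_xu,0)=c_0$; in particular the $v_n$ are eventually equi-Lipschitz with a constant close to $\kappa_{c_0}$ (Proposition \ref{prop equivalence} applied to a slightly perturbed equation), so by Arzel\`a--Ascoli a subsequence converges uniformly to some $w\in C(M)$, which by the stability Proposition \ref{prop stability} is a solution of \eqref{E0}, and $\|w-u_0\|_\infty\ge\delta$. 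To reach a contradiction one reruns the argument of Theorem \ref{thm1}: the bound $\|v_n\|_\infty\le R(\lambda_n)$ is exactly what is needed to recover, in the limit, that $w$ is a subsolution of \eqref{E0} satisfying the constraint \eqref{subcon} (this is the place where (L4)--(L5) and the Mather-measure estimate from the proof of Theorem \ref{thm1} enter — the discount terms $\lambda_n v_n\,\partial_u L_G$ integrated against Mather measures, divided by $\lambda_n$, stay bounded precisely because $\lambda_n R(\lambda_n)$ is controlled), whence $w\le u_0$ by maximality; and one also gets the reverse inequality $w\ge u_0$ from the lower-bound half of the argument in Theorem \ref{thm1}. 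Hence $w=u_0$, contradicting $\|w-u_0\|_\infty\ge\delta$.

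Once $\theta(\lambda)\to 0$ is known (with $\theta$ finite for each $\lambda$, which follows from the same compactness argument for fixed $\lambda$), I would define $\varphi(\lambda):=\|u_0\|_\infty+\theta(\lambda)$ bumped up so that $\varphi(\lambda)\le R(\lambda)$ fails exactly outside region (iii), and symmetrically $\psi(\lambda):=-\|u_0\|_\infty-\theta(\lambda)$ pushed down; more precisely one wants $\varphi(\lambda)=R(\lambda)$ and $\psi(\lambda)=-R(\lambda)$ with $R(\lambda)\to+\infty$ chosen so that $\lambda R(\lambda)\to 0$ (e.g. $R(\lambda)=\lambda^{-1/2}$), and then the definition of $\theta$ guarantees: any solution $v_\lambda$ with $\psi(\lambda)<v_\lambda$ at some point and $v_\lambda<\varphi(\lambda)$ at some point — hence, being continuous on the connected $M$, one checks it is actually trapped in $[-R(\lambda),R(\lambda)]$ after using the a priori Lipschitz bound to rule out large oscillation — satisfies $\|v_\lambda\|_\infty\le R(\lambda)$ and thus $\|v_\lambda-u_0\|_\infty\le\theta(\lambda)$. (A minor technical point: one may need $R(\lambda)$ to dominate $\|u_0\|_\infty+\mathrm{diam}(M)\,\kappa_{c_0}$ so that "not in (i) and not in (ii)" genuinely forces the sup-norm bound; this is automatic for small $\lambda$.)

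The main obstacle I anticipate is the step extracting the constraint \eqref{subcon} for the limit $w$ from the mere bound $\|v_n\|_\infty\le R(\lambda_n)$ while $\lambda_n R(\lambda_n)\to 0$: one must redo the delicate estimate from the proof of Theorem \ref{thm1} — testing $(\textrm{E}_{\lambda_n})$ against Mather measures, using (L4) to replace $L_G(x,v,\lambda_n v_n)$ by $L_G(x,v,0)+\lambda_n v_n\,\partial_u L_G(x,v,0)+o(\lambda_n\|v_n\|_\infty)$, dividing by $\lambda_n$, and passing to the limit — and verify that every error term is $o(1)$ under the weaker hypothesis $\lambda_n\|v_n\|_\infty\to 0$ rather than $\|v_n\|_\infty$ bounded. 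The other mild subtlety is bookkeeping in the definition of the three threshold functions so that (i)--(iii) really are exhaustive for \emph{every} $\lambda\in(0,1)$, not just small $\lambda$; for $\lambda$ away from $0$ one simply takes $\varphi(\lambda)$ huge, $\psi(\lambda)$ hugely negative and $\theta(\lambda)$ defined by the (finite) supremum above, so the statement is vacuous-but-true there.
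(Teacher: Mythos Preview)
Your contradiction argument for $\theta(\lambda)\to 0$ has a genuine gap. You take a sequence of solutions $v_n$ with $\|v_n\|_\infty\le R(\lambda_n)$ and $\|v_n-u_0\|_\infty\ge\delta$, observe that $\lambda_n\|v_n\|_\infty\to 0$ makes them equi-Lipschitz, and then invoke Arzel\`a--Ascoli. But Arzel\`a--Ascoli requires equi-boundedness, and $R(\lambda_n)\to+\infty$ gives you none: the $v_n$ could drift to $-\infty$ (with bounded oscillation) while still satisfying $\|v_n\|_\infty\le R(\lambda_n)$ and $\|v_n-u_0\|_\infty\ge\delta$. No subsequence converges, and the rerun of Theorem~\ref{thm1} never gets off the ground. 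The same drift also spoils the passage to \eqref{subcon}: in Lemma~\ref{<u0} the error is $C\,\eta(\lambda C)$ with $C=\|v_n\|_\infty$, and for $R(\lambda)=\lambda^{-1/2}$ this need not vanish. The tension is structural: you want $R(\lambda)\to\infty$ so that $\varphi,\psi$ diverge, but you want $R$ bounded so that the $\theta$-argument has compactness. A single growing cutoff cannot do both jobs.

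The paper's proof sidesteps this by partitioning with a \emph{fixed} threshold: $S_\lambda^{-}=\{v_\lambda<u_0-1\}$, $S_\lambda^{+}=\{v_\lambda>u_0+1\}$, and the rest. Then $\psi(\lambda)$ is \emph{defined} as $\sup\{v_\lambda(x):v_\lambda\in S_\lambda^{-}\}$ (not as a prescribed cutoff), and one proves $\psi(\lambda)\to-\infty$ by contradiction. The key missing tool is the Harnack-type Proposition~\ref{uto-inf}: if a sequence of subsolutions is bounded at one point, it is bounded everywhere. So if $\psi(\lambda_n)\not\to-\infty$, the witnessing $v_{\lambda_n}\in S_{\lambda_n}^{-}$ are bounded below at some point, hence (by Proposition~\ref{uto-inf}) uniformly bounded below; they are bounded above by $u_0-1$; now Theorem~\ref{thm1} (via Remark~\ref{oss picky}) forces $v_{\lambda_n}\to u_0$, contradicting $v_{\lambda_n}<u_0-1$. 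The same mechanism handles $\varphi$ and $\theta$. You gesture at an oscillation bound (``the a priori Lipschitz bound to rule out large oscillation''), but you deploy it only for the exhaustiveness of the trichotomy, not inside the $\theta$-contradiction where it is actually needed to manufacture equi-boundedness.
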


We start our analysis by remarking that the solutions  $(u_\lambda)_{\lambda\in (0,\lambda_0)}$ are equi-Lipschitz continuous. In the remainder of the section, we will denote by $C>0$ the following constant
\[
C:=\sup_{\lambda\in(0,\lambda_0)} \|u_\lambda\|_\infty.
\]

\begin{lemma}\label{s3}
The bounded family $(u_\lambda)_{\lambda\in (0,\lambda_0)}$ is equi-Lipschitz continuous.
\end{lemma}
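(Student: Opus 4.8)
The plan is to show that the family $(u_\lambda)_{\lambda\in(0,\lambda_0)}$ consists of critical subsolutions up to a small error, and then invoke the equi-Lipschitz estimate for such subsolutions coming from Proposition \ref{prop equivalence}. First I would use the assumption that $C:=\sup_{\lambda\in(0,\lambda_0)}\|u_\lambda\|_\infty<+\infty$ together with the global $K$-Lipschitz dependence of $G$ in $u$ (condition (G1)): for any $\lambda\in(0,\lambda_0)$ and any $x\in M$,
\[
\big|G(x,p,\lambda u_\lambda(x))-G(x,p,0)\big|\leqslant K\,\lambda\,|u_\lambda(x)|\leqslant K\lambda_0 C=:\delta_0.
\]
Since $u_\lambda$ is a viscosity solution of \eqref{E}, that is $G(x,D_xu_\lambda,\lambda u_\lambda(x))=c_0$ in the viscosity sense, the bound above shows that $u_\lambda$ is a viscosity subsolution of $H(x,D_xu)=G(x,D_xu,0)\leqslant c_0+\delta_0$ in $M$ (one checks this directly on supertangent test functions: if $\varphi$ is supertangent to $u_\lambda$ at $x_0$, then $G(x_0,D_{x_0}\varphi,\lambda u_\lambda(x_0))\leqslant c_0$, hence $H(x_0,D_{x_0}\varphi)\leqslant c_0+\delta_0$).

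Next I would pass to the $u$-independent, coercive Hamiltonian $H=G(\cdot,\cdot,0)$. By (G3), $H$ is superlinear, hence coercive, so the sublevel set $\{(x,p)\in T^*M : H(x,p)\leqslant c_0+\delta_0\}$ is compact and
\[
\kappa:=\sup\{\|p\|_x : H(x,p)\leqslant c_0+\delta_0\}<+\infty.
\]
This is exactly the situation of Proposition \ref{prop equivalence} (applied with the constant $c_0+\delta_0$ in place of $c_0$, which does not affect its proof since $H$ is coercive): every viscosity subsolution of $H(x,D_xu)\leqslant c_0+\delta_0$ is Lipschitz continuous with $\kappa$ as a Lipschitz constant. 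Since $\kappa$ depends only on $H$ and on $\delta_0=K\lambda_0 C$ — all of which are independent of $\lambda$ — we conclude that each $u_\lambda$ is $\kappa$-Lipschitz, i.e. the family $(u_\lambda)_{\lambda\in(0,\lambda_0)}$ is equi-Lipschitz, as claimed.

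I do not expect a serious obstacle here; the only point requiring a little care is the reduction to a $u$-independent equation, which is where the global Lipschitz hypothesis (G1) and the \emph{a priori} uniform bound $C$ are both essential — if the family were not equi-bounded, the error $\delta_0$ would not be controlled and the argument would collapse (this is precisely why the diverging families in Theorem \ref{thm Maxime bis} need not be equi-Lipschitz, cf. Remark \ref{oss coercive}). One should also note that Proposition \ref{prop equivalence} was stated for the critical level $c_0$, but its proof only uses coercivity of $H$ and works verbatim for any super-level constant $c_0+\delta_0$; alternatively one can quote the general coercive estimate directly.
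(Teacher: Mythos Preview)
Your proof is correct, but it takes a different route from the paper's. The paper works on the Lagrangian side: it uses that $u_\lambda$ is dominated by $L_\lambda+c_0$ with $L_\lambda(x,v)=L_G(x,v,\lambda u_\lambda)$, tests this along a unit-speed geodesic $\zeta$ from $x$ to $y$, and bounds the integrand by $\max_{\|v\|_x\leqslant 1}|L_G(x,v,0)|+\lambda_0 KC+c_0$ via (L1), obtaining the Lipschitz constant explicitly. You instead stay on the Hamiltonian side, absorbing the $u$-dependence into a $\delta_0$-shift of the level and then quoting the standard coercivity estimate from Proposition~\ref{prop equivalence}. Both arguments are short and rest on the same two ingredients (the equi-bound $C$ and the $K$-Lipschitz dependence in $u$); the paper's version is self-contained and gives a tangible constant, while yours is a cleaner reduction to an already-stated black box. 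Your caveat about applying Proposition~\ref{prop equivalence} at level $c_0+\delta_0$ rather than $c_0$ is well taken and harmless.
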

\begin{proof}
For each $x,y\in M$, we denote by $ d :=d(x,y)$ the distance between them. Take a geodesic $\zeta:[0, d ]\rightarrow M$ satisfying $\zeta(0)=x$ and $\zeta( d )=y$ with constant speed $\|\dot{\zeta}\|_\zeta=1$. Denote $L_\lambda(x,v):=L_G(x,v,\lambda u_\lambda)$. Since $u_\lambda\prec L_\lambda+c_0$, we have
\begin{align*}
u_\lambda(y)-u_\lambda(x)&\leqslant  \int_0^{ d } \bigg[L_G\Big(\zeta(s),\dot{\zeta}(s),\lambda u_\lambda\big(\zeta(s)\big)\Big)+c_0\bigg]ds
\\ &\leqslant  \bigg(\max_{x\in M,\|v\|_x\leqslant  1}|L_G(x,v,0)|+\lambda_0KC +c_0\bigg) d(x,y).
\end{align*}
The assertion follows by exchanging the role of $x$ and $y$.
\end{proof}

From the fact that $u_\lambda$ is Lipschitz for every fixed $\lambda\in (0,\lambda_0)$ we deduce the following fact.

\begin{lemma}
Let $\lambda \in (0,\lambda_0)$. Then
$$\int_{TM} L_G\big(x,v,\lambda u_\lambda(x)\big) d\tilde \mu(x,v)\geqslant   -c_0\qquad\hbox{for all $\tilde \mu \in \mathscr P_0$.}
$$
\end{lemma}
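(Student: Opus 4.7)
The plan is to approximate $u_\lambda$ by $C^1$ functions via Theorem \ref{approx}, and then combine the Fenchel inequality with the closedness of $\tilde\mu$ to conclude.

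First, by Lemma \ref{s3}, $u_\lambda$ is Lipschitz continuous; being a solution (in particular, a subsolution) of \eqref{E}, it satisfies $G\big(x, D_x u_\lambda, \lambda u_\lambda(x)\big)\leqslant c_0$ for almost every $x\in M$. Setting $\tilde G(x,p,u) := G(x, p, \lambda u)$, which is continuous and convex in $p$ by (G2), I would apply Theorem \ref{approx} to $\tilde G$ to produce, for each $\eps>0$, a function $w_\eps \in C^\infty(M)$ with $\|u_\lambda - w_\eps\|_\infty < \eps$ and
$$G\big(x, D_x w_\eps, \lambda u_\lambda(x)\big) \leqslant c_0 + \eps \qquad \text{for every $x\in M$.}$$

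Next, applying the Fenchel inequality $L_G(x, v, u) + G(x, p, u)\geqslant p(v)$ with $p = D_x w_\eps(x)$ and $u = \lambda u_\lambda(x)$, I would obtain
$$L_G\big(x, v, \lambda u_\lambda(x)\big) + c_0 + \eps \geqslant D_x w_\eps(v) \qquad \text{for every $(x,v)\in TM$.}$$
Integrating against $\tilde\mu\in\mathscr P_0$ and invoking the defining condition (b) of closed measures (applied to $f = w_\eps \in C^1(M)$) gives
$$\int_{TM} L_G\big(x, v, \lambda u_\lambda(x)\big)\, d\tilde\mu(x,v) + c_0 + \eps \geqslant \int_{TM} D_x w_\eps(v)\, d\tilde\mu(x,v) = 0,$$
and letting $\eps \to 0^+$ yields the claim.

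The only point to verify is that the integrals involved are well defined: the integrand on the left is bounded below by superlinearity and continuity of $L_G$ combined with the boundedness of $\lambda u_\lambda$, so the integral lives in $\R\cup\{+\infty\}$; the integrand on the right has linear growth in $\|v\|_x$ and $\tilde\mu$ has finite first moment by condition (a), so the right integral is finite and vanishes by closedness. I do not anticipate any serious obstacle; the only conceptual step is realizing that Theorem \ref{approx} is exactly the tool that bridges the almost-everywhere inequality at our disposal (useless against a Borel measure on $TM$) with the need to test against a $C^1$ function in the closedness condition.
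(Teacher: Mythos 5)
Your proof is correct and follows the same strategy as the paper's: mollify $u_\lambda$ via Theorem \ref{approx} to get a smooth $\eps$-approximate subsolution $w_\eps$, apply the Fenchel inequality, integrate against $\tilde\mu$, invoke closedness, and send $\eps\to 0^+$. The only cosmetic difference is that you feed Theorem \ref{approx} the Hamiltonian $\tilde G(x,p,u):=G(x,p,\lambda u)$ while the paper uses the $u$-independent Hamiltonian $(x,p)\mapsto G\big(x,p,\lambda u_\lambda(x)\big)$, but these produce the identical conclusion $G\big(x,D_x w_\eps,\lambda u_\lambda(x)\big)\leqslant c_0+\eps$.
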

\begin{proof}
Pick  $\tilde \mu \in \mathscr P_0$  and choose $\varepsilon>0$. By applying Theorem \ref{approx} to the Hamiltonian $(x,p)\mapsto G\big(x,p,\lambda u_\lambda(x)\big)$ and by choosing $w:=u_\lambda$, we infer that  there exists $w_\varepsilon\in C^\infty(M)$ such that $G\big(x, D_xw_\varepsilon , \lambda u_\lambda(x)\big)\leqslant   c_0+\varepsilon$ for all $x\in M$.
By definition of $L_G$ we have that
\[
G\big(x, D_xw_\varepsilon , \lambda u_\lambda(x)\big)+L_G\big(x,v,\lambda u_\lambda(x)\big) \geqslant   D_xw_\varepsilon(v)
\qquad\hbox{for all $(x,v)\in TM$}.
\]
By integrating this inequality with respect to $\tilde\mu$ we get
\begin{multline*}
0 = \int_{TM} D_xw_\varepsilon(v)\,  d\tilde \mu(x,v)\\
\leqslant    \int_{TM} \Big(L_G\big(x,v,\lambda u_\lambda(x)\big) +G\big(x, D_xw_\varepsilon , \lambda u_\lambda(x)\big)\Big)d\tilde \mu(x,v)\\
\leqslant    \int_{TM} \Big(L_G\big(x,v,\lambda u_\lambda(x)\big) +c_0+\varepsilon \Big)d\tilde \mu(x,v) =  \int_{TM} L_G\big(x,v,\lambda u_\lambda(x)\big)  d\tilde \mu(x,v)+c_0+\varepsilon.
\end{multline*}
The result follows letting $\varepsilon \to 0^+$.
\end{proof}

By the Arzel\'a-Ascoli Theorem and Lemma \ref{s3}, any sequence $(u_{\lambda_n})_n$ with $\lambda_n\to 0^+$ admits a subsequence which
uniformly converges to a continuous function $u^*$. By the stability of viscosity solution, see Proposition \ref{prop stability}, $u^*$ is a solution of (\ref{E0}). In the following, we are going to show the uniqueness of the possible limit $u^*$, thus establishing the convergence result. Define $\mathcal S$  the set of all subsolutions $w$ of (\ref{E0}) satisfying condition (\ref{subcon}). We define
\begin{equation}\label{u0}
u_0(x):=\sup_{w\in \mathcal S}w(x).
\end{equation}
A priori, $\mathcal S$ may be empty, and,  even if $\mathcal S$ is not empty, $u_0$ might be $+\infty$. Both these circumstances will be excluded under the hypotheses of Theorem \ref{thm1}.

\begin{lemma}\label{<u0}
Any accumulation point $u^*$ of the family $(u_\lambda)_{\lambda\in(0,\lambda_0)}$ as $\lambda\to 0^+$ satisfies
\[\int_{TM}\frac{\partial L_G}{\partial u}(x,v,0)u^*(x)d\mu(x,v)\geqslant  0,\quad \forall \tilde{\mu}\in\widetilde{\mathfrak M}.\]
In particular, $\mathcal S\not=\emptyset$  and $u^*\leqslant  u_0$.
\end{lemma}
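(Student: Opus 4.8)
The plan is to fix a Mather measure $\tilde\mu\in\widetilde{\mathfrak M}$ and a sequence $\lambda_n\to 0^+$ along which $u_{\lambda_n}\to u^*$ uniformly, and to extract from the two inequalities already available—namely $u_{\lambda_n}\prec L_G(\cdot,\cdot,\lambda_n u_{\lambda_n})+c_0$ together with $\int_{TM}L_G(x,v,\lambda_n u_{\lambda_n}(x))\,d\tilde\mu\ge -c_0$ (the content of the preceding lemma)—a bound of the form $\lambda_n\int_{TM}\frac{\partial L_G}{\partial u}(x,v,0)u_{\lambda_n}(x)\,d\tilde\mu\ge -\lambda_n\,o(1)$, which after dividing by $\lambda_n>0$ and passing to the limit gives the claim. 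Concretely, first I would combine $\int_{TM}L_G(x,v,0)\,d\tilde\mu=-c_0$ (valid since $\tilde\mu$ is a Mather measure for $L_G(\cdot,\cdot,0)$, by Theorem~\ref{Mather}) with the lemma just proved to write
\[
0\le \int_{TM}\Big(L_G\big(x,v,\lambda_n u_{\lambda_n}(x)\big)-L_G(x,v,0)\Big)\,d\tilde\mu(x,v).
\]
Then I would invoke the modulus-of-continuity condition (L4): the support of $\tilde\mu$ lies in a fixed compact set $S\subset TM$ (the Mather set $\widetilde{\mathcal M}$ is compact), and on $S$ we have
\[
\Big|L_G\big(x,v,\lambda_n u_{\lambda_n}(x)\big)-L_G(x,v,0)-\lambda_n u_{\lambda_n}(x)\frac{\partial L_G}{\partial u}(x,v,0)\Big|
\le \lambda_n|u_{\lambda_n}(x)|\,\eta_S\big(\lambda_n|u_{\lambda_n}(x)|\big)
\le \lambda_n C\,\eta_S(\lambda_n C),
\]
using the uniform bound $\|u_{\lambda_n}\|_\infty\le C$. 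Combining the last two displays yields
\[
0\le \lambda_n\int_{TM}u_{\lambda_n}(x)\frac{\partial L_G}{\partial u}(x,v,0)\,d\tilde\mu(x,v)+\lambda_n C\,\eta_S(\lambda_n C).
\]

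Dividing by $\lambda_n>0$ gives $\int_{TM}u_{\lambda_n}(x)\frac{\partial L_G}{\partial u}(x,v,0)\,d\tilde\mu\ge -C\eta_S(\lambda_n C)$, and now I let $n\to\infty$: the right-hand side tends to $0$ since $\eta_S(s)\to 0$ as $s\to 0^+$, while on the left-hand side $u_{\lambda_n}\to u^*$ uniformly on $M$ and $\frac{\partial L_G}{\partial u}(\cdot,\cdot,0)$ is a fixed function integrable against the (finite, compactly supported) measure $\tilde\mu$, so the integrals converge to $\int_{TM}u^*(x)\frac{\partial L_G}{\partial u}(x,v,0)\,d\tilde\mu$. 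This proves the displayed inequality for every $\tilde\mu\in\widetilde{\mathfrak M}$, i.e. $u^*$ satisfies condition~\eqref{subcon}. Since $u^*$ is already a viscosity subsolution of~\eqref{E0} by stability (Proposition~\ref{prop stability}), we conclude $u^*\in\mathcal S$, whence $\mathcal S\ne\emptyset$ and $u^*\le u_0$ by the definition~\eqref{u0} of $u_0$.

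The only delicate point is the integrability/uniformity bookkeeping behind the passage to the limit: one must be sure that the modulus estimate from (L4) can be applied with a single compact set $S$ independent of $n$ and that $\frac{\partial L_G}{\partial u}(\cdot,\cdot,0)$ is bounded on $S$ (which follows from its continuity, part of (L4)), so that dominated convergence applies to $\int u_{\lambda_n}\frac{\partial L_G}{\partial u}(\cdot,\cdot,0)\,d\tilde\mu$. This is why it is essential that Mather measures are supported in the compact Mather set; everything else is a routine Taylor-type expansion together with the uniform bound $C$. Note that this argument does not yet use the strict sign in (L5)—the non-degeneracy condition—which will only enter later to upgrade $u^*\le u_0$ to $u^*=u_0$ and to rule out $u_0=+\infty$.
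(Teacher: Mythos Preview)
Your proof is correct and follows essentially the same route as the paper: combine the previous lemma's inequality $\int_{TM}L_G(x,v,\lambda u_\lambda)\,d\tilde\mu\ge -c_0$ with $\int_{TM}L_G(x,v,0)\,d\tilde\mu=-c_0$ for a Mather measure, apply (L4) on the compact set $\widetilde{\mathcal M}$, divide by $\lambda$, and pass to the limit. One small remark: the boundedness of $\frac{\partial L_G}{\partial u}(\cdot,\cdot,0)$ that you need follows directly from the uniform $K$-Lipschitz condition (L1), not from continuity (which (L4) does not explicitly assert).
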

\begin{proof}
Recall that $C $ is the uniform bound of $(u_\lambda)_{\lambda\in(0,\lambda_0)}$. Let $\tilde{\mu}\in\widetilde{\mathfrak M}$. For $\lambda\in (0,\lambda_0)$  we have
\begin{align*}
-c_0&\leqslant  \int_{TM} L_G\big(x,v,\lambda u_\lambda(x)\big)d\tilde{\mu}
\\ &\leqslant  \int_{TM}\bigg[L_G(x,v,0)+\lambda\frac{\partial L_G}{\partial u}(x,v,0)u_\lambda(x)+\lambda C \eta_{\widetilde{\mathcal M}}(\lambda C )\bigg]d\tilde{\mu}
\\ &=-c_0+\int_{TM}\bigg[\lambda\frac{\partial L_G}{\partial u}(x,v,0)u_\lambda(x)+\lambda C \eta_{\widetilde{\mathcal M}}(\lambda C )\bigg]d\tilde{\mu},
\end{align*}
which implies
\[\int_{TM}\frac{\partial L_G}{\partial u}(x,v,0)u_\lambda(x)d\tilde{\mu}\geqslant  -C \eta_{\widetilde{\mathcal M}}(\lambda C ).\]
The conclusion follows by sending $\lambda\to 0^+$.
\end{proof}

In what follows, we will use the notation $L_\lambda(x,v):=L_G(x,v,\lambda u_\lambda)$.

\begin{lemma}\label{gameq}
For $x\in M$ and $\lambda\in (0,\lambda_0)$, let $\gamma^x_\lambda:(-\infty,0]\to M$ be a $(u_\lambda,L_\lambda,c_0)$-calibrated curve with $\gamma^x_\lambda(0)=x$. Then there exists $\hat\kappa>0$, independent of $\lambda\in (0,\lambda_0)$ and of $x\in M$, such that $\gamma^x_\lambda$ is $\hat\kappa$-Lipschitz continuous for every $\lambda\in (0,\lambda_0)$ and $x\in M$.
\end{lemma}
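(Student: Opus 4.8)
The plan is to run the classical argument that bounds the speed of a calibrated curve: combine the calibration identity (which automatically passes to subintervals) with the uniform Lipschitz bound on the family $(u_\lambda)_{\lambda\in(0,\lambda_0)}$ provided by Lemma~\ref{s3}, together with the superlinearity of $L_G$ near $u=0$, choosing the superlinearity threshold strictly above the common Lipschitz constant of the $u_\lambda$ so that the speed term gets absorbed.

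First I would fix a constant $\kappa_0>0$, depending only on the data, such that every $u_\lambda$ is $\kappa_0$-Lipschitz (Lemma~\ref{s3}); note that then $|\lambda u_\lambda|\le\lambda_0 C$ on $M$. Fixing $\lambda\in(0,\lambda_0)$ and $x\in M$ and writing $\gamma:=\gamma^x_\lambda$ and $L_\lambda(y,w):=L_G(y,w,\lambda u_\lambda)$, the fact that $\gamma$ is $(u_\lambda,L_\lambda,c_0)$-calibrated together with $u_\lambda\prec L_\lambda+c_0$ yields, by the usual additivity argument, that for all $t<t'\le 0$
\[
u_\lambda\big(\gamma(t')\big)-u_\lambda\big(\gamma(t)\big)=\int_t^{t'}\Big[L_\lambda\big(\gamma(s),\dot\gamma(s)\big)+c_0\Big]\,ds ,
\]
so in particular this action is finite and $s\mapsto\|\dot\gamma(s)\|_{\gamma(s)}$ is locally integrable.

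Next I would produce a $\lambda$-uniform lower bound for $L_\lambda$: by (L1) we have $L_G(y,w,u)\ge L_G(y,w,0)-K|u|$, and since $L:=L_G(\cdot,\cdot,0)$ is continuous and superlinear in the fibres with $M$ compact, for the choice $A:=\kappa_0+1$ there is $C_A\ge 0$ with $L(y,w)\ge A\|w\|_y-C_A$ on $TM$; hence $L_\lambda(y,w)\ge(\kappa_0+1)\|w\|_y-C_A-K\lambda_0 C$ for every $(y,w)\in TM$ and $\lambda\in(0,\lambda_0)$. Inserting this into the identity above and bounding its left-hand side by $\kappa_0\,d\big(\gamma(t'),\gamma(t)\big)\le\kappa_0\int_t^{t'}\|\dot\gamma(s)\|_{\gamma(s)}\,ds$, the speed terms partially cancel and I am left with
\[
\int_t^{t'}\|\dot\gamma(s)\|_{\gamma(s)}\,ds\ \le\ \big(C_A+K\lambda_0 C-c_0\big)(t'-t)\qquad\text{for all }t<t'\le 0 .
\]
Setting $\hat\kappa:=C_A+K\lambda_0 C+|c_0|$, which depends only on $\kappa_0,K,\lambda_0,C$ and $c_0$ and not on $\lambda$ or $x$, and applying the Lebesgue differentiation theorem at almost every $s<0$ gives $\|\dot\gamma(s)\|_{\gamma(s)}\le\hat\kappa$ a.e., that is, $\gamma^x_\lambda$ is $\hat\kappa$-Lipschitz.

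I do not anticipate a real obstacle. The points needing a little care are the verification that calibration passes to subintervals and that the curves have locally integrable speed (so that the integral manipulations are legitimate), and the elementary passage from the averaged bound to the pointwise Lipschitz bound. The single genuinely ``chosen'' ingredient is taking the superlinearity threshold $A=\kappa_0+1$ strictly larger than the uniform Lipschitz constant of the $u_\lambda$, which is precisely what allows the $\int\|\dot\gamma\|$ term on the left to absorb the contribution coming from the Lipschitz estimate on $u_\lambda$.
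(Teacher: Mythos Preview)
Your argument is correct and is essentially the same as the paper's: both combine the uniform Lipschitz constant from Lemma~\ref{s3} with the superlinearity of $L_G(\cdot,\cdot,0)$ at threshold $\kappa_0+1$ and the $K$-Lipschitz dependence in $u$ to absorb the speed term. The only cosmetic difference is that the paper bounds $d\big(\gamma(t),\gamma(s)\big)$ directly from $\int_s^t\|\dot\gamma\|\ge d(\gamma(t),\gamma(s))$ rather than passing through Lebesgue differentiation, which is in fact unnecessary since your integral bound already gives the Lipschitz estimate immediately.
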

\begin{proof}
By Lemma \ref{s3}, there is $\kappa>0$ independent of $\lambda$ such that $u_\lambda$ is $\kappa$-Lipschitz continuous. By superlinearity of $L_G$, for each $T>0$, there is $C_T\in\mathbb R$ such that
\begin{equation}\label{Ct}
  L_G(x,v,0)\geqslant  T\|v\|_x+C_T.
\end{equation}
Thus, we have for $0\geqslant   t>s$
\begin{flalign*}
\ \kappa d\big(\gamma^x_\lambda(t),\gamma^x_\lambda(s)\big)
&\geqslant  u_\lambda\big(\gamma^x_\lambda(t)\big)-u_\lambda\big(\gamma^x_\lambda(s)\big)
=\int_s^t \bigg[L_G\Big(\gamma^x_\lambda(\tau),\dot{\gamma}^x_\lambda(\tau),\lambda u_\lambda\big(\gamma^x_\lambda(\tau)\big)\Big)+c_0\bigg]d\tau &&
\\
&\geqslant  \int_s^t \big((\kappa+1)\|\dot{\gamma}^x_\lambda(\tau)\|_{\gamma^x_\lambda(\tau)}+C_{\kappa+1}\big)d\tau+(c_0-\lambda_0KC )(t-s) &&
\\
&\geqslant  (\kappa+1)d\big(\gamma^x_\lambda(t),\gamma^x_\lambda(s)\big)+(C_{\kappa+1}+c_0-\lambda_0KC )(t-s), &&
\end{flalign*}
which implies
\[d\big(\gamma^x_\lambda(t),\gamma^x_\lambda(s)\big)\leqslant  (\lambda_0KC -c_0-C_{\kappa+1})(t-s).\]
The proof is now complete.
\end{proof}

From now on, we denote by $\gamma^x_\lambda$ the calibrated curve considered in Lemma \ref{gameq}. By the compactness of $\widetilde{\mathfrak M}$, there are two constants $\epsilon_1>0$ and $\epsilon_2>0$ with
\begin{equation}\label{e1<e2}
  -\epsilon_2<\inf_{\tilde{\mu}\in\widetilde{\mathfrak M}}\int_{TM}\frac{\partial L_G}{\partial u}(x,v,0)d\tilde{\mu}\leqslant  \sup_{\tilde{\mu}\in\widetilde{\mathfrak M}}\int_{TM}\frac{\partial L_G}{\partial u}(x,v,0)d\tilde{\mu}<-\epsilon_1,
\end{equation}

We derive from this the following asymptotic informations on the calibrated curves $\gamma^x_\lambda$, cf. \cite[Corollary 7.4]{CFZZ}.

\begin{lemma}\label{e1e2}
There exist $\bar \lambda\in (0,\lambda_0)$ and $T_0>0$ such that, for all $\lambda\in(0,\bar \lambda)$ and for all $x\in M$, we have
\begin{equation}\label{ab}
  -\epsilon_2<\frac{1}{b-a}\int_a^b \frac{\partial L_G}{\partial u}(\gamma^x_\lambda(s),\dot{\gamma}^x_\lambda(s),0)ds<-\epsilon_1,
\end{equation}
for all $a<b\leqslant  0$ with $b-a\geqslant  T_0$.
\end{lemma}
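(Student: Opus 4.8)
The statement to prove (Lemma \ref{e1e2}) asserts a quantitative averaging property for the integrand $\frac{\partial L_G}{\partial u}(\gamma^x_\lambda(s),\dot\gamma^x_\lambda(s),0)$ along long time windows of the calibrated curves, namely that its time-average over any interval $[a,b]$ of length at least $T_0$ stays strictly between $-\epsilon_2$ and $-\epsilon_1$, uniformly in $x\in M$ and $\lambda\in(0,\bar\lambda)$. The natural route is a compactness/contradiction argument, closely following the strategy of \cite[Corollary 7.4]{CFZZ}. First I would reduce the claim to a statement about the empirical (occupation) measures of the calibrated curves: for a curve $\gamma=\gamma^x_\lambda$ and an interval $[a,b]$, define $\tilde\mu_{a,b}\in\PP(TM)$ by $\int_{TM} f\, d\tilde\mu_{a,b}=\frac{1}{b-a}\int_a^b f\big(\gamma(s),\dot\gamma(s)\big)\,ds$ for $f\in C_c(TM)$. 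By Lemma \ref{gameq}, all these curves are $\hat\kappa$-Lipschitz, so the measures $\tilde\mu_{a,b}$ are all supported in the fixed compact set $S:=\{(x,v): \|v\|_x\le\hat\kappa\}$, and hence the whole family is precompact in $\PP(TM)$ for the weak-$*$ topology. The quantity in \eqref{ab} is just $\int_{TM}\frac{\partial L_G}{\partial u}(x,v,0)\,d\tilde\mu_{a,b}(x,v)$, which is continuous with respect to weak-$*$ convergence of measures supported in $S$ (since $\frac{\partial L_G}{\partial u}(\cdot,\cdot,0)$ is continuous, by (L4), on the compact set $S$).

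**The contradiction argument.** Suppose the conclusion fails. Then there exist sequences $\lambda_n\to 0^+$, $x_n\in M$, and intervals $[a_n,b_n]$ with $b_n-a_n\to+\infty$ (this is where $T_0$ failing for every candidate value enters) such that the averages $\int_{TM}\frac{\partial L_G}{\partial u}(x,v,0)\,d\tilde\mu_n$ lie outside $(-\epsilon_2,-\epsilon_1)$, where $\tilde\mu_n:=\tilde\mu_{a_n,b_n}$ is the occupation measure of $\gamma^{x_n}_{\lambda_n}$ on $[a_n,b_n]$. Passing to a subsequence, $\tilde\mu_n\weakst\tilde\mu$ for some $\tilde\mu\in\PP(TM)$ supported in $S$. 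The key structural claims are then: (a) $\tilde\mu$ is closed, i.e. $\tilde\mu\in\PP_0$; and (b) $\tilde\mu$ is a Mather measure for $L=L_G(\cdot,\cdot,0)$, i.e. $\tilde\mu\in\Mis$. Claim (a) follows from the standard computation: for $f\in C^1(M)$, $\int_{TM} D_xf(v)\,d\tilde\mu_n = \frac{1}{b_n-a_n}\big(f(\gamma(b_n))-f(\gamma(a_n))\big)\to 0$ since $f$ is bounded and $b_n-a_n\to\infty$; the bound on $\int\|v\|_x\,d\tilde\mu$ is immediate from support in $S$. For claim (b), I would use that $\gamma^{x_n}_{\lambda_n}$ is $(u_{\lambda_n},L_{\lambda_n},c_0)$-calibrated to write $u_{\lambda_n}(\gamma(b_n))-u_{\lambda_n}(\gamma(a_n)) = \int_{a_n}^{b_n}\big[L_G(\gamma(s),\dot\gamma(s),\lambda_n u_{\lambda_n}(\gamma(s)))+c_0\big]\,ds$; dividing by $b_n-a_n$, the left side tends to $0$ (as $u_{\lambda_n}$ is bounded by $C$), and on the right, using (L1) to replace $L_G(\cdot,\cdot,\lambda_n u_{\lambda_n}(\cdot))$ by $L_G(\cdot,\cdot,0)$ up to an error bounded by $\lambda_n KC\to 0$, I get $\int_{TM} L_G(x,v,0)\,d\tilde\mu = -c_0$. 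By Theorem \ref{Mather} this forces $\tilde\mu\in\Mis$. But then condition (L5) gives $\int_{TM}\frac{\partial L_G}{\partial u}(x,v,0)\,d\tilde\mu<0$, and by \eqref{e1<e2} this integral must lie strictly between $-\epsilon_2$ and $-\epsilon_1$; combined with the weak-$*$ convergence and continuity of the integrand on $S$, $\int\frac{\partial L_G}{\partial u}(x,v,0)\,d\tilde\mu_n$ eventually lies in $(-\epsilon_2,-\epsilon_1)$, contradicting the choice of the $\tilde\mu_n$. This contradiction yields the existence of $T_0$; the existence of $\bar\lambda$ is extracted simultaneously, since the $\lambda_n$ in the failing sequence can be taken $\to0^+$.

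**Main obstacle.** The delicate point is claim (b), establishing that the limit occupation measure is genuinely a Mather measure and not merely a closed measure. The inequality $\int_{TM} L_G(x,v,0)\,d\tilde\mu\ge -c_0$ is the easy half (it holds for all closed measures, by the lemma just before Lemma \ref{s3}, or directly by Fenchel's inequality \eqref{fenchel} applied to a smooth subsolution of \eqref{e0}); the nontrivial half is the reverse inequality $\int_{TM} L_G(x,v,0)\,d\tilde\mu\le -c_0$, which must be squeezed out of the calibration identity by carefully controlling how the $u$-dependence of $L_G$ contributes. One must be attentive that the error from replacing $\lambda_n u_{\lambda_n}$ by $0$ is uniform: this is exactly where the global Lipschitz bound (L1) and the uniform bound $C$ on the family are used, so the error $\le\lambda_n KC\cdot(b_n-a_n)$ divided by $(b_n-a_n)$ is $\lambda_n KC\to0$ regardless of the (possibly unbounded) interval length. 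A secondary technical care is the interplay of the two limits $n\to\infty$ and the division by $b_n-a_n\to\infty$: the boundary terms $f(\gamma(b_n))-f(\gamma(a_n))$ and $u_{\lambda_n}(\gamma(b_n))-u_{\lambda_n}(\gamma(a_n))$ are bounded uniformly in $n$ (the former by $2\|f\|_\infty$, the latter by $2C$), so dividing by $b_n-a_n$ kills them — this is why the hypothesis $b-a\ge T_0$ with $T_0$ large, i.e. long windows, is essential and why no analogous statement can hold for short windows.
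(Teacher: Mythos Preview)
Your proposal is correct and follows essentially the same approach as the paper: a contradiction argument based on occupation measures of the calibrated curves, using the uniform Lipschitz bound from Lemma~\ref{gameq} for precompactness, the calibration identity together with (L1) and the bound $\|u_\lambda\|_\infty\le C$ to show the limit measure is Mather, and then \eqref{e1<e2} for the contradiction. The paper treats the two inequalities in \eqref{ab} separately (proving one and noting the other is similar), whereas you bundle them together, but the substance is identical.
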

\begin{proof}
Let us prove the {left-hand} inequality in (\ref{ab}). We argue by contradiction. Assume there is a sequence $\lambda_n\to 0$ and a sequence $b_n-a_n\to +\infty$ such that
\begin{equation}\label{>e2}
  \frac{1}{b_n-a_n}\int_{a_n}^{b_n}\frac{\partial L_G}{\partial u}(\gamma^x_{\lambda_n}(s),\dot{\gamma}^x_{\lambda_n}(s),0)ds\leqslant  -\epsilon_2.
\end{equation}
Define for all $n\in \N$ a probability measure $\tilde{\mu}_n$ on $TM$ by
\[\int_{TM}g(x,v)d\tilde{\mu}_n:=\frac{1}{b_n-a_n}\int_{a_n}^{b_n} g\big(\gamma^x_{\lambda_n}(s),\dot{\gamma}^x_{\lambda_n}(s)\big)ds,\quad \forall g\in C_c(TM).\]
Here $C_c(TM)$ is the set of all continuous functions defined on $TM$ with compact supports. By Lemma \ref{gameq}, all the measures $\tilde{\mu}_n$ have support in the compact set
\[\{(x,v)\in TM:\ \|v\|_x\leqslant  \lambda_0KC -c_0-C_{\kappa+1}\}.\]
Up to extracting a subsequence if necessary, we can assume that $\tilde{\mu}_n$ converges to $\tilde{\mu}$ in the weak-$*$ topology on $C_c(TM)$. Note that $\tilde \mu$ is also compactly supported.  For $f\in C^1(M)$, we have
\begin{align*}
\int_{TM}D_xf(v)d\tilde{\mu}_n(x,v)&=\frac{1}{b_n-a_n}\int_{a_n}^{b_n} D_{\gamma^x_{\lambda_n}(s) }f\big(\dot{\gamma}^x_{\lambda_n}(s)\big)ds
\\ &=\frac{1}{b_n-a_n}\Big(f\big(\gamma^x_{\lambda_n}(b_n)\big)-f\big(\gamma^x_{\lambda_n}(a_n)\big)\Big)\to 0,
\end{align*}
as $n\to +\infty$, which implies that $\tilde{\mu}$ is closed. Since $\gamma^x_{\lambda_n}$ is a calibrated curve, we have
\begin{align*}
&\int_{TM}\bigg[L_G\big(x,v,\lambda_n u_{\lambda_n}(x)\big)+c_0\bigg]d\tilde{\mu}_n(x,v)
\\ &=\frac{1}{b_n-a_n}\int_{a_n}^{b_n} \bigg[L_G\Big(\gamma^x_{\lambda_n}(s),\dot{\gamma}^x_{\lambda_n}(s),\lambda_n u_{\lambda_n}\big(\gamma^x_{\lambda_n}(s)\big)\Big)+c_0\bigg]ds
\\ &=\frac{1}{b_n-a_n}\Big(u_{\lambda_n}\big(\gamma^x_{\lambda_n}(b_n)\big)-u_{\lambda_n}\big(\gamma^x_{\lambda_n}(a_n)\big)\Big).
\end{align*}
Since $u_\lambda$ is bounded, letting $n\to +\infty$, we find that
\[\int_{TM}L_G(x,v,0)d\tilde{\mu}=-c_0.\]
Therefore, the limit $\tilde{\mu}$ is a Mather measure of $H$. By (\ref{>e2}), we get
\[\int_{TM}\frac{\partial L_G}{\partial u}(x,v,0)d\tilde{\mu}(x,v)\leqslant  -\epsilon_2,\]
which contradicts (\ref{e1<e2}). The {right-hand} side inequality in (\ref{ab}) can be proved similarly.
\end{proof}

In the following, we will denote by $\bar \lambda >0$ and $T_0>0$ the constants given by Lemma \ref{e1e2}.

\begin{lemma}\label{>T0}
Let $\lambda\in(0,\bar \lambda)$ and $x\in M$. The following holds:
\begin{itemize}
\item[\em (i)] for any $t\in (-\infty,-T_0]$, we have
\[\epsilon_2 t\leqslant  \int_t^0 \frac{\partial L_G}{\partial u}(\gamma^x_\lambda(s),\dot{\gamma}^x_\lambda(s),0)ds\leqslant  \epsilon_1 t.\]
As a consequence,\
$\displaystyle e^{\lambda\int_{-\infty}^0 \frac{\partial L_G}{\partial u}(\gamma^x_\lambda(s),\dot{\gamma}^x_\lambda(s),0)\,ds}=0.$\smallskip
\item[\em (ii)] For any $T\geqslant  T_0$, we have
\[\frac{e^{-\lambda \epsilon_2 T_0}-e^{-\lambda \epsilon_2 T}}{\lambda \epsilon_2}\leqslant  \int_{-T}^0 e^{\lambda \int_t^0\frac{\partial L_G}{\partial u}(\gamma^x_\lambda(s),\dot{\gamma}^x_\lambda(s),0)ds}dt
\leqslant
\frac{1}{\lambda \epsilon_1}+\frac{e^{\lambda K T_0}-1}{\lambda K}.\]
In particular,
\begin{equation}\label{einf}
  \frac{e^{-\lambda \epsilon_2 T_0}}{\lambda \epsilon_2}\leqslant  \int_{-\infty}^0 e^{\lambda \int_t^0\frac{\partial L_G}{\partial u}(\gamma^x_\lambda(s),\dot{\gamma}^x_\lambda(s),0)ds}dt
  \leqslant
  \frac{1}{\lambda \epsilon_1}+\frac{e^{ \lambda K  T_0}-1}{\lambda K}.
\end{equation}
\end{itemize}
\end{lemma}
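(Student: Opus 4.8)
The plan is to derive everything from the one-sided averaging bounds (\ref{ab}) in Lemma~\ref{e1e2}, together with the uniform $K$-Lipschitz bound on $u\mapsto L_G(x,v,u)$ (condition (L1)), which controls $\big|\frac{\partial L_G}{\partial u}(x,v,0)\big|\le K$ pointwise along the calibrated curves. First I would prove part~(i). Writing $I(t):=\int_t^0 \frac{\partial L_G}{\partial u}(\gamma^x_\lambda(s),\dot\gamma^x_\lambda(s),0)\,ds$, for $t\le -T_0$ I apply (\ref{ab}) with $a=t$, $b=0$ (so $b-a=-t\ge T_0$), which gives $-\epsilon_2 < \tfrac{1}{-t}I(t) < -\epsilon_1$, i.e. $\epsilon_2 t < I(t) < \epsilon_1 t$; both factors are negative since $t<0$, so $I(t)\to-\infty$ at least linearly. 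This immediately yields $\lambda I(t)\to-\infty$ as $t\to-\infty$, and since $\int_{-\infty}^0\frac{\partial L_G}{\partial u}\,ds$ is the monotone (eventually decreasing) limit of $I(t)$, we get $e^{\lambda\int_{-\infty}^0\frac{\partial L_G}{\partial u}(\gamma^x_\lambda(s),\dot\gamma^x_\lambda(s),0)\,ds}=0$.

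Next, for part~(ii), fix $T\ge T_0$ and split $\int_{-T}^0 e^{\lambda I(t)}\,dt = \int_{-T}^{-T_0} e^{\lambda I(t)}\,dt + \int_{-T_0}^0 e^{\lambda I(t)}\,dt$. On $[-T,-T_0]$ I use $I(t)\le \epsilon_1 t<0$ from part~(i), hence $e^{\lambda I(t)}\le e^{\lambda\epsilon_1 t}$; but to get the slightly stronger stated bound I instead use $I(t)\le \epsilon_2 t$ on the left and $e^{\lambda I(t)}\le e^{\lambda\epsilon_2 t}$ — wait, for the lower bound in (ii) I need $I(t)\ge\epsilon_2 t$, giving $e^{\lambda I(t)}\ge e^{\lambda\epsilon_2 t}$, and then $\int_{-T}^{-T_0} e^{\lambda\epsilon_2 t}\,dt = \frac{e^{-\lambda\epsilon_2 T_0}-e^{-\lambda\epsilon_2 T}}{\lambda\epsilon_2}$, dropping the (nonnegative) contribution from $[-T_0,0]$ gives the stated lower bound. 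For the upper bound: on $[-T,-T_0]$ use $I(t)\le\epsilon_1 t$ so $\int_{-T}^{-T_0}e^{\lambda I(t)}\,dt\le \int_{-\infty}^{-T_0}e^{\lambda\epsilon_1 t}\,dt = \frac{e^{-\lambda\epsilon_1 T_0}}{\lambda\epsilon_1}\le\frac{1}{\lambda\epsilon_1}$; on $[-T_0,0]$ use the crude bound $|I(t)|\le K|t|\le KT_0$ (from (L1)), more precisely $I(t)\le Kt\cdot(-1)$... actually $I(t)=\int_t^0\frac{\partial L_G}{\partial u}\le \int_t^0 K\,ds = -Kt = K|t|$, so $e^{\lambda I(t)}\le e^{\lambda K|t|}=e^{-\lambda K t}$, and $\int_{-T_0}^0 e^{-\lambda K t}\,dt = \frac{e^{\lambda K T_0}-1}{\lambda K}$. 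Adding the two pieces yields $\int_{-T}^0 e^{\lambda I(t)}\,dt\le \frac{1}{\lambda\epsilon_1}+\frac{e^{\lambda K T_0}-1}{\lambda K}$, uniformly in $T\ge T_0$.

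Finally, passing $T\to+\infty$: the integrand $e^{\lambda I(t)}$ is nonnegative and, by part~(i), integrable on $(-\infty,0]$ (it is bounded by $e^{\lambda\epsilon_1 t}$ for $t\le -T_0$), so by monotone convergence $\int_{-T}^0 e^{\lambda I(t)}\,dt\uparrow \int_{-\infty}^0 e^{\lambda I(t)}\,dt$. In the lower bound the term $e^{-\lambda\epsilon_2 T}\to 0$, giving $\frac{e^{-\lambda\epsilon_2 T_0}}{\lambda\epsilon_2}\le \int_{-\infty}^0 e^{\lambda I(t)}\,dt$; the upper bound is uniform in $T$ and survives the limit. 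This establishes (\ref{einf}). The only subtle point — the "main obstacle," such as it is — is keeping track of which of $\epsilon_1,\epsilon_2,K$ and which direction of inequality is needed in each regime, since the averaged bound (\ref{ab}) only holds on windows of length $\ge T_0$, forcing the split at $-T_0$ and the use of the pointwise Lipschitz bound $K$ on the short window $[-T_0,0]$ rather than the sharper $\epsilon_1,\epsilon_2$ constants there; everything else is a direct computation of elementary exponential integrals.
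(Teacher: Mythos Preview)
Your proof is correct and follows essentially the same approach as the paper: derive (i) directly from Lemma~\ref{e1e2} with $a=t$, $b=0$, then for (ii) split the integral at $-T_0$, use the bounds $\epsilon_2 t\leqslant I(t)\leqslant \epsilon_1 t$ on $(-\infty,-T_0]$ and the pointwise Lipschitz bound $|\partial_u L_G|\leqslant K$ on $[-T_0,0]$, and finally let $T\to+\infty$. One small slip: $I(t)$ need not be monotone in $t$ (the integrand can change sign), but this is irrelevant since you already have $I(t)\leqslant \epsilon_1 t\to-\infty$, which is all that is needed for the consequence in (i).
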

\begin{proof}
Item (i) is a direct consequence of Lemma \ref{e1e2}. It remains to prove Item (ii). By Item (i) and (L1) we have
\begin{flalign*}
&
\int_{-T}^0 e^{\lambda \int_t^0\frac{\partial L_G}{\partial u}(\gamma^x_\lambda(s),\dot{\gamma}^x_\lambda(s),0)ds}dt
=
\int_{-T}^{-T_0} \!\! e^{\lambda \int_t^0\frac{\partial L_G}{\partial u}(\gamma^x_\lambda(s),\dot{\gamma}^x_\lambda(s),0)ds}dt
+
\int_{-T_0}^0 \!\!  e^{\lambda \int_t^0\frac{\partial L_G}{\partial u}(\gamma^x_\lambda(s),\dot{\gamma}^x_\lambda(s),0)ds}dt
&&
\\
&
\leqslant
\int_{-T}^{-T_0} e^{\lambda \epsilon_1 t}dt+\int_{-T_0}^0 e^{  -\lambda K t}dt
=\frac{e^{-\lambda\epsilon_1 T_0}-e^{-\lambda\epsilon_1 T}}{\lambda \epsilon_1}+\frac{e^{  \lambda K T_0}-1}{  \lambda K}
\leqslant
\frac{1}{\lambda \epsilon_1}+\frac{e^{  \lambda K T_0}-1}{  \lambda K}.
&&
\end{flalign*}
For the other side, we have
\begin{align*}
\int_{-T}^0 e^{\lambda \int_t^0\frac{\partial L_G}{\partial u}(\gamma^x_\lambda(s),\dot{\gamma}^x_\lambda(s),0)ds}dt&\geqslant  \int_{-T}^{-T_0} e^{\lambda \int_t^0\frac{\partial L_G }{\partial u}(\gamma^x_\lambda(s),\dot{\gamma}^x_\lambda(s),0)ds}dt
\\ &\geqslant  \int_{-T}^{-T_0} e^{\lambda \epsilon_2 t}dt=\frac{e^{-\lambda\epsilon_2 T_0}-e^{-\lambda\epsilon_2 T}}{\lambda \epsilon_2}.
\end{align*}
Let $T\to +\infty$, we then get (\ref{einf}).
\end{proof}

We proceed by associating to each calibrated curve a probability measure on $TM$. These probability measures will play a key role in the proof of the convergence result stated in Theorem \ref{thm1}. They can be regarded as a generalization to the case at issue of the analogous measures first introduced in \cite[formula (3.5)]{Da1}. They already appeared in this exact form in \cite{V3,Z,CFZZ}.

\begin{definition}\label{mulam}\rm
We define  probability measures $\tilde{\mu}^x_\lambda$ on $TM$ by
\[\int_{TM}f(y,v)d\tilde{\mu}^x_\lambda(y,v)=\frac{\int_{-\infty}^0 f\big(\gamma^x_\lambda(t),\dot{\gamma}^x_\lambda(t)\big)e^{\lambda \int_t^0 \frac{\partial L_G}{\partial u}(\gamma^x_\lambda(s),\dot{\gamma}^x_\lambda(s),0)ds}dt}{\int_{-\infty}^0 e^{\lambda \int_t^0 \frac{\partial L_G}{\partial u}(\gamma^x_\lambda(s),\dot{\gamma}^x_\lambda(s),0)ds}dt},\quad \forall f\in C_c(TM).\]
By (\ref{einf}), the measure $\tilde{\mu}^x_\lambda$ is well-defined for $\lambda\in (0,\bar \lambda)$.
\end{definition}

The following holds, cf. \cite[Proposition 3.6]{Da1}, \cite[Proposition 4.5]{V3}, \cite[Proposition 5.8]{Z}, \cite[Proposition 7.5]{CFZZ}.

\begin{lemma}\label{muisma}
The family $(\tilde{\mu}^x_\lambda)_{\lambda\in(0,\bar \lambda)}$ has support contained in a common compact subset of $TM$, in particular it is relatively compact in $\parts (TM)$. Furthermore, if $\tilde{\mu}^x_{\lambda_n}\weakst  \tilde{\mu}$  in $\parts (TM)$ for $\lambda_n\to 0^+$, then $\tilde\mu$ is a Mather measure.
\end{lemma}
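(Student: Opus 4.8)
The plan is to establish the two assertions separately, along the lines of \cite[Proposition 3.6]{Da1} (see also \cite[Proposition 4.5]{V3}, \cite[Proposition 5.8]{Z}, \cite[Proposition 7.5]{CFZZ}). For the first one, I would simply note that by Lemma \ref{gameq} every calibrated curve $\gamma^x_\lambda$ is $\hat\kappa$-Lipschitz with $\hat\kappa$ independent of $\lambda\in(0,\bar\lambda)$ and $x\in M$, so $\big(\gamma^x_\lambda(t),\dot\gamma^x_\lambda(t)\big)$ lies, for a.e.\ $t\le 0$, in the compact set $\mathcal K:=\{(y,v)\in TM:\ \|v\|_y\le\hat\kappa\}$. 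Since, by Definition \ref{mulam}, $\tilde\mu^x_\lambda$ is the average of $t\mapsto\big(\gamma^x_\lambda(t),\dot\gamma^x_\lambda(t)\big)$ against the positive probability density $g_\lambda(t)/I_\lambda$ --- where $g_\lambda(t):=e^{\lambda\int_t^0\frac{\partial L_G}{\partial u}(\gamma^x_\lambda(s),\dot\gamma^x_\lambda(s),0)\,ds}$ and $I_\lambda:=\int_{-\infty}^0 g_\lambda(t)\,dt$, finite by \eqref{einf} --- it is supported in $\mathcal K$. Hence all the $\tilde\mu^x_\lambda$ are supported in the single compact set $\mathcal K$, so the family is relatively compact in $\parts(TM)$; moreover, for such measures, weak-$*$ convergence tested against $C_c(TM)$ automatically upgrades to convergence tested against every element of $C(TM)$ (multiply by a cutoff equal to $1$ on $\mathcal K$).

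For the second assertion, fix $\lambda_n\to 0^+$ with $\tilde\mu^x_{\lambda_n}\weakst\tilde\mu$ in $\parts(TM)$. I would first check that $\tilde\mu$ is closed. Fix $f\in C^1(M)$: since $g_\lambda$ is absolutely continuous with $g_\lambda'(t)=-\lambda\frac{\partial L_G}{\partial u}(\gamma^x_\lambda(t),\dot\gamma^x_\lambda(t),0)\,g_\lambda(t)$ a.e., $g_\lambda(0)=1$, and $g_\lambda(t)\to 0$ as $t\to-\infty$ by Lemma \ref{>T0}(i), while $t\mapsto f(\gamma^x_\lambda(t))$ is bounded with derivative $D_{\gamma^x_\lambda(t)}f(\dot\gamma^x_\lambda(t))$ a.e., an integration by parts on $(-\infty,0]$ gives
\[
\int_{TM}D_yf(v)\,d\tilde\mu^x_\lambda=\frac{f(x)}{I_\lambda}+\lambda\int_{TM}f(y)\frac{\partial L_G}{\partial u}(y,v,0)\,d\tilde\mu^x_\lambda(y,v).
\]
By \eqref{einf} one has $I_\lambda\ge e^{-\lambda\epsilon_2 T_0}/(\lambda\epsilon_2)$, so $f(x)/I_\lambda=O(\lambda)$, and the last integral is bounded uniformly in $\lambda$ since $f$ and $\frac{\partial L_G}{\partial u}(\cdot,\cdot,0)$ are bounded on $\mathcal K$; hence both terms tend to $0$ along $\lambda_n$. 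Passing to the limit on the left-hand side (legitimate, as $D_yf(v)$ is continuous and all measures are supported in $\mathcal K$) yields $\int_{TM}D_yf(v)\,d\tilde\mu=0$; together with $\supp\tilde\mu\subset\mathcal K$, this shows $\tilde\mu\in\mathscr P_0$.

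Finally, to see that $\tilde\mu$ realizes the minimum in Theorem \ref{Mather}, I would repeat the same integration by parts with $u_{\lambda_n}$ in place of $f$, using that the calibration of $\gamma^x_{\lambda_n}$ makes $t\mapsto u_{\lambda_n}(\gamma^x_{\lambda_n}(t))$ absolutely continuous with derivative $L_G\big(\gamma^x_{\lambda_n}(t),\dot\gamma^x_{\lambda_n}(t),\lambda_n u_{\lambda_n}(\gamma^x_{\lambda_n}(t))\big)+c_0$ a.e. This yields
\begin{align*}
\int_{TM}\big[L_G\big(y,v,\lambda_n u_{\lambda_n}(y)\big)+c_0\big]\,d\tilde\mu^x_{\lambda_n}
&=\frac{u_{\lambda_n}(x)}{I_{\lambda_n}}\\
&\qquad+\lambda_n\int_{TM}u_{\lambda_n}(y)\frac{\partial L_G}{\partial u}(y,v,0)\,d\tilde\mu^x_{\lambda_n}(y,v),
\end{align*}
whose right-hand side is $O(\lambda_n)$ because $\|u_{\lambda_n}\|_\infty\le C$; combining this with $|L_G(y,v,\lambda_n u_{\lambda_n}(y))-L_G(y,v,0)|\le K\lambda_n C$ from (L1) gives $\limsup_n\int_{TM}[L_G(y,v,0)+c_0]\,d\tilde\mu^x_{\lambda_n}\le 0$, hence $\int_{TM}L_G(y,v,0)\,d\tilde\mu\le -c_0$ after letting $n\to\infty$. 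Since $\tilde\mu\in\mathscr P_0$ and $L_G(\cdot,\cdot,0)=L$, Theorem \ref{Mather} also gives $\int_{TM}L\,d\tilde\mu\ge -c_0$; therefore $\int_{TM}L\,d\tilde\mu=-c_0$ and $\tilde\mu$ is a Mather measure. The step I expect to be most delicate is the justification of the two integrations by parts over the half-line --- in particular, that the boundary terms vanish at $-\infty$ and that the resulting integrals converge absolutely --- which is precisely what the exponential decay of $g_\lambda$ (Lemma \ref{>T0}) together with the uniform bounds on $u_\lambda$ and on $\frac{\partial L_G}{\partial u}(\cdot,\cdot,0)$ over $\mathcal K$ are there to handle; after that, the argument is a routine passage to the limit exploiting the uniform compactness of the supports.
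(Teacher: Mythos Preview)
Your proof is correct and follows essentially the same approach as the paper's own proof: both use Lemma \ref{gameq} for the common compact support, then an integration by parts combined with the exponential decay from Lemma \ref{>T0} and the bounds \eqref{einf} to prove closedness, and finally the calibration identity together with (L1) to prove minimality. Your presentation is slightly more streamlined in that you express the post-integration-by-parts identities directly in terms of $\tilde\mu^x_\lambda$, whereas the paper keeps the integrals explicit, but the arguments are otherwise identical.
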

\begin{proof}
The first part is a direct consequence of Lemma \ref{gameq}. It remains to prove that the limit $\tilde{\mu}$ is a Mather measure. We first prove that $\tilde{\mu}$ is closed. For $f\in C^1(M)$, we have, by integrating by parts,
\begin{align*}
\int_{TM}D_xf(v)d\tilde{\mu}^x_\lambda(y,v)&=\frac{\int_{-\infty}^0 \frac{d}{dt}\Big(f\big(\gamma^x_\lambda(t)\big)\Big)e^{\lambda \int_t^0 \frac{\partial L_G}{\partial u}(\gamma^x_\lambda(s),\dot{\gamma}^x_\lambda(s),0)ds}dt}{\int_{-\infty}^0 e^{\lambda \int_t^0\frac{\partial L_G}{\partial u}(\gamma^x_\lambda(s),\dot{\gamma}^x_\lambda(s),0) ds}dt}
\\ &=\frac{f(x)-\int_{-\infty}^0 f\big(\gamma^x_\lambda(t)\big)\frac{d}{dt}\Big(e^{\lambda \int_t^0 \frac{\partial L_G}{\partial u}(\gamma^x_\lambda(s),\dot{\gamma}^x_\lambda(s),0)ds}\Big)dt}{{\int_{-\infty}^0 e^{\lambda \int_t^0 \frac{\partial L_G}{\partial u}(\gamma^x_\lambda(s),\dot{\gamma}^x_\lambda(s),0)ds}dt}}.
\end{align*}
By (L1) and $e^{\lambda \int_t^0 \frac{\partial L_G}{\partial u}(\gamma^x_\lambda(s),\dot{\gamma}^x_\lambda(s),0)ds}>0$, we have
\begin{align*}
&\bigg|\int_{-\infty}^0 f\big(\gamma^x_\lambda(t)\big)\frac{d}{dt}\Big(e^{\lambda \int_t^0 \frac{\partial L_G}{\partial u}(\gamma^x_\lambda(s),\dot{\gamma}^x_\lambda(s),0)ds}\Big)dt\bigg|
\\ &=\bigg|\int_{-\infty}^0 \lambda f\big(\gamma^x_\lambda(t)\big)\frac{\partial L_G}{\partial u}(\gamma^x_\lambda(t),\dot{\gamma}^x_\lambda(t),0)e^{\lambda \int_t^0 \frac{\partial L_G}{\partial u}(\gamma^x_\lambda(s),\dot{\gamma}^x_\lambda(s),0)ds}dt\bigg|
\\ &\leqslant  \lambda\|f\|_\infty K \int_{-\infty}^0 e^{\lambda \int_t^0 \frac{\partial L_G}{\partial u}(\gamma^x_\lambda(s),\dot{\gamma}^x_\lambda(s),0)ds}dt \leqslant  K\bigg(\frac{1}{\epsilon_1}+\frac{  e^{\lambda K T_0}-1}{  K}\bigg)\|f\|_\infty,
\end{align*}
where, for the last inequality, we have used (\ref{einf}). By (\ref{einf}) again, we have
\begin{align*}
\int_{TM}D_xf(v)d\tilde{\mu}^x_\lambda(y,v)&\leqslant  \frac{\lambda \epsilon_2}{e^{-\lambda \epsilon_2 T_0}}K\bigg(\frac{1}{\epsilon_1}+\frac{  e^{\lambda K T_0}}{  K} \bigg)\|f\|_\infty\to 0,
\end{align*}
as $\lambda \to 0^+$.

We then prove that $\tilde{\mu}$ is minimizing. Since $t\mapsto u_\lambda\big(\gamma^x_\lambda(t)\big)$ is Lipschitz continuous, and $\gamma^x_\lambda$ is a $(u_\lambda,L_\lambda,c_0)$-calibrated curve, for a.e. $t<0$ we have
\[\frac{d}{dt}u_\lambda\big(\gamma^x_\lambda(t)\big)=L_G\Big(\gamma^x_\lambda(t),\dot{\gamma}^x_\lambda(t),\lambda u_\lambda\big(\gamma^x_\lambda(t)\big)\Big)+c_0.\]
Then
\begin{align*}
&\int_{TM}\big(L_G(x,v,0)+c_0\big)d\tilde{\mu}^x_\lambda(x,v)
\\ &=\frac{\int_{-\infty}^0 \big(L_G(\gamma^x_\lambda(t),\dot{\gamma}^x_\lambda(t),0)+c_0\big)e^{\lambda \int_t^0 \frac{\partial L_G}{\partial u}(\gamma^x_\lambda(s),\dot{\gamma}^x_\lambda(s),0)ds}dt}{\int_{-\infty}^0 e^{\lambda \int_t^0 \frac{\partial L_G}{\partial u}(\gamma^x_\lambda(s),\dot{\gamma}^x_\lambda(s),0)ds}dt}
\\ &=\frac{\int_{-\infty}^0 \Big(L_G\Big(\gamma^x_\lambda(t),\dot{\gamma}^x_\lambda(t),\lambda u_\lambda\big(\gamma^x_\lambda(t)\big)\Big)+c_0-\Delta_\lambda(t)\Big)e^{\lambda \int_t^0 \frac{\partial L_G}{\partial u}(\gamma^x_\lambda(s),\dot{\gamma}^x_\lambda(s),0)ds}dt}{\int_{-\infty}^0 e^{\lambda \int_t^0 \frac{\partial L_G}{\partial u}(\gamma^x_\lambda(s),\dot{\gamma}^x_\lambda(s),0)ds}dt}
\\ &= \frac{\int_{-\infty}^0 \Big(\frac{d}{dt}\Big(u_\lambda\big(\gamma^x_\lambda(t)\big)\Big)-\Delta_\lambda(t)\Big)e^{\lambda \int_t^0 \frac{\partial L_G}{\partial u}(\gamma^x_\lambda(s),\dot{\gamma}^x_\lambda(s),0)ds}dt}{\int_{-\infty}^0 e^{\lambda \int_t^0 \frac{\partial L_G}{\partial u}(\gamma^x_\lambda(s),\dot{\gamma}^x_\lambda(s),0)ds}dt},
\end{align*}
where
\[\Delta_\lambda(t)=L_G\Big(\gamma^x_\lambda(t),\dot{\gamma}^x_\lambda(t),\lambda u_\lambda\big(\gamma^x_\lambda(t)\big)\Big)-L_G\big(\gamma^x_\lambda(t),\dot{\gamma}^x_\lambda(t),0\big).\]
Similarly to the first part of the proof, we have
\begin{align*}
\frac{\int_{-\infty}^0 \frac{d}{dt}u_\lambda\big(\gamma^x_\lambda(t)\big)e^{\lambda \int_t^0 \frac{\partial L_G}{\partial u}(\gamma^x_\lambda(s),\dot{\gamma}^x_\lambda(s),0)ds}dt}{\int_{-\infty}^0 e^{\lambda \int_t^0 \frac{\partial L_G}{\partial u}(\gamma^x_\lambda(s),\dot{\gamma}^x_\lambda(s),0)ds}dt}
\leqslant  \frac{\lambda \epsilon_2}{e^{-\lambda \epsilon_2 T_0}}K\bigg(\frac{1}{\epsilon_1}+\frac{  e^{\lambda K T_0}}{  K}\bigg)C \to 0,
\end{align*}
as $\lambda \to 0^+$, where $C $ is a uniform bound on the $\|u_\lambda\|_\infty$.

Finally, to bound the error term we use (L1) to find
\begin{align*}
|\Delta_\lambda(t)|=\Big|L_G\Big(\gamma^x_\lambda(t),\dot{\gamma}^x_\lambda(t),\lambda u_\lambda\big(\gamma^x_\lambda(t)\big)\Big)-L\big(\gamma^x_\lambda(t),\dot{\gamma}^x_\lambda(t),0\big)\Big|
\leqslant  \lambda KC .
\end{align*}
Therefore, as $\lambda \to 0^+$ along the sequence $(\lambda_n)$, we conclude that $\tilde{\mu}$ is a Mather measure.\qedhere
\end{proof}

The following lemma will be crucial for the proof of the convergence result, cf. \cite[Lemma 3.7]{Da1}, \cite[Lemma 4.7]{V3}, \cite[Lemma 7.7]{CFZZ}.

\begin{lemma}\label{>u0}
Let $w$ be any subsolution of (\ref{E0}). For every $x\in M$ and $\lambda\in (0,\bar \lambda)$, we have
\[u_\lambda(x)\geqslant  w(x)-\frac{\int_{TM}w(y)\frac{\partial L_G}{\partial u}(y,v,0)d\tilde{\mu}^x_\lambda(y,v)}{\int_{TM}\frac{\partial L_G}{\partial u}(y,v,0)d\tilde{\mu}^x_\lambda(y,v)}+R_\lambda(x),\]
where $\lim\limits_{\lambda \to 0^+}R_\lambda(x)=0$.
\end{lemma}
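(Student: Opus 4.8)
The plan is to produce, along the calibrated curve $\gamma:=\gamma^x_\lambda$, an \emph{identity} for $u_\lambda(x)$ and an \emph{inequality} for $w(x)$ by multiplying the calibration/domination relations by the weight that defines $\tilde{\mu}^x_\lambda$ and integrating, and then to subtract the two. Fix $\lambda\in(0,\bar\lambda)$ and $x\in M$, and abbreviate $\Phi(t):=\frac{\partial L_G}{\partial u}\big(\gamma(t),\dot\gamma(t),0\big)$, $\rho(t):=e^{\lambda\int_t^0\Phi(s)\,ds}$ (the weight appearing in Definition \ref{mulam}), and $Z:=\int_{-\infty}^0\rho(t)\,dt$, which is finite and positive by \eqref{einf}. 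Two preliminary facts: first, $\rho$ is absolutely continuous with $\rho'=-\lambda\Phi\rho$ and $\rho(t)\to 0$ as $t\to-\infty$ by Lemma \ref{>T0}(i), so $\int_{-\infty}^0\Phi(t)\rho(t)\,dt=-\tfrac1\lambda\int_{-\infty}^0\rho'(t)\,dt=-\tfrac1\lambda$, equivalently
\[
\lambda Z\int_{TM}\frac{\partial L_G}{\partial u}(y,v,0)\,d\tilde{\mu}^x_\lambda(y,v)=-1 ;
\]
second, by Lemma \ref{gameq} the curves $\gamma^x_\lambda$ all take values in one compact set $S\subset TM$ (with $\hat\kappa$ independent of $x,\lambda$), so (L4) applies with a single modulus $\eta_S$.

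For $u_\lambda$ I would differentiate the map $t\mapsto u_\lambda(\gamma(t))\rho(t)$, using $\frac{d}{dt}u_\lambda(\gamma(t))=L_G\big(\gamma(t),\dot\gamma(t),\lambda u_\lambda(\gamma(t))\big)+c_0$ a.e.\ (from the calibration property, as in the proof of Lemma \ref{muisma}) together with $\rho'=-\lambda\Phi\rho$, and then integrate over $(-\infty,0)$; the boundary term at $-\infty$ vanishes because $u_\lambda$ is bounded and $\rho(t)\to0$. Writing $L_G\big(\gamma(t),\dot\gamma(t),\lambda u_\lambda(\gamma(t))\big)=L_G\big(\gamma(t),\dot\gamma(t),0\big)+\lambda\Phi(t)u_\lambda(\gamma(t))+e_\lambda(t)$, with $|e_\lambda(t)|\le\lambda C\,\eta_S(\lambda C)$ by (L4), the two $\lambda\Phi u_\lambda$ contributions cancel and one is left with
\[
u_\lambda(x)=Z\int_{TM}\big(L_G(y,v,0)+c_0\big)\,d\tilde{\mu}^x_\lambda(y,v)+R_\lambda(x),\qquad R_\lambda(x):=\int_{-\infty}^0 e_\lambda(t)\rho(t)\,dt .
\]

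For $w$ I would use that, being a subsolution of \eqref{E0}, it satisfies $\frac{d}{dt}w(\gamma(t))\le L_G\big(\gamma(t),\dot\gamma(t),0\big)+c_0$ a.e.; multiplying by $\rho\ge0$, adding $w(\gamma(t))\rho'(t)$, and integrating the resulting bound on $\frac{d}{dt}\big(w(\gamma(t))\rho(t)\big)$ over $(-\infty,0)$ (the boundary term again vanishing) gives
\[
w(x)\le Z\int_{TM}\big(L_G(y,v,0)+c_0\big)\,d\tilde{\mu}^x_\lambda(y,v)-\lambda Z\int_{TM}w(y)\frac{\partial L_G}{\partial u}(y,v,0)\,d\tilde{\mu}^x_\lambda(y,v).
\]
Subtracting this inequality from the identity for $u_\lambda(x)$ and substituting $\lambda Z=-\big(\int_{TM}\frac{\partial L_G}{\partial u}(y,v,0)\,d\tilde{\mu}^x_\lambda\big)^{-1}$ (from the preliminary identity) yields exactly the claimed inequality, with $R_\lambda$ manifestly independent of $w$. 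Finally $|R_\lambda(x)|\le\lambda C\,\eta_S(\lambda C)\,Z$, and \eqref{einf} gives $Z\le\frac1{\lambda\epsilon_1}+\frac{e^{\lambda KT_0}-1}{\lambda K}\le\frac{C'}{\lambda}$ for $\lambda$ small, whence $|R_\lambda(x)|\le C C'\,\eta_S(\lambda C)\to0$ as $\lambda\to0^+$, uniformly in $x$. All the computations are routine once $\rho$ is in place; the one genuinely load-bearing point — and the only place needing care — is the choice of this specific weight, which is exactly what makes the troublesome term $\lambda\int\Phi\,u_\lambda(\gamma)\,\rho$ cancel between the calibration identity and the (L4)-linearization, so that $u_\lambda$ survives on the right-hand side only through $\tilde{\mu}^x_\lambda$; the leftover error is then controlled solely by the size estimate \eqref{einf} on $Z$.
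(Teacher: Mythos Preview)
Your proof is correct and follows the same core strategy as the paper's: multiply the calibration and domination relations along $\gamma^x_\lambda$ by the weight $\rho$, linearize via (L4), integrate, and use the identity $\lambda Z\int\frac{\partial L_G}{\partial u}\,d\tilde\mu^x_\lambda=-1$. Your organization is slightly cleaner --- you split the argument into an identity for $u_\lambda$ plus an inequality for $w$ and invoke the domination characterization of subsolutions directly, whereas the paper runs a single chain of inequalities and detours through a smooth approximation $w_\varepsilon$ and Fenchel's inequality --- but the mechanism is the same.
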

\begin{proof}
For $\varepsilon>0$, using Theorem \ref{approx} we take $w_\varepsilon\in C^\infty(M)$ such that $\|w_\varepsilon-w\|_\infty\leqslant  \varepsilon$ and
\[G(x,D_x w_\varepsilon ,0)\leqslant  c_0+\varepsilon,\quad \forall x\in M.\]
Using the Fenchel inequality we have for all $ (x,v)\in TM$,
\begin{align*}
L_G(x,v,0)&\geqslant  L_G(x,v,0)+G(x,D_x w_\varepsilon ,0)-c_0-\varepsilon
\\ &\geqslant  D_x w_\varepsilon(v) -c_0-\varepsilon.
\end{align*}
Since $t\mapsto u_\lambda\big(\gamma^x_\lambda(t)\big)$ is Lipschitz continuous, and $\gamma^x_\lambda$ is a $(u_\lambda,L_\lambda,c_0)$-calibrated curve, for a.e. $t<0$ we have
\begin{flalign}\label{eq added}
\frac{d}{dt}u_\lambda\big(\gamma^x_\lambda(t)\big)
&
=L_G\Big(\gamma^x_\lambda(t),\dot{\gamma}^x_\lambda(t),\lambda u_\lambda\big(\gamma^x_\lambda(t)\big)\Big)+c_0
&&
\\
&
\geqslant  L_G\Big(\gamma^x_\lambda(t),\dot{\gamma}^x_\lambda(t),\lambda u_\lambda\big(\gamma^x_\lambda(t)\big)\Big)-L_G\big(\gamma^x_\lambda(t),\dot{\gamma}^x_\lambda(t),0\big)+D_{\gamma^x_\lambda(t)}w_\varepsilon(\dot \gamma^x_\lambda(t))-\varepsilon
&&\nonumber
\\
&=\frac{d}{dt}w_\varepsilon\big(\gamma^x_\lambda(t)\big)+\lambda \frac{\partial L_G}{\partial u}\big(\gamma^x_\lambda(s),\dot{\gamma}^x_\lambda(s),0\big)u_\lambda\big(\gamma^x_\lambda(t)\big)-\varepsilon+\Omega_{\lambda,x}(t),&&\nonumber
\end{flalign}
where
\begin{align*}
\Omega_{\lambda,x}(t):=&L_G\Big(\gamma^x_\lambda(t),\dot{\gamma}^x_\lambda(t),\lambda u_\lambda\big(\gamma^x_\lambda(t)\big)\Big)
\\ &-L_G\big(\gamma^x_\lambda(t),\dot{\gamma}^x_\lambda(t),0\big)-\lambda \frac{\partial L_G}{\partial u}\big(\gamma^x_\lambda(s),\dot{\gamma}^x_\lambda(s),0\big)u_\lambda\big(\gamma^x_\lambda(t)\big).
\end{align*}
{
Let us estimate the error term $\Omega_{\lambda,x}(t)$. Let us set $S:=\{(x,v)\in TM\,:\, \|v\|_x\leqslant \hat \kappa\,\}$, where
$\hat\kappa>0$ is the Lipschitz constant of the curves $\{\gamma^x_\lambda\,:\,\lambda\in (0,\lambda_0)\,\}$, according to
Lemma \ref{gameq}. By (L4) we have
\begin{equation}\label{eq estimate Omega}
|\Omega_{\lambda,x}(t)|
\leqslant
\lambda C \eta_S(\lambda C )
\qquad
\hbox{for all $t\leqslant 0$ and $\lambda\in (0,\lambda_0)$,}
\end{equation}
where $C $ is a uniform bound on the $\|u_\lambda\|_\infty$.
}
By multiplying both sides of \eqref{eq added} by $e^{\lambda\int_t^0\frac{\partial L_G}{\partial u}(\gamma^x_\lambda(s),\dot{\gamma}^x_\lambda(s),0)ds}$ and by rearranging terms, we obtain, for a.e. $t<0$,
\begin{flalign*}
&
\frac{d}{dt}\bigg(u_\lambda\big(\gamma^x_\lambda(t)\big)e^{\lambda\int_t^0\!\frac{\partial L_G}{\partial u}(\gamma^x_\lambda(s),\dot{\gamma}^x_\lambda(s),0)ds}\bigg)
\geqslant
\bigg(\frac{d}{dt}w_\varepsilon\big(\gamma^x_\lambda(t)\big)-\varepsilon+\Omega_{\lambda,x}(t)\bigg)
e^{\lambda\int_t^0\!\frac{\partial L_G}{\partial u}(\gamma^x_\lambda(s),\dot{\gamma}^x_\lambda(s),0)ds}.
&&
\end{flalign*}
Integrating the above inequality over the interval $(-T,0]$  where $T\geqslant  T_0$ as stated in Lemma \ref{>T0}, and using an integration by parts, we have
\begin{flalign*}
&
u_\lambda(x)-u_\lambda\big(\gamma^x_\lambda(-T)\big)e^{\lambda\int_{-T}^0\frac{\partial L_G}{\partial u}(\gamma^x_\lambda(s),\dot{\gamma}^x_\lambda(s),0)ds}
&&
\\
&
\quad\geqslant  w_\varepsilon(x)-w_\varepsilon\big(\gamma^x_\lambda(-T)\big)e^{\lambda\int_{-T}^0\frac{\partial L_G}{\partial u}(\gamma^x_\lambda(s),\dot{\gamma}^x_\lambda(s),0)ds}
&&
\\
&
\quad -\int_{-T}^0w_\varepsilon\big(\gamma^x_\lambda(t)\big)\frac{d}{dt}\bigg(e^{\lambda\int_t^0\frac{\partial L_G}{\partial u}(\gamma^x_\lambda(s),\dot{\gamma}^x_\lambda(s),0)ds}\bigg)dt
+\int_{-T}^0 (\Omega_{\lambda,x}(t)-\varepsilon)e^{\lambda\int_t^0\frac{\partial L_G}{\partial u}(\gamma^x_\lambda(s),\dot{\gamma}^x_\lambda(s),0)ds}.
&&
\end{flalign*}
Letting $\varepsilon\to 0^+$ it follows that,
\begin{flalign*}
u_\lambda(x)&\geqslant  w(x)-(\|w\|_\infty+C) e^{\lambda\int_{-T}^0\frac{\partial L_G}{\partial u}(\gamma^x_\lambda(s),\dot{\gamma}^x_\lambda(s),0)ds}
&&
\\
&-\int_{-T}^0 w\big(\gamma^x_\lambda(t)\big) \frac{d}{dt}\bigg(e^{\lambda\int_t^0\frac{\partial L_G}{\partial u}(\gamma^x_\lambda(s),\dot{\gamma}^x_\lambda(s),0)ds}\bigg)dt+\int_{-T}^0 \Omega_{\lambda,x}(t) e^{\lambda\int_t^0\frac{\partial L_G}{\partial u}(\gamma^x_\lambda(s),\dot{\gamma}^x_\lambda(s),0)ds}.
&&
\end{flalign*}
{  From Lemma \ref{>T0} we infer that the maps
\[
t\mapsto e^{\lambda\int_t^0\frac{\partial L_G}{\partial u}(\gamma^x_\lambda(s),\dot{\gamma}^x_\lambda(s),0)ds}
\qquad
\hbox{and}
\qquad
t\mapsto \frac{d}{dt}\bigg(e^{\lambda\int_t^0\frac{\partial L_G}{\partial u}(\gamma^x_\lambda(s),\dot{\gamma}^x_\lambda(s),0)ds}\bigg)
\]
are in $L^1\big((-\infty,0]\big)$ and converge to $0$ as $t\to -\infty$. By taking also into account \eqref{eq added}, we can send $T\to +\infty$ in the above inequality, to get, by the Dominated Convergence Theorem,
\begin{flalign*}
u_\lambda(x)&\geqslant  w(x)
&&
\\
&-\int_{-\infty}^0w\big(\gamma^x_\lambda(t)\big)\frac{d}{dt}\bigg(e^{\lambda\int_t^0\frac{\partial L_G}{\partial u}(\gamma^x_\lambda(s),\dot{\gamma}^x_\lambda(s),0)ds}\bigg)dt+\int_{-\infty}^0 \Omega_{\lambda,x}(t)e^{\lambda\int_t^0\frac{\partial L_G}{\partial u}(\gamma^x_\lambda(s),\dot{\gamma}^x_\lambda(s),0)ds}
&&
\\
&
=:w(x)-I_\lambda+R_\lambda(x).
&&
\end{flalign*}
}
By Definition \ref{mulam}, we have
\begin{align*}
I_\lambda&=-\lambda\int_{-\infty}^0w\big(\gamma^x_\lambda(t)\big)\frac{\partial L_G}{\partial u}\big(\gamma^x_\lambda(s),\dot{\gamma}^x_\lambda(s),0\big) e^{\lambda\int_t^0\frac{\partial L_G}{\partial u}(\gamma^x_\lambda(s),\dot{\gamma}^x_\lambda(s),0)ds}dt
\\
&
=-\lambda\Big(\int_{-\infty}^0 e^{\lambda \int_t^0 \frac{\partial L_G}{\partial u}(\gamma^x_\lambda(s),\dot{\gamma}^x_\lambda(s),0)ds}dt\Big)
\int_{TM}w(y)\frac{\partial L_G}{\partial u}(y,v,0)\, d\tilde{\mu}^x_\lambda(y,v).
\end{align*}
{According to Lemma \ref{>T0}-(i), for $\lambda\in (0,\lambda_0)$,} we derive that
\begin{flalign*}
\lambda\int_{-\infty}^0 e^{\lambda \int_t^0 \frac{\partial L_G}{\partial u}(\gamma^x_\lambda(s),\dot{\gamma}^x_\lambda(s),0)ds}dt
&
=\frac{\lambda\int_{-\infty}^0 e^{\lambda \int_t^0 \frac{\partial L_G}{\partial u}(\gamma^x_\lambda(s),\dot{\gamma}^x_\lambda(s),0)ds}dt}{\int_{-\infty}^0\frac{d}{dt}\bigg(e^{\lambda \int_t^0 \frac{\partial L_G}{\partial u}(\gamma^x_\lambda(s),\dot{\gamma}^x_\lambda(s),0)ds}\bigg)dt}
&&
\\
&
=-\frac{\int_{-\infty}^0 e^{\lambda \int_t^0 \frac{\partial L_G}{\partial u}(\gamma^x_\lambda(s),\dot{\gamma}^x_\lambda(s),0)ds}dt}{\int_{-\infty}^0 \frac{\partial L_G}{\partial u} (\gamma^x_\lambda({  t}),\dot{\gamma}^x_\lambda({  t}),0)e^{\lambda \int_t^0 \frac{\partial L_G}{\partial u}(\gamma^x_\lambda(s),\dot{\gamma}^x_\lambda(s),0)ds}dt}
&&
\\
&
=-\frac{1}{\int_{TM}\frac{\partial L_G}{\partial u}(y,v,0)d\tilde{\mu}^x_\lambda(y,v)}.
&&
\end{flalign*}
By \eqref{eq added} we also have
\begin{equation*}
|R_\lambda(x)|\leqslant  \lambda C \eta_S(\lambda C )\int_{-\infty}^0 e^{\lambda\int_t^0\frac{\partial L_G}{\partial u}(\gamma^x_\lambda(s),\dot{\gamma}^x_\lambda(s),0)ds}\leqslant  \bigg(\frac{1}{\epsilon_1}+\frac{ e^{\lambda K T_0}-1}{K} \bigg)C \eta_S(\lambda C ).
\end{equation*}
The assertion follows by sending $\lambda\to 0^+$.
\end{proof}

We are now in position to prove the first two main theorems of this section.

\begin{proof}[Proof of Theorem \ref{thm1}] Let $u^*$ be an accumulation point of the $(u_\lambda)_{\lambda \in (0,\lambda_0)}$ as $\lambda\to 0^+$. By Lemma \ref{<u0}, we know that $u^*\in\mathcal S$, so $u^*\leqslant  u_0$ in $M$. Let us prove that $u^*\geqslant u_0$. Fix $x\in M$ and
pick $w\in\mathcal S$. From Lemmas \ref{muisma} and \ref{>u0} we infer that
\[
u^*(x)\geqslant  w(x)-\frac{\int_{TM}w(y)\frac{\partial L_G}{\partial u}(y,v,0)d\tilde{\mu}}{\int_{TM}\frac{\partial L_G}{\partial u}(y,v,0)d\tilde{\mu}},
\]
for some Mather measure $\tilde\mu$. Since $w$ satisfies the constraint (\ref{subcon}), we get from this that $u^*(x)\geqslant  w(x)$, hence
$u^*(x)\geqslant \sup\limits_{w\in\mathcal S} w(x)=:u_0(x)$. By the arbitrariness of the choice of $x\in M$, we get that $u_0$ is finite-valued and that $u^*\geqslant  u_0$ in $M$. We conclude that $u_0$ is the unique accumulation point of the family of functions $(u_\lambda)_{\lambda\in (0,\lambda_0)}$ as $\lambda\to 0^+$. The proof is complete.
\end{proof}

\begin{proof}[Proof of Theorem \ref{thm2}]
We denote
\[
\hat u_0(x):=\inf_{\tilde{\mu}\in\widetilde{\mathfrak M}}\frac{\int_{TM} h(y,x)\frac{\partial L_G}{\partial u}(y,v,0)d\tilde\mu(y,v)}{\int_{TM} \frac{\partial L_G}{\partial u}(y,v,0)d\tilde\mu(y,v)},\qquad x\in M.
\]
Note that $\hat u_0$ is finite-valued, as $\hat u_0\geq \min\limits_{M\times M} h>-\infty$. We start by remarking that $\hat u_0$ is a subsolution of \eqref{E0}. Indeed, for every fixed $\tilde\mu\in \Mtilde$, the function
$h_{\tilde\mu}:M\to\R, \ x\mapsto  \int_{TM} h(y,x)\,  d  {\tilde\mu} (y)$ is a convex combination of the family of
critical solutions $(h_y)_{y\in M}$, where $h_y(x)=h(y,x)$. By the convexity of $H$
in the momentum and the equi-Lipschitz character of the critical subsolutions, see Propositions \ref{prop when G convex} and \ref{prop equivalence}, it follows that each  $h_{\tilde\mu}$ is a critical subsolution. By Proposition  \ref{prop when G convex} again, we infer that
a finite valued infimum of critical subsolutions is itself
a critical subsolution. Therefore $\hat u_0$ is a critical subsolution.

Let us now prove that $u_0\leqslant  \hat u_0$. By Proposition \ref{prop equivalence} we know that
$u_0(x)\leqslant  u_0(y)+h(y,x)$ for all $x,y\in M$. Let us integrate this inequality with respect to a Mather measure
$\tilde{\mu}\in\widetilde{\mathfrak M}$.  By assumption and by definition of Mather set, we have $\frac{\partial L_G}{\partial u}(x,v,0)\leqslant  0$ for all $(x,v)\in\textrm{supp}(\tilde\mu)$. We infer
\begin{align*}
&u_0(x)\int_{TM}\frac{\partial L_G}{\partial u}(y,v,0)\,d\tilde\mu(y,v)
\\ &\geqslant  \int_{TM}u_0(y)\frac{\partial L_G}{\partial u}(y,v,0)\,d\tilde\mu(y,v)+\int_{TM}h(y,x)\frac{\partial L_G}{\partial u}(y,v,0)\,d\tilde\mu(y,v)
\\ &\geqslant  \int_{TM}h(y,x)\frac{\partial L_G}{\partial u}(y,v,0)\,d\tilde\mu(y,v),
\end{align*}
where, for the last inequality, we used that $u_0$ satisfies \eqref{subcon}. From this we get
\[
u_0(x)
\leqslant
\dfrac{\int_{TM}h(y,x)\frac{\partial L_G}{\partial u}(y,v,0)\,d\tilde\mu(y,v)}{\int_{TM}\frac{\partial L_G}{\partial u}(y,v,0)\,d\tilde\mu(y,v)}
\qquad
\hbox{for all $x\in M$.}
\]
By taking the inf with respect to $\tilde\mu\in{\widetilde{\mathcal M}}$ of the right-hand side term in the above inequality, we get $u_0\leqslant  \hat u_0$.

Last, we prove that $u_0\geqslant  \hat u_0$. For every fixed $z\in \A$, set
\[
U_{z}(x):=-h(x,z)+\hat u_0(z), \qquad x\in M.
\]
Then $U_z$ is a subsolution of (\ref{E0}), by Proposition \ref{prop h}. Furthermore, for every $x\in M$ and
$\tilde\mu\in\widetilde{\mathfrak M}$, we have
\[
\frac{\int_{TM} U_z(x)\,\frac{\partial L_G}{\partial u}(x,v,0)d\tilde\mu(x,v)}{\int_{TM} \frac{\partial L_G}{\partial u}(x,v,0)d\tilde\mu(x,v)}
=
-\frac{\int_{TM} h(x,z)\,\frac{\partial L_G}{\partial u}(x,v,0)d\tilde\mu(x,v)}{\int_{TM} \frac{\partial L_G}{\partial u}(x,v,0)d\tilde\mu(x,v)}
+
\hat u_0(z)
\leqslant  0,
\]
which implies that $U_z\in\mathcal S$. Then $u_0(x)\geqslant   U_z(x)$ for all $x\in M$. In particular, by taking $x=z\in\mathcal A$, we get
\[
u_0(z)\geqslant  U_z(z)=-h(z,z)+\hat u_0(z)=\hat u_0(z).
\]
We have thus shown that $u_0\geqslant \hat u_0$ on $\A$, hence $u_0\geqslant \hat u_0$ on $M$ according according to Proposition \ref{prop AC}.
\end{proof}

We proceed by proving the third main theorem of this section, namely the trichotomy result stated in Theorem \ref{thm Maxime bis}. We start by establishing a sort of Harnack-type inequality for subsolutions of \eqref{E}.

\begin{lemma}
There exists  a constant $A_+>0$ such that, if $\lambda \in (0,1)$ and $w_\lambda : M\to \R$ is a subsolution to \eqref{E}, then
\begin{equation}\label{Harnack}
\min_{x\in M} w_\lambda(x) \leqslant \max_{x\in M} w_\lambda(x) \leqslant \min_{x\in M} w_\lambda(x) +A_+\Big(1+\lambda\min_{x\in M}
|w_\lambda(x)|\Big).
\end{equation}
\end{lemma}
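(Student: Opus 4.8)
The plan is to exploit the coercivity of $H$, which makes subsolutions of \eqref{E} automatically Lipschitz with a controlled constant, and then to chain this Lipschitz bound with the Lipschitz-in-$u$ character of $G$. The key point is that if $w_\lambda$ is a subsolution of \eqref{E}, then $G(x, D_x w_\lambda, \lambda w_\lambda(x)) \le c_0$ a.e.; using (G1) (the $K$-Lipschitz dependence in $u$), this gives $G(x, D_x w_\lambda, 0) \le c_0 + \lambda K \|w_\lambda\|_\infty$ a.e. In other words, $w_\lambda$ is an a.e. subsolution of $H(x, D_x u) = c_0 + \lambda K \|w_\lambda\|_\infty$, hence (by coercivity/superlinearity of $H$, cf. Proposition \ref{prop equivalence} applied to a shifted equation, or simply the definition of $\kappa_c$) it is Lipschitz with a constant $\kappa(\lambda \|w_\lambda\|_\infty)$ that is bounded as long as $\lambda \|w_\lambda\|_\infty$ stays bounded. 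But here is the subtlety: the bound we want on the right-hand side of \eqref{Harnack} involves $\lambda \min_x |w_\lambda|$, not $\lambda \|w_\lambda\|_\infty$, so we cannot simply assume the oscillation is controlled a priori — that is precisely what we are proving.

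The way around this is a bootstrap/self-improving argument. Set $m_\lambda := \min_{x} w_\lambda(x)$ and $\text{osc}(w_\lambda) := \max_x w_\lambda - \min_x w_\lambda$. For any $x$, $|w_\lambda(x)| \le |m_\lambda| + \text{osc}(w_\lambda)$, so $\lambda \|w_\lambda\|_\infty \le \lambda |m_\lambda| + \lambda\,\text{osc}(w_\lambda)$. Feeding this into the superlinearity estimate: pick $T>0$ to be chosen, and from $H(x,p) \ge T\|p\|_x - C_T$ for a suitable $C_T$, the a.e. inequality $H(x, D_x w_\lambda) \le c_0 + K(\lambda |m_\lambda| + \lambda\,\text{osc}(w_\lambda))$ forces $\|D_x w_\lambda\|_x \le \tfrac{1}{T}(c_0 + C_T + K\lambda|m_\lambda| + K\lambda\,\text{osc}(w_\lambda))$ a.e. Integrating along a geodesic of length $\le \text{diam}(M)$ joining the min and max points gives
\[
\text{osc}(w_\lambda) \le \frac{\text{diam}(M)}{T}\Big(c_0 + C_T + K\lambda|m_\lambda| + K\lambda\,\text{osc}(w_\lambda)\Big).
\]
Now choose $T$ large enough (depending only on $\text{diam}(M)$ and $K$, since $\lambda < 1$) that $\tfrac{\text{diam}(M)}{T} K \lambda \le \tfrac{1}{2}$; absorbing that term into the left-hand side yields $\text{osc}(w_\lambda) \le 2\tfrac{\text{diam}(M)}{T}(c_0 + C_T + K\lambda|m_\lambda|) =: A_+(1 + \lambda |m_\lambda|)$ for a suitable constant $A_+$ depending only on $M$, $H$, and $K$. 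This is exactly \eqref{Harnack}, the left inequality being trivial.

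The main obstacle — and the only genuinely delicate point — is making sure the constant $A_+$ is \emph{uniform} in $\lambda \in (0,1)$ and independent of the particular subsolution $w_\lambda$; this is what the absorption trick achieves, and it works precisely because $\lambda < 1$ keeps the coefficient $K\lambda\tfrac{\text{diam}(M)}{T}$ small for a \emph{single} fixed large $T$. One should be slightly careful that $w_\lambda$ is only a viscosity subsolution, not classically differentiable, but Proposition \ref{prop equivalence} (applied with $H$ and critical value $c_0 + K\lambda\|w_\lambda\|_\infty$, which is legitimate since shifting the right-hand side of the critical equation by a constant does not affect the superlinearity argument) guarantees $w_\lambda$ is Lipschitz and an a.e. subsolution, so the integration-along-a-geodesic step is rigorous. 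A secondary check: one must ensure $C_T$ can be taken uniform in $x$, which follows from compactness of $M$ and continuity of $H$.
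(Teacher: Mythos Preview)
Your proof is correct, and it takes a genuinely different route from the paper's.

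The paper argues on the Lagrangian side: it uses the domination $w_\lambda \prec L_G(\cdot,\cdot,\lambda w_\lambda) + c_0$, writes the integral inequality along a unit-speed geodesic $\zeta$ from a minimum point $y_\lambda$ to a maximum point $x_\lambda$, peels off the $u$-dependence in two steps via (L1) (first replacing $\lambda w_\lambda$ by $\lambda m_\lambda$, then $\lambda m_\lambda$ by $0$), and then applies Gronwall to the resulting inequality
\[
w_\lambda\big(\zeta(s)\big) - m_\lambda \le (C_{L_G} + \lambda K|m_\lambda|)\,d_\lambda + \lambda K \int_0^s \big[w_\lambda\big(\zeta(\tau)\big) - m_\lambda\big]\,d\tau,
\]
arriving at $M_\lambda - m_\lambda \le (C_{L_G} + \lambda K|m_\lambda|)\,\overline{D}\,e^{K\overline{D}}$. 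Your argument stays on the Hamiltonian side: from the a.e. inequality $G(x, D_x w_\lambda, \lambda w_\lambda(x)) \le c_0$ and (G1) you extract a pointwise gradient bound via superlinearity of $H = G(\cdot,\cdot,0)$, convert that to a Lipschitz bound, bound the oscillation by $\text{diam}(M)$ times the Lipschitz constant, and then absorb the $\lambda K\,\text{osc}(w_\lambda)$ term into the left by choosing the superlinearity parameter $T$ large. The Gronwall step is replaced by a one-shot algebraic absorption, which is arguably cleaner and yields a constant without the exponential factor $e^{K\overline{D}}$. Both approaches rely essentially on the same two ingredients --- the $K$-Lipschitz dependence in $u$ and a quantitative coercivity in $p$ --- just invoked through dual lenses; the paper's choice fits its overall Lagrangian/weak-KAM framework, while yours is closer in spirit to standard viscosity a~priori estimates.
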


\begin{proof}
The left hand side inequality is obvious.
Let us prove the right hand side inequality.
Let $x_\lambda\in M$ such that $M_\lambda = w_\lambda(x_\lambda)= \max w_\lambda$.
Let $y_\lambda\in M$ such that $m_\lambda = w_\lambda(y_\lambda)= \min w_\lambda$.
 Denote $d_\lambda:=d(y_\lambda,x_\lambda)$. Let $\zeta:[0,d_\lambda]\rightarrow M$ be a geodesic satisfying $\zeta(0)=y_\lambda$ and $\zeta(d_\lambda)=x_\lambda$ with constant speed 1.
  By $w_\lambda \prec L_\lambda+c_0$ and (L1), we get
\begin{flalign*}
  w_\lambda\big(\zeta(s)\big)
  &\leqslant w_\lambda(y_\lambda)
  +\int_0^s \Big[{  L_G}\Big(\zeta(\tau),\dot \zeta(\tau),\lambda w_\lambda\big(\zeta(\tau)\big)\Big)+c_0\Big]d\tau
  &&
  \\ &
  \leqslant m_\lambda
  +\int_0^s \Big[{  L_G}\big(\zeta(\tau),\dot \zeta(\tau),\lambda m_\lambda \big)+c_0+\lambda K\big(w_\lambda\big(\zeta(\tau)\big)-m_\lambda\big)\Big]d\tau
  &&
  \\
  &\leqslant
  m_\lambda+\int_0^s \Big[{  L_G}\big(\zeta(\tau),\dot \zeta(\tau),0 \big)+c_0+\lambda K |m_\lambda|\Big] d\tau+\lambda K\int_0^s \Big[w_\lambda\big(\zeta(\tau)\big)-m_\lambda\Big]d\tau
  &&
\\
  &\leqslant m_\lambda+(C_{  L_G}+\lambda K |m_\lambda|)d_\lambda+\lambda K\int_0^s \Big[w_\lambda\big(\zeta(\tau)\big)-m_\lambda\Big]d\tau
  &&
\end{flalign*}
where $C_{  L_G}:=\max\limits_{x\in M,\|v\|_x\leqslant 1}|{  L_G}(x,v,0)+c_0|$. By the Gronwall inequality we infer
\begin{equation*}
  w_\lambda\big(\zeta(s)\big)-m_\lambda \leqslant (C_{  L_G}+\lambda K|m_\lambda|)d_\lambda e^{\lambda Ks}\leqslant (C_{  L_G}+\lambda K|m_\lambda|)d_\lambda e^{\lambda K d_\lambda},\quad \forall s\in(0,d_\lambda].
\end{equation*}
Taking $s=d_\lambda$, and recalling that $\lambda \in (0,1)$,we have
\[M_\lambda = w_\lambda(x_\lambda)\leqslant m_\lambda+(C_{  L_G}+\lambda K |m_\lambda|)d_\lambda e^{\lambda K d_\lambda}\leqslant m_\lambda+(C_{  L_G}+\lambda K|m_\lambda|)\overline De^{ K\overline D},\]
where $\overline D:=$diam$(M)$. The result follows taking $A_+ = \max (C_{  L_G} , K)\overline De^{ K\overline D}$.
\end{proof}

As a consequence, we derive the following key proposition.
It will be also used  in Section \ref{sec model case} to show the existence of diverging families of solutions.

\begin{proposition}\label{uto-inf}
Let $\Lambda$ be a subset of $(0,1)$ having $0$ as accumulation point.
Let $(w_\lambda)_{\lambda \in\Lambda}$ be a family of  subsolutions of (\ref{E}).
\begin{itemize}
\item[\em (i)]
 If, for each $\lambda\in \Lambda$, there is a point $x_\lambda\in M$ such that $w_\lambda(x_\lambda)\to-\infty$ as $\lambda\to 0^+$, $\lambda \in \Lambda$, then $w_\lambda$ uniformly converges to $-\infty$ as $\lambda\to 0^+$, $\lambda \in \Lambda$.
 \item[\em (ii)]
 If, for each $\lambda\in \Lambda$, there is a point $x_\lambda\in M$ such that $w_\lambda(x_\lambda)\to+\infty$ as $\lambda\to 0^+$, $\lambda\in\Lambda$, then $w_\lambda$ uniformly converges to $+\infty$ as $\lambda\to 0^+$, $\lambda \in \Lambda$.
 \end{itemize}
\end{proposition}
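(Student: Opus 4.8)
The statement to be proved is Proposition~\ref{uto-inf}: a family of subsolutions of \eqref{E} that blows up (to $-\infty$ or to $+\infty$) at \emph{some} point actually blows up \emph{uniformly}. The natural engine here is the Harnack-type inequality \eqref{Harnack} just established, which controls the oscillation $\max w_\lambda - \min w_\lambda$ by $A_+(1+\lambda\min|w_\lambda|)$. So the whole argument is a bootstrap: feed the blow-up of $w_\lambda(x_\lambda)$ into \eqref{Harnack} and extract uniform blow-up of $\min w_\lambda$ (case (i)) or $\max w_\lambda$ (case (ii)), carefully tracking the $\lambda\min|w_\lambda|$ term so it does not swallow the estimate.

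\emph{Case (i).} Suppose $w_\lambda(x_\lambda)\to-\infty$. Then $m_\lambda:=\min_M w_\lambda \le w_\lambda(x_\lambda)\to-\infty$, so for $\lambda$ small we may assume $m_\lambda<0$, hence $|m_\lambda|=-m_\lambda$ and $\lambda\min_M|w_\lambda|\le \lambda|m_\lambda|$. Plugging into the right inequality of \eqref{Harnack} gives
\[
M_\lambda \le m_\lambda + A_+\bigl(1+\lambda|m_\lambda|\bigr) = m_\lambda(1-A_+\lambda) + A_+ .
\]
Since $A_+\lambda\to 0$, for $\lambda$ small enough we have $1-A_+\lambda\ge \tfrac12>0$, so $M_\lambda \le \tfrac12 m_\lambda + A_+ \to -\infty$. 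As $M_\lambda=\max_M w_\lambda$, this says $w_\lambda$ converges uniformly to $-\infty$ on $M$, which is the claim.

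\emph{Case (ii).} Suppose $w_\lambda(x_\lambda)\to+\infty$, so $M_\lambda:=\max_M w_\lambda\ge w_\lambda(x_\lambda)\to+\infty$. Now I want a lower bound on $m_\lambda:=\min_M w_\lambda$. Rearranging \eqref{Harnack}: $m_\lambda \ge M_\lambda - A_+ - A_+\lambda\min_M|w_\lambda|$. The danger is the term $\lambda\min_M|w_\lambda|$: if $m_\lambda$ were very negative, $\min_M|w_\lambda|=|m_\lambda|$ could be large. Split into two subcases according to the sign of $m_\lambda$. If $m_\lambda\ge 0$ then $\min_M|w_\lambda|=m_\lambda\le M_\lambda$, so $m_\lambda\ge M_\lambda - A_+ - A_+\lambda M_\lambda = M_\lambda(1-A_+\lambda)-A_+\to+\infty$ (for $\lambda$ small, $1-A_+\lambda\ge\tfrac12$), in particular $m_\lambda\to+\infty$. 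If $m_\lambda<0$ then $\min_M|w_\lambda|\le|m_\lambda|=-m_\lambda$, so \eqref{Harnack} gives $M_\lambda\le m_\lambda + A_+ + A_+\lambda(-m_\lambda) = m_\lambda(1-A_+\lambda)+A_+$; for $\lambda$ small this forces $m_\lambda \ge (M_\lambda-A_+)/(1-A_+\lambda) > 0$, contradicting $m_\lambda<0$ once $M_\lambda>A_+$. Hence for $\lambda$ small the subcase $m_\lambda<0$ cannot occur, and we are in the first subcase, so $m_\lambda\to+\infty$, i.e. $w_\lambda$ converges uniformly to $+\infty$.

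\emph{Main obstacle.} The only real subtlety is that \eqref{Harnack} contains $\lambda\min_M|w_\lambda|$ rather than $\lambda|m_\lambda|$, and that this factor multiplies $A_+$, so one must make sure the coefficient $1-A_+\lambda$ stays positive (true since $\lambda\to 0$ along $\Lambda$) and must handle the sign of $m_\lambda$ correctly in case (ii) — a negative $m_\lambda$ a priori degrades the bound, so one argues by contradiction that it cannot happen for small $\lambda$. Everything else is bookkeeping with the single estimate \eqref{Harnack}. (Note all inequalities only need to hold for $\lambda\in\Lambda$ sufficiently close to $0$, which is exactly what ``uniformly converges as $\lambda\to 0^+$, $\lambda\in\Lambda$'' requires.)
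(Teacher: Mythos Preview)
Your proof is correct and follows essentially the same approach as the paper: both arguments feed the Harnack-type inequality \eqref{Harnack} to pass from pointwise divergence of $w_\lambda(x_\lambda)$ to uniform divergence, via the relation between $m_\lambda=\min w_\lambda$ and $M_\lambda=\max w_\lambda$. The paper's version is terser---it simply asserts that \eqref{Harnack} yields $m_\lambda\sim M_\lambda$ (asymptotic equivalence) and concludes---whereas you unpack this explicitly, including the sign analysis of $m_\lambda$ in case (ii); but the underlying mechanism is identical.
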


\begin{proof}
As in the previous proof, we denote $M_\lambda =  \max w_\lambda$ and $m_\lambda = \min w_\lambda$.

 Let us prove assertion (i). The hypothesis $w_\lambda(x_\lambda)\to-\infty$ is equivalent to $m_\lambda \to -\infty$ as $\lambda\to 0^+$, $\lambda \in \Lambda$. Then restricting to $\lambda \in \Lambda \cap (0,1)$, \eqref{Harnack} implies that   $m_\lambda \sim M_\lambda$ hence $M_\lambda \to -\infty$ as $\lambda\to 0^+$, $\lambda \in \Lambda$.

 Let us prove item (ii). The hypothesis $w_\lambda(x_\lambda)\to+\infty$ is equivalent to $M_\lambda \to +\infty$ as $\lambda\to 0^+$, $\lambda \in \Lambda$. Then restricting to $\lambda \in \Lambda \cap (0,1)$, \eqref{Harnack} implies that  $m_\lambda \sim M_\lambda$ hence $m_\lambda \to +\infty$ as $\lambda\to 0^+$, $\lambda \in \Lambda$.
\end{proof}

We are now in position to prove Theorem \ref{thm Maxime bis}. We recall that, in what follows, $u_0$ still denotes the critical solution provided by Theorem \ref{thm1}

\begin{proof}[Proof of Theorem \ref{thm Maxime bis}]
Let us denote by $S_\lambda$ the set of solutions to \eqref{E}. Define
\[S_\lambda^-:=\{v_\lambda\ \textrm{is\ a\ solution\ of\ (\ref{E})\ with}\ v_\lambda<u_0-1\}\]
and
\[\psi(\lambda):=\sup \{v_\lambda(x):\ v_\lambda\in S_\lambda^-,\ x\in M\}.\]
If $S_\lambda^-=\emptyset$, we set $\psi(\lambda)=-\infty$.

We claim that $\lim\limits_{\lambda\to 0}\psi(\lambda)=-\infty$. Otherwise, there is $A>0$, sequences $\lambda_n\to 0^+$, $v_{\lambda_n}\in S_{\lambda_n}$ and $y_n\in M$ such that
\begin{equation}\label{>-A}
  v_{\lambda_n}(y_n)>-A,\quad \forall n\in\N.
\end{equation}
We claim that $v_{\lambda_n}$ is uniformly bounded from below. If not, there is $z_n\in M$ such that
\[
v_{\lambda_n}(z_n)\to-\infty\qquad \hbox{as $n\to +\infty$}.
\]
By Proposition \ref{uto-inf}, $v_{\lambda_n}$ uniformly converges to $-\infty$, which contradicts (\ref{>-A}).
Then, according to Theorem \ref{thm1} and Remark \ref{oss picky}, the sequence $v_{\lambda_n}$ uniformly converges. to the only possible limit $u_0$, which contradicts the fact that $v_{\lambda_n}\in S_{\lambda_n}$ for all $n\in\N$.

In a similar manner, define
\[S_\lambda^+:=\{v_\lambda\ \textrm{is\ a\ solution\ of\ (\ref{E})\ with}\ v_\lambda>u_0+1\}\]
and
\[\varphi(\lambda):=\inf \{v_\lambda(x):\ v_\lambda\in S_\lambda^+,\ x\in M\}.\]
If $S_\lambda^+=\emptyset$, we set $\varphi(\lambda)=+\infty$. The same proof yields that $\lim\limits_{\lambda\to 0}\varphi(\lambda)=+\infty$.

Define
\[\theta(\lambda):=\sup\{|v_\lambda(x)-u_0(x)|,\ v_\lambda\in S_\lambda \setminus (S_\lambda^- \cup S_\lambda^+),\ x\in M\}.\]
We claim that $\lim\limits_{\lambda\to 0}\theta(\lambda)=0$. Otherwise, there is $\varepsilon>0$, a sequence of discount factors $\lambda_n\to 0^+$ and of solutions $v_{\lambda_n}\in S_{\lambda_n}\setminus( S^-_{\lambda_n}\cup S^+_{\lambda_n})$ and $y_n\in M$ such that
\begin{equation}\label{petit}
|v_{\lambda_n}(y_n)-u_0(y_n)|>\varepsilon,\quad \forall n.
\end{equation}
Since $v_{\lambda_n}\in S_{\lambda_n}\setminus( S^-_{\lambda_n}\cup S^+_{\lambda_n})$, there is also a point $x_n$ such that
\[|v_{\lambda_n}(x_n) -  u_0(x_n)|\leqslant 1.\]
Similarly to the first step, we infer from Proposition \ref{uto-inf} that $v_{\lambda_n}$ is uniformly bounded. Then, according to Theorem \ref{thm1} and Remark \ref{oss picky}, the sequence $v_{\lambda_n}$ uniformly converges to the only possible limit $u_0$. But again this yields a contradiction as $\|v_{\lambda_n}-u_0\|_\infty >\varepsilon$ for all $n\in\N$.
This concludes the proof.
\end{proof}

\section{The linear case}\label{sec model case}

{In this section we continue our analysis on the vanishing discount problem in the case when $G$ depends linearly on $u$. Hence we will}
consider a Hamilton-Jacobi equation with discount factor $\lambda>0$ of the form
\begin{equation}\label{VH}\tag{${\textrm{HJ}}_\lambda$}
  \lambda a(x)u(x)+H(x,D_x u)=c_0\qquad\hbox{in $M$},
\end{equation}
where we assume that $H:T^*M\to\mathbb R$ is a continuous function satisfying (H1)-(H2) (convexity and superlinearity) and the coefficient $a$ is a continuous function on $M$ satisfying the following condition:
\begin{equation}\label{V>0}\tag{$a1$}
  \int_{TM}a(x)d\tilde{\mu}>0\qquad \hbox{for all $\tilde \mu\in\widetilde{\mathfrak M}$},
\end{equation}
where $\Mis$ denotes the compact and convex subset of $\parts(TM)$ made up by Mather measures associated with $H$.
Without any loss of generality, we shall restrict to the case $\lambda\in(0,1)$. Equations of the form \eqref{VH} can be regarded
as a model example for the more general Hamilton-Jacobi equations of contact type considered in Section \ref{sec general}, cf. equation \eqref{E}. The full Hamiltonian is then given by $G(x,p,u) = a(x)u+H(x,p)$. The Hamiltonian $G$ then fulfills all the hypotheses of the previous section. In particular, it is Lipschitz with respect to $u$ with Lipschitz constant $\|a\|_\infty$.

We will be concerned with the issue of existence of solutions to equation \eqref{VH}, at least for small values of $\lambda\in (0,1)$, and their  asymptotic behavior as $\lambda\to 0^+$. In Section \ref{sec model converging} we show the existence of the maximal solution to
\eqref{VH} for values of $\lambda\in (0,1)$ small enough. These solutions are shown to be equi-bounded and equi-Lispchitz, therefore, in view of the results established in Section \ref{sec general}, they converge to a solution of the limit critical equation
\begin{equation}\label{e0}\tag{HJ$_0$}
  H(x,Du)=c_0\quad \hbox{in $M$.}
\end{equation}
In Section \ref{sec model diverging}, under further natural conditions on the coefficient $a$, we investigate the existence of possible other families of solutions of \eqref{VH} and we describe their asymptotic behavior as $\lambda\to 0^+$.
\subsection{A converging family of solutions}\label{sec model converging}


We recall that  $L:TM\to\mathbb R$ is the Lagrangian associated with $H$ via the Legendre transform, namely
\[
L(x,v):=\sup_{p\in T^*_x M}\big(p(v)-H(x,p)\big)\qquad\hbox{for all $(x,v)\in TM$}.
\]
The constant $c_0\in \R$ is the critical value of $H$. We define the following value function, for $x\in M$,
\[V_\lambda(x):=\inf_{\gamma}\limsup_{t\to+\infty}\int_{-t}^0e^{-\lambda \int_s^0a(\gamma(\tau))d\tau}\Big(L\big(\gamma(s),\dot\gamma(s)\big)+c_0\Big)ds,\]
where the infimum is taken among absolutely continuous curves $\gamma:(-\infty,0]\to M$ with $\gamma(0)=x$.
{This definition is inspired by the formula given in \cite[Theorem 4.8]{Z}  to represent the
unique solution of \eqref{VH} in the case $a\geqslant 0$ in $M$ and $a>0$ on $\A$.}

The following holds.

\begin{theorem}\label{thm3}
There exists $\lambda_0\in (0,1)$ such that for every $\lambda\in(0,\lambda_0)$ the following holds:
\begin{itemize}
\item[\em (i)]  the value function $V_\lambda : M\to \R$ is finite-valued;\smallskip
\item[\em (ii)] the functions $\{V_\lambda\,:\,\lambda\in (0,\lambda_0)\,\}$ are equi-bounded and equi-Lipschitz;\smallskip
\item[\em (iii)]  the value function $V_\lambda$ is the maximal subsolution of (\ref{VH}). In particular, it is the maximal solution of (\ref{VH});
\item[\em (iv)] for every $x\in M$, there exists a curve $\gamma^x_\lambda:(-\infty,0]\to M$ such that
\[
V_\lambda(x)=\int_{-\infty}^0e^{-\lambda \int_s^0a(\gamma^x_\lambda(\tau))d\tau}\Big(L\big(\gamma^x_\lambda(s),\dot\gamma^x_\lambda(s)\big)+c_0\Big)ds.
\]
Furthermore, the curve $\gamma^x_\lambda$ is $\hat \kappa$-Lipschitz, for some $\hat \kappa>0$ independent of $\lambda\in (0,\lambda_0)$ and $x\in M$.
\end{itemize}
\end{theorem}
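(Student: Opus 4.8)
The plan is to construct, for $\lambda$ small enough, the maximal solution $V_\lambda$ of \eqref{VH} by combining the ergodic-type representation formula defining $V_\lambda$ with the machinery of the implicit Lax–Oleinik semigroups $(T^-_t)_{t\ge 0}$, $(T^+_t)_{t>0}$ recalled in Section \ref{sec contact H}. The key preliminary observation is that a strict subsolution of \eqref{VH} exists for $\lambda$ small: starting from a critical subsolution $v_0$ of \eqref{e0} that is strict outside $\A$ (Theorem \ref{thm outA}), and adjusting it by a large constant, one exploits assumption \eqref{V>0} together with condition (L5) (equivalently (G5), here simply $\int a\,d\tilde\mu>0$) to make $\lambda a(x)u + H(x,D_xu)\le c_0-\delta$ near $\M$ as well — this is exactly the place where the non-degeneracy hypothesis does its work, through the estimates of Lemmas \ref{e1e2} and \ref{>T0} applied to calibrated curves. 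Having such a strict subsolution $u_0$, Lemmas \ref{T-T+} and \ref{u-v-} give that $u_-:=\lim_{t\to\infty}T^-_tu_0$ is the maximal solution of \eqref{VH}. So steps (i)–(iii) reduce to: (a) showing $\lambda_0>0$ exists so that a strict subsolution exists for $\lambda\in(0,\lambda_0)$; (b) identifying the resulting maximal solution with the value function $V_\lambda$.

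For the identification in (iii), I would show the two inequalities separately. That any subsolution $w$ of \eqref{VH} satisfies $w\le V_\lambda$ follows by a Gronwall-type argument: along any curve $\gamma$ with $\gamma(0)=x$, domination $w\prec L-\lambda a(\cdot)w + c_0$ integrated against the weight $e^{-\lambda\int_s^0 a(\gamma(\tau))d\tau}$ yields, after integration by parts (mimicking the computation in Lemma \ref{>u0}), the bound $w(x)\le \int_{-t}^0 e^{-\lambda\int_s^0 a}(L+c_0)\,ds + (\text{boundary term})$, and one lets $t\to\infty$. Conversely, $V_\lambda$ itself is a subsolution: this is the dynamic-programming principle for the value function, which gives $V_\lambda = T^-_t V_\lambda$ for all $t$, hence $V_\lambda$ is a fixed point of the semigroup and therefore a solution (using the characterization of weak KAM solutions via $T^-$). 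Combining, $V_\lambda$ is the maximal subsolution, hence a solution, hence the maximal solution.

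The equi-boundedness and equi-Lipschitz estimates of (ii) are where most of the analytic care goes. The upper bound on $V_\lambda$ comes from testing the infimum in its definition against a well-chosen competitor curve (e.g. a calibrated curve for a fixed critical solution), using $|e^{-\lambda\int_s^0 a}|\le e^{\lambda\|a\|_\infty |s|}$ and the linear-growth control $L(x,v)+c_0\ge 0$ that follows from $L+c_0\ge -H(x,0)\ge\dots$ — more precisely, from the fact that any critical subsolution $v$ gives $v(\gamma(0))-v(\gamma(-t))\le\int_{-t}^0(L+c_0)$, so the integrand is controlled below by a Lipschitz function of the endpoints. The lower bound, and with it the uniform Lipschitz constant $\hat\kappa$ for the optimal curves $\gamma^x_\lambda$, is the genuine difficulty: one must show that the exponential weight $e^{-\lambda\int_s^0 a}$ does not blow up along optimizing curves, which again hinges on the integral positivity of $a$ on the Mather set via an argument exactly parallel to Lemma \ref{e1e2} — if the Birkhoff-type averages $\frac1{b-a}\int_a^b a(\gamma^x_\lambda(s))\,ds$ were not bounded below by a positive constant over long time intervals, a weak-$*$ limit of occupation measures would be a Mather measure violating \eqref{V>0}. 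Once the curves are equi-Lipschitz and $V_\lambda$ is equi-bounded, equi-Lipschitz continuity of $V_\lambda$ follows as in Lemma \ref{s3}, and existence of the minimizing curve $\gamma^x_\lambda$ in (iv) follows by the direct method (Arzelà–Ascoli on competitors of controlled speed, lower semicontinuity of the action from convexity of $L$). The main obstacle, then, is establishing the uniform-in-$\lambda$ a priori bounds on $V_\lambda$ and on the length of optimal curves; everything else is a fairly standard assembly of weak KAM and viscosity-solution facts already quoted above.
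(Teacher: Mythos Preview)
Your overall architecture for (iii) --- DPP gives that $V_\lambda$ is a subsolution, and a Gronwall/integration-by-parts argument shows any subsolution lies below $V_\lambda$ --- is exactly what the paper does (Propositions \ref{prop DPP}, \ref{prop wiff}, \ref{prop maximal subsolution}). Likewise, your instinct that the lower bound on $V_\lambda$ must come from showing the weight $e^{-\lambda\int_s^0 a}$ cannot blow up along near-optimal curves, via a Mather-measure contradiction, is correct.

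However, the Lax--Oleinik route you propose as the backbone has a genuine gap. You need a \emph{strict} subsolution of \eqref{VH} to invoke Lemma \ref{u-v-} and conclude that $u_-$ is the maximal solution, and you claim one can be built from a critical subsolution $v_0$ strict outside $\A$ by ``adjusting by a large constant''. Under only \eqref{V>0}, this does not work: the integral condition $\int a\,d\tilde\mu>0$ for all Mather measures does \emph{not} force $a>0$ pointwise on $\A$ (or even on $\M$), so $\lambda a(x)(v_0(x)-C)+H(x,D_xv_0)$ has no sign on the part of $\A$ where $a\le 0$, regardless of $C$. Your appeal to Lemmas \ref{e1e2} and \ref{>T0} is circular: those lemmas concern calibrated curves of an \emph{assumed} equi-bounded family of solutions, which is precisely what you are trying to construct. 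The paper in fact only produces a strict subsolution later, in the proof of Theorem \ref{thm dichotomy}(ii), and there it explicitly requires the stronger pointwise hypothesis ($a3'$): $a>0$ on $\A$.

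The paper's actual route avoids this entirely by working directly with $V_\lambda$ and using a \emph{bootstrapping} scheme you do not mention: first a crude upper bound $V_\lambda\le \widehat C_v^+/\lambda$ (testing against a curve that sits at a point where $a>0$), then the key Lemma \ref{Ca} showing $\sup_{\lambda,\gamma,t}\lambda^{p+1}\alpha(\lambda,\gamma,t)<\infty$ over curves in $\Gamma_\lambda(p)$, which yields a crude lower bound $V_\lambda\ge -\widehat C_v^-/\lambda^2$; this in turn restricts the competitor class enough to get a uniform upper bound on $\A$ via static curves (Lemma \ref{lemma partial upper bound}), extended to $M$ by DPP, and finally a second application of Lemma \ref{Ca} with $p=0$ gives the uniform lower bound. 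Your sketch jumps straight to uniform bounds and would not go through without this two-step refinement. For (iv), the paper does not use the direct method on competitors; it takes a weak KAM calibrated curve for the already-constructed solution $u_\lambda$ and shows, via an ODE/Gronwall identity and Lemma \ref{leto0}, that the boundary term vanishes and the calibration integral equals $u_\lambda(x)$.
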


\begin{remark}
For every $\lambda>0$, define
\[c_\lambda:=\inf_{u\in C^\infty(M)}\sup_{x\in M}\big\{H(x,Du)+\lambda a(x)u\big\}.\]
According to \cite{JYZ,n2}, if (\ref{VH}) has solutions, then $c_0\geqslant  c_\lambda$.
Theorem \ref{thm3} implies that $c_0\geqslant  c_\lambda$ for every $\lambda>0$ small enough if (\ref{V>0}) holds. We also point out that the value provided by the above inf-sup with $\lambda=0$ agrees with the critical constant $c_0$, as it is well known, see for instance \cite{FS,CIP}.
\end{remark}

According to Theorem \ref{thm1}, the functions $V_\lambda$ uniformly converge to a critical solution $u_0$ as $\lambda\to 0^+$, where $u_0$ is the maximal subsolution $w$ of (\ref{e0}) satisfying
\begin{equation*}
  \int_{TM}w(x)a(x)d\tilde{\mu}(x,v)\leqslant  0,\quad \forall \tilde{\mu}\in\widetilde{\mathfrak M}.
\end{equation*}

We can furthermore strengthen the conclusion of Theorem \ref{thm Maxime bis} on the asymptotic behavior, as
$\lambda\to 0^+$, of all possible families of solutions to equation  \eqref{VH}. Theorem \ref{thm3} in fact rules out the
possibility that there exist families of solutions that diverge to $+\infty$. The precise statement is the following.

\begin{theorem}\label{thm Maxime}
There exist $\psi:(0,1)\to\mathbb [-\infty,+\infty)$ and $\theta:(0,1)\to\mathbb R$ with
\[\lim_{\lambda\to 0}\psi(\lambda)=-\infty,\quad \lim_{\lambda\to 0}\theta(\lambda)=0\]
such that, if $v_\lambda$ is a solution of (\ref{VH}), then either $v_\lambda\leqslant  \psi(\lambda)$ or $\|v_\lambda-u_0\|_\infty\leqslant  \theta(\lambda)$ for all $\lambda\in (0,1)$.
\end{theorem}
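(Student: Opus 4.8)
The plan is to obtain Theorem~\ref{thm Maxime} as a corollary of the trichotomy of Theorem~\ref{thm Maxime bis}, using Theorem~\ref{thm3} to rule out the branch in which solutions diverge to $+\infty$. First I would record that the Hamiltonian $G(x,p,u)=a(x)u+H(x,p)$ satisfies (G1)--(G4), that its Fenchel conjugate is $L_G(x,v,u)=L(x,v)-a(x)u$, so that $\frac{\partial L_G}{\partial u}(x,v,0)=-a(x)$; in particular (L4) holds trivially, the expression being affine in $u$, and condition (L5) is precisely \eqref{V>0}. By Theorem~\ref{thm3}, there is $\lambda_0\in(0,1)$ such that, for every $\lambda\in(0,\lambda_0)$, equation~\eqref{VH} admits a maximal solution $V_\lambda$, and the family $(V_\lambda)_{\lambda\in(0,\lambda_0)}$ is equi-bounded, say $\sup_{\lambda\in(0,\lambda_0)}\|V_\lambda\|_\infty=:C<+\infty$. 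Thus $(V_\lambda)_{\lambda\in(0,\lambda_0)}$ is an equi-bounded family of solutions of \eqref{VH}, so Theorem~\ref{thm1} applies and identifies $u_0=\lim_{\lambda\to0^+}V_\lambda$ as the critical solution appearing in the statement.

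Next I would apply Theorem~\ref{thm Maxime bis} to this same family, obtaining functions $\varphi:(0,1)\to(-\infty,+\infty]$, $\psi^\ast:(0,1)\to[-\infty,+\infty)$ and $\theta^\ast:(0,1)\to\R$ with $\psi^\ast\to-\infty$, $\varphi\to+\infty$, $\theta^\ast\to0$ as $\lambda\to0^+$, such that every solution $v_\lambda$ of \eqref{VH} satisfies one of $v_\lambda\leqslant\psi^\ast(\lambda)$, $v_\lambda\geqslant\varphi(\lambda)$, or $\|v_\lambda-u_0\|_\infty\leqslant\theta^\ast(\lambda)$. The decisive observation is that maximality of $V_\lambda$ forces $v_\lambda\leqslant V_\lambda\leqslant C$ on $M$ for every solution $v_\lambda$ of \eqref{VH} with $\lambda\in(0,\lambda_0)$. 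Since $\varphi(\lambda)\to+\infty$, I would fix $\lambda_1\in(0,\lambda_0)$ with $\varphi(\lambda)>C$ for all $\lambda\in(0,\lambda_1)$; for such $\lambda$ the middle alternative $v_\lambda\geqslant\varphi(\lambda)$ is then impossible, so the trichotomy collapses to the desired dichotomy. One sets $\psi:=\psi^\ast$ and $\theta:=\theta^\ast$ on $(0,\lambda_1)$.

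It only remains to prescribe $\psi$ and $\theta$ on $[\lambda_1,1)$ in a way that keeps the dichotomy valid there; this is a pure bookkeeping matter, immaterial to the content of the statement, which concerns the regime $\lambda\to0^+$. Concretely, on this range one may split the solutions of \eqref{VH} into those lying everywhere below $u_0-1$ — for which $\sup_M v_\lambda\leqslant\sup_M u_0-1$, so one takes $\psi(\lambda)$ to be this supremum, with the convention $\sup\emptyset=-\infty$ — and the remaining ones, for which a legitimate value of $\theta(\lambda)$ is the supremum of $\|v_\lambda-u_0\|_\infty$; finiteness of the latter for the relevant values of $\lambda$ can be extracted from the Harnack-type estimate~\eqref{Harnack} (shrinking $\lambda_1$ if convenient), and in any case one is free to choose $\psi,\theta$ on $[\lambda_1,1)$ in any manner consistent with the dichotomy. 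The only genuine input here is Theorem~\ref{thm3}, and specifically the equi-boundedness of its maximal solutions, which is the hard part of this section and is carried out in Section~\ref{sec model converging}; granting it, the present theorem is essentially immediate, so there is no serious obstacle — the one point I would double-check is that the solution $u_0$ in this statement is the same object as the limit furnished by Theorems~\ref{thm1} and~\ref{thm Maxime bis}, which is clear since all of them are characterized as $\lim_{\lambda\to0^+}V_\lambda$.
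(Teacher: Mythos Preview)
Your proposal is correct and follows precisely the approach the paper itself indicates: the paper does not give an explicit proof of Theorem~\ref{thm Maxime}, but states in the paragraph preceding it that ``Theorem~\ref{thm3} in fact rules out the possibility that there exist families of solutions that diverge to $+\infty$,'' i.e., the result is obtained from the trichotomy of Theorem~\ref{thm Maxime bis} by using the maximality and equi-boundedness of the $V_\lambda$ to eliminate the $\varphi$-branch, exactly as you do. Your verification that $G(x,p,u)=a(x)u+H(x,p)$ satisfies (G1)--(G4) and (L5), your identification of $u_0$ via Theorem~\ref{thm1}, and your observation that $v_\lambda\leqslant V_\lambda\leqslant C$ forces $S_\lambda^+=\emptyset$ once $\varphi(\lambda)>C$, are all on point; the residual bookkeeping on $[\lambda_1,1)$ is, as you say, inessential to the asymptotic content of the statement and is likewise glossed over by the paper.
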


The remainder of this subsection is devoted to prove the statement of Theorem \ref{thm3}. This will be obtained via a series of intermediate results.

A key step in order to establish that the value function is a (Lispchitz) viscosity subsolution to \eqref{e0} is to prove that it satisfies the Dynamic Programming Principle. Please note that the next proposition is valid even when $V_\lambda$ is not finite-valued.

\begin{proposition}\label{prop DPP}
(Dynamic Programming Principle). Let $\lambda\in (0,1)$. For each absolutely continuous curve $\gamma:[{b_1},b_2]\to M$, we have
\begin{eqnarray*}
e^{-\lambda \int_{b_2}^0 a(\gamma(\tau))d\tau} V_\lambda\big(\gamma(b_2)\big)\leqslant
e^{-\lambda \int_{b_1}^0 a(\gamma(\tau))d\tau} V_\lambda\big(\gamma({b_1})\big)+ \int_{{b_1}}^{b_2}e^{-\lambda \int_s^0a(\gamma(\tau))d\tau}\Big(L\big(\gamma(s),\dot\gamma(s)\big)+c_0\Big)ds.
\end{eqnarray*}
\end{proposition}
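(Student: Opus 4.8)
The plan is to establish the inequality by constructing, for the endpoint $\gamma(b_2)$, a competitor in the variational definition of $V_\lambda$: a time-shifted copy of $\gamma$ glued in front of an almost optimal curve for $V_\lambda(\gamma(b_1))$. For absolutely continuous $\eta:(-\infty,0]\to M$ set
\[
J_\lambda(\eta):=\limsup_{t\to+\infty}\int_{-t}^0 e^{-\lambda\int_s^0 a(\eta(\tau))\,d\tau}\big(L(\eta(s),\dot\eta(s))+c_0\big)\,ds,
\]
so that $V_\lambda(y)=\inf\{J_\lambda(\eta):\eta(0)=y\}$. Since $e^{-\lambda\int_{b_2}^0 a(\gamma(\tau))\,d\tau}>0$, dividing the claim by this factor and using the multiplicativity $e^{-\lambda\int_s^0}=e^{-\lambda\int_s^{b_2}}\,e^{-\lambda\int_{b_2}^0}$ reduces the Proposition to
\begin{align*}
V_\lambda(\gamma(b_2))\;\le\;& e^{-\lambda\int_{b_1}^{b_2} a(\gamma(\tau))\,d\tau}\,V_\lambda(\gamma(b_1))\\
&+\int_{b_1}^{b_2} e^{-\lambda\int_s^{b_2} a(\gamma(\tau))\,d\tau}\big(L(\gamma(s),\dot\gamma(s))+c_0\big)\,ds,
\end{align*}
an expression involving $\gamma$ only on $[b_1,b_2]$. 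The degenerate cases are immediate: if $V_\lambda(\gamma(b_1))=+\infty$ or the running-cost integral above equals $+\infty$, the right-hand side is $+\infty$; and if $V_\lambda(\gamma(b_1))=-\infty$, the construction below forces $V_\lambda(\gamma(b_2))=-\infty$. So from now on assume both quantities finite.

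Fix $\varepsilon>0$ and pick $\eta:(-\infty,0]\to M$ with $\eta(0)=\gamma(b_1)$ and $J_\lambda(\eta)\le V_\lambda(\gamma(b_1))+\varepsilon$. Define $\zeta:(-\infty,0]\to M$ by $\zeta(s):=\gamma(s+b_2)$ for $s\in[b_1-b_2,0]$ and $\zeta(s):=\eta(s-b_1+b_2)$ for $s\le b_1-b_2$. The two pieces agree at $s=b_1-b_2$ (both equal $\gamma(b_1)=\eta(0)$), so $\zeta$ is absolutely continuous on every bounded subinterval, hence an admissible curve with $\zeta(0)=\gamma(b_2)$; therefore $V_\lambda(\gamma(b_2))\le J_\lambda(\zeta)$.

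It then remains to compute $J_\lambda(\zeta)$, which I would do by splitting $\int_{-t}^0$ at $s=b_1-b_2$ for large $t$ and changing variables. On $[b_1-b_2,0]$ one has $\int_s^0 a(\zeta(\tau))\,d\tau=\int_{s+b_2}^{b_2}a(\gamma(\rho))\,d\rho$, and the substitution $\sigma=s+b_2$ turns that piece into $\int_{b_1}^{b_2}e^{-\lambda\int_\sigma^{b_2}a(\gamma(\rho))\,d\rho}\big(L(\gamma(\sigma),\dot\gamma(\sigma))+c_0\big)\,d\sigma$. On $(-\infty,b_1-b_2]$, writing $\int_s^0=\int_s^{b_1-b_2}+\int_{b_1-b_2}^0$ gives $\int_s^0 a(\zeta(\tau))\,d\tau=\int_{s-b_1+b_2}^0 a(\eta(\rho))\,d\rho+\int_{b_1}^{b_2}a(\gamma(\rho))\,d\rho$, and the substitution $u=s-b_1+b_2$ turns that piece into $e^{-\lambda\int_{b_1}^{b_2}a(\gamma(\rho))\,d\rho}\int_{-(t+b_1-b_2)}^0 e^{-\lambda\int_u^0 a(\eta(\rho))\,d\rho}\big(L(\eta(u),\dot\eta(u))+c_0\big)\,du$. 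Letting $t\to+\infty$ and using that the $\limsup$ is insensitive to the shift in the lower limit of integration, one obtains
\[
J_\lambda(\zeta)=e^{-\lambda\int_{b_1}^{b_2}a(\gamma(\rho))\,d\rho}\,J_\lambda(\eta)+\int_{b_1}^{b_2}e^{-\lambda\int_\sigma^{b_2}a(\gamma(\rho))\,d\rho}\big(L(\gamma(\sigma),\dot\gamma(\sigma))+c_0\big)\,d\sigma.
\]
Combining $V_\lambda(\gamma(b_2))\le J_\lambda(\zeta)$ with $J_\lambda(\eta)\le V_\lambda(\gamma(b_1))+\varepsilon$ and letting $\varepsilon\to0^+$ yields the reduced inequality; multiplying back by $e^{-\lambda\int_{b_2}^0 a(\gamma(\tau))\,d\tau}$ recovers the Proposition.

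The only genuinely delicate point is this last bookkeeping with the $\limsup$: because $V_\lambda$ is defined through a $\limsup$ and not a limit, one must note that $\limsup_t(A_t+B)=B+\limsup_t A_t$ for the finite constant $B$ produced by the $[b_1,b_2]$-piece, and that replacing the lower endpoint $-t$ by $-(t+b_1-b_2)$ leaves the $\limsup$ unchanged; both are elementary once isolated, and the rest is change of variables together with the multiplicativity of $s\mapsto e^{-\lambda\int_s^0 a(\cdot)}$. I would emphasize, as the statement already hints, that this argument uses neither hypothesis \eqref{V>0} nor any smallness of $\lambda$: the Dynamic Programming Principle is a soft consequence of the definition of $V_\lambda$, whereas finiteness and the equi-bounds of Theorem \ref{thm3} are the substantive analytic content, for which \eqref{V>0} will be needed.
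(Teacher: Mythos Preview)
Your proof is correct and follows essentially the same approach as the paper: both reduce the general inequality to an equivalent form intrinsic to $[b_1,b_2]$ (the paper by translating to $b_2=0$, you by dividing out the exponential factor), then concatenate the given arc with an arbitrary/$\varepsilon$-optimal curve for $V_\lambda(\gamma(b_1))$ and split the resulting functional accordingly. Your treatment is slightly more explicit than the paper's about the $\limsup$ bookkeeping and the degenerate cases, but the argument is the same.
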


\begin{proof}
We start by claiming that we can reduce to the case ${b_2}=0$, without any loss of generality. Indeed, by multiplying the above inequality for $e^{\lambda \int_{b_2}^0 a(\gamma(\tau))d\tau}$, we get
\begin{eqnarray*}
V_\lambda\big(\gamma({b_2})\big)
\leqslant
e^{-\lambda \int_{b_1}^{b_2} a(\gamma(\tau))d\tau} V_\lambda\big(\gamma({b_1})\big)
+
\int_{{b_1}}^{b_2}e^{-\lambda \int_s^{b_2} a(\gamma(\tau))d\tau}\Big(L\big(\gamma(s),\dot\gamma(s)\big)+c_0\Big)ds.
\end{eqnarray*}
The claim follows by making the change of variables $\tau':=\tau-{b_2}$, $s':=s-{b_2}$ and by replacing $\gamma$ with $\gamma_{-{b_2}}(\cdot):=\gamma(\cdot+{b_2})$.

Let us then prove the assertion with ${b_2}=0$. For each $\xi:(-\infty,0]\to M$ with $\xi(0)=\gamma({b_1})$, we define
\begin{equation*}
  \bar \gamma(t):=
   \begin{cases}
   \gamma(t) &\hbox{if $t\in [{b_1},0]$}\\
   \xi(t-{b_1})   &\hbox{if $t\in (-\infty,{b_1})$}
   \end{cases}
   \end{equation*}
Then, by definition of the value function, we have
\begin{align*}
V_\lambda\big(\gamma(0)\big)&\leqslant  \limsup_{t\to+\infty}\int_{-t}^0e^{-\lambda \int_s^0{a}(\bar\gamma(\tau))d\tau}\Big(L\big(\bar\gamma(s),\dot{\bar \gamma}(s)\big)+c_0\Big)ds
\\ &=\int_{{b_1}}^0e^{-\lambda\int_s^0a(\gamma(\tau))d\tau}\Big(L\big(\gamma(s),\dot{\gamma}(s)\big)+c_0\Big)ds
\\ &\quad +e^{-\lambda \int_{b_1}^0a(\gamma(\tau))d\tau}\limsup_{t\to+\infty}\int_{-t}^0e^{-\lambda \int_s^0a(\xi(\tau))d\tau}\Big(L\big(\xi(s),\dot{\xi}(s)\big)+c_0\Big)ds.
\end{align*}
Taking the infimum among all $\xi$, we get the assertion.
\end{proof}

We now proceed to show $V_\lambda$ is  a bounded function on $M$, at least for $\lambda\in (0,1)$ small enough. We start by proving the following upper bound.

\begin{lemma}\label{lemma bdd above}
There is a constant $\widehat C_v^+>0$ such that $V_\lambda(x)\leqslant  \widehat C_v^+/\lambda$ for all $x\in M$ and $\lambda\in (0,1)$.
\end{lemma}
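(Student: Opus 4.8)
The plan is to bound $V_\lambda(x)$ from above by testing the value function against a cleverly chosen comparison curve, using the variational definition of $V_\lambda$. First I would invoke the non-degeneracy condition \eqref{V>0}: since $\Mis$ is compact and $\tilde\mu\mapsto\int_{TM}a\,d\tilde\mu$ is continuous in the weak-$*$ topology of $\PP_\ell$ (recall Mather measures have support in a common compact set), there is $\delta_0>0$ with $\int_{TM}a\,d\tilde\mu\geqslant\delta_0$ for all $\tilde\mu\in\Mis$. By the by-now standard argument already used in Lemma~\ref{e1e2} — passing to the measures associated with long pieces of a minimizing curve and extracting a weak-$*$ limit that turns out to be a closed minimizing, hence Mather, measure — one shows there are $T_0>0$ such that for any calibrated/minimizing curve $\gamma$ of $L+c_0$ and any interval $[a,b]$ with $b-a\geqslant T_0$ one has $\frac1{b-a}\int_a^b a(\gamma(\tau))\,d\tau\geqslant\delta_0/2>0$. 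In particular one can fix, once and for all, a curve $\zeta:(-\infty,0]\to M$ that is calibrated for a fixed critical subsolution (e.g. a curve through a point of the projected Aubry set, or more simply any curve realizing the Mañé critical potential), which is $\hat\kappa$-Lipschitz and along which $\int_{-t}^0(L(\zeta,\dot\zeta)+c_0)\,ds$ stays bounded as $t\to+\infty$ (it is bounded by $h_t$-type quantities, hence by $\mathrm{diam}(M)$-controlled constants, uniformly in $t$).

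Next I would estimate, for such a curve $\zeta$ with $\zeta(0)=x$ (using that $\A$ is a uniqueness set is not needed here; one just needs one good curve per point — actually one can take $\zeta$ to first run along a minimal geodesic from an arbitrary fixed point $x_*$ to $x$ in bounded time and then stay calibrated, so the same $\zeta$-construction works for every $x$), the integrand
\[
e^{-\lambda\int_s^0 a(\zeta(\tau))\,d\tau}\big(L(\zeta(s),\dot\zeta(s))+c_0\big).
\]
Splitting $(-\infty,0]$ into $[-T_0,0]$ and $(-\infty,-T_0]$: on the bounded part the exponential is at most $e^{\lambda\|a\|_\infty T_0}\leqslant e^{\|a\|_\infty T_0}$ and $L(\zeta,\dot\zeta)+c_0$ is bounded by a constant $C_1$ depending only on $\hat\kappa$ and $H$, giving a contribution $\leqslant C_1 T_0 e^{\|a\|_\infty T_0}$. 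On the tail, for $s\leqslant -T_0$ the averaged positivity gives $\int_s^0 a(\zeta(\tau))\,d\tau\geqslant \tfrac{\delta_0}{2}|s|$ (for $|s|\geqslant T_0$), so $e^{-\lambda\int_s^0 a(\zeta(\tau))\,d\tau}\leqslant e^{-\lambda\delta_0|s|/2}$; since $|L(\zeta(s),\dot\zeta(s))+c_0|\leqslant C_1$ as well, the tail contributes at most $C_1\int_{T_0}^{+\infty}e^{-\lambda\delta_0\sigma/2}\,d\sigma=\tfrac{2C_1}{\lambda\delta_0}e^{-\lambda\delta_0 T_0/2}\leqslant \tfrac{2C_1}{\lambda\delta_0}$. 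Taking the $\limsup_{t\to+\infty}$ and then the infimum over curves, we obtain $V_\lambda(x)\leqslant C_1 T_0 e^{\|a\|_\infty T_0}+\tfrac{2C_1}{\lambda\delta_0}=:\widehat C_v^+/\lambda'$ after absorbing the bounded term into the $1/\lambda$ term on $(0,1)$, i.e. $V_\lambda(x)\leqslant \widehat C_v^+/\lambda$ with $\widehat C_v^+$ independent of $x$ and $\lambda\in(0,1)$.

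The main obstacle is producing, uniformly in $x$, a single admissible comparison curve along which both $L+c_0$ stays bounded \emph{and} the time-average of $a$ is bounded below by a positive constant after a fixed waiting time $T_0$. The positivity-in-average statement is exactly the content of condition \eqref{V>0} combined with the compactness of $\Mis$ and the measure-extraction argument of Lemma~\ref{e1e2}; the boundedness of $L+c_0$ along the curve is the statement that calibrated curves for the (stationary) critical equation have uniformly bounded action and are equi-Lipschitz, which follows from superlinearity exactly as in Lemma~\ref{gameq}. Once these two uniform facts are in hand, the splitting-and-geometric-series estimate above is routine and gives the claimed bound.
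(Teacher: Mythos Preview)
Your argument is correct in spirit and would go through, but it is considerably more elaborate than what is needed, and it essentially anticipates machinery the paper reserves for a later, sharper step.

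The paper's proof is almost trivial by comparison. Condition \eqref{V>0} immediately yields a single point $x_0\in M$ with $a(x_0)>0$. The comparison curve is then: a unit-speed geodesic from $x_0$ to $x$ on $[-d,0]$, followed by the constant curve at $x_0$ on $(-\infty,-d)$. Along the constant part one has $L(x_0,0)+c_0$ bounded and the exponential weight equals $e^{\lambda a(x_0)(d+s)}$, which integrates over $(-\infty,-d)$ to $1/(\lambda a(x_0))$. This gives the $1/\lambda$ bound with no Mather measures, no calibrated curves, and no compactness argument.

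Your route---concatenating a short geodesic with a static/calibrated curve through a point of $\A$, and invoking a Lemma~\ref{e1e2}-type contradiction argument to get averaged positivity of $a$ along such curves---is exactly the mechanism the paper deploys later, in Lemma~\ref{lemma partial upper bound} and Proposition~\ref{prop tildeCv}, to upgrade the crude $1/\lambda$ bound to a \emph{uniform} bound. So what you propose works, but it spends the full strength of \eqref{V>0} on a result that only needs the existence of one point where $a>0$. Two minor caveats in your write-up: calibrated curves are associated with critical \emph{solutions}, not arbitrary subsolutions, so you should say you take a static curve through a point of $\A$ (these are the curves in $\K$ of Lemma~\ref{lemma partial upper bound}); and your $T_0$ must absorb the geodesic time $d\leqslant\mathrm{diam}(M)$, since the averaged-positivity estimate is only claimed along the static tail.
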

\begin{proof}
By condition (\ref{V>0}), there is a point $x_0\in M$ such that $a(x_0)>0$. We denote $d:=d(x_0,x)$, $\overline{D}:=\textrm{diam}(M)$. We take a geodesic $\zeta:[-d,0]\to M$ with $\zeta(-d)=x_0$ and $\zeta(0)=x$. Define
\begin{equation*}
  \bar \gamma(t)=
   \begin{cases}
   \zeta(t), & -d\leqslant  t\leqslant  0,
   \\
   x_0, & t<-d
   \\
   \end{cases}
\end{equation*}
and set $C_L:=\max\limits_{x\in M,\|v\|_x\leqslant  1}|L(x,v)+c_0|$.
Then
\begin{align*}
V_\lambda(x)&\leqslant  \limsup_{t\to+\infty}\int_{-t}^0e^{-\lambda \int_s^0a(\bar \gamma(\tau))d\tau}\Big(L\big(\bar \gamma(s),\dot{\bar \gamma}(s)\big)+c_0\Big)ds
\\ &=\int_{-d}^0e^{-\lambda \int_s^0a(\zeta(\tau))d\tau}\Big(L\big(\zeta(s),\dot\zeta(s)\big)+c_0\Big)ds
\\ &\quad +e^{-\lambda \int_{-d}^0a(\zeta(\tau))d\tau}\limsup_{t\to+\infty}\int_{-t}^{-d}e^{-\lambda \int_s^{-d}a(x_0)d\tau}\big(L(x_0,0)+c_0\big)ds\\
&\leqslant  C_L\left( \int_{-d}^0e^{-\lambda\|a\|_\infty s}ds+e^{\lambda \|a\|_\infty d}\int_{-\infty}^{-d}e^{\lambda a(x_0)(d+s)}ds\right)\\
&=C_L\left( \frac{e^{\lambda\|a\|_\infty  d}-1}{\lambda\|a\|_\infty}+\frac{e^{\lambda \|a\|_\infty  d}}{\lambda a(x_0)}\right)
\leqslant  \frac{2 C_L\, e^{\|a\|_\infty\overline{D}}}{\lambda a(x_0)}.\qedhere
\end{align*}
\end{proof}
\ \medskip\\
Now we know that the infimum in $V_\lambda(x)$ is taken among curves in
\begin{flalign*}
&
\quad
\Gamma_\lambda:=\bigg\{\gamma:(-\infty,0]\to M:\ \limsup_{t\to+\infty}\int_{-t}^0e^{-\lambda \int_s^0a(\gamma(\tau))d\tau}\Big(L\big(\gamma(s),\dot\gamma(s)\big)+c_0\Big)ds\leqslant  \frac{\widehat C_v^+}{\lambda}\bigg\}.
&&
\end{flalign*}
For $\gamma:(-\infty,0]\to M$ and $t>0$, we define
\[
\alpha(\lambda,\gamma,t):=\int_{-t}^0e^{-\lambda \int_s^0a(\gamma(\tau))d\tau}ds
\]
and
\[\beta(\lambda,\gamma,t):=e^{-\lambda \int_{-t}^0a (\gamma(s))ds}.\]
Since $\frac{d}{dt}\alpha(\lambda,\gamma,t)=\beta(\lambda,\gamma,t)>0$, $t\mapsto \alpha(\lambda,\gamma,t)$ is increasing. Then $\lim\limits_{t\to+\infty}\alpha(\lambda,\gamma,t)$ exists for each $\lambda$ and $\gamma$, and may equal $+\infty$. We will also need the following auxiliary remark.

\begin{lemma}\label{a/b}
For every $\lambda>0$ and for all curves $\gamma$, we have
\[-\lambda \|a\|_\infty+\frac{1}{\alpha(\lambda,\gamma,t)}\leqslant  \frac{\beta(\lambda,\gamma,t)}{\alpha(\lambda,\gamma,t)}\leqslant  \lambda \|a\|_\infty+\frac{1}{\alpha(\lambda,\gamma,t)},\quad \forall t>0.\]
\end{lemma}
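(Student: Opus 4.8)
The plan is to exploit the identity $\partial_t\alpha(\lambda,\gamma,t)=\beta(\lambda,\gamma,t)$ already recorded just above the statement, together with an analogous differential relation for $\beta$ itself, so as to control the difference $\beta-1$ by $\alpha$. Performing the change of variables $s=-r$ in the definition of $\alpha$ gives the convenient representation $\alpha(\lambda,\gamma,t)=\int_0^t\beta(\lambda,\gamma,r)\,dr$, where $\beta(\lambda,\gamma,r)=\exp\big(-\lambda\int_{-r}^0 a(\gamma(\tau))\,d\tau\big)$. Since $\alpha(\lambda,\gamma,t)>0$ for $t>0$ (the integrand is strictly positive), the two claimed inequalities are equivalent, after multiplying through by $\alpha(\lambda,\gamma,t)$, to the single estimate
\[
\big|\beta(\lambda,\gamma,t)-1\big|\leqslant \lambda\|a\|_\infty\,\alpha(\lambda,\gamma,t).
\]

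To prove this, I would first observe that, since $a\in C(M)$ is bounded and $\gamma$ is absolutely continuous, the function $r\mapsto\int_{-r}^0 a(\gamma(\tau))\,d\tau$ is Lipschitz on $[0,t]$ with a.e.\ derivative $a(\gamma(-r))$; composing with the locally Lipschitz map $x\mapsto e^{-\lambda x}$ shows that $r\mapsto\beta(\lambda,\gamma,r)$ is Lipschitz on $[0,t]$ with
\[
\frac{d}{dr}\beta(\lambda,\gamma,r)=-\lambda\,a(\gamma(-r))\,\beta(\lambda,\gamma,r)\qquad\text{for a.e.\ }r\in[0,t].
\]
As $\beta(\lambda,\gamma,0)=1$, the fundamental theorem of calculus for absolutely continuous functions yields $\beta(\lambda,\gamma,t)-1=-\lambda\int_0^t a(\gamma(-r))\,\beta(\lambda,\gamma,r)\,dr$. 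Taking absolute values and using $\beta(\lambda,\gamma,r)>0$ together with $|a|\leqslant\|a\|_\infty$ gives
\[
\big|\beta(\lambda,\gamma,t)-1\big|\leqslant \lambda\|a\|_\infty\int_0^t\beta(\lambda,\gamma,r)\,dr=\lambda\|a\|_\infty\,\alpha(\lambda,\gamma,t),
\]
which is exactly the desired estimate. Dividing by $\alpha(\lambda,\gamma,t)>0$ then gives $-\lambda\|a\|_\infty\leqslant \frac{\beta-1}{\alpha}\leqslant \lambda\|a\|_\infty$, i.e.\ the asserted two-sided bound on $\beta/\alpha$.

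There is no genuine obstacle here: once the correct differential identities are written down, the argument is a one-line application of the fundamental theorem of calculus. The only points deserving a word of care are the justification of the a.e.\ differentiation of $r\mapsto\beta(\lambda,\gamma,r)$ under the sole hypotheses $a\in C(M)$ and $\gamma$ absolutely continuous (covered by the Lipschitz estimate above), and the strict positivity of $\alpha(\lambda,\gamma,t)$ for $t>0$, which legitimizes the division; both are immediate.
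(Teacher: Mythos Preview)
Your proof is correct and rests on the same two differential identities the paper uses, namely $\partial_t\alpha=\beta$ and $\partial_t\beta=-\lambda a(\gamma(-t))\beta$. The only difference is in how the conclusion is extracted: you integrate $\beta'$ directly via the fundamental theorem of calculus to obtain $\beta(t)-1=-\lambda\int_0^t a(\gamma(-r))\beta(r)\,dr$ and then bound, whereas the paper applies the Cauchy mean value theorem to the pair $(\beta,\alpha)$ to find $\xi\in(0,t)$ with $(\beta(t)-1)/\alpha(t)=-\lambda a(\gamma(-\xi))$ and bounds that. Your route is marginally more elementary (no mean value theorem needed) and makes the absolute continuity hypotheses explicit; the paper's route is a touch shorter. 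Either way the argument is immediate once the derivatives are written down.
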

\begin{proof}
A direct calculation shows that
\[\frac{\frac{d}{dt}\beta(\lambda,\gamma,t)}{\frac{d}{dt}\alpha(\lambda,\gamma,t)}=-\lambda a\big(\gamma(-t)\big).\]
By the Cauchy mean value theorem, there is $\xi\in(0,t)$ such that
\[\frac{\beta(\lambda,\gamma,t)-\beta(\lambda,\gamma,0)}{\alpha(\lambda,\gamma,t)-\alpha(\lambda,\gamma,0)}=\frac{\beta(\lambda,\gamma,t)-1}{\alpha(\lambda,\gamma,t)}=\frac{\frac{d}{dt}\beta(\lambda,\gamma,\xi)}{\frac{d}{dt}\alpha(\lambda,\gamma,\xi)}=-\lambda a\big(\gamma (-\xi)\big),\]
which implies the conclusion. 
\end{proof}

We distill in the next lemma an argument that we will repeatedly use in the sequel.

\begin{lemma}\label{lemma Cn}
Assume there are sequences $\lambda_n\to 0^+$, $\gamma_n:(-\infty,0]\to M$, $t_n\in (0,+\infty)$ and $\theta_n\to 0^+$ such that
\begin{flalign}
 &\qquad \alpha(\lambda_n,\gamma_n,t_n):=\int_{-t_n}^0e^{-\lambda_n \int_s^0a(\gamma_n(\tau))d\tau}ds\to +\infty, \qquad\hbox{as $n\to +\infty$,}&& \label{eq diverging alpha}\\
  &\qquad \frac{1}{\alpha(\lambda_n,\gamma_n,t_n)}\int_{-t_n}^0e^{-\lambda_n\int_s^0a(\gamma_n(\tau))d\tau}\Big(L\big(\gamma_n(s),\dot{\gamma}_n(s)\big)+c_0\Big)ds\leqslant  \theta_n,
  \quad\forall n\in\N.&&
  \label{leqCn}
\end{flalign}
Define a probability measure $\tilde{\mu}_n\in\parts (TM)$ by
\begin{equation}\label{mun}
\int_{TM}f\, d\tilde{\mu}_n:=\frac{\int_{-t_n}^0e^{-\lambda_n\int_s^0a(\gamma_n(\tau))d\tau}f\big(\gamma_n(s),\dot{\gamma}_n(s)\big)ds}{\alpha(\lambda_n,\gamma_n,t_n)},\qquad \forall f\in C_\ell(TM).\tag{$\star$}
\end{equation}
Then the set $(\tilde\mu_n)_n$ is relatively compact in $\mathscr P_\ell$ (for the weak-$*$ topology coming from $C_\ell(TM)$) and any of its accumulation points is a Mather measure associated with $L$.
\end{lemma}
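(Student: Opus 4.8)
The plan is to first establish relative compactness of $(\tilde\mu_n)_n$ in $\mathscr P_\ell$, then identify the accumulation points as Mather measures by checking the two defining properties: being closed, and being minimizing for $\int_{TM} L\, d\tilde\mu$ subject to the constraint in Theorem \ref{Mather}. For compactness, I would bound $\int_{TM} L(x,v)\, d\tilde\mu_n$ uniformly from above. From \eqref{leqCn} we have $\int_{TM}(L+c_0)\, d\tilde\mu_n \le \theta_n$, so $\int_{TM} L\, d\tilde\mu_n \le \theta_n - c_0$, which is bounded above by, say, $1-c_0$ for $n$ large. Since $L$ is superlinear hence bounded below, Lemma \ref{technical} gives that the set $\{\tilde\mu\in\mathscr P_\ell : \int_{TM} L\, d\tilde\mu \le 1-c_0\}$ is compact in $\mathscr P_\ell$, and $(\tilde\mu_n)_n$ eventually lies in it; hence it is relatively compact. (One should first check $\tilde\mu_n\in\mathscr P_\ell$, i.e.\ $\int \|v\|_x\, d\tilde\mu_n<\infty$, which follows from the superlinear lower bound $L(x,v)\ge \|v\|_x + C_1$ together with the upper bound on $\int L\, d\tilde\mu_n$.)

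Next, let $\tilde\mu$ be any accumulation point, say $\tilde\mu_{n_k}\weakst\tilde\mu$ in $\mathscr P_\ell$. To show $\tilde\mu$ is closed, I would test against $f\in C^1(M)$: since $D_x f(v)$ is linear in $v$, it lies in $C_\ell(TM)$, so
\[
\int_{TM} D_x f(v)\, d\tilde\mu_{n_k}
= \frac{1}{\alpha(\lambda_{n_k},\gamma_{n_k},t_{n_k})}\int_{-t_{n_k}}^0 e^{-\lambda_{n_k}\int_s^0 a(\gamma_{n_k}(\tau))\,d\tau}\, \frac{d}{ds}\Big(f(\gamma_{n_k}(s))\Big)\, ds .
\]
Integrating by parts moves the derivative onto the exponential weight, whose $s$-derivative is $-\lambda_{n_k} a(\gamma_{n_k}(s)) e^{-\lambda_{n_k}\int_s^0 a\, d\tau}$; using $\|a\|_\infty$ and $\|f\|_\infty$ and the definition of $\alpha$, the bulk term is $O(\lambda_{n_k})$, while the boundary terms are bounded by $2\|f\|_\infty(1+\beta(\lambda_{n_k},\gamma_{n_k},t_{n_k}))/\alpha(\lambda_{n_k},\gamma_{n_k},t_{n_k})$, which tends to $0$ because $\beta\le e^{\|a\|_\infty t}$ is harmless once divided by $\alpha\to\infty$ — more carefully, by Lemma \ref{a/b}, $\beta/\alpha \le \lambda\|a\|_\infty + 1/\alpha \to 0$. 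Passing to the limit (legitimate since $D_xf(v)\in C_\ell(TM)$ and $\tilde\mu_{n_k}\weakst\tilde\mu$ in the $C_\ell$-topology) yields $\int_{TM} D_x f(v)\, d\tilde\mu = 0$, so $\tilde\mu\in\mathscr P_0$.

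Finally, to see $\tilde\mu$ is minimizing: since $L\in C_\ell(TM)$, the weak-$*$ convergence in $\mathscr P_\ell$ gives $\int_{TM} L\, d\tilde\mu = \lim_k \int_{TM} L\, d\tilde\mu_{n_k} \le \limsup_k(\theta_{n_k} - c_0) = -c_0$. On the other hand $\tilde\mu\in\mathscr P_0$ forces $\int_{TM} L\, d\tilde\mu \ge -c_0$ by Theorem \ref{Mather}. Hence $\int_{TM} L\, d\tilde\mu = -c_0$, i.e.\ $\tilde\mu$ is a Mather measure for $L$. The main obstacle is the bookkeeping around the $C_\ell$-topology: one must be careful that $L$ and the functions $x\mapsto D_xf(v)$ genuinely belong to $C_\ell(TM)$ so that weak-$*$ convergence in $\mathscr P_\ell$ (rather than merely in $\mathscr P(TM)$ against $C_c$) can be invoked to pass $\int L\, d\tilde\mu_{n_k}\to\int L\, d\tilde\mu$; this is exactly what distinguishes the present argument from the $C_c$-based compactness used in, e.g., Lemma \ref{e1e2}, and it is what makes the upper bound on $\int L\, d\tilde\mu_n$ (via Lemma \ref{technical}) do double duty, both for compactness and for tightness at infinity.
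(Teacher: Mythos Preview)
Your compactness argument and your proof that the limit measure is closed are essentially the same as the paper's: bound $\int L\,d\tilde\mu_n$ from above via \eqref{leqCn}, invoke Lemma \ref{technical}, then integrate by parts and use Lemma \ref{a/b} to kill both the boundary and bulk terms.

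There is, however, a genuine gap in your minimality step. You write ``since $L\in C_\ell(TM)$, the weak-$*$ convergence in $\mathscr P_\ell$ gives $\int_{TM} L\, d\tilde\mu = \lim_k \int_{TM} L\, d\tilde\mu_{n_k}$''. But $L\notin C_\ell(TM)$: by (H2)/(L3) the Lagrangian is \emph{superlinear} in $v$, so $\sup_{(x,v)}|L(x,v)|/(1+\|v\|_x)=+\infty$. Hence you cannot test the weak-$*$ convergence in $(C_\ell(TM))'$ against $L$, and the asserted convergence of $\int L\,d\tilde\mu_{n_k}$ is not justified this way.

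The paper repairs this with a one-sided argument. Since $L$ is continuous and bounded from below, and weak-$*$ convergence in $\mathscr P_\ell$ implies narrow convergence, lower semicontinuity of the integral gives
\[
\int_{TM}(L+c_0)\,d\tilde\mu \leqslant \liminf_{k}\int_{TM}(L+c_0)\,d\tilde\mu_{n_k}\leqslant \lim_k\theta_{n_k}=0,
\]
which, combined with Theorem \ref{Mather}, forces equality. So your overall strategy is right; only the passage to the limit in $\int L\,d\tilde\mu_{n_k}$ needs to be replaced by this $\liminf$ lower-semicontinuity step rather than an appeal to $L\in C_\ell(TM)$.
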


\begin{proof}
Note that, due to \eqref{eq diverging alpha}, we have $t_n\to +\infty$ as $n\to +\infty$.
Since $L$ is uniformly superlinear in the fibers, there exists a constant $C_1\geqslant   0$ such that
\[
\max_{n\in\N}\, \theta_n
\geqslant
\int_{TM}\big(L(x,v)+c_0\big)\,d\tilde{\mu}_n(x,v)\geqslant  \int_{TM}(\|v\|_x-C_1)d\tilde{\mu}_n
\qquad\hbox{for all $n\in\N$,}
\]
which readily implies that the sequence $(\tilde{\mu}_n)_n$ is a well defined sequence in $\mathscr P_\ell$. The asserted precompactness of
$(\tilde{\mu}_n)_n$ in $\parts(TM)$ follows from Lemma \ref{technical}.

Let $\tilde{\mu}$ be a limit point of a subsequence of $(\tilde{\mu}_n)_n$, that we will not relabel to ease notations.
We are going to show that $\tilde\mu$ is a Mather measure.\smallskip

\noindent {{\bf The measure $\tilde\mu$ is closed}:} we pick $\phi\in C^1(M)$. An integration by parts shows that
\begin{align}
\nonumber&\int_{TM} D_x\phi(v)\,d\tilde{\mu}_n(x,v)\\
&=
\frac{\int_{-t_n}^0 \frac{d\phi}{dt}\big(\gamma_n(s)\big)\,  e^{-\lambda_n\int_s^0a(\gamma_n(\tau))d\tau}\,ds   }{\alpha(\lambda_n,\gamma_n,t_n)}
\label{IPP}\\
\nonumber&=\frac{\phi\big(\gamma_n(0)\big)-e^{-\lambda_n\int_{-t_n}^0a(\gamma_n(\tau))d\tau}\phi\big(\gamma_n(-t_n)\big)}{\alpha(\lambda_n,\gamma_n,t_n)}-\lambda_n\int_{TM}a(x)\phi(x)\,d\tilde{\mu}_n(x,v).
\end{align}
We infer
\begin{align*}
\bigg|\int_{TM} D_x\phi(v) d\tilde{\mu}_n\bigg|
&\leqslant
\frac{\|\phi\|_\infty}{\alpha(\lambda_n,\gamma_n,t_n)}+\frac{e^{-\lambda_n\int_{-t_n}^0a(\gamma_n(\tau))d\tau}}{\alpha(\lambda_n,\gamma_n,t_n)}\|\phi\|_\infty+\lambda_n\|a\|_\infty\|\phi\|_\infty \\
&=\bigg(\frac{1}{\alpha(\lambda_n,\gamma_n,t_n)}+\frac{\beta(\lambda_n,\gamma_n,t_n)}{\alpha(\lambda_n,\gamma_n,t_n)}+\lambda_n\|a\|_\infty\bigg)\|\phi\|_\infty.
\end{align*}
By sending $n\to +\infty$ and by using Lemma \ref{a/b}, hypothesis \eqref{eq diverging alpha} and again Lemma \ref{technical},
we derive
\[
\int_{TM} D_x\phi(v)\, d\tilde{\mu}(x,v)=\lim_{n\to+\infty}\int_{TM} D_x\phi(v)\, d\tilde{\mu}_n(x,v)=0.\]

\noindent {\bf {The measure $\tilde{\mu}$ is minimizing}:} by (\ref{leqCn}) we get
\[
0
=
\lim_{n\to+\infty}\theta_n
\geqslant
\liminf_{n\to+\infty}\int_{TM}\big(L(x,v)+c_0\big)\, d\tilde\mu_n(x,v)
\geqslant
\int_{TM}\big(L(x,v)+c_0\big)\, d\tilde{\mu}(x,v),
\]
where the last inequality follows from the fact that $L$ is continuous and bounded from below and the measures
$\tilde\mu_n$ are weakly-$*$ converging to $\tilde\mu$ in $\PP_\ell$ (hence, narrowly converging), see for instance \cite[Section 5.1.1]{Gigli_book}.

In view of Theorem \ref{Mather},
we conclude that $\tilde\mu$ is a Mather measure.
\end{proof}

\noindent Let $C>0$ be a fixed constant  and define, for any fixed integer $p\geqslant  0$,
\begin{eqnarray*}
\quad \Gamma_\lambda(p):=\bigg\{\gamma:(-\infty,0]\to M:\ \limsup_{t\to+\infty}\int_{-t}^0e^{-\lambda \int_s^0a(\gamma(\tau))d\tau}\big(L\big(\gamma(s),\dot\gamma(s)\big)+c_0\big)ds\leqslant  \frac{C}{\lambda^p}\bigg\}.
\end{eqnarray*}

The information provided by the next lemma will be crucial for our upcoming analysis.

\begin{lemma}\label{Ca}
Let $p\geqslant  0$ be a fixed integer and assume that $\Gamma_\lambda(p)\not=\emptyset$ for every $\lambda\in (0,1)$ small enough. Then there exists $\lambda(p)\in (0,1)$ such that
\begin{eqnarray*}
A(p):=\sup\Big\{\lambda^{p+1} \alpha(\lambda,\gamma,t)\,:\, \lambda\in\big(0,\lambda(p)\big),\ \gamma\in\Gamma_\lambda(p),\ t>0\Big\}<+\infty.
\end{eqnarray*}

\end{lemma}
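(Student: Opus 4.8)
The plan is to argue by contradiction, mirroring the mechanism behind Lemma \ref{lemma Cn}. Suppose that for every $\lambda(p)\in(0,1)$ the supremum $A(p)$ is infinite; then we can extract sequences $\lambda_n\to 0^+$, curves $\gamma_n\in\Gamma_{\lambda_n}(p)$ and times $t_n>0$ such that $\lambda_n^{p+1}\alpha(\lambda_n,\gamma_n,t_n)\to+\infty$. First I would record that this forces $\alpha(\lambda_n,\gamma_n,t_n)\to+\infty$ (since $\lambda_n<1$), and hence $t_n\to+\infty$. The key quantitative input is the membership $\gamma_n\in\Gamma_{\lambda_n}(p)$: passing to the limsup in $t$, and using that for each fixed $n$ the map $t\mapsto\alpha(\lambda_n,\gamma_n,t)$ is increasing to a (possibly infinite) limit, we get that along a suitable sequence of times (which we may take to be the $t_n$ themselves, up to a harmless enlargement)
\[
\int_{-t_n}^0 e^{-\lambda_n\int_s^0 a(\gamma_n(\tau))\,d\tau}\big(L(\gamma_n(s),\dot\gamma_n(s))+c_0\big)\,ds\le \frac{C}{\lambda_n^p}+o(1).
\]
Dividing by $\alpha(\lambda_n,\gamma_n,t_n)$ and writing $\theta_n:=\dfrac{C/\lambda_n^{p}+o(1)}{\alpha(\lambda_n,\gamma_n,t_n)}=\dfrac{C+o(1)}{\lambda_n^{p+1}\alpha(\lambda_n,\gamma_n,t_n)}\cdot\lambda_n\to 0$, precisely because $\lambda_n^{p+1}\alpha(\lambda_n,\gamma_n,t_n)\to+\infty$ and $\lambda_n\to 0$, we land exactly in the hypotheses \eqref{eq diverging alpha}--\eqref{leqCn} of Lemma \ref{lemma Cn}.

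Next I would invoke Lemma \ref{lemma Cn}: the associated measures $\tilde\mu_n$ defined by \eqref{mun} are relatively compact in $\PP_\ell$ and any accumulation point $\tilde\mu$ is a Mather measure associated with $L$, i.e.\ $\tilde\mu\in\Mis$. The final step is to derive a contradiction from condition \eqref{V>0}. To do this I would apply the closedness/minimality just obtained together with the integration-by-parts identity \eqref{IPP}, exactly as in the proof of Lemma \ref{lemma Cn}, but now testing against a fixed $\phi\in C^\infty(M)$ and, more importantly, extracting information from the term $\lambda_n\int_{TM} a(x)\phi(x)\,d\tilde\mu_n$. The cleaner route, however, is to estimate $\alpha$ itself from the inequality in Lemma \ref{a/b}: integrating $\tfrac{\beta}{\alpha}\le \lambda\|a\|_\infty+\tfrac1\alpha$ has no immediate payoff, so instead I would argue directly on $\alpha(\lambda_n,\gamma_n,t_n)=\int_{-t_n}^0 e^{-\lambda_n\int_s^0 a(\gamma_n(\tau))\,d\tau}\,ds$. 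Using the measure identity, $\tfrac1{t_n}\int_{-t_n}^0 a(\gamma_n(s))\,ds\to\int_{TM}a\,d\tilde\mu>0$ by \eqref{V>0} and $\tilde\mu\in\Mis$; more precisely, by Lemma \ref{lemma Cn} the ergodic averages of $a$ along $\gamma_n$ (with the exponential weight) converge to $\int a\,d\tilde\mu=:2\delta>0$. I would then show that this positive drift forces $\lambda_n\alpha(\lambda_n,\gamma_n,t_n)$ to stay bounded — heuristically $\int_{-\infty}^0 e^{-\lambda_n\delta|s|}\,ds=\tfrac1{\lambda_n\delta}$ — so that $\lambda_n^{p+1}\alpha(\lambda_n,\gamma_n,t_n)\le \lambda_n^p\cdot\big(\lambda_n\alpha(\lambda_n,\gamma_n,t_n)\big)\to 0$ when $p\ge 1$, and stays bounded when $p=0$; in either case this contradicts $\lambda_n^{p+1}\alpha(\lambda_n,\gamma_n,t_n)\to+\infty$.

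The main obstacle is making rigorous the passage from ``the limit measure $\tilde\mu$ has $\int a\,d\tilde\mu>0$'' to the uniform bound $\lambda_n\alpha(\lambda_n,\gamma_n,t_n)=O(1)$. The subtlety is that $\alpha$ integrates the full weight $e^{-\lambda_n\int_s^0 a(\gamma_n(\tau))\,d\tau}$ over the whole half-line $(-t_n,0]$, whereas the convergence of averages is only an asymptotic, $t\to\infty$ statement for each fixed $n$; one must control the weight uniformly over the initial segment where the ergodic average has not yet stabilized. I expect the right tool is a two-step splitting at a time $T_0$ (independent of $n$, analogous to Lemma \ref{e1e2}) beyond which $\tfrac1{t-s}\int_{s}^{t}a(\gamma_n)\ge\delta$ for all $s<t\le -T_0$ with $t-s\ge T_0$; on $[-T_0,0]$ the weight is bounded by $e^{\lambda_n\|a\|_\infty T_0}\le e^{\|a\|_\infty T_0}$, and on $(-t_n,-T_0]$ the geometric decay of the weight gives $\int_{-\infty}^{-T_0} e^{\lambda_n\int_s^{-T_0}(\cdot)}\,ds\le \tfrac{1}{\lambda_n\delta}$ up to a constant. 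This uniform $T_0$ is extracted exactly by a contradiction-within-the-contradiction argument as in Lemma \ref{e1e2}, using Lemma \ref{lemma Cn} to identify any limit measure as Mather and \eqref{V>0} to bound $\int a\,d\tilde\mu$ away from $0$. Granting this, the estimate $\lambda_n\alpha(\lambda_n,\gamma_n,t_n)\le C_{T_0}$ follows and closes the argument.
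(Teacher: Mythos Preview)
Your setup and the first two steps are correct and match the paper: argue by contradiction, extract $\lambda_n\to 0^+$, $\gamma_n\in\Gamma_{\lambda_n}(p)$, and times $t_n$ with $\lambda_n^{p+1}\alpha(\lambda_n,\gamma_n,t_n)\to+\infty$ and weighted action $\le C/\lambda_n^p+1$ at $t_n$; then Lemma~\ref{lemma Cn} applies with $\theta_n=(C/\lambda_n^p+1)/\alpha_n\to 0$, and any weak limit $\tilde\mu$ of the $\tilde\mu_n$ is a Mather measure.

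Where you diverge from the paper is in extracting the contradiction. The paper does \emph{not} attempt to bound $\lambda_n\alpha_n$ directly. Instead it takes a critical subsolution $\varphi\le -1$ of \eqref{e0}, smooths it to $\varphi_\varepsilon$ via Theorem~\ref{approx}, and chains Fenchel's inequality $L+c_0\ge D_x\varphi_\varepsilon(v)-\varepsilon$ with the integration-by-parts identity \eqref{IPP} to get
\[
\theta_n\ \ge\ \frac{\varphi_\varepsilon(\gamma_n(0))}{\alpha_n}-\lambda_n\int_{TM}a(x)\varphi_\varepsilon(x)\,d\tilde\mu_n-\varepsilon,
\]
the term $-\beta_n\varphi_\varepsilon(\gamma_n(-t_n))/\alpha_n$ being dropped because $\varphi_\varepsilon<0$. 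Letting $\varepsilon\to 0$, dividing by $\lambda_n$, and using $\lambda_n\alpha_n\ge\lambda_n^{p+1}\alpha_n\to+\infty$ yields $\int a\varphi\,d\tilde\mu\ge 0$. Since $\varphi-m$ is again a negative subsolution for every $m\ge 0$, replacing $\varphi$ by $\varphi-m$ forces $\int a\,d\tilde\mu\le 0$, contradicting \eqref{V>0}.

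Your proposed route --- a uniform $T_0$ with $\tfrac1{t-s}\int_s^t a(\gamma_n)\ge\delta$ on all long sub-intervals, then geometric decay of the weight --- has a genuine gap. The analogue you invoke, Lemma~\ref{e1e2}, works there because the curves are \emph{calibrated} and equi-Lipschitz (Lemma~\ref{gameq}): the action over any sub-interval equals a difference of values of $u_\lambda$, hence is uniformly bounded, which is precisely what makes the unweighted limit measures tight and minimizing. Here the curves $\gamma_n\in\Gamma_{\lambda_n}(p)$ are \emph{arbitrary} absolutely continuous curves subject only to a one-sided $\limsup$ bound on the \emph{weighted} action over $[-t,0]$. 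You have no control of the (unweighted) action on sub-intervals, no Lipschitz bound on $\dot\gamma_n$, and Lemma~\ref{lemma Cn} cannot be fed into the nested contradiction since its hypotheses concern the full weighted integral on $[-t_n,0]$. The ``contradiction-within-the-contradiction'' therefore does not go through as stated.

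As a side remark, your intuition can be salvaged by a one-line identity rather than a nested argument: differentiating the weight gives $\lambda_n\alpha_n\int_{TM}a\,d\tilde\mu_n=1-\beta(\lambda_n,\gamma_n,t_n)<1$, so once $\int a\,d\tilde\mu_n\to\int a\,d\tilde\mu>0$ you obtain $\lambda_n\alpha_n$ bounded immediately. The paper, however, takes the subsolution route described above.
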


\begin{remark}\label{rmk Ca}
Note that $\Gamma_\lambda(p+1)\supseteq \Gamma_\lambda(p)$ for every integer $p\geqslant  0$ and $\lambda\in (0,1)$. In particular, when $p\geqslant  1$ and $C:=\widehat C_v^+$, we have $\Gamma_\lambda(p)\not=\emptyset$ for every $\lambda\in (0,1)$ in view of Lemma \ref{lemma bdd above}.
\end{remark}

\begin{proof}
Let $p\geqslant  0$ be a fixed integer.
We argue by contradiction. Assume there exist sequences $\lambda_n\to 0^+$, $\gamma_n\in\Gamma_{\lambda_n}(p)$ and $t_n\to +\infty$ such that
\[
\lambda^{p+1}_n\alpha(\lambda_n,\gamma_n,t_n)\to +\infty\qquad\hbox{as $n\to +\infty$,}
\]
with
\[
\int_{-t_n}^0e^{-\lambda_n \int_s^0a(\gamma_n(\tau))d\tau}\Big(L\big(\gamma_n(s),\dot\gamma_n(s)\big)+c_0\Big)ds
\leqslant  \frac{C}{\lambda_n^p}+1.
\]
In particular, conditions \eqref{eq diverging alpha} and \eqref{leqCn} in Lemma \ref{lemma Cn} are satisfied with
\[
\theta_n:=\dfrac{C/\lambda_n^p+1}{\alpha(\lambda_n,\gamma_n,t_n)}.
\]
Let $\tilde{\mu}_n$ be the probability measure defined in (\ref{mun}). According to Lemma \ref{lemma Cn}, up to extraction of a subsequence (not relabeled), the measures $\tilde\mu_n$ weakly converge to a Mather
measure $\tilde \mu$.

Now we choose a subsolution $\varphi\leqslant  -1$ of (\ref{e0}). For every $\varepsilon>0$, there exists, in view of Theorem \ref{approx}, a function $\varphi_\varepsilon\in C^\infty(M)$ satisfying
\begin{equation}\label{vare}
  \|\varphi_\varepsilon-\varphi\|_\infty\leqslant  \varepsilon\qquad\hbox{and}\qquad
  H\big(x,D_x \varphi_\varepsilon\big)\leqslant  c_0+\varepsilon\quad\hbox{for all $x\in M$}.
\end{equation}
For $\varepsilon>0$ small, we have $\varphi_\varepsilon<0$. We get
\begin{flalign*}
\quad\  \theta_n
&
\geqslant  \int_{TM}\big(L(x,v)+c_0\big)\,d\tilde{\mu}_n(x,v)
&&
\\
&\geqslant  \int_{TM}\Big(  D_x\varphi_\varepsilon(v)-H\big(x,D_x \varphi_\varepsilon\big)+c_0\Big)\,d\tilde{\mu}_n(x,v)
\geqslant
\int_{TM}\big( D_x\varphi_\varepsilon(v)-\varepsilon\big) \,d\tilde{\mu}_n(x,v)
&&
\\
&
=\frac{\varphi_\varepsilon\big(\gamma_n(0)\big)-e^{-\lambda_n\int_{-t_n}^0a(\gamma_n(\tau))d\tau}\varphi_\varepsilon\big(\gamma_n(-t_n)\big)}{\alpha(\lambda_n,\gamma_n,t_n)}-\lambda_n\int_{TM}a(x)\varphi_\varepsilon(x) \,d\tilde{\mu}_n(x,v)-\varepsilon
&&
\\
&
> \frac{\varphi_\varepsilon\big(\gamma_n(0)\big)}{\alpha(\lambda_n,\gamma_n,t_n)}-\lambda_n\int_{TM}a(x)\varphi_\varepsilon(x)\,d\tilde{\mu}_n(x,v)-\varepsilon.
&&
\end{flalign*}
The equality appearing above is obtained via an integration by parts (cf. proof of Lemma \ref{lemma Cn} equation \eqref{IPP}, when we prove that $\tilde\mu$ is closed), while for the last inequality we have used that fact that $\varphi_\eps<0$.
Now we send $\varepsilon\to 0^+$ and divide the above inequality by $\lambda_n$.  We get
\[
\frac{C}{\lambda_n^{p+1}\alpha(\lambda_n,\gamma_n,t_n)} + \frac{1}{\lambda_n \alpha(\lambda_n,\gamma_n,t_n)}\geqslant  \frac{\varphi\big(\gamma_n(0)\big)}{\lambda_n\alpha(\lambda_n,\gamma_n,t_n)}-\int_{TM}a(x)\varphi(x)d\tilde{\mu}_n.
\]
Since $\lambda_n \alpha(\lambda_n,\gamma_n,t_n)  \geqslant  {\lambda}^{p+1}_n\alpha(\lambda_n,\gamma_n,t_n) \to +\infty$  as $n\to +\infty$, we get
\[
\int_{TM}a(x)\varphi(x)\,d\tilde\mu(x,v) \geqslant  0.
\]
Since for all  $m\geqslant  0$ the function $\varphi-m\leqslant -1$ is also a negative subsolution of (\ref{e0}), by replacing $\varphi$ with
$\varphi-m$ in the inequality above we obtain
\[
\|\varphi\|_\infty \|a\|_\infty
\geqslant
\int_{TM}a(x)\varphi(x)\,d\tilde{\mu}(x,v) \geqslant  m \int_{TM}a(x)\,d\tilde{\mu}(x,v),\qquad \hbox{for all $m\geqslant  0$,}
\]
which implies that $\int_{TM}a(x)d\tilde{\mu}\leqslant  0$. This contradicts assumption \eqref{V>0}.
\end{proof}

We now proceed to show that the value function $V_\lambda$ is bounded from below, for every fixed $\lambda\in \big(0,\lambda(1)\big)$, where $\lambda(1)>0$ is the value obtained according to Lemma \ref{lemma Cn} with $p=1$ and $C=\widehat C_v^+$, where $\widehat C_v^+$ is the constant provided by Lemma \ref{lemma bdd above}.

\begin{lemma}
There exists a constant $\widehat C^-_v>0$ independent of $\lambda$ such that
\[
V_\lambda(x)\geqslant  -\widehat C^-_v\lambda^{-2}\qquad\hbox{for all $x\in M$ and  $\lambda\in \big(0,\lambda(1)\big)$.}
\]
\end{lemma}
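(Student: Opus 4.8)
The plan is to bound from below, uniformly over admissible curves, the quantity whose infimum defines $V_\lambda(x)$, by combining a Fenchel-inequality estimate along curves with the control on the exponential weights $\alpha(\lambda,\gamma,t)$ provided by Lemma \ref{Ca}. First I would use Lemma \ref{lemma bdd above}, which gives $V_\lambda\le\widehat C^+_v/\lambda$, to restrict the infimum defining $V_\lambda(x)$ to curves $\gamma:(-\infty,0]\to M$ with $\gamma(0)=x$ lying in $\Gamma_\lambda$. Since $\Gamma_\lambda$ is precisely $\Gamma_\lambda(1)$ for the choice $C:=\widehat C^+_v$ — which is nonempty for all $\lambda\in(0,1)$ by Remark \ref{rmk Ca} — Lemma \ref{Ca} with $p=1$ then produces $\lambda(1)\in(0,1)$ and a finite constant $A(1)$ such that $\alpha(\lambda,\gamma,t)\le A(1)\lambda^{-2}$ for all $\lambda\in(0,\lambda(1))$, all $\gamma\in\Gamma_\lambda$, and all $t>0$. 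This is the essential structural input.

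Next I would fix a critical subsolution $\varphi$ of \eqref{e0} normalized so that $\varphi\le-1$ on $M$ (possible, since subtracting a constant leaves a critical subsolution a critical subsolution), and for small $\varepsilon>0$ apply Theorem \ref{approx} to $H$ to obtain $\varphi_\varepsilon\in C^\infty(M)$ with $\|\varphi_\varepsilon-\varphi\|_\infty\le\varepsilon$, $H(x,D_x\varphi_\varepsilon)\le c_0+\varepsilon$ on $M$, and $\varphi_\varepsilon<0$. The Fenchel inequality gives $L(x,v)+c_0\ge D_x\varphi_\varepsilon(v)-\varepsilon$, so along any curve $\gamma$ the integrand in the definition of $V_\lambda$ is at least $e^{-\lambda\int_s^0 a(\gamma(\tau))d\tau}\big(\tfrac{d}{ds}\varphi_\varepsilon(\gamma(s))-\varepsilon\big)$. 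Integrating by parts (using $\tfrac{d}{ds}e^{-\lambda\int_s^0 a(\gamma(\tau))d\tau}=\lambda a(\gamma(s))e^{-\lambda\int_s^0 a(\gamma(\tau))d\tau}$), discarding the boundary term $-e^{-\lambda\int_{-t}^0 a(\gamma(\tau))d\tau}\varphi_\varepsilon(\gamma(-t))$, which is nonnegative precisely because $\varphi_\varepsilon<0$, and estimating the rest crudely by $\|\varphi_\varepsilon\|_\infty$, $\|a\|_\infty\|\varphi_\varepsilon\|_\infty$ and $\alpha(\lambda,\gamma,t)$, I expect to reach, uniformly in $t>0$,
\[
\int_{-t}^0 e^{-\lambda\int_s^0 a(\gamma(\tau))d\tau}\big(L(\gamma(s),\dot\gamma(s))+c_0\big)\,ds\ \ge\ -\|\varphi_\varepsilon\|_\infty-\big(\lambda\|a\|_\infty\|\varphi_\varepsilon\|_\infty+\varepsilon\big)\,\alpha(\lambda,\gamma,t).
\]

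Finally, for $\gamma\in\Gamma_\lambda$ and $\lambda\in(0,\lambda(1))$ I would substitute $\alpha(\lambda,\gamma,t)\le A(1)\lambda^{-2}$; the resulting lower bound no longer depends on $t$, so it passes to $\limsup_{t\to+\infty}$, and then sending $\varepsilon\to0^+$ (so $\|\varphi_\varepsilon\|_\infty\to\|\varphi\|_\infty$) yields $\limsup_t \int\cdots\ \ge\ -\|\varphi\|_\infty-\|a\|_\infty\|\varphi\|_\infty A(1)\,\lambda^{-1}\ \ge\ -\widehat C^-_v\,\lambda^{-2}$ with $\widehat C^-_v:=\|\varphi\|_\infty\big(1+\|a\|_\infty A(1)\big)$, using $\lambda<1$. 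Taking the infimum over admissible $\gamma$ gives the claim. I expect the only genuinely delicate points to be, first, correctly invoking Lemma \ref{Ca}: the weight $\alpha(\lambda,\gamma,t)$ is a priori unbounded as soon as $a$ takes negative values somewhere along $\gamma$, and taming it via the $\Gamma_\lambda$-restriction is the whole mechanism that makes a finite lower bound possible; and second, keeping every estimate uniform in $t$ before passing to the $\limsup$. The Fenchel-plus-integration-by-parts computation itself is routine, modulo care with the sign that permits discarding the boundary term.
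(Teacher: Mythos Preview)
Your proof is correct, but it is considerably more elaborate than the paper's. The paper simply observes that the Lagrangian $L$ is bounded from below on $TM$ (by superlinearity), so setting $C_L^-:=-\min\{\min_{TM}(L+c_0),0\}\ge 0$ gives, for every $\gamma\in\Gamma_\lambda$ and $t>0$,
\[
\int_{-t}^0 e^{-\lambda\int_s^0 a(\gamma(\tau))d\tau}\big(L(\gamma(s),\dot\gamma(s))+c_0\big)\,ds\ \ge\ -C_L^-\,\alpha(\lambda,\gamma,t)\ \ge\ -\frac{C_L^-\,A(1)}{\lambda^2},
\]
and the lemma follows at once. No Fenchel inequality, no subsolution, no integration by parts, no $\varepsilon$-regularization. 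Your route---regularizing a negative subsolution and integrating by parts---is exactly the machinery the paper deploys later, in Proposition~\ref{prop lower bound}, where the goal is a \emph{uniform} lower bound and the crude pointwise estimate on $L+c_0$ no longer suffices. In fact your argument already yields the stronger bound $V_\lambda\ge -\widehat C_v^-\lambda^{-1}$ before you relax it to $\lambda^{-2}$; the paper's shortcut only gives $\lambda^{-2}$, which is all that is needed at this intermediate stage.
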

\begin{proof}
Let us fix $\gamma\in\Gamma_\lambda$. For every $t>0$ we have
\[\int_{-t}^0e^{-\lambda \int_s^0a(\gamma(\tau))d\tau}\Big(L\big(\gamma(s),\dot\gamma(s)\big)+c_0\Big)ds\geqslant  -C^-_L\int_{-t}^0e^{-\lambda \int_s^0a(\gamma(\tau))d\tau}ds,\]
where $C^-_L:=-\min\big\{\min_{TM}\big(L(x,v)+c_0\big),0\big\}\geqslant  0$.
We are now going to apply Lemma \ref{Ca} with $p=1$ and by choosing $C:=\widehat C_v^+$ in the definition of $\Gamma_\lambda(1)$, so that  $\Gamma_\lambda(1)=\Gamma_\lambda$: from the above inequality we infer
\[
\int_{-t}^0e^{-\lambda \int_s^0a(\gamma(\tau))d\tau}\Big(L\big(\gamma(s),\dot\gamma(s)\big)+c_0\Big)ds\geqslant  -\frac{C^-_L A(1)}{\lambda^2}
\qquad\hbox{for all $t>0$}.
\]
The assertion readily follows from the definition of $V_\lambda$.
\end{proof}

From the information gathered so far, we know that the value function $V_\lambda$ is finite-valued on $M$ for every $\lambda\in\big(0,\lambda(1)\big)$. We now proceed to get bounds for $V_\lambda$ from above and from below on $M$  independent of $\lambda$.

\begin{lemma}\label{lemma partial upper bound}
There is a constant $\overline{C}^+_v>0$ such that
\[
V_\lambda(x)\leqslant  \overline{C}^+_v\qquad\hbox{for all $x\in \A$ and  $\lambda\in \big(0,\lambda(1)\big)$.}
\]
\end{lemma}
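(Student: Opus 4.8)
The plan is to estimate $V_\lambda(x)$ for $x\in\A$ from above by inserting into the defining infimum a carefully chosen competitor curve and then using condition \eqref{V>0} to tame the exponential weights. Fix once and for all a critical viscosity solution $\bar u$ of \eqref{e0}, e.g. $\bar u=h(z_0,\cdot)$ with $z_0\in\A$ (such a $z_0$ exists by Theorem \ref{thm outA}, and $h(z_0,\cdot)$ is a critical solution by Proposition \ref{prop h}); it is a backward weak KAM solution. For $x\in\A$ (in fact for any $x\in M$) let $\gamma_x:(-\infty,0]\to M$ be a $(\bar u,L,c_0)$-calibrated curve with $\gamma_x(0)=x$, so that, with $\Phi_x(s):=\int_s^0\big(L(\gamma_x(\tau),\dot\gamma_x(\tau))+c_0\big)d\tau$, one has $\Phi_x(s)=\bar u(x)-\bar u(\gamma_x(s))$ and hence $|\Phi_x(s)|\le 2\|\bar u\|_\infty$ for all $s\le 0$, \emph{uniformly} in $x$. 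This equi-boundedness of the partial actions is the gain over using loops realizing $h(x,x)=0$, whose partial actions are not a priori equi-bounded.

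Next I would show the weight $w_x(s):=\exp\!\big(-\lambda\int_s^0 a(\gamma_x(\tau))\,d\tau\big)$ decays. The normalized occupation measures $\nu_{x,t}:=\tfrac1t\int_{-t}^0\delta_{(\gamma_x(s),\dot\gamma_x(s))}\,ds$ satisfy $\int(L+c_0)\,d\nu_{x,t}=\tfrac1t\Phi_x(-t)\to 0$; by superlinearity and Lemma \ref{technical} the family $\{\nu_{x,t}\}_{t\ge t_1}$ is relatively compact in $\PP_\ell$, every limit point is closed (from $\int D_x\phi(v)\,d\nu_{x,t}=\tfrac1t(\phi(x)-\phi(\gamma_x(-t)))\to 0$) and minimizing, hence a Mather measure by Theorem \ref{Mather}. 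Setting $\delta:=\min_{\tilde\mu\in\Mis}\int a\,d\tilde\mu>0$ (the minimum is positive by \eqref{V>0} and compactness of $\Mis$), it follows that $\liminf_{t\to+\infty}\tfrac1t\int_{-t}^0 a(\gamma_x(\tau))\,d\tau\ge\delta$. A compactness argument then upgrades this to a \emph{uniform} statement: if there were $x_n\in\A$ and $t_n\to+\infty$ with $\tfrac1{t_n}\int_{-t_n}^0 a(\gamma_{x_n})<\tfrac\delta2$, the measures $\nu_{x_n,t_n}$ would (by the same reasoning, using only $|\Phi_{x_n}(-t_n)|\le 2\|\bar u\|_\infty$ and Lemma \ref{technical}) subconverge to a Mather measure with $\int a\,d\tilde\mu\le\tfrac\delta2$, contradicting \eqref{V>0}. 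Hence there is $T_*>0$, independent of $x$ and of $\lambda$, with $\int_{-t}^0 a(\gamma_x(\tau))\,d\tau\ge\tfrac\delta2 t$ for all $t\ge T_*$. Consequently $w_x(-t)\le e^{-\lambda\delta t/2}$ for $t\ge T_*$, and since $\lambda\in(0,1)$,
\[
\int_{-\infty}^0 w_x(s)\,ds=\int_0^{+\infty} w_x(-t)\,dt\le T_*e^{\|a\|_\infty T_*}+\frac2{\lambda\delta}.
\]

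Finally I would plug $\gamma_x$ into the definition of $V_\lambda$ and integrate by parts: since $L(\gamma_x,\dot\gamma_x)+c_0=-\Phi_x'$ and $w_x'=\lambda a(\gamma_x)\,w_x$ a.e.,
\[
\int_{-t}^0 w_x(s)\big(L(\gamma_x(s),\dot\gamma_x(s))+c_0\big)\,ds
= w_x(-t)\Phi_x(-t)+\lambda\int_{-t}^0 a(\gamma_x(s))\,w_x(s)\,\Phi_x(s)\,ds .
\]
As $t\to+\infty$ the boundary term tends to $0$ (by the weight decay and $|\Phi_x|\le 2\|\bar u\|_\infty$), while the integral converges absolutely, being dominated by $2\|a\|_\infty\|\bar u\|_\infty\,w_x\in L^1(-\infty,0]$; so the $\limsup$ in the definition of $V_\lambda(x)$ equals this limit, and
\[
V_\lambda(x)\le \lambda\Big|\int_{-\infty}^0 a(\gamma_x)\,w_x\,\Phi_x\,ds\Big|
\le 2\lambda\|a\|_\infty\|\bar u\|_\infty\Big(T_*e^{\|a\|_\infty T_*}+\frac2{\lambda\delta}\Big)
\le \overline C^+_v,
\]
with $\overline C^+_v:=2\|a\|_\infty\|\bar u\|_\infty\big(T_*e^{\|a\|_\infty T_*}+2/\delta\big)$, uniformly for $x\in\A$ and $\lambda\in(0,\lambda(1))$. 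The one genuinely delicate point is the uniform choice of $T_*$: one must be sure the occupation–measure/compactness argument survives letting the base points $x_n$ (and hence the calibrated curves $\gamma_{x_n}$) vary, which it does because the only inputs are the uniform bound $|\Phi_{x_n}(-t_n)|\le 2\|\bar u\|_\infty$ and the compactness of sublevel sets of $\tilde\mu\mapsto\int L\,d\tilde\mu$ in $\PP_\ell$.
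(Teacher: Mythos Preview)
Your proof is correct and follows essentially the same strategy as the paper: insert a calibrated curve into the definition of $V_\lambda$, integrate by parts against the exponential weight, and control the resulting terms via a uniform-in-$x$ lower bound $\int_{-t}^0 a\circ\gamma \geqslant \varepsilon t$ for $t$ large, the latter obtained by a contradiction argument in which occupation measures converge to a Mather measure violating \eqref{V>0}.

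The one noteworthy difference is in the choice of curve. The paper uses the \emph{static curves} $\eta:\R\to\A$ of Aubry--Mather theory (curves lying in $\A$ that calibrate every critical subsolution, quoted from an external reference), which are known to be equi-Lipschitz; compactness of the occupation measures then comes from their equi-compact support. You instead use backward calibrated curves for a single fixed critical solution $\bar u$, and obtain compactness directly from Lemma \ref{technical} via the uniform action bound $|\Phi_x(s)|\leqslant 2\|\bar u\|_\infty$. Your route avoids invoking the external result on static curves and, as you note parenthetically, actually yields the bound for every $x\in M$, not just $x\in\A$; this would make the subsequent extension step (Proposition \ref{prop tildeCv}) redundant. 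The paper's route, on the other hand, keeps the occupation measures compactly supported, which slightly simplifies the passage to the limit $\int a\,d\nu_n\to\int a\,d\tilde\mu$ (for you this still works because $a\in C_\ell(TM)$).
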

\begin{proof}
According to \cite[Theorem 4.14 and Proposition 4.4]{gen}, see also \cite[Theorem 3.3]{DZ10}, for every $x\in\A$ there exists a curve
$\eta:\mathbb R\to \A$ with $\eta(0)=x$ such that, for every subsolution $\varphi$ of \eqref{e0},
\begin{equation}\label{eq static curve}
L\big(\eta(s),\dot\eta(s)\big)+c_0=\frac{d}{ds}(\varphi\comp\eta)(s)\quad \hbox{for a.e.\ $s\in\mathbb R$.}
\end{equation}
Let us denote by $\K$ the family of curves $\eta:\mathbb R\to \A$ satisfying \eqref{eq static curve}. We remark for further use that these curves are equi-Lipschitz, see for instance \cite[Lemma 4.9]{gen}.

Pick a subsolution $\varphi\leqslant   0$ of \eqref{e0} and fix $x\in\A$. From the definition of $V_\lambda$ we get
\begin{eqnarray}
V_\lambda(x)
&\leqslant &
\limsup_{t\to+\infty}\int_{-t}^0 e^{-\lambda \int_s^0a(\eta(\tau))d\tau}\Big(L\big(\eta(s),\dot\eta(s)\big)+c_0\Big)ds
\nonumber\\
&=& \limsup_{t\to+\infty} \int_{-t}^0 e^{-\lambda \int_s^0 a(\eta(\tau))\, d\tau}\,\frac{d}{ds}(\varphi\comp\eta)(s)\, ds \label{VA}\\
&=&\limsup_{t\to+\infty}\bigg\{\varphi\big(\eta(0)\big)-e^{-\lambda\int_{-t}^0a(\eta(\tau))d\tau}\varphi\big(\eta(-t)\big)-\lambda\alpha(\lambda,\eta,t)\int_{TM}a(x)\varphi(x)d\tilde{\mu}^\eta_t\bigg\},\nonumber
\end{eqnarray}
where $\tilde{\mu}^\eta_t\in\parts (TM)$ is defined by
\[
\int_{TM}f(x,v) \,d \tilde{\mu}^\eta_t(x,v):=\frac{\int_{-t}^0e^{-\lambda\int_s^0a(\eta(\tau))d\tau}f\big(\eta(s),\dot{\eta}(s)\big)ds}{\int_{-t}^0e^{-\lambda\int_s^0a(\eta(\tau))d\tau}ds},\qquad \forall f\in C_c(TM).
\]
The second equality in \eqref{VA} is derived via an integration by parts as for \eqref{IPP}.
By assumption \eqref{V>0} and compactness of the family of Mather measures $\Mis$, there exists
$\varepsilon>0$ such that
\begin{equation}\label{equivalent V>0}
\int_{TM} a(x)\,d\tilde\mu(x,v)>\varepsilon
\qquad
\hbox{for all $\tilde\mu\in\Mis$}.
\end{equation}
We show that there is $T_0>0$ such that, for all  curves $\eta\in\K$, we have
\[\int_{-t}^0a\big(\eta(s)\big)\,ds>\varepsilon t\qquad \hbox{for all $t\geqslant  T_0$.}
\]
We argue by contradiction. Assume there exist sequences $\eta_n\in\K$ and $t_n\to 0$ such that
\begin{equation}\label{tn<e}
  \int_{-t_n}^0a\big(\eta_n(s)\big) \,ds\leqslant  \varepsilon t_n.
\end{equation}
Define $\tilde \mu_n \in \parts(TM)$ by
\[
\int_{TM}f(x,v)d \tilde{\mu}_n :=\frac{1}{t_n}\int_{-t_n}^0f\big(\eta_n(s),\dot\eta_n(s)\big)\,ds,\quad \forall f\in C_c(TM).
\]
Due to the fact that the curves $(\eta_n)_n$ are equi-Lipschitz, the measures $(\tilde\mu_n)_n$ have equi-compact support. In particular, up to extracting a subsequence (not relabeled), they weakly converge to a probability measure $\tilde\mu\in\parts(TM)$. Furthermore
\[
\lim_n \int_{TM} f(x,v)\,d\tilde\mu_n(x,v)=\int_{TM} f(x,v)\,d\tilde\mu(x,v)
\qquad
\forall f\in C(TM).
\]
We claim that $\tilde\mu$ is closed.
Indeed, for every  $\phi\in C^1(M)$ we have
\begin{eqnarray*}
\int_{TM }   D_x\phi(v)\, d\tilde{\mu}(x,v)
&=&
\lim_{n} \int_{TM}  D_x\phi(v)\, d\tilde{\mu}_n(x,v)\\
&=&
\lim_{n} \frac{1}{t_n}\int_{-t_n}^0 \frac{d}{ds}(\phi\comp\eta_n)(s)\,ds
\leqslant
\lim_{n} \frac{2\|\phi\|_\infty}{t_n}= 0.
\end{eqnarray*}
We proceed to show that $\tilde\mu$ is minimizing, namely, a Mather measure. Pick a subsolution $\varphi$ to \eqref{e0}. By exploiting \eqref{eq static curve}, we get
\begin{flalign*}
\int_{TM}\big(L(x,v)+c_0\big) d\tilde{\mu}
&
=
\lim_n  \int_{TM}\big(L(x,v)+c_0\big) d\tilde{\mu}_n=\lim_n \frac{1}{t_n}\int_{-t_n}^0\!\!\Big(L\big(\eta_n(s),\dot\eta_n(s)\big)+c_0\Big)ds
&&
\\
&
=
\lim_{n} \frac{1}{t_n}\int_{-t_n}^0 \frac{d}{ds}(\varphi\comp\eta_n)(s)\,ds
=
\lim_n
\frac{\varphi\big(\eta_n(0)\big)-\varphi\big(\eta_n(-t_n)\big)}{t_n}
=
0.
&&
\end{flalign*}
By (\ref{tn<e}), we also have
$\int_{TM}a(x)\,d\tilde\mu(x,v)\leqslant  \varepsilon,$
which leads to a contradiction with \eqref{equivalent V>0}.
Then for $t>T_0$ we have
\[
e^{-\lambda\int_{-t}^0a(\eta(\tau))d\tau}\leqslant  e^{-\lambda\varepsilon t}
\]
and
\begin{eqnarray*}
0<\lambda\alpha(\lambda,\eta,t)
&=&
\lambda \int_{-t}^0e^{-\lambda \int_s^0a(\eta(\tau))d\tau}ds\\
&=&\lambda \int_{-T_0}^0e^{-\lambda \int_s^0a(\eta(\tau))d\tau}ds+\lambda\int_{-t}^{-T_0}e^{-\lambda \int_s^0a(\eta(\tau))d\tau}ds\\
&\leqslant & \lambda \int_{-T_0}^0 e^{-\lambda \|a\|_\infty s}\, ds+\lambda\int_{-t}^{-T_0}e^{\lambda \varepsilon s}ds\\
&\leqslant
&\frac{ e^{\lambda {\|a\|_\infty}T_0}-1}{\|a\|_\infty}+\frac{e^{-\lambda\varepsilon T_0}}{\varepsilon}
\leqslant
\frac{e^{{\|a\|_\infty} T_0}}{\|a\|_\infty}+\frac{1}{\varepsilon}=:C_\eta.
\end{eqnarray*}
By using these inequalities in \eqref{VA} and recalling that $\varphi\leqslant  0$, we finally get
\[V_\lambda(x)\leqslant  C_\eta\|a\|_\infty \|\varphi\|_\infty,\]
which gives the upper bound of $V_\lambda$ on $\mathcal A$ independent of $\lambda$.
\end{proof}

By exploiting the fact that $V_\lambda$ satisfies the Dynamic Programming Principle, we show that the partial upper bound obtained in Lemma \ref{lemma partial upper bound} actually entails a uniform upper bound on the whole $M$.

\begin{proposition}\label{prop tildeCv}
There is $C_v^+>0$ independent of $\lambda$ such that
\[
V_\lambda(x)\leqslant  C_v^+\qquad\hbox{for all $x\in M$ and $\lambda\in \big(0,\lambda(1)\big)$.}
\]
\end{proposition}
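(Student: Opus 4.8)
The plan is to bootstrap the partial upper bound of Lemma~\ref{lemma partial upper bound}, which only controls $V_\lambda$ on the projected Aubry set $\A$, into a bound valid on all of $M$. The mechanism is the Dynamic Programming Principle of Proposition~\ref{prop DPP}: it lets one estimate $V_\lambda(x)$ for an arbitrary $x\in M$ by the value of $V_\lambda$ at a nearby point of $\A$ plus the ``running cost'' of a short curve joining that point to $x$.

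Concretely, I would fix $x\in M$ and $\lambda\in\big(0,\lambda(1)\big)$ and choose a point $z\in\A$ realizing $d(x,\A)$; such a $z$ exists since $\A\neq\emptyset$ by Theorem~\ref{thm outA}, and $d:=d(z,x)\le\overline{D}:=\mathrm{diam}(M)$. Taking a unit-speed geodesic $\zeta:[-d,0]\to M$ with $\zeta(-d)=z$ and $\zeta(0)=x$ and applying Proposition~\ref{prop DPP} with $\gamma=\zeta$, $b_1=-d$, $b_2=0$ gives
\[
V_\lambda(x)\le e^{-\lambda\int_{-d}^0 a(\zeta(\tau))\,d\tau}\,V_\lambda(z)+\int_{-d}^0 e^{-\lambda\int_s^0 a(\zeta(\tau))\,d\tau}\Big(L\big(\zeta(s),\dot\zeta(s)\big)+c_0\Big)\,ds .
\]

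From here it is only bookkeeping with crude bounds, uniform in $\lambda$. Since $\lambda<1$ and the time interval has length $d\le\overline{D}$, every weight $e^{-\lambda\int_s^0 a(\zeta(\tau))\,d\tau}$ with $s\in[-d,0]$ lies in $\big[e^{-\|a\|_\infty\overline{D}},\,e^{\|a\|_\infty\overline{D}}\big]$. Combined with $V_\lambda(z)\le\overline{C}^+_v$ from Lemma~\ref{lemma partial upper bound} (and $\overline{C}^+_v>0$, so the estimate survives whatever the sign of $V_\lambda(z)$), the first term is $\le e^{\|a\|_\infty\overline{D}}\,\overline{C}^+_v$; and using $\|\dot\zeta\|_{\zeta}\equiv1$ together with $C_L:=\max_{x\in M,\,\|v\|_x\le1}|L(x,v)+c_0|$, the integral term is $\le e^{\|a\|_\infty\overline{D}}\,C_L\,\overline{D}$. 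Adding these yields $V_\lambda(x)\le C_v^+:=e^{\|a\|_\infty\overline{D}}\big(\overline{C}^+_v+C_L\overline{D}\big)$, which depends on neither $x$ nor $\lambda$. There is no genuine obstacle in this argument; the only points needing a word of care are that $V_\lambda$ has already been shown to be finite-valued on $M$ for $\lambda\in\big(0,\lambda(1)\big)$, so that Proposition~\ref{prop DPP} indeed applies at $z$, and that the restriction $\lambda<1$ together with the compactness of $M$ (through $\overline{D}$) is exactly what makes the exponential weights controllable independently of $\lambda$.
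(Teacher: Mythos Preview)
Your proof is correct and essentially identical to the paper's: both pick a point of $\A$, connect it to $x$ by a unit-speed geodesic, apply the Dynamic Programming Principle (Proposition~\ref{prop DPP}), and bound the resulting terms crudely using $\lambda<1$, $d\le\overline{D}$, $V_\lambda\le\overline C_v^+$ on $\A$, and the constant $C_L$, arriving at the same $C_v^+=e^{\|a\|_\infty\overline D}(\overline C_v^++C_L\overline D)$. Your added remark that $\overline C_v^+>0$ makes the bound on the first term valid regardless of the sign of $V_\lambda(z)$ is a nice point of care that the paper leaves implicit.
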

\begin{proof}
Fix $x\in M$ and pick a point $y\in\mathcal A$. Set $ d :=d(x,y)$,  $\overline{D}:=$diam$(M)$ and $C_L:=\max\limits_{x\in M,\|v\|_x\leqslant  1}|L(x,v)+c_0|$ . Take a geodesic $\zeta:[- d ,0]\to M$ with $\zeta(- d )=y$, $\zeta(0)=x$ and $\|\dot{\zeta}\|_\zeta=1$.  By
Proposition \ref{prop DPP}  we have
\begin{align*}
V_\lambda(x)&\leqslant  e^{-\lambda \int_{- d }^0a(\zeta(\tau))d\tau}V_\lambda(y)+\int_{- d }^0 e^{-\lambda \int_s^0a(\zeta(\tau))d\tau}\Big(L\big(\zeta(s),\dot\zeta(s)\big)+c_0\Big)ds
\\ &\leqslant  e^{\lambda \|a\|_\infty  d }\overline{C}_v^+ +C_L de^{\lambda \|a\|_\infty  d }
\leqslant
e^{\|a\|_\infty \overline{D}}(\overline{C}_v^+ +C_L \overline{D}),
\end{align*}
which gives the sought uniform upper bound of $V_\lambda$.
\end{proof}

Now that we know that $V_\lambda$ is uniformly bounded from above, we can prove that
$V_\lambda$ is also uniformly bounded from below.

\begin{proposition}\label{prop lower bound}
There exist $\overline \lambda\in \big(0,\lambda(1)\big)$ and a constant $C^-_v>0$ independent of $\lambda$ such that
\[
V_\lambda(x)\geqslant  -C_v^-\qquad\hbox{for all $x\in M$ and $\lambda\in (0,\overline\lambda)$.}
\]
In particular, $|V_\lambda(x)|\leqslant  C_v:=\max\{C_v^-,C_v^+\}$\ for all $x\in M$ and $\lambda\in (0,\overline\lambda)$.
\end{proposition}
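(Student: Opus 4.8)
The plan is to combine the uniform upper bound of Proposition \ref{prop tildeCv} with Lemma \ref{Ca} applied with $p=0$. First I would fix the constant $C:=C_v^++1$ in the definition of the classes $\Gamma_\lambda(p)$. Since $V_\lambda(x)\leqslant C_v^+<C$ for all $x\in M$ and $\lambda\in\big(0,\lambda(1)\big)$ by Proposition \ref{prop tildeCv}, for each such $\lambda$ and $x$ and each $\delta\in(0,1)$ there is a curve $\gamma:(-\infty,0]\to M$ with $\gamma(0)=x$ whose limsup-action is $<V_\lambda(x)+\delta\leqslant C_v^++\delta<C$, so that $\gamma\in\Gamma_\lambda(0)$; in particular $\Gamma_\lambda(0)\neq\emptyset$ for all $\lambda\in\big(0,\lambda(1)\big)$. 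Hence Lemma \ref{Ca} applies with $p=0$ and yields $\lambda(0)\in(0,1)$ with
\[
A(0)=\sup\big\{\lambda\,\alpha(\lambda,\gamma,t)\,:\,\lambda\in(0,\lambda(0)),\ \gamma\in\Gamma_\lambda(0),\ t>0\big\}<+\infty .
\]
I would then set $\overline{\lambda}:=\min\{\lambda(0),\lambda(1)\}$.

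Next, to bound $V_\lambda$ from below I would argue as in the proofs of Lemmas \ref{lemma partial upper bound} and \ref{Ca}. Pick a subsolution $\varphi\leqslant-1$ of \eqref{e0} and, for $\varepsilon\in(0,1)$, a function $\varphi_\varepsilon\in C^\infty(M)$ given by Theorem \ref{approx} with $\|\varphi_\varepsilon-\varphi\|_\infty\leqslant\varepsilon$ (so $\varphi_\varepsilon<0$) and $H(x,D_x\varphi_\varepsilon)\leqslant c_0+\varepsilon$ on $M$; the Fenchel inequality then gives $L(x,v)+c_0\geqslant D_x\varphi_\varepsilon(v)-\varepsilon$ for all $(x,v)\in TM$. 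Fix $\lambda\in(0,\overline{\lambda})$, $x\in M$, and any curve $\gamma\in\Gamma_\lambda(0)$ with $\gamma(0)=x$. Inserting the Fenchel inequality into the integral defining the action, integrating by parts against the weight $e^{-\lambda\int_s^0 a(\gamma(\tau))d\tau}$ exactly as in \eqref{IPP}, and discarding the boundary term at $-t$ (which has favorable sign since $\varphi_\varepsilon<0$), one obtains, for every $t>0$,
\[
\int_{-t}^0 e^{-\lambda\int_s^0 a(\gamma(\tau))d\tau}\Big(L\big(\gamma(s),\dot\gamma(s)\big)+c_0\Big)ds\ \geqslant\ -\|\varphi_\varepsilon\|_\infty-\big(\lambda\|a\|_\infty\|\varphi_\varepsilon\|_\infty+\varepsilon\big)\,\alpha(\lambda,\gamma,t).
\]
Since $\gamma\in\Gamma_\lambda(0)$ and $\lambda<\lambda(0)$, Lemma \ref{Ca} gives $\lambda\,\alpha(\lambda,\gamma,t)\leqslant A(0)$, hence $\alpha(\lambda,\gamma,t)\leqslant A(0)/\lambda$; letting $\varepsilon\to0^+$ with $\lambda$ fixed then yields
\[
\int_{-t}^0 e^{-\lambda\int_s^0 a(\gamma(\tau))d\tau}\Big(L\big(\gamma(s),\dot\gamma(s)\big)+c_0\Big)ds\ \geqslant\ -\|\varphi\|_\infty\big(1+\|a\|_\infty A(0)\big)=:-C_v^-
\]
for all $t>0$, so $\limsup_{t\to+\infty}(\cdots)\geqslant -C_v^-$ for this $\gamma$. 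Applying this to the near-minimizing curves produced in the first paragraph and letting $\delta\to0^+$ gives $V_\lambda(x)\geqslant -C_v^-$, with $C_v^-$ independent of $x$ and $\lambda$. The final assertion follows by combining this with Proposition \ref{prop tildeCv} and setting $C_v:=\max\{C_v^-,C_v^+\}$.

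I do not expect a genuine obstacle: the substantive analysis — the relative compactness of the rescaled occupation measures and their identification as Mather measures, which is where \eqref{V>0} gets used — has already been done inside Lemma \ref{Ca}. The only point that really matters, and the reason this proposition must come after Proposition \ref{prop tildeCv}, is that the argument needs the \emph{uniform} upper bound $V_\lambda\leqslant C_v^+$: this is what places the competitor curves in the class $\Gamma_\lambda(0)$, so that Lemma \ref{Ca} with $p=0$ controls $\alpha(\lambda,\gamma,t)$ at order $1/\lambda$ and keeps $\lambda\,\alpha(\lambda,\gamma,t)$ bounded. With only the non-uniform bound $V_\lambda\leqslant\widehat C_v^+/\lambda$ the curves would lie in $\Gamma_\lambda(1)$, Lemma \ref{Ca} would give $\alpha(\lambda,\gamma,t)=O(1/\lambda^2)$, and one would recover merely the earlier non-uniform bound $V_\lambda\geqslant -\widehat C_v^-\lambda^{-2}$. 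A minor technical nuisance is the usual smoothing of the subsolution via Theorem \ref{approx}, unavoidable here because a general competitor curve need not be calibrated, so only the Fenchel inequality — not an equality along the curve — is at our disposal.
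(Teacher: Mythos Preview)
Your proposal is correct and follows essentially the same approach as the paper: both use Proposition \ref{prop tildeCv} to place the competitor curves in $\Gamma_\lambda(0)$, invoke Lemma \ref{Ca} with $p=0$ to get $\lambda\,\alpha(\lambda,\gamma,t)\leqslant A(0)$, and then bound the action from below via a smoothed subsolution $\varphi_\varepsilon$, the Fenchel inequality, and the integration by parts \eqref{IPP}, arriving at the identical lower bound $-\|\varphi\|_\infty\big(1+\|a\|_\infty A(0)\big)$. The only cosmetic differences are your choice $C=C_v^++1$ (the paper uses $C_v^+$ directly, which also works since curves with limsup $>C_v^+$ do not affect the infimum) and the order in which you pass to the limits in $\varepsilon$ and $t$.
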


\begin{proof}
By Proposition \ref{prop tildeCv}, we know that, for $\lambda<\lambda(1)$, the infimum in $V_\lambda$ is taken among the set
\begin{eqnarray*}
\Gamma:=\bigg\{\gamma:(-\infty,0]\to M:\ \limsup_{t\to+\infty}\int_{-t}^0e^{-\lambda \int_s^0a(\gamma(\tau))d\tau}\Big(L\big(\gamma(s),\dot\gamma(s)\big)+c_0\Big)ds\leqslant  {C}_v^+ \bigg\}.
\end{eqnarray*}
By Lemma \ref{Ca}, there exists $\lambda(0) \in (0,1)$ (depending on $C^+_v$) such that
\begin{equation*} 
 A(0) :=\sup\bigg\{\lambda\alpha(\lambda,\gamma,t):\ \lambda\in\big(0,\lambda(0)\big),\ \gamma\in \Gamma, \ t>0\,\bigg\}<+\infty.
\end{equation*}
Let us set $\overline \lambda:=\min\{\lambda(0),\lambda(1)\}$.\footnote{We recall that $\lambda(1)\in (0,1)$ is the value obtained according to Lemma \ref{lemma Cn} with $p=1$ and $C=\widehat C_v^+$, where $\widehat C_v^+$ is the constant provided by Lemma \ref{lemma bdd above}.}
Let $\varphi\leqslant  -1$ be a subsolution of (\ref{e0}). For each $\varepsilon>0$, we can take $\varphi_\varepsilon\in C^\infty(M)$ satisfying $\|\varphi_\varepsilon-\varphi\|_\infty\leqslant  \varepsilon$ and $H\big(x,D_x \varphi_\varepsilon\big)\leqslant  c_0+\varepsilon$\ for all $x\in M$, given by Theorem \ref{approx}.
For $\varepsilon>0$ small, we have $\varphi_\varepsilon<0$. For each $\gamma\in \Gamma$, we have
\begin{align*}
&\int_{-t}^0e^{-\lambda \int_s^0a(\gamma(\tau))d\tau}\Big(L\big(\gamma(s),\dot\gamma(s)\big)+c_0\Big)ds
\\ &\geqslant  \int_{-t}^0e^{-\lambda \int_s^0a(\gamma(\tau))d\tau}\Big( D_{\gamma(s)}\varphi_\varepsilon\big(\dot{\gamma}(s)\big)-H\big(\gamma(s),D_{\gamma(s)} \varphi_\varepsilon\big)+c_0\Big)ds
\\ &\geqslant  \int_{-t}^0e^{-\lambda \int_s^0a(\gamma(\tau))d\tau}\bigg(\frac{d\varphi_\varepsilon}{dt}\big(\gamma(s)\big)-\varepsilon\bigg)ds
\\ &=\varphi_\varepsilon\big(\gamma(0)\big)-e^{-\lambda\int_{-t}^0a(\gamma(\tau))d\tau}\varphi_\varepsilon\big(\gamma(-t)\big)-\alpha(\lambda,\gamma,t)\int_{TM}\big(\lambda a(x)\varphi_\varepsilon(x)+\varepsilon\big)d\tilde{\mu}^\gamma_t
\\ &\geqslant  \varphi_\varepsilon\big(\gamma(0)\big)-\alpha(\lambda,\gamma,t)\int_{TM}\big(\lambda a(x)\varphi_\varepsilon(x)+\varepsilon\big)d\tilde{\mu}^\gamma_t,
\end{align*}
where
\[
\int_{TM} f(x,v) d\tilde{\mu}^\gamma_t(x,v):=\frac{\int_{-t}^0e^{-\lambda\int_s^0a(\gamma(\tau))d\tau}f\big(\gamma(s),\dot{\gamma}(s)\big)ds}{\alpha(\lambda,\gamma,t)},
\qquad \forall f\in C_c(TM).
\]
We recall that $\alpha(\lambda,\gamma,t):=\int_{-t}^0e^{-\lambda\int_s^0a(\gamma(\tau))d\tau}ds$.
Now let $\varepsilon\to 0^+$ to get
\[
\int_{-t}^0e^{-\lambda \int_s^0a(\gamma(\tau))d\tau}\Big(L\big(\gamma(s),\dot\gamma(s)\big)+c_0\Big)ds\geqslant  \varphi\big(\gamma(0)\big)-\lambda\alpha(\lambda,\gamma,t)\int_{TM}a(x)\varphi(x)d\tilde{\mu}^\gamma_t.
\]
Sending $t\to+\infty$ we get
\[\limsup_{t\to+\infty}\int_{-t}^0e^{-\lambda \int_s^0a(\gamma(\tau))d\tau}\Big(L\big(\gamma(s),\dot\gamma(s)\big)+c_0\Big)ds
\geqslant  -\big(1+A(0)\|a\|_\infty\big)\|\varphi\|_\infty.\]
The bound from below readily follows from this by definition of $V_\lambda$.
The last assertion is a consequence of Proposition \ref{prop tildeCv}  and of what we have just shown above.
\end{proof}

We now proceed to show that the value function is Lipschitz continuous. This is indeed a consequence of this more general result.

\begin{proposition}\label{prop vlip}
Let $w:M\to\mathbb R$ be a bounded function. Assume that $w$ satisfies the Dynamic Programming Principle, i.e., for each absolutely continuous curve $\gamma:[-t,0]\to M$, we have
\begin{flalign}\label{w-w}\tag{DPP}
 & \ w\big(\gamma(0)\big)-e^{-\lambda \int_{-t}^0 a(\gamma(\tau))d\tau}w\big(\gamma(-t)\big)\leqslant  \int_{-t}^0 e^{-\lambda \int_s^0a(\gamma(\tau))d\tau}\Big(L\big(\gamma(s),\dot\gamma(s)\big)+c_0\Big)ds.&&
\end{flalign}
Then $w$ is Lipschitz continuous, with a Lipschitz constant that only depends on $L,\,\D{diam}(M)$ and $\|w\|_\infty$.
\end{proposition}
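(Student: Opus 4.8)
The plan is to invoke the Dynamic Programming Principle \eqref{w-w} only along a single minimizing geodesic joining the two points at issue, so that the backward-infinite curves entering the definition of $V_\lambda$ never come into play and everything reduces to elementary estimates. Fix $x,y\in M$, set $d:=d(x,y)\leqslant \overline D:=\mathrm{diam}(M)$, and let $\zeta:[-d,0]\to M$ be a minimizing geodesic with $\zeta(-d)=y$, $\zeta(0)=x$, parametrized with unit speed $\|\dot\zeta\|_\zeta\equiv 1$. First I would apply \eqref{w-w} with $\gamma=\zeta$ and $t=d$, which yields
\[
w(x)-e^{-\lambda\int_{-d}^0 a(\zeta(\tau))\,d\tau}\,w(y)\leqslant \int_{-d}^0 e^{-\lambda\int_s^0 a(\zeta(\tau))\,d\tau}\Big(L\big(\zeta(s),\dot\zeta(s)\big)+c_0\Big)\,ds.
\]

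Next I would estimate both sides, keeping every bound independent of $\lambda$ by using the standing assumption $\lambda\in(0,1)$. On the right-hand side, since $\|\dot\zeta\|_\zeta\equiv 1$ one has $|L(\zeta(s),\dot\zeta(s))+c_0|\leqslant C_L:=\max_{x\in M,\ \|v\|_x\leqslant 1}|L(x,v)+c_0|$, while $\big|\int_s^0 a(\zeta(\tau))\,d\tau\big|\leqslant \|a\|_\infty d\leqslant \|a\|_\infty\overline D$ gives $e^{-\lambda\int_s^0 a(\zeta(\tau))\,d\tau}\leqslant e^{\|a\|_\infty\overline D}$, so the right-hand side is at most $C_L\,e^{\|a\|_\infty\overline D}\,d$. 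On the left, I would write $w(x)-e^{-\lambda\int_{-d}^0 a(\zeta)\,d\tau}w(y)=\big(w(x)-w(y)\big)+w(y)\big(1-e^{-\lambda\int_{-d}^0 a(\zeta)\,d\tau}\big)$ and use the elementary inequality $|e^z-1|\leqslant |z|\,e^{|z|}$ together with $\lambda\big|\int_{-d}^0 a(\zeta)\,d\tau\big|\leqslant \|a\|_\infty d$ to bound $\big|1-e^{-\lambda\int_{-d}^0 a(\zeta)\,d\tau}\big|\leqslant \|a\|_\infty e^{\|a\|_\infty\overline D}\,d$. Combining the two estimates yields
\[
w(x)-w(y)\leqslant \big(C_L+\|a\|_\infty\|w\|_\infty\big)\,e^{\|a\|_\infty\overline D}\,d(x,y).
\]

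To finish I would run exactly the same argument along a minimizing geodesic from $x$ to $y$ to obtain the reverse inequality, hence $|w(x)-w(y)|\leqslant \kappa\,d(x,y)$ with $\kappa:=\big(C_L+\|a\|_\infty\|w\|_\infty\big)e^{\|a\|_\infty\overline D}$, a constant depending only on $L$ (through $C_L$ and $c_0$), on $\mathrm{diam}(M)$, on $\|a\|_\infty$, and on $\|w\|_\infty$, and in particular independent of $\lambda\in(0,1)$. There is essentially no genuine obstacle here: the only points that deserve a little care are making all the exponential factors uniform in $\lambda$—which is precisely what $\lambda\in(0,1)$ provides—and remembering to symmetrize in $x$ and $y$, since \eqref{w-w} is only a one-sided inequality. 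Applied with $w=V_\lambda$, for which $\|V_\lambda\|_\infty\leqslant C_v$ uniformly on $(0,\overline\lambda)$ by Proposition \ref{prop lower bound}, this provides the equi-Lipschitz bound for the family $(V_\lambda)_\lambda$ needed to invoke the results of Section \ref{sec general}.
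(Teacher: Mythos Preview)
Your proof is correct and follows essentially the same approach as the paper: apply \eqref{w-w} along a unit-speed minimizing geodesic between the two points, split the left-hand side as $(w(x)-w(y))+w(y)(1-e^{-\lambda\int a})$, bound the exponential correction and the right-hand side using $\lambda\in(0,1)$, and symmetrize. The only cosmetic difference is that the paper estimates $|1-e^{-\lambda\int a}|$ via the mean value theorem and a sign case split, whereas you use $|e^z-1|\leqslant|z|e^{|z|}$ directly; both yield a Lipschitz constant of the form $(C_L+\|a\|_\infty\|w\|_\infty)$ times a factor depending only on $\|a\|_\infty\overline D$.
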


\begin{proof}
Pick $x,y\in M$. Let $\zeta :[- d ,0]\to M$ be a geodesic with $\zeta (- d )=y$ and $\zeta (0)=x$, where $ d :=d(x,y)$. By (\ref{w-w}) we have
\begin{eqnarray*}
w(x)-w(y)&\leqslant  -w(y)\bigg(1-e^{-\lambda\int_{- d }^0a(\zeta (\tau))d\tau}\bigg)+\int_{- d }^0e^{-\lambda\int_{s}^0a(\zeta (\tau))d\tau}\big(L\big(\zeta (s),\dot{\zeta }(s)\big)+c_0\Big)ds.
\end{eqnarray*}
There is $\tau_0 \in(- d ,0)$ such that $\int_{- d }^0a\big(\zeta (\tau)\big)d\tau=a\big(\zeta (\tau_0 )\big) d $. If $a\big(\zeta (\tau_0 )\big)\geqslant  0$, we have
\[0\leqslant  1-e^{-\lambda a(\zeta (\tau_0 )) d }\leqslant  e^{\lambda a(\zeta (\tau_0 )) d }-1\leqslant  e^{\lambda\|a\|_\infty  d }-1.\]
If $a\big(\zeta (\tau_0 )\big)< 0$, we have
\[0\geqslant  1-e^{-\lambda a(\zeta (\tau_0 )) d }\geqslant  1-e^{\lambda \|a\|_\infty  d }.\]
Then
\begin{eqnarray*}
w(x)-w(y)\leqslant  \lambda \|w\|_\infty\frac{e^{\lambda\|a\|_\infty  d }-1}{\lambda}+C_L\int_{- d }^0e^{-\lambda \|a\|_\infty s}ds\leqslant  (\|a\|_\infty\|w\|_\infty+C_L)\frac{e^{\lambda\|a\|_\infty d}-1}{\lambda \|a\|_\infty}.
\end{eqnarray*}
Since $\lambda\in(0,1)$ and $ d \leqslant  \overline{D}$, there is $C_{\overline{D}}>0$ such that $e^{\lambda \|a\|_\infty d }-1\leqslant  C_{\overline{D}}\lambda\tilde  d$. Exchanging the role of $x$ and $y$, we get the conclusion.
\end{proof}

As a consequence of the previous proposition and Proposition \ref{prop lower bound}, we derive the following information.

\begin{corollary}\label{cor equi-Lipschitz}
There is $\kappa>0$ independent of $\lambda$ such that the functions $\{V_\lambda\,:\,\lambda\in(0,\bar \lambda)\}$ are
$\kappa$-Lipschitz continuous.
\end{corollary}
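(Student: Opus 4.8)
The plan is to simply combine the Dynamic Programming Principle satisfied by the value functions with the uniform $L^\infty$ bound already obtained, and feed these two facts into the general Lipschitz-regularity statement of Proposition \ref{prop vlip}. More precisely, first I would recall that, by Proposition \ref{prop DPP}, each $V_\lambda$ with $\lambda\in(0,1)$ satisfies the Dynamic Programming Principle \eqref{w-w}: for every absolutely continuous curve $\gamma:[-t,0]\to M$,
\[
V_\lambda\big(\gamma(0)\big)-e^{-\lambda \int_{-t}^0 a(\gamma(\tau))d\tau}V_\lambda\big(\gamma(-t)\big)\leqslant \int_{-t}^0 e^{-\lambda \int_s^0a(\gamma(\tau))d\tau}\Big(L\big(\gamma(s),\dot\gamma(s)\big)+c_0\Big)ds .
\]
(This is exactly the $b_1=-t$, $b_2=0$ instance of Proposition \ref{prop DPP}.)

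Next I would invoke Proposition \ref{prop lower bound}, which provides $\overline\lambda\in\big(0,\lambda(1)\big)$ and a constant $C_v>0$, both independent of $\lambda$, with $\|V_\lambda\|_\infty\leqslant C_v$ for all $\lambda\in(0,\overline\lambda)$. Thus, for each such $\lambda$, the function $w:=V_\lambda$ is bounded and satisfies \eqref{w-w}, so Proposition \ref{prop vlip} applies and yields that $V_\lambda$ is Lipschitz continuous with a Lipschitz constant depending only on $L$, $\D{diam}(M)$ and $\|V_\lambda\|_\infty$. Tracking the dependence through the proof of Proposition \ref{prop vlip}, the Lipschitz constant is bounded by a quantity of the form $\big(\|a\|_\infty\|V_\lambda\|_\infty+C_L\big)\,C_{\overline D}$, which is monotone in $\|V_\lambda\|_\infty$; replacing $\|V_\lambda\|_\infty$ by the uniform bound $C_v$ gives a value $\kappa>0$ that no longer depends on $\lambda$. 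Declaring this $\kappa$ as the common Lipschitz constant for all $\lambda\in(0,\overline\lambda)$ finishes the argument.

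There is essentially no real obstacle here: the genuinely hard work — establishing the uniform upper and lower bounds on $V_\lambda$ in Propositions \ref{prop tildeCv} and \ref{prop lower bound}, and proving the DPP-to-Lipschitz implication in Proposition \ref{prop vlip} — has already been carried out. The only point deserving a line of care is that the Lipschitz bound furnished by Proposition \ref{prop vlip} must be used in the form in which its $\|w\|_\infty$-dependence is explicit and monotone, so that substituting the $\lambda$-independent bound $C_v$ produces a genuinely $\lambda$-independent $\kappa$; this is immediate from the computation in that proof. Hence the corollary follows at once.
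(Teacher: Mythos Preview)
Your proposal is correct and follows exactly the approach indicated in the paper, which simply records the corollary as an immediate consequence of Proposition~\ref{prop vlip} combined with the uniform bound of Proposition~\ref{prop lower bound}. Your added remark that the Lipschitz constant in Proposition~\ref{prop vlip} depends monotonically on $\|w\|_\infty$, so that the uniform bound $C_v$ yields a $\lambda$-independent $\kappa$, makes explicit the one point the paper leaves implicit.
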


Next, we show that the value function is a viscosity subsolution of the equation \eqref{VH}. Indeed, the following result holds.

\begin{proposition}\label{prop wiff}
Let $w\in C(M)$. Then $w$ is a subsolution of (\ref{VH}) if and only if (\ref{w-w}) holds for every absolutely curve $\gamma:[-t,0]\to M$ and every $t>0$.
\end{proposition}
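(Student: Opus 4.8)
The statement to prove is Proposition~\ref{prop wiff}: a continuous function $w$ is a viscosity subsolution of \eqref{VH} if and only if the integral inequality \eqref{w-w} holds along every absolutely continuous curve. This is a standard equivalence between the PDE and a ``domination'' formulation, but with the twist that the relevant Lagrangian is the $u$-dependent one $L_G(x,v,u)=L(x,v)-\lambda a(x)u$, which forces the exponential weight $e^{-\lambda\int_s^0 a(\gamma(\tau))\,d\tau}$ into the picture via an integrating-factor computation.

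\textbf{Plan.} The cleanest route is to relate \eqref{w-w} to the already-established notion ``$w\prec L_\lambda+c_0$'' from Section~\ref{sec contact H}, where $L_\lambda(x,v):=L_G(x,v,\lambda w(x))=L(x,v)-\lambda a(x)w(x)$, and then invoke the lemma in Section~\ref{sec contact H} stating that $w\prec L_G+c$ is equivalent to $w$ being a viscosity subsolution (equivalently, a Lipschitz a.e.\ subsolution). So the real content is: \eqref{w-w} $\iff$ $w\prec L_\lambda+c_0$. First I would show \eqref{w-w} $\Rightarrow$ domination. Fix an absolutely continuous $\gamma:[-t,0]\to M$. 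Consider the function $\phi(s):=e^{-\lambda\int_s^0 a(\gamma(\tau))\,d\tau}w(\gamma(s))$; note $\phi'(s)=e^{-\lambda\int_s^0 a(\gamma(\tau))\,d\tau}\big(\lambda a(\gamma(s))w(\gamma(s))+\frac{d}{ds}w(\gamma(s))\big)$ in a suitable a.e./distributional sense once we know $w$ is (locally) Lipschitz — but since $w$ is only assumed continuous a priori, I should instead argue directly at the level of the integral inequality: apply \eqref{w-w} on every subinterval $[s_1,s_2]\subseteq[-t,0]$ (which follows from \eqref{w-w} by the same reduction-to-endpoint argument used in the proof of Proposition~\ref{prop DPP}), divide by $e^{-\lambda\int_{s_2}^0 a}$, and pass to a Riemann-sum limit to extract
\[
w(\gamma(0))-w(\gamma(-t))\le \int_{-t}^0\Big[L\big(\gamma(s),\dot\gamma(s)\big)-\lambda a\big(\gamma(s)\big)w\big(\gamma(s)\big)+c_0\Big]ds,
\]
i.e.\ exactly $w\prec L_\lambda+c_0$. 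Then by the Section~\ref{sec contact H} lemma (item (ii)), $w$ is Lipschitz and a viscosity subsolution of $G(x,D_xw,\lambda w(x))=c_0$, which is \eqref{VH}.

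\textbf{The converse.} Suppose $w$ is a subsolution of \eqref{VH}. By the same lemma, $w\prec L_\lambda+c_0$, meaning for every absolutely continuous $\gamma$ and $s_1\le s_2$,
\[
w\big(\gamma(s_2)\big)-w\big(\gamma(s_1)\big)\le \int_{s_1}^{s_2}\Big[L\big(\gamma(s),\dot\gamma(s)\big)-\lambda a\big(\gamma(s)\big)w\big(\gamma(s)\big)+c_0\Big]ds.
\]
Now I recover \eqref{w-w} by the integrating factor: set $g(s):=e^{-\lambda\int_s^0 a(\gamma(\tau))\,d\tau}$, a $C^1$, positive function with $g'(s)=\lambda a(\gamma(s))g(s)$, and $g(0)=1$. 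One shows
\[
w\big(\gamma(0)\big)-g(-t)\,w\big(\gamma(-t)\big)=\int_{-t}^0 \frac{d}{ds}\Big(g(s)\,w\big(\gamma(s)\big)\Big)ds,
\]
and $\frac{d}{ds}\big(g\,w\circ\gamma\big)=g'\,w\circ\gamma+g\cdot\frac{d}{ds}(w\circ\gamma)=g\big(\lambda a(\gamma)\,w\circ\gamma+\frac{d}{ds}(w\circ\gamma)\big)$; combining with the a.e.\ inequality $\frac{d}{ds}(w\circ\gamma)(s)\le L(\gamma(s),\dot\gamma(s))-\lambda a(\gamma(s))w(\gamma(s))+c_0$ (valid a.e.\ since $w$ is Lipschitz and dominated, hence $s\mapsto w(\gamma(s))$ is Lipschitz and one can differentiate the domination inequality) yields
\[
\frac{d}{ds}\Big(g(s)\,w\big(\gamma(s)\big)\Big)\le g(s)\Big(L\big(\gamma(s),\dot\gamma(s)\big)+c_0\Big)\quad\text{a.e.},
\]
and integrating over $[-t,0]$ gives precisely \eqref{w-w}.

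\textbf{Main obstacle.} The delicate point is the regularity bookkeeping: \eqref{w-w} is stated for a function only assumed continuous, so in the direction \eqref{w-w}$\Rightarrow$PDE I cannot differentiate $w\circ\gamma$ until I have first derived domination by $L_\lambda+c_0$ (which, via the Section~\ref{sec contact H} lemma, is what upgrades $w$ to Lipschitz); thus the two implications must be organized so that domination is the common pivot and Lipschitz regularity is only used where legitimate. A second technical care point is justifying the interchange of the exponential weight with the integral and the passage from ``inequality on subintervals'' to ``inequality against the weight'' — this is a routine approximation of $g$ by step functions (or an integration-by-parts once Lipschitz regularity is in hand), but it must be done cleanly since $a$ (hence $g$) is only continuous. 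Apart from these, everything reduces to the integrating-factor identity and the already-cited equivalence between viscosity subsolutions and domination from Section~\ref{sec contact H}.
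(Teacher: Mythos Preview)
Your proposal is correct and routes both directions through the pivot ``$w\prec L_\lambda+c_0$'' together with the integrating-factor identity, whereas the paper argues each direction more directly and by different means. For subsolution $\Rightarrow$ \eqref{w-w}, the paper does not differentiate $w\circ\gamma$ at all: it invokes Theorem~\ref{approx} to produce smooth approximate subsolutions $w_n$, computes $\frac{d}{ds}\big(e^{-\lambda\int_s^0 a}\,w_n(\gamma(s))\big)$ exactly, applies Fenchel's inequality, and lets $n\to\infty$; this sidesteps any question about the a.e.\ chain rule for the merely Lipschitz $w$. For \eqref{w-w} $\Rightarrow$ subsolution, the paper first invokes Proposition~\ref{prop vlip} (which derives Lipschitz regularity directly from \eqref{w-w}), then tests at differentiability points with a $C^1$ curve of prescribed initial velocity $v$, divides \eqref{w-w} by $t$, sends $t\to 0^+$, and takes the supremum over $v$; no passage through domination is made. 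Your approach is more conceptual---everything reduces to one known equivalence---and avoids the smooth-approximation machinery, at the price of the Riemann-sum/subinterval argument and the a.e.\ differentiation of the domination inequality; both are legitimate, but do require the care you already flag about integrability of $s\mapsto L(\gamma(s),\dot\gamma(s))$ (curves for which this fails render the right-hand side of \eqref{w-w} equal to $+\infty$, hence are harmless).
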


\begin{proof}
Let us first assume that $w$ is a viscosity subsolution of \eqref{e0}.
By Proposition \ref{prop equivalence}, we derive that $w$ is Lipschitz continuous.
Using Theorem \ref{approx}, we take a sequence $w_n\in C^1(M)$ such that $\|w_n-w\|_\infty\leqslant  1/n$ and

\[\lambda a(x)w_n(x)+H\big(x,D_x w_n\big)\leqslant  c_0+\frac{1}{n}\qquad\hbox{for all $x\in M$}.
\]
For $\gamma:[-t,0]\to M$, we have, performing the now usual integration by parts and the Fenchel inequality \eqref{fenchel},
\begin{flalign*}
\qquad&
w_n\big(\gamma(0)\big)-e^{-\lambda \int_{-t}^0a(\gamma(\tau))d\tau}w_n\big(\gamma(-t)\big)
=\int_{-t}^0\frac{d}{ds}\bigg(e^{-\lambda \int_{s}^0a(\gamma(\tau))d\tau}w_n\big(\gamma(s)\big)\bigg)ds
&&
\\
&
=\int_{-t}^0e^{-\lambda \int_{s}^0a(\gamma(\tau))d\tau}\bigg(\lambda a\big(\gamma(s)\big)w_n\big(\gamma(s)\big)+ D_{\gamma(s)}w_n\big(\dot\gamma(s)\big)\bigg)ds
&&
\\
&
\leqslant  \int_{-t}^0e^{-\lambda \int_{s}^0a(\gamma(\tau))d\tau}\bigg(\lambda a\big(\gamma(s)\big)w_n\big(\gamma(s)\big)+H\big(\gamma(s),
D_{\gamma(s)}w_n\big)+L\big(\gamma(s),\dot\gamma(s)\big)\bigg)ds
&&
\\
&
\leqslant  \int_{-t}^0e^{-\lambda \int_{s}^0a(\gamma(\tau))d\tau}\bigg(c_0+\frac{1}{n}+L\big(\gamma(s),\dot\gamma(s)\big)\bigg)ds
&&
\\
&
\leqslant  \frac{1}{n}\int_{-t}^0e^{-\lambda \|a\|_\infty s}ds+\int_{-t}^0e^{-\lambda \int_{s}^0a(\gamma(\tau))d\tau}\Big(L\big(\gamma(s),\dot\gamma(s)\big)+c_0\Big)ds
&&
\\
&=\frac{e^{\lambda\|a\|_\infty t}-1}{n\|a\|_\infty \lambda}+\int_{-t}^0 e^{-\lambda \int_s^0a(\gamma(\tau))d\tau}\Big(L\big(\gamma(s),\dot\gamma(s)\big)+c_0\Big)ds.
&&
\end{flalign*}
The assertion follows by sending $n\to+\infty$.

Conversely, let us assume that $w$ satisfies (\ref{w-w}). According to Proposition \ref{prop vlip}, $w$ is Lipschitz continuous. We only need to check if $w$ is a subsolution where $w$ is differentiable, thanks to Proposition \ref{prop equivalence}.
Let $w$ be differentiable at $x$. We take a $C^1$ curve $\gamma:[-t,0]\to M$ with $\gamma(0)=x$ and $\dot\gamma(0)=v$. Then
\[\frac{w\big(\gamma(0)\big)-e^{-\lambda \int_{-t}^0a(\gamma(\tau))d\tau}w\big(\gamma(-t)\big)}{t}\leqslant  \frac{1}{t}\int_{-t}^0 e^{-\lambda \int_s^0a(\gamma(\tau))d\tau}\Big(L\big(\gamma(s),\dot\gamma(s)\big)+c_0\Big)ds.\]
Letting $t\to 0^+$, we get
\[\lambda a(x)w(x)+ D_x w(v) -L(x,v)\leqslant  c_0.\]
Taking the supremum with respect to $v$, we get, thanks to \eqref{convconj},
\[\lambda a(x)w(x)+H(x,D_x w)\leqslant  c_0,\]
which implies that $w$ is a subsolution.
\end{proof}

We proceed to show that the value function is the maximal viscosity subsolution of \eqref{VH}, and hence a solution by maximality. We need an auxiliary lemma first.

\begin{lemma}\label{leto0}
Let $\lambda>0$  and let $\gamma:(-\infty,0]\to M$ be an absolutely continuous curve. Let us assume that
\begin{equation}\label{al<inf}
  \sup_{t>0}\alpha(\lambda,\gamma,t)<+\infty.
\end{equation}
Then
$e^{-\lambda \int_{-t}^0 a(\gamma(\tau))d\tau}\to 0$\  as $t\to+\infty$.
\end{lemma}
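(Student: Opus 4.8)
The plan is to recast \eqref{al<inf} as an $L^1$-summability statement for $\beta(\lambda,\gamma,\cdot)$ and then upgrade summability to decay using that $\log\beta(\lambda,\gamma,\cdot)$ is uniformly Lipschitz. First I would record that, after the substitution $s=-r$, one has $\alpha(\lambda,\gamma,t)=\int_0^t\beta(\lambda,\gamma,r)\,dr$, so $t\mapsto\alpha(\lambda,\gamma,t)$ is nondecreasing with $\frac{d}{dt}\alpha(\lambda,\gamma,t)=\beta(\lambda,\gamma,t)>0$, as already noted above. Hence \eqref{al<inf} is equivalent to $\beta(\lambda,\gamma,\cdot)\in L^1(0,+\infty)$, with
\[
\int_0^{+\infty}\beta(\lambda,\gamma,t)\,dt=\sup_{t>0}\alpha(\lambda,\gamma,t)<+\infty .
\]

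Next I would observe that $\log\beta(\lambda,\gamma,t)=-\lambda\int_{-t}^0 a(\gamma(\tau))\,d\tau$ is $C^1$ in $t$, since $\tau\mapsto a(\gamma(\tau))$ is continuous, and that $\big|\tfrac{d}{dt}\log\beta(\lambda,\gamma,t)\big|=\lambda|a(\gamma(-t))|\leqslant\lambda\|a\|_\infty$. We may assume $\|a\|_\infty>0$, for otherwise $\beta\equiv1$ and $\alpha(\lambda,\gamma,t)=t\to+\infty$, contradicting \eqref{al<inf}. Setting $\delta:=(\log 2)/(\lambda\|a\|_\infty)$, the bound above yields $\beta(\lambda,\gamma,\sigma)\geqslant\tfrac12\,\beta(\lambda,\gamma,t)$ for every $t\geqslant0$ and every $\sigma\in[t,t+\delta]$.

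Finally I would argue by contradiction. If $\beta(\lambda,\gamma,t)\not\to0$ as $t\to+\infty$, choose $\varepsilon>0$ and $t_n\to+\infty$ with $\beta(\lambda,\gamma,t_n)\geqslant\varepsilon$; after passing to a subsequence (possible since $t_n\to+\infty$) the intervals $I_n:=[t_n,t_n+\delta]$ are pairwise disjoint, and on each of them $\beta(\lambda,\gamma,\cdot)\geqslant\varepsilon/2$ by the previous step. Then $\int_0^{+\infty}\beta(\lambda,\gamma,t)\,dt\geqslant\sum_n\int_{I_n}\beta(\lambda,\gamma,t)\,dt\geqslant\sum_n\tfrac{\varepsilon\delta}{2}=+\infty$, contradicting the summability established first. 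Hence $\beta(\lambda,\gamma,t)=e^{-\lambda\int_{-t}^0 a(\gamma(\tau))\,d\tau}\to0$ as $t\to+\infty$. The only point requiring care is that mere positivity and $L^1$-integrability of $\beta(\lambda,\gamma,\cdot)$ do not by themselves force decay; what does the job is the a priori Lipschitz control on $\log\beta(\lambda,\gamma,\cdot)$, which costs nothing here since it follows solely from the boundedness of the continuous function $a$ on the compact manifold $M$.
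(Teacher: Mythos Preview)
Your proof is correct and follows essentially the same approach as the paper: argue by contradiction, use the bound $\big|\tfrac{d}{dt}\log\beta(\lambda,\gamma,t)\big|\leqslant\lambda\|a\|_\infty$ to show $\beta$ stays bounded below on fixed-length intervals around the points $t_n$, and sum over disjoint such intervals to contradict the finiteness of $\alpha$. Your presentation is slightly cleaner in that you explicitly isolate the $L^1$-integrability of $\beta$ and the Lipschitz property of $\log\beta$ as the two ingredients, and you handle the degenerate case $\|a\|_\infty=0$; but the underlying argument is the same.
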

\begin{proof}
We argue by contradiction. Assume there exist an increasing sequence $t_n\to+\infty$ and a $\delta\in (0,1)$ small enough such that
\[
e^{-\lambda \int_{-t_n}^0 a(\gamma(\tau))d\tau}\geqslant  \delta
\qquad
\qquad\hbox{for all $n\in\N$}.
\]
Then, for all $t\in\big(t_n,t_n+\frac{\ln(\delta^{-1})}{\lambda\|a\|_\infty}\big)$, we have
\[
-\lambda\int_{-t}^{-t_n}a\big(\gamma(\tau)\big)d\tau\geqslant  -\lambda\|a\|_\infty|t-t_n|\geqslant  \ln\delta,
\]
hence
\[
e^{-\lambda \int_{-t}^0 a(\gamma(\tau))d\tau}=e^{-\lambda \int_{-t}^{-t_n} a(\gamma(\tau))d\tau}e^{-\lambda \int_{-t_n}^0 a(\gamma(\tau))d\tau}\geqslant  \delta^2 , \qquad \hbox{for all $n\in\mathbb N$.}
\]
Let us pick $r>0$ with $r\leqslant \frac{\ln(\delta^{-1})}{\lambda\|a\|_\infty}$. Up to extracting a subsequence, we can assume that $r\leqslant  |t_{n+1}-t_n|$ for all $n\in\mathbb N$. Let us set $t_0:=0$.
We have
\[\int_{-t_n}^0e^{-\lambda \int_{s}^0 a(\gamma(\tau))d\tau}ds\geqslant  \sum_{i=0}^{n-1}\int_{-t_i-r}^{-t_i}e^{-\lambda \int_{s}^0 a(\gamma(\tau))d\tau}ds\geqslant  r\delta^2n\to +\infty\]
as $n\to+\infty$, which contradicts (\ref{al<inf}).
\end{proof}

Let us prove the result announced above. We recall that the $\bar \lambda$ appearing in the next two statements is the real number in $(0,1)$ provided by Proposition \ref{prop lower bound}.

\begin{proposition}\label{prop maximal subsolution}
For every fixed $\lambda\in (0,\bar \lambda)$, the value function $V_\lambda$ is the maximal subsolution of (\ref{VH}). In particular, it is a viscosity solution of (\ref{VH}).
\end{proposition}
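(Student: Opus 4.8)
The plan is to show two things: first, that $V_\lambda$ is itself a viscosity subsolution of \eqref{VH}, and second, that it dominates every subsolution of \eqref{VH}. Once maximality is established, the fact that $V_\lambda$ is also a supersolution — hence a solution — follows from the standard argument: a maximal subsolution of a convex, coercive Hamilton–Jacobi equation is automatically a supersolution (otherwise one could strictly increase it near a point where the supersolution test fails, contradicting maximality), so I would invoke this via Proposition \ref{prop equivalence} together with the Lax–Oleinik-type machinery of Section \ref{sec contact H}, or simply cite the by-now classical Perron-type argument.

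For the first point, I would combine the results already proved: by Proposition \ref{prop lower bound}, $V_\lambda$ is bounded (for $\lambda \in (0,\bar\lambda)$), by Proposition \ref{prop DPP} it satisfies the Dynamic Programming Principle \eqref{w-w}, and therefore by Proposition \ref{prop wiff} it is a viscosity subsolution of \eqref{VH}. (Corollary \ref{cor equi-Lipschitz} also records its Lipschitz regularity, which is consistent with this.) So the subsolution property comes essentially for free from the earlier intermediate results.

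For the maximality, let $w$ be any subsolution of \eqref{VH}. By Proposition \ref{prop wiff}, $w$ satisfies \eqref{w-w} along every absolutely continuous curve $\gamma:[-t,0]\to M$ with $\gamma(0)=x$, i.e.
\[
w(x) \leqslant e^{-\lambda\int_{-t}^0 a(\gamma(\tau))\,d\tau}\, w\big(\gamma(-t)\big) + \int_{-t}^0 e^{-\lambda\int_s^0 a(\gamma(\tau))\,d\tau}\Big(L\big(\gamma(s),\dot\gamma(s)\big)+c_0\Big)\,ds.
\]
Fix $x\in M$ and a curve $\gamma:(-\infty,0]\to M$ with $\gamma(0)=x$; I want to pass to the limsup as $t\to+\infty$ and recover $w(x)\leqslant V_\lambda(x)$ after taking the infimum over $\gamma$. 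The only subtlety is controlling the boundary term $e^{-\lambda\int_{-t}^0 a(\gamma(\tau))\,d\tau}\, w(\gamma(-t))$. Since $w$ is bounded, this is harmless as long as we restrict attention to curves $\gamma$ for which the liminf (or limsup) of the integral term is finite — but that is exactly the class of curves over which the infimum defining $V_\lambda(x)$ is effectively taken, in view of Lemma \ref{lemma bdd above} and Proposition \ref{prop tildeCv}. For such a curve, either $\sup_{t>0}\alpha(\lambda,\gamma,t)<+\infty$, in which case Lemma \ref{leto0} gives $e^{-\lambda\int_{-t}^0 a(\gamma(\tau))\,d\tau}\to 0$ and the boundary term vanishes in the limit; or $\alpha(\lambda,\gamma,t)\to+\infty$, in which case I claim one still concludes: indeed $|e^{-\lambda\int_{-t}^0 a}\, w(\gamma(-t))| \leqslant \|w\|_\infty\, e^{-\lambda\int_{-t}^0 a}$, and along a sequence $t_n\to+\infty$ realizing the limsup of the integral term one can further extract so that $e^{-\lambda\int_{-t_n}^0 a}$ converges; if the limit is positive then $\beta/\alpha\to 0$ by Lemma \ref{a/b} forces $\beta$ bounded while $\alpha\to\infty$, and a short argument (splitting the integral as in the proof of Lemma \ref{leto0}) shows the boundary contribution is negligible compared to nothing — more cleanly, I would just take the infimum first over those $\gamma$ with $\limsup_{t}\alpha(\lambda,\gamma,t)<\infty$ reduced further, noting by Lemma \ref{leto0} that the boundary term tends to $0$, and observe that curves with $\alpha$ bounded are covered by the same Lemma. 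Passing to the limsup in $t$ along a maximizing sequence and then taking the infimum over admissible $\gamma$ yields $w(x)\leqslant V_\lambda(x)$ for all $x\in M$, which is maximality.

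The main obstacle is precisely this last bookkeeping: making sure that, for every competitor curve $\gamma$ relevant to the infimum defining $V_\lambda(x)$, the boundary term $e^{-\lambda\int_{-t}^0 a(\gamma(\tau))\,d\tau}\, w(\gamma(-t))$ does not contribute in the limit $t\to+\infty$. This is where Lemma \ref{leto0} is designed to be used, and the dichotomy "$\alpha$ bounded" versus "$\alpha\to\infty$" is the right way to organize the argument; everything else is a direct assembly of Propositions \ref{prop DPP}, \ref{prop wiff}, \ref{prop lower bound} and the standard fact that a maximal subsolution is a solution.
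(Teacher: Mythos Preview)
Your overall plan matches the paper's: $V_\lambda$ is a subsolution by combining Propositions \ref{prop DPP}, \ref{prop lower bound} and \ref{prop wiff}; maximality is obtained from the \eqref{w-w} inequality for an arbitrary subsolution $w$; and the supersolution property follows from the standard Perron bump construction (which is exactly what the paper writes out).

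The one genuine gap is your handling of the boundary term $e^{-\lambda\int_{-t}^0 a(\gamma(\tau))d\tau}\,w(\gamma(-t))$. Your case split ``$\alpha$ bounded versus $\alpha\to+\infty$'' is unnecessary, and your treatment of the second case does not work as written: if $\beta(\lambda,\gamma,t)$ stays bounded away from $0$ along a subsequence, nothing in your argument forces it to zero, and Lemma \ref{a/b} only controls the ratio $\beta/\alpha$, not $\beta$ itself. The point you are circling but not landing on is that the second case \emph{never occurs} for the relevant curves. You correctly restrict to $\gamma\in\Gamma$ (curves with limsup action $\leqslant C_v^+$), but you then need the key fact that every such $\gamma$ satisfies $\sup_{t>0}\alpha(\lambda,\gamma,t)<+\infty$; this is precisely Lemma \ref{Ca} applied with $p=0$ and $C=C_v^+$, and is recorded in the proof of Proposition \ref{prop lower bound}. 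With that in hand, Lemma \ref{leto0} applies directly and the boundary term vanishes --- no dichotomy needed.

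The paper organises this slightly more cleanly: pick a near-optimal $\gamma\in\Gamma$, split the action at time $-t$, bound the tail from below by $e^{-\lambda\int_{-t}^0 a}\,V_\lambda(\gamma(-t))$ using the definition of $V_\lambda$ applied to the shifted curve, and bound the finite part from below via \eqref{w-w} for $w$. This yields
\[
V_\lambda(x)+\varepsilon > w(x) + e^{-\lambda\int_{-t}^0 a(\gamma(\tau))d\tau}\,\big(V_\lambda(\gamma(-t))-w(\gamma(-t))\big),
\]
and one sends $t\to+\infty$ then $\varepsilon\to 0$.
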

\begin{proof}
Let $w$ be a subsolution of (\ref{VH}). Let us fix $x\in M$.
By definition, for all $\varepsilon>0$, there is $\gamma\in\Gamma$ such that
\begin{align*}
V_\lambda(x)+\varepsilon&>\limsup_{T\to+\infty}\int_{-T}^0e^{-\lambda \int_s^0a(\gamma(\tau))d\tau}\Big(L\big(\gamma(s),\dot\gamma(s)\big)+c_0\Big)ds
\\ &=\int_{-t}^0 e^{-\lambda \int_s^0a(\gamma(\tau))d\tau}\Big(L\big(\gamma(s),\dot\gamma(s)\big)+c_0\Big)ds
\\ &\quad +e^{-\lambda \int_{-t}^0 a(\gamma(\tau))d\tau}\limsup_{T\to+\infty}\int_{-T}^0e^{-\lambda \int_s^0a(\xi (\tau))d\tau}\Big(L\big(\xi(s),\dot{\xi}(s)\big)+c_0\Big)ds,
\end{align*}
where $\xi(s):=\gamma(s-t)$ for $s\leqslant  0$. By Proposition \ref{prop wiff}, we have
\[\int_{-t}^0 e^{-\lambda \int_s^0a(\gamma(\tau))d\tau}\Big(L\big(\gamma(s),\dot\gamma(s)\big)+c_0\Big)ds\geqslant  w(x)-e^{-\lambda \int_{-t}^0 a(\gamma(\tau))d\tau}w\big(\gamma(-t)\big).\]
By the definition of $V_\lambda$, we have
\begin{eqnarray*}
e^{-\lambda \int_{-t}^0 a(\gamma(\tau))d\tau}\limsup_{T\to+\infty}\int_{-T}^0e^{-\lambda \int_s^0a(\xi(\tau))d\tau}\Big(L\big(\xi(s),\dot{\xi}(s)\big)+c_0\Big)ds\geqslant  e^{-\lambda \int_{-t}^0 a(\gamma(\tau))d\tau}V_\lambda\big(\gamma(-t)\big).
\end{eqnarray*}
Combining all the above inequalities, we conclude
\[
V_\lambda(x)+\varepsilon>w(x)+e^{-\lambda \int_{-t}^0 a(\gamma(\tau))d\tau}\Big(V_\lambda\big(\gamma(-t)\big)-w\big(\gamma(-t)\big)\Big).
\]
We know that $\sup\limits_{t>0} \alpha(\lambda,\gamma,t)<+\infty$, cf. proof of Proposition \ref{prop lower bound}.
By Lemma \ref{leto0}, we derive that
$e^{-\lambda \int_{-t}^0 a(\gamma(\tau))d\tau}\to 0$
as $t\to+\infty$. By finally sending $\varepsilon\to 0^+$ we conclude that $V_\lambda(x)\geqslant  w(x)$. This, together with Propositions \ref{prop DPP} and \ref{prop wiff}, proves the asserted maximality of $V_\lambda$.

Now we show that $V_\lambda$ is a viscosity solution of (\ref{VH}). {Since $V_\lambda$ is the maximal subsolution, we only need to show that it is a supersolution. The argument is standard and depends on the bump construction. We give it below for the reader's convenience.} Assume, by contradiction, that the supersolution test fails at some point $z\in M$. This means that there is a strict subtangent $\phi\in C^1(M)$ of $V_\lambda$ at $z$\,\footnote{Meaning that $V_\lambda-\phi$ has a strict local minimum at $z$.} such that
\[\lambda a(z)V_\lambda(z)+H\big(z,D_z\phi \big)<c_0.
\]
Up to adding a constant to $\phi$, we can assume that $V_\lambda(z)=\phi(z)$.
Let $B_r (z)$ be the open ball centered at $z$ with the radius $r $. Let us choose $r >0$ and  $\eps>0$ small enough so that
\begin{eqnarray}\label{eq bump}
\phi+\eps<V_\lambda\ \hbox{on $\partial B_r (z)$}\quad\hbox{and}\quad \lambda a(x)(\phi(x)+\varepsilon)+H\big(x,D_x \phi\big)<c_0 \ \  \forall x\in B_r (z).
\end{eqnarray}
Set
\begin{equation*}
  \widetilde V_\lambda(x):=
\begin{cases}
   \max\{V_\lambda(x),\phi(x)+\varepsilon\} & \hbox{if $x\in B_r (z)$.}\\
   V_\lambda(x) & \hbox{if  $x\in M\setminus B_r (z)$}
\end{cases}
\end{equation*}
Due to \eqref{eq bump}, the function $\widetilde V_\lambda$ is a subsolution of \eqref{VH} in $B_r (z)$ as the maximum of two subsolutions in that open ball, and it agrees with  $V_\lambda$ in an open neighborhood of $M\setminus B_r (z)$. This readily implies that $\widetilde V_\lambda$ is a subsolution of \eqref{VH} on the whole $M$. Yet, we have
$\widetilde V_\lambda(z)=\varphi(z)+\varepsilon>V_\lambda(z)$, contradicting the fact that $V_\lambda$ is the maximal subsolution of (\ref{VH}). This shows that $V_\lambda$ is indeed a solution to \eqref{VH} in $M$.
\end{proof}

From now on, we denote by $u_\lambda(x)$ the value function $V_\lambda(x)$, since it is the maximal solution.

\begin{proposition}
There exists $\lambda_0\in (0,\bar\lambda)$ such that, for every fixed $x\in M$ and $\lambda\in (0,\lambda_0)$, we can find a curve $\gamma^x_\lambda:(-\infty,0]\to M$ with $\gamma^x_\lambda(0)=x$ such that
\[u_\lambda(x)=\int_{-\infty}^0e^{-\lambda \int_s^0a(\gamma^x_\lambda(\tau))d\tau}\Big(L\big(\gamma^x_\lambda(s),\dot\gamma^x_\lambda(s)\big)+c_0\Big)ds.\]
Furthermore, the curve $\gamma^x_\lambda$ is $\hat \kappa$-Lipschitz continuous, for some constant $\hat \kappa>0$ independent of
$\lambda\in (0,\lambda_0)$ and $x\in M$.
\end{proposition}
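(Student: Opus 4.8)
The plan is to read the statement off from the general theory of Section~\ref{sec general}, applied to the equi-bounded family $(u_\lambda)_{\lambda\in(0,\bar\lambda)}=(V_\lambda)_{\lambda\in(0,\bar\lambda)}$, together with an elementary integrating-factor computation. For $G(x,p,u)=a(x)u+H(x,p)$ one has $L_G(x,v,u)=L(x,v)-a(x)u$, so conditions (L1)--(L4) hold trivially, $\frac{\partial L_G}{\partial u}(x,v,0)=-a(x)$, and hence assumption \eqref{V>0} is exactly condition (L5). By Propositions~\ref{prop tildeCv} and~\ref{prop lower bound} the family $(V_\lambda)_{\lambda\in(0,\bar\lambda)}$ is equi-bounded, say $\|V_\lambda\|_\infty\le C_v$, and by Proposition~\ref{prop maximal subsolution} each $V_\lambda=u_\lambda$ is a viscosity solution of \eqref{VH}, i.e.\ of \eqref{E} with this $G$. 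As such it is a backward weak KAM solution (see \cite{n2,NWY}), so for every $x\in M$ it admits a calibrated curve $\gamma^x_\lambda:(-\infty,0]\to M$ with $\gamma^x_\lambda(0)=x$ of the type considered in Lemma~\ref{gameq}.

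First I would collect the two inputs from Section~\ref{sec general}. Applying Lemma~\ref{gameq} to the family $(u_\lambda)_{\lambda\in(0,\bar\lambda)}$ (equi-Lipschitz by Corollary~\ref{cor equi-Lipschitz}) produces a constant $\hat\kappa>0$, independent of $\lambda\in(0,\bar\lambda)$ and $x\in M$, such that every $\gamma^x_\lambda$ is $\hat\kappa$-Lipschitz; in particular $|L(\gamma^x_\lambda(s),\dot\gamma^x_\lambda(s))|\le C_{\hat\kappa}$ for a constant $C_{\hat\kappa}$ depending only on $\hat\kappa$. Next, since $\frac{\partial L_G}{\partial u}(\cdot,\cdot,0)=-a$ and \eqref{V>0} is condition (L5), inequality \eqref{e1<e2} holds and Lemmas~\ref{e1e2} and~\ref{>T0} apply to the curves $\gamma^x_\lambda$: there is $\lambda_0\in(0,\bar\lambda)$ such that, for all $\lambda\in(0,\lambda_0)$ and $x\in M$, estimate \eqref{einf} gives
\[
\sup_{T>0}\alpha(\lambda,\gamma^x_\lambda,T)=\int_{-\infty}^0 e^{-\lambda\int_s^0 a(\gamma^x_\lambda(\tau))\,d\tau}\,ds=\int_{-\infty}^0 e^{\lambda\int_s^0 \frac{\partial L_G}{\partial u}(\gamma^x_\lambda(\tau),\dot\gamma^x_\lambda(\tau),0)\,d\tau}\,ds<+\infty .
\]
Lemma~\ref{leto0} then yields $e^{-\lambda\int_{-T}^0 a(\gamma^x_\lambda(\tau))\,d\tau}\to 0$ as $T\to+\infty$.

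Finally I would convert the calibration identity. Differentiating it, $\frac{d}{ds}u_\lambda(\gamma^x_\lambda(s))=L(\gamma^x_\lambda(s),\dot\gamma^x_\lambda(s))-\lambda a(\gamma^x_\lambda(s))u_\lambda(\gamma^x_\lambda(s))+c_0$ for a.e.\ $s<0$; multiplying by the integrating factor $e^{-\lambda\int_s^0 a(\gamma^x_\lambda(\tau))\,d\tau}$ turns the left-hand side into $\frac{d}{ds}\big(u_\lambda(\gamma^x_\lambda(s))\,e^{-\lambda\int_s^0 a(\gamma^x_\lambda(\tau))\,d\tau}\big)$, so integration on $[-T,0]$ gives
\[
u_\lambda(x)-u_\lambda\big(\gamma^x_\lambda(-T)\big)\,e^{-\lambda\int_{-T}^0 a(\gamma^x_\lambda(\tau))\,d\tau}=\int_{-T}^0 e^{-\lambda\int_s^0 a(\gamma^x_\lambda(\tau))\,d\tau}\Big(L\big(\gamma^x_\lambda(s),\dot\gamma^x_\lambda(s)\big)+c_0\Big)\,ds .
\]
Letting $T\to+\infty$, the second term on the left vanishes by the previous paragraph together with $\|u_\lambda\|_\infty\le C_v$, while the integrand on the right is dominated by $(C_{\hat\kappa}+|c_0|)\,e^{-\lambda\int_s^0 a(\gamma^x_\lambda(\tau))\,d\tau}\in L^1((-\infty,0])$ in view of the bound on $\sup_{T>0}\alpha(\lambda,\gamma^x_\lambda,T)$. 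This produces the identity $u_\lambda(x)=\int_{-\infty}^0 e^{-\lambda\int_s^0 a(\gamma^x_\lambda(\tau))\,d\tau}\big(L(\gamma^x_\lambda(s),\dot\gamma^x_\lambda(s))+c_0\big)\,ds$, with $\gamma^x_\lambda$ being $\hat\kappa$-Lipschitz, $\hat\kappa$ independent of $\lambda\in(0,\lambda_0)$ and $x\in M$, as claimed.

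The only nontrivial point is the integrability $\sup_{T>0}\alpha(\lambda,\gamma^x_\lambda,T)<+\infty$ of the discount weight along calibrated curves: this is exactly where the positivity hypothesis \eqref{V>0} on the Mather measures enters in an essential way (through the asymptotic estimates of Lemmas~\ref{e1e2}--\ref{>T0} on the calibrated curves), and it is the reason why one must pass from $\bar\lambda$ to a possibly smaller threshold $\lambda_0$. Everything else — the uniform Lipschitz bound, the integrating-factor manipulation, and the passage to the limit $T\to+\infty$ — is routine once these facts are in place.
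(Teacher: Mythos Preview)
Your proof is correct and follows the same overall architecture as the paper: take a calibrated curve for the weak KAM solution $u_\lambda$, rewrite the calibration via the integrating factor $e^{-\lambda\int_s^0 a(\gamma^x_\lambda)}$ to obtain
\[
u_\lambda(x)=u_\lambda\big(\gamma^x_\lambda(-T)\big)\,e^{-\lambda\int_{-T}^0 a(\gamma^x_\lambda)}+\int_{-T}^0 e^{-\lambda\int_s^0 a(\gamma^x_\lambda)}\big(L+c_0\big)\,ds,
\]
and pass to the limit $T\to+\infty$ once you know that $\sup_T\alpha(\lambda,\gamma^x_\lambda,T)<+\infty$ (hence, by Lemma~\ref{leto0}, the boundary term vanishes and dominated convergence applies).

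The one genuine difference is in how this last integrability is obtained. You recycle the general machinery of Section~\ref{sec general}: since $\frac{\partial L_G}{\partial u}(\cdot,\cdot,0)=-a$ and the family $(u_\lambda)_{\lambda\in(0,\bar\lambda)}$ is equi-bounded, Lemmas~\ref{e1e2}--\ref{>T0} apply directly and \eqref{einf} gives the bound. The paper instead reproves this fact from scratch inside Section~\ref{sec model case}, via a contradiction argument based on Lemma~\ref{lemma Cn} (extract a limiting Mather measure along a hypothetical diverging sequence $\lambda_n\alpha_n\to+\infty$, then test against a negative subsolution to force $\int a\,d\tilde\mu\le 0$, contradicting \eqref{V>0}). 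Your route is shorter and more in keeping with the modular spirit of the paper; the paper's argument is more self-contained within Section~\ref{sec model case} and yields the slightly stronger uniform estimate \eqref{eq L^1 estimate} over \emph{all} calibrated curves, though only the single-curve version is needed for the statement.
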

\begin{proof}
Let us fix $\lambda\in (0,\bar\lambda)$. According to Proposition \ref{prop maximal subsolution},
$u_\lambda$ is a viscosity, hence a weak KAM, solution of
\[\lambda a(x)u_\lambda(x)+H(x,D_x u)=c_0\qquad\hbox{in $M$}.
\]
By Lemma \ref{gameq}, we know that, for every fixed $x\in M$,  there is a Lipschitz curve $\gamma^x_\lambda:(-\infty,0]\to M$ with $\gamma^x_\lambda(0)=x$ such that
\begin{eqnarray}\label{eq calibrating}
u_\lambda\big(\gamma^x_\lambda(b_2)\big)-u_\lambda\big(\gamma^x_\lambda(b_1)\big)=\int_{b_1}^{b_2}\bigg[L\big(\gamma^x_\lambda(s),\dot{\gamma}^x_\lambda(s)\big)+c_0-\lambda a\big(\gamma^x_\lambda(s)\big)u_\lambda\big(\gamma^x_\lambda(s)\big)\bigg]ds
\end{eqnarray}
for all $b_1<b_2\leqslant  0$.
The above equality with $b_1=-t$ and $b_2=0$ can be restated as
\begin{equation}\label{ueq}
  u(t)=u(0)+\int_0^t h(s)u(s)ds+\int_0^t\ell(s)ds
  \qquad
  \hbox{for all $t>0$},
\end{equation}
where
\[
u(s):=-u_\lambda\big(\gamma^x_\lambda(-s)\big),\quad h(s):=\lambda a\big(\gamma^x_\lambda(-s)\big),\quad \ell(s):=L\big(\gamma^x_\lambda(-s),\dot{\gamma}^x_\lambda(-s)\big)+c_0.
\]
Note that the function $u$ and $h$ are in $L^\infty\big([0,t]\big)$. Since $\ell$ is uniformly bounded from below, this implies, in view of \eqref{ueq}, that $\ell$ is in $L^1\big([0,t]\big)$.
The operator $\mathcal T$ defined by
\[\mathcal Tf(t)=u(0)+\int_0^t h(s)f(s)ds+\int_0^t\ell(s)ds\]
is a contraction when $\lambda \|a\|_\infty t<1$, in particular there is a unique fixed point of $\mathcal T$. This gives the existence of the local unique solution of (\ref{ueq}), which is defined, for example, on $[0,t_0]$ with $t_0=\frac{1}{2\lambda \|a\|_\infty}$. Since the time of local existence is independent of the initial data, the maximal solution is defined for all $t>0$, and is given by
\[u(t)=u(0)e^{\int_0^th(s)ds}+\int_0^t\ell(s)e^{\int^t_{s}h(\tau)d\tau}ds,\]
as an explicit computation shows.
This gives directly
\begin{flalign}\label{ulam}
&  u_\lambda(x)=e^{-\lambda \int_{-t}^0a(\gamma^x_\lambda(\tau))d\tau}u_\lambda\big(\gamma^x_\lambda(-T)\big)+\!\!\int_{-t}^0 e^{-\lambda \int_s^0a(\gamma^x_\lambda(\tau))d\tau}\Big(L\big(\gamma^x_\lambda(s),\dot\gamma^x_\lambda(s)\big)+c_0\Big)ds&&
\end{flalign}
for all $t>0$. Due to the fact that the functions $(u_\lambda)_{\lambda\in (0,\overline\lambda)}$ are equi-bounded and equi-Lipschitz, this
implies that the curve $\gamma_\lambda^x$ is $\hat \kappa$-Lipschitz, with a Lipschitz constant
$\hat\kappa$ that is independent of $\lambda\in (0,\overline\lambda)$ and $x\in M$, see Lemma \ref{gameq}.

We want to show that there exists $\lambda_0\in (0,\bar\lambda]$ such that
$e^{-\lambda \int_{-t}^0a(\gamma^x_\lambda(\tau))d\tau}\to 0$ as $t\to+\infty$ whenever $\lambda\in (0,\lambda_0)$. According to Lemma \ref{leto0}, it suffices to show that there exists a $\lambda_0\in (0,\bar\lambda]$ such that
\begin{equation}\label{eq L^1 estimate}
\sup\bigg\{\lambda\alpha(\lambda,\gamma,t):\  \lambda\in (0,\lambda_0), \gamma\in\mathscr C_\lambda^x,\, x\in M,\,\ t>0\bigg\}<+\infty,
\end{equation}
where we have denoted by $\mathscr C^x_\lambda$ the family of absolutely continuous curves $\gamma:(-\infty,0]\to M$ with $\gamma(0)=x$ that satisfy \eqref{eq calibrating}. Notice in fact that \eqref{eq L^1 estimate} implies in particular that, for every fixed
$\lambda\in (0,\lambda_0)$, condition \eqref{al<inf} in Lemma \ref{leto0} is met.
We argue by contradiction. Let us assume the claim false. Then there exist sequences $\lambda_n\to 0^+$, $t_n\in (0,+\infty)$, $x_n\in M$  and
$\gamma_n\in \mathscr C_{\lambda_n}^{x_n}$ such that $\lambda_n \alpha(\lambda_n,\gamma_n,t_n)\to +\infty$ as $n\to +\infty$. Notice that the latter implies that $t_n\to +\infty$ and $\alpha_n:=\alpha(\lambda_n,\gamma_n,t_n)\to +\infty$ as $n\to +\infty$.
Let $\tilde{\mu}_n$ be the probability measure defined in (\ref{mun}). Then (\ref{ulam}) can be restated as
\begin{eqnarray}\label{e/e}
  \int_{TM}\big(L(x,v)+c_0\big)\,d\tilde{\mu}_n(x,v)=\frac{1}{\alpha_n}\bigg(u_{\lambda_n}\big(\gamma_n(0)\big)-e^{-\lambda_n \int_{-t_n}^0a(\gamma_n(\tau))d\tau}u_{\lambda_n}\big(\gamma_n(-t_n)\big)\bigg)
\end{eqnarray}
for all $n\in\N$. With the aid of Lemma \ref{a/b}, it is easy to check that the right-hand side of \eqref{e/e} goes to $0$ as $n\to +\infty$.
We can therefore apply Lemma \ref{lemma Cn} to infer that, up to extracting a subsequence, $\tilde{\mu}_n$ weakly converges to a Mather measure $\tilde{\mu}$.

Now we choose a subsolution $\varphi\leqslant  -1$ of (\ref{e0}). For $\varepsilon>0$, we take $\varphi_\varepsilon\in C^\infty(M)$ satisfying $\|\varphi_\varepsilon-\varphi\|_\infty\leqslant  \varepsilon$ and $H\big(x,D_x \varphi_\varepsilon\big)\leqslant  c_0+\varepsilon$\ for all $x\in M$, given by Theorem \ref{approx}. For $\varepsilon$ small enough, we have $\varphi_\varepsilon<0$. By (\ref{e/e}) we get
\begin{align*}
&\frac{1}{\alpha_n}\bigg(u_{\lambda_n}\big(\gamma_n(0)\big)-e^{-\lambda_n \int_{-t_n}^0a(\gamma_n(\tau))d\tau}u_{\lambda_n}\big(\gamma_n(-t_n)\big)\bigg)
=
\int_{TM}\big(L(x,v)+c_0\big)d\tilde{\mu}_n\\
&\geqslant  \int_{TM}\Big( D_x\varphi_\varepsilon(v)-H\big(x,D_x \varphi_\varepsilon\big)+c_0\Big)\,d\tilde{\mu}_n
\geqslant
\int_{TM}\big(  D_x\varphi_\varepsilon(v) -\varepsilon\big)\, d\tilde{\mu}_n\\
&=\frac{\varphi_\varepsilon\big(\gamma_n(0)\big)-e^{-\lambda_n\int_{-t_n}^0a(\gamma_n(\tau))d\tau}\varphi_\varepsilon\big(\gamma_n(-t_n)\big)}{\alpha(\lambda_n,\gamma_n,t_n)}-\lambda_n\int_{TM}a(x)\varphi_\varepsilon(x)\,d\tilde{\mu}_n-\varepsilon\\
&\geqslant  \frac{\varphi_\varepsilon\big(\gamma_n(0)\big)}{\alpha_n}-\lambda_n\int_{TM}a(x)\varphi_\varepsilon(x)\,d\tilde{\mu}_n-\varepsilon,
\end{align*}
where for the first inequality we have used Fenchel's inequality \eqref{fenchel} and for the subsequent equality an integration by parts.
Let $\varepsilon\to 0^+$ to get
\begin{eqnarray*}
\frac{1}{\alpha_n}\bigg(u_{\lambda_n}\big(\gamma_n(0)\big)-e^{-\lambda_n \int_{-t_n}^0a(\gamma_n(\tau))d\tau}u_{\lambda_n}\big(\gamma_n(-t_n)\big)\bigg)
\geqslant
\frac{\varphi\big(\gamma_n(0)\big)}{\alpha_n}-\lambda_n\int_{TM}a(x)\varphi(x)\,d\tilde{\mu}_n.
\end{eqnarray*}
Now we divide by $\lambda_n$, we use the fact that $\lambda_n\alpha_n\to+\infty$ and Lemma \ref{a/b} to get
\[\int_{TM}a(x)\varphi(x)\,d\tilde\mu(x,v)\geqslant  -\|a\|_\infty \|u_\lambda\|_\infty.\]
Since for, all $m\geqslant  0$, $\varphi-m\leqslant  0$ is also a subsolution of (\ref{e0}), we get, by applying the previous inequality to
$\varphi-m$, that
\[
\|\varphi\|_\infty\geqslant  \int_{TM}a(x)\varphi(x)\,d\tilde{\mu}(x,v)
\geqslant  m\int_{TM}a(x)\,d\tilde{\mu}(x,v)-\|a\|_\infty \|u_\lambda\|_\infty,\quad \forall m\geqslant  0,
\]
which implies that $\int_{TM}a(x)\,d\tilde{\mu}\leqslant  0$. This leads to a contradiction with \eqref{V>0} being $\tilde\mu$ a Mather measure, as we recalled above.

Let us fix $\lambda\in (0,\lambda_0)$ and $x\in M$.
By sending $t\to +\infty$ in \eqref{ulam} and by exploiting the information just gathered, we derive that
\begin{eqnarray*}
u_\lambda(x)
=
\lim_{t\to +\infty} \int_{-t}^0 e^{-\lambda \int_s^0a(\gamma^x_\lambda(\tau))d\tau}\Big(L\big(\gamma^x_\lambda(s),\dot\gamma^x_\lambda(s)\big)+c_0\Big)ds.
\end{eqnarray*}
We conclude that
\begin{align}\label{eq minimizing curve}
u_\lambda(x)=\lim_{t\to +\infty} \int_{-t}^0 e^{-\lambda \int_s^0a(\gamma^x_\lambda(\tau))d\tau}\Big(&L\big(\gamma^x_\lambda(s),\dot\gamma^x_\lambda(s)\big)+c_0\Big)ds\\
&=
 \int_{-\infty}^0 e^{-\lambda \int_s^0a(\gamma^x_\lambda(\tau))d\tau}\Big(L\big(\gamma^x_\lambda(s),\dot\gamma^x_\lambda(s)\big)+c_0\Big)ds.\nonumber
\end{align}
Indeed \eqref{eq L^1 estimate} means that the positive function $s\mapsto e^{-\lambda \int_s^0a(\gamma^x_\lambda(\tau))d\tau}$ is in $L^1\big((-\infty,0]\big)$. Furthermore, the fact that the curve $\gamma^x_\lambda$ is Lipschitz implies that the
function $s\mapsto L\big(\gamma^x_\lambda(s),\dot\gamma^x_\lambda(s)\big)$ is in $L^\infty\big((-\infty,0]\big)$.  Equality \eqref{eq minimizing curve} follows from this by making use of the Dominated Convergence Theorem.
\end{proof}

\subsection{Diverging families of solutions}\label{sec model diverging}

We have seen in Theorem \ref{thm3} that, under the assumption (\ref{V>0}), the maximal solution $u_\lambda$ of (\ref{VH}) uniformly converges as $\lambda\to 0^+$ to a critical solution $u_0$. In this section, we will show that if we furthermore assume
\begin{itemize}
\item [($a$2)\,] there is a point $x_0\in M$ such that $a(x_0)<0$;\smallskip
\item [($a$3)\,] $a(x)\geqslant  0$ in an open neighborhood $U$ of the projected Aubry set $\mathcal A$,\smallskip
\end{itemize}
then there exists a family of solutions to (\ref{VH}) that uniformly diverges to $-\infty$. Furthermore, when condition
(a$3$) is reinforced in favor of the following one
\begin{itemize}
\item [($a$3$'$)] $a(x)>0$ on $\mathcal A$,
\end{itemize}
then all solutions to \eqref{VH} different from the maximal ones uniformly diverge to  $-\infty$ as $\lambda\to 0^+$. We summarize all this in the following statement.  We refer the reader to Section
\ref{sec weak kam} for the definition of the projected Aubry set. We underline that, in view of Theorem \ref{thm M in A}, condition (a$3$) is stronger
than the integral condition \eqref{V>0}. Throughout this section, we will denote by $u_\lambda$ the maximal solution of \eqref{VH}.

\begin{theorem}\label{thm dichotomy}
Let us assume ($a$2).
\begin{itemize}
\item[\em (i)] If ($a$3) holds, there is a family of solutions $(v_\lambda)_{\lambda\in (0,\hat \lambda)}$ of (\ref{VH}) for some $\hat\lambda\in (0,1)$ uniformly diverging to $-\infty$ as $\lambda\to 0^+$.\smallskip
\item[\em (ii)] If ($a$3$\,'$) holds, then any family of solutions $(v_\lambda)_{\lambda\in (0,\lambda')}$ of (\ref{VH}) with $\lambda'\in (0,1)$  satisfying $v_\lambda\not=u_\lambda$ for all $\lambda\in (0,\lambda')$ uniformly diverges to $-\infty$ as $\lambda\to 0^+$.
\end{itemize}
\end{theorem}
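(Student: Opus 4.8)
\emph{Proof strategy.} I treat the two items separately; throughout, recall that the standing assumption (a1) together with (a2) forces $a$ to change sign, so that \eqref{VH}, with $a$ replaced by $\lambda a$, falls within the scope of the results of Section \ref{sec contact H}.

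\emph{Part (i): construction of a uniformly very negative subsolution.} The idea is to produce, for each small $\lambda>0$, a subsolution $\phi_\lambda$ of \eqref{VH} with $\max_M\phi_\lambda\to-\infty$, and then convert it into a genuine solution by the Lax--Oleinik machinery. Let $U\supset\A$ be the open set on which $a\ge 0$ (condition (a3)), fix an open $U_1$ with $\A\subset U_1\subset U$, and let $v$ be the critical subsolution of Theorem \ref{thm outA}, normalized so that $\max_M v\le 0$; since $v$ is $C^\infty$ and strictly subsolves on $M\setminus\A$ and $M\setminus U_1$ is a compact subset of $M\setminus\A$, there is $\delta>0$ with $H(x,D_xv)\le c_0-2\delta$ on $M\setminus U_1$. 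For $\lambda\in(0,1)$ put $m_\lambda:=\delta/(2\lambda\|a\|_\infty)$ and $\phi_\lambda:=v-m_\lambda$. Then $m_\lambda\to+\infty$, $\phi_\lambda\le -m_\lambda\to-\infty$ uniformly, and, for $\lambda$ small enough, $\phi_\lambda$ is an a.e.\ (hence, being Lipschitz, a viscosity) subsolution of \eqref{VH}: on $U$ one has $a\ge 0$ and $\phi_\lambda<0$, so using $H(\cdot,D\cdot v)\le c_0$ a.e.\ we get $\lambda a\phi_\lambda+H(x,D_xv)\le c_0$; on $M\setminus U_1$ one has $\lambda a\phi_\lambda+H(x,D_xv)\le\lambda\|a\|_\infty(\|v\|_\infty+m_\lambda)+c_0-2\delta\le c_0$ by the choice of $m_\lambda$; and $U\cup(M\setminus U_1)=M$.

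Fixing $\lambda\in(0,\hat\lambda)$ with $\hat\lambda$ as above, I would apply Lemmas \ref{T-T+} and \ref{u-v-} to the subsolution $\phi_\lambda$: they give a forward weak KAM solution $v_+^\lambda=\lim_t T_t^+\phi_\lambda\le\phi_\lambda$ and then a viscosity solution $v_\lambda:=\lim_t T_t^-v_+^\lambda$ of \eqref{VH}, together with a point $x_\lambda\in M$ with $v_\lambda(x_\lambda)=v_+^\lambda(x_\lambda)$. Consequently $v_\lambda(x_\lambda)=v_+^\lambda(x_\lambda)\le\phi_\lambda(x_\lambda)\le-m_\lambda\to-\infty$, so by Proposition \ref{uto-inf}(i) the family $(v_\lambda)_{\lambda\in(0,\hat\lambda)}$ of solutions uniformly diverges to $-\infty$, which is (i). (Note (a3) is used precisely to ensure $\phi_\lambda$ is a subsolution near $\A$, where $H(\cdot,D\cdot v)$ has no room to absorb a wrong-sign term.)

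\emph{Part (ii): reduction to the equi-bounded case.} Under (a3$'$) we have $a\ge\rho>0$ on a closed neighborhood $\overline W$ of $\A$ for some $\rho>0$, so in particular (a3) holds and, by Theorem \ref{thm3}, the maximal solution $u_\lambda$ of \eqref{VH} is well defined and equi-bounded for $\lambda<\lambda_0$; we may take $\lambda'\le\lambda_0$. Let $(v_\lambda)_{\lambda\in(0,\lambda')}$ be solutions with $v_\lambda\ne u_\lambda$ for every $\lambda$, and suppose for contradiction that $(v_\lambda)$ does not uniformly diverge to $-\infty$. Since $v_\lambda\le u_\lambda$ by maximality, the $v_\lambda$ are equi-bounded above, and the contrapositive of Proposition \ref{uto-inf}(i) provides $A>0$ and $\lambda_n\to0^+$ with $\min_M v_{\lambda_n}\ge-A$; thus $(v_{\lambda_n})_n$ is equi-bounded and, by Theorem \ref{thm1} and Remark \ref{oss picky} (applied to $G(x,p,u)=a(x)u+H(x,p)$, for which (L5) reduces to (a1)), $v_{\lambda_n}\to u_0$ uniformly, so $w_n:=u_{\lambda_n}-v_{\lambda_n}\ge 0$ tends uniformly to $0$. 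It now suffices to show $w_n\equiv 0$ for $n$ large, contradicting $v_{\lambda_n}\ne u_{\lambda_n}$.

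\emph{Part (ii): the crux.} Let $z_n$ maximize $w_n$, $W_n:=w_n(z_n)\ge 0$, and let $\sigma_n:(-\infty,0]\to M$ be a calibrated curve for the backward weak KAM solution $v_{\lambda_n}$ ending at $z_n$ (cf.\ \cite{NWY}); these are equi-Lipschitz by the argument of Lemma \ref{gameq}, as $(v_{\lambda_n})_n$ is equi-bounded. Subtracting the calibration identity for $v_{\lambda_n}$ along $\sigma_n$ from the domination inequality for $u_{\lambda_n}$ along $\sigma_n$ gives, for all $t>0$,
\[
w_n(z_n)-w_n(\sigma_n(-t))\le -\lambda_n\int_{-t}^0 a(\sigma_n(s))\,w_n(\sigma_n(s))\,ds,
\]
and since $w_n(\sigma_n(-t))\le W_n$ this forces $\int_{-t}^0 a(\sigma_n(s))\,w_n(\sigma_n(s))\,ds\le 0$ for every $t>0$. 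The delicate point, and the main obstacle, is a uniform ``$\alpha$-limit in $\A$'' statement: there is $T_1>0$, independent of $n$, with $\sigma_n(-t)\in W$ for all $t\ge T_1$ and $n$ large. I would prove this by a compactness argument in the spirit of Lemma \ref{e1e2}: translating the equi-Lipschitz curves $\sigma_n$ in time, a uniform limit $\tilde\sigma:\R\to M$ would, using $\lambda_n\to0$, $v_{\lambda_n}\to u_0$ and stability of calibration, be a two-sided $(u_0,L,c_0)$-calibrated curve, hence contained in $\A$, which is incompatible with $\tilde\sigma(0)$ staying at positive distance from $\A$. Granting it, on $\{t\ge T_1\}$ we have $a(\sigma_n(-t))\ge\rho$, so the sign condition above yields $\rho\int_{-t}^{-T_1}w_n(\sigma_n(s))\,ds\le\|a\|_\infty T_1 W_n$ for all $t\ge T_1$; letting $t\to+\infty$, $s\mapsto w_n(\sigma_n(s))$ is integrable on $(-\infty,-T_1]$, hence $w_n(\sigma_n(s_k))\to0$ along some $s_k\to-\infty$, and also $-\lambda_n\int_{s_k}^0 a(\sigma_n)\,w_n(\sigma_n)\,ds\le\lambda_n C_1 W_n$ with $C_1:=\|a\|_\infty T_1(1+\|a\|_\infty/\rho)$ independent of $n$. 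Plugging $t=|s_k|$ into the displayed inequality rearranged as $w_n(z_n)\le w_n(\sigma_n(-t))-\lambda_n\int_{-t}^0 a(\sigma_n)\,w_n(\sigma_n)\,ds$, we get $(1-\lambda_n C_1)W_n\le w_n(\sigma_n(s_k))$; letting $k\to+\infty$ gives $(1-\lambda_n C_1)W_n\le 0$, so $W_n=0$ as soon as $\lambda_n<1/C_1$. This is the desired contradiction, and the whole estimate is engineered so that the wrong-sign term $-\lambda_n\int a\,w_n$ is controlled by $\lambda_n$ times a multiple of $W_n$, hence absorbed for small $\lambda_n$ — the subtlety being exactly that positivity of $a$ is available only on $\A$, which is why the uniform localization of the calibrated curves near $\A$ is indispensable.
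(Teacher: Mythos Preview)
Your Part (i) is essentially the paper's proof: the same strict-off-$\A$ subsolution $\varphi$ shifted down by a quantity tending to $-\infty$, the same case-split verification on $U$ (where $a\ge 0$ and $\phi_\lambda<0$) and on $M\setminus U$ (where the $\delta$-gap absorbs the error), and the same use of the forward/backward Lax--Oleinik semigroups (Lemmas \ref{T-T+}, \ref{u-v-}) plus Proposition \ref{uto-inf}. The paper takes $m_\lambda=1/\sqrt\lambda$ where you take $m_\lambda=\delta/(2\lambda\|a\|_\infty)$, but either choice works.

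Your Part (ii) is correct but follows a genuinely different route. The paper's argument is short and purely ``soft'': under (a3$'$) the function $\varphi_\lambda=\varphi-1/\sqrt\lambda$ is a \emph{strict} subsolution of \eqref{VH} (the extra $-\sqrt\lambda\,\theta$ term coming from $a\ge\theta>0$ on a neighborhood of $\A$), hence by Lemma \ref{u-v-} the backward semigroup $T^{\lambda,-}_t\varphi_\lambda$ converges to the maximal solution $u_\lambda$; this squeezes any solution $v_\lambda$ with $\varphi_\lambda\le v_\lambda\le u_\lambda$ to coincide with $u_\lambda$, so every non-maximal solution must fall below $\varphi_\lambda$ at some point and Proposition \ref{uto-inf} finishes. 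You instead argue by contradiction, extract an equi-bounded subsequence $(v_{\lambda_n})$ converging to $u_0$, and control the gap $w_n=u_{\lambda_n}-v_{\lambda_n}$ along calibrated curves for $v_{\lambda_n}$; the key step is your uniform ``$\alpha$-limit in $\A$'' lemma, which is where $a>0$ on $\A$ enters, and your sketch (translated curves limit to a bi-infinite $(u_0,L,c_0)$-calibrated curve, hence lie in $\A$) is correct. Your approach is more hands-on and yields a clean absorption inequality $(1-\lambda_nC_1)W_n\le 0$, but at the cost of proving the localization lemma; the paper's approach sidesteps all of this by exploiting the strict-subsolution machinery of Section \ref{sec contact H}.
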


\begin{proof}
By Theorem \ref{thm outA}, there is a subsolution $\varphi$ of (\ref{e0}) which is $C^\infty$ and strict in $M\setminus \A$. Therefore, for every
open neighborhood $U$ of $\A$ there is a $\delta=\delta(U)>0$ depending on $U$ such that
\[
H\big(x,D_x \varphi\big)\leqslant  c_0-\delta  <c_0 \quad \hbox{for all $x\in M\setminus U$.}
\]

Let us prove {\em (i)}. Let $U$ be the open neighborhood of $\A$ given by condition ($a$3). Define
\[\varphi_\lambda:=\varphi-\frac{1}{\sqrt{\lambda}}.\]
For $\hat \lambda>0$ small enough, we have $\varphi_\lambda<0$ for all $\lambda\in(0,\hat \lambda)$. Up to choosing a smaller $\hat \lambda>0$ if necessary, we have
\[
\lambda a(x)\varphi_\lambda+H(x,D_x \varphi_\lambda )\leqslant  \lambda a(x)\varphi_\lambda(x)+c_0\leqslant  c_0\quad \hbox{a.e. in $U$,}
\]
and
\[
\lambda a(x)\varphi_\lambda+H(x,D_x \varphi_\lambda )\leqslant  \lambda\|a\|_\infty \|\varphi\|_\infty+\|a\|_\infty\sqrt{\lambda}+c_0-\delta  <c_0\quad\hbox{in $M\backslash U$,}
\]
for all $\lambda\in(0,\hat \lambda)$. Therefore, $\varphi_\lambda$ is a subsolution of (\ref{VH}). Now we denote by $T^{\lambda,-}_t$ (resp. $T^{\lambda,+}_t$) the Lax-Oleinik semigroup defined in (\ref{T-}) (resp. the forward semigroup defined in (\ref{T+})) associated to $L(x,v)-\lambda a(x)u$. Define
\[v^+_\lambda:=\lim_{t\to+\infty}T^{\lambda,+}_t\varphi_\lambda,\quad v_\lambda:=\lim_{t\to+\infty}T^{\lambda,-}_tv^+_\lambda,\]
By Lemma \ref{T-T+}, $v^+_\lambda\leqslant  \varphi_\lambda$, in particular $v^+_\lambda$ uniformly diverges to $-\infty$ as $\lambda\to 0^+$.
By Lemma \ref{u-v-}, $v_\lambda$ is a solution of (\ref{VH}), and there is a point $x_\lambda\in M$ such that
$v_\lambda(x_\lambda)=v^+_\lambda(x_\lambda)$ for each $\lambda\in (0,\hat\lambda)$. We derive that $v_\lambda(x_\lambda)\to -\infty$ as $\lambda\to 0^+$.
By Proposition \ref{uto-inf}, we conclude that $v_\lambda$ uniformly diverges to $-\infty$ as $\lambda\to 0^+$.\smallskip

Let us prove {\em (ii)}. From assumption  ($a$3$'$)  we infer that there is an open neighborhood $U$ of $\A$ and $\theta>0$ such that
$a(x)\geqslant  \theta  $ for all $x\in U$. One can easily check that there exist $\lambda'\in(0,1)$ small enough and $\varepsilon>0$ such that
\begin{eqnarray*}
\lambda a(x)\varphi_\lambda+H(x,D_x \varphi_\lambda )\leqslant  \lambda a(x)\varphi_\lambda(x)+c_0\leqslant  \lambda\|a\|_\infty\|\varphi\|_\infty-\sqrt{\lambda}\,\theta  +c_0\leqslant  c_0-\varepsilon\quad \hbox{a.e. in $U$}
\end{eqnarray*}
and
\begin{eqnarray*}
\lambda a(x)\varphi_\lambda+H(x,D_x \varphi_\lambda )\leqslant  \lambda\|a\|_\infty \|\varphi\|_\infty+\|a\|_\infty\sqrt{\lambda}+c_0-\delta \leqslant  c_0-\varepsilon \quad\hbox{in  $M\backslash U$,}
\end{eqnarray*}
for all $\lambda\in(0,\lambda')$. Therefore, $\varphi_\lambda$ is a strict subsolution of (\ref{VH}).\medskip\\
\begin{claim}
{\em Let $\lambda\in(0,\lambda')$. There is no solution $v_\lambda\not=u_\lambda$ of (\ref{VH}) satisfying
$\varphi_\lambda\leqslant  v_\lambda \leqslant  u_\lambda$ in $M$.}\medskip
\end{claim}
\noindent We argue by contradiction. Assume there is such a solution $v_\lambda $. We have
\[T^{\lambda,-}_t\varphi_\lambda\leqslant  T^{\lambda,-}_t v_\lambda \leqslant  T^{\lambda,-}_t u_\lambda=u_\lambda \qquad \forall t>0.\]
Since $\varphi_\lambda$ is a strict subsolution of (\ref{VH}), by Lemma \ref{u-v-},
\[\lim_{t\to+\infty}T^{\lambda,-}_t\varphi_\lambda=u_\lambda.\]
We get
\[\lim_{t\to+\infty}T^{\lambda,-}_t v_\lambda =u_\lambda,\]
which contradicts the fact that $v_\lambda $ is a fixed point of $T^{\lambda,-}_t$.\smallskip

Therefore, for each solution $v_\lambda $ of (\ref{VH}), there is a point $x_\lambda\in M$ such that
\[
v_\lambda (x_\lambda)\leqslant  \varphi_\lambda(x_\lambda)\to -\infty\quad\hbox{as $\lambda\to 0^+$}.
\]
By Proposition \ref{uto-inf}, $v_\lambda $ uniformly converges to $-\infty$ as $\lambda\to 0^+$.
\end{proof}

\begin{remark}
We note that the proof above also shows that the solutions $v_\lambda$  diverge to $-\infty$ with speed (at least) of order $-1/\sqrt{\lambda}$.
\end{remark}

\section*{Acknowledgements}
Andrea Davini is a member of the INdAM Research Group GNAMPA. He is supported for this research by Sapienza Universit\`a di Roma - Research Funds 2021. Panrui Ni and Maxime Zavidovique are supported by ANR CoSyDy (ANR-CE40-0014). Jun Yan is supported by National Natural Science Foundation of China (Grant Nos. 12171096, 12231010).

\section*{Declarations}

\noindent {\bf Conflict of interest statement:} The author states that there is no conflict of interest.

\medskip

\noindent {\bf Data availability statement:} Data sharing not applicable to this article as no datasets were generated or analysed during the current study.

\bibliography{discount}
\bibliographystyle{siam}

\end{document}